\newcommand {\emptycomment}[1]{} 
\newcommand{\nc}{\newcommand}
\newcommand{\delete}[1]{}
\nc{\mfootnote}[1]{\footnote{#1}} 
\nc{\todo}[1]{\tred{To do:} #1}
\nc{\mlabel}[1]{\label{#1}}  
\nc{\mcite}[1]{\cite{#1}}  
\nc{\mref}[1]{\ref{#1}}  
\nc{\meqref}[1]{\eqref{#1}} 
\nc{\mbibitem}[1]{\bibitem{#1}} 
\nc{\mlabel}[1]{\label{#1}  
{\hfill \hspace{1cm}{\bf{{\ }\hfill(#1)}}}}
\nc{\mcite}[1]{\cite{#1}{{\bf{{\ }(#1)}}}}  
\nc{\mref}[1]{\ref{#1}{{\bf{{\ }(#1)}}}}  
\nc{\meqref}[1]{\eqref{#1}{{\bf{{\ }(#1)}}}} 
\nc{\mbibitem}[1]{\bibitem[\bf #1]{#1}} 
\newtheorem{thm}{Theorem}[section]
\newtheorem{lem}[thm]{Lemma}
\newtheorem{cor}[thm]{Corollary}
\newtheorem{pro}[thm]{Proposition}
\theoremstyle{definition}
\newtheorem{defi}[thm]{Definition}
\newtheorem{ex}[thm]{Example}
\newtheorem{rmk}[thm]{Remark}
\nc{\tred}[1]{\textcolor{red}{#1}}
\nc{\tblue}[1]{\textcolor{blue}{#1}}
\nc{\tgreen}[1]{\textcolor{green}{#1}}
\nc{\tpurple}[1]{\textcolor{purple}{#1}}
\nc{\btred}[1]{\textcolor{red}{\bf #1}}
\nc{\btblue}[1]{\textcolor{blue}{\bf #1}}
\nc{\btgreen}[1]{\textcolor{green}{\bf #1}}
\nc{\btpurple}[1]{\textcolor{purple}{\bf #1}}
\nc{\ts}[1]{\textcolor{purple}{Tianshui:#1}}
\nc{\cm}[1]{\textcolor{red}{Chengming:#1}}
\nc{\li}[1]{\textcolor{red}{#1}}
\nc{\lir}[1]{\textcolor{blue}{Li:#1}}
\nc{\twovec}[2]{\left(\begin{array}{c} #1 \\ #2\end{array} \right )}
\nc{\threevec}[3]{\left(\begin{array}{c} #1 \\ #2 \\ #3 \end{array}\right )}
\nc{\twomatrix}[4]{\left(\begin{array}{cc} #1 & #2\\ #3 & #4 \end{array} \right)}
\nc{\threematrix}[9]{{\left(\begin{matrix} #1 & #2 & #3\\ #4 & #5 & #6 \\ #7 & #8 & #9 \end{matrix} \right)}}
\nc{\twodet}[4]{\left|\begin{array}{cc} #1 & #2\\ #3 & #4 \end{array} \right|}
\nc{\rk}{\mathrm{r}}
\newcommand{\ad}{\mathrm{ad}}
\nc{\tforall}{\text{ for all }}
\nc{\svec}[2]{{\tiny\left(\begin{matrix}#1\\
#2\end{matrix}\right)\,}}  
\nc{\ssvec}[2]{{\tiny\left(\begin{matrix}#1\\
#2\end{matrix}\right)\,}} 
\nc{\typeI}{local cocycle $3$-Lie bialgebra\xspace}
\nc{\typeIs}{local cocycle $3$-Lie bialgebras\xspace}
\nc{\typeII}{double construction $3$-Lie bialgebra\xspace}
\nc{\typeIIs}{double construction $3$-Lie bialgebras\xspace}
\nc{\bia}{{$\mathcal{P}$-bimodule ${\bf k}$-algebra}\xspace}
\nc{\bias}{{$\mathcal{P}$-bimodule ${\bf k}$-algebras}\xspace}
\nc{\rmi}{{\mathrm{I}}}
\nc{\rmii}{{\mathrm{II}}}
\nc{\rmiii}{{\mathrm{III}}}
\nc{\pr}{{\mathrm{pr}}}
\nc{\OT}{constant $\theta$-}
\nc{\T}{$\theta$-}
\nc{\IT}{inverse $\theta$-}
\nc{\asi}{ASI\xspace}
\nc{\qadm}{$Q$-admissible\xspace}
\nc{\aybe}{AYBE\xspace}
\nc{\admset}{\{\pm x\}\cup (-x+K^{\times}) \cup K^{\times} x^{-1}}
\nc{\dualrep}{gives a dual representation\xspace}
\nc{\admt}{admissible to\xspace}
\nc{\opa}{\cdot_A}
\nc{\opb}{\cdot_B}
\nc{\post}{positive type\xspace}
\nc{\negt}{negative type\xspace}
\nc{\invt}{inverse type\xspace}
\nc{\pll}{\beta}
\nc{\plc}{\epsilon}
\nc{\ass}{{\mathit{Ass}}}
\nc{\lie}{{\mathit{Lie}}}
\nc{\comm}{{\mathit{Comm}}}
\nc{\dend}{{\mathit{Dend}}}
\nc{\zinb}{{\mathit{Zinb}}}
\nc{\tdend}{{\mathit{TDend}}}
\nc{\prelie}{{\mathit{preLie}}}
\nc{\postlie}{{\mathit{PostLie}}}
\nc{\quado}{{\mathit{Quad}}}
\nc{\octo}{{\mathit{Octo}}}
\nc{\ldend}{{\mathit{ldend}}}
\nc{\lquad}{{\mathit{LQuad}}}
 \nc{\adec}{\check{;}} \nc{\aop}{\alpha}
\nc{\dftimes}{\widetilde{\otimes}} \nc{\dfl}{\succ} \nc{\dfr}{\prec}
\nc{\dfc}{\circ} \nc{\dfb}{\bullet} \nc{\dft}{\star}
\nc{\dfcf}{{\mathbf k}} \nc{\apr}{\ast} \nc{\spr}{\cdot}
\nc{\twopr}{\circ} \nc{\tspr}{\star} \nc{\sempr}{\ast}
\nc{\disp}[1]{\displaystyle{#1}}
\nc{\bin}[2]{ (_{\stackrel{\scs{#1}}{\scs{#2}}})}  
\nc{\binc}[2]{ \left (\!\! \begin{array}{c} \scs{#1}\\
    \scs{#2} \end{array}\!\! \right )}  
\nc{\bincc}[2]{  \left ( {\scs{#1} \atop
    \vspace{-.5cm}\scs{#2}} \right )}  
\nc{\sarray}[2]{\begin{array}{c}#1 \vspace{.1cm}\\ \hline
    \vspace{-.35cm} \\ #2 \end{array}}
\nc{\bs}{\bar{S}} \nc{\dcup}{\stackrel{\bullet}{\cup}}
\nc{\dbigcup}{\stackrel{\bullet}{\bigcup}} \nc{\etree}{\big |}
\nc{\la}{\longrightarrow} \nc{\fe}{\'{e}} \nc{\rar}{\rightarrow}
\nc{\dar}{\downarrow} \nc{\dap}[1]{\downarrow
\rlap{$\scriptstyle{#1}$}} \nc{\uap}[1]{\uparrow
\rlap{$\scriptstyle{#1}$}} \nc{\defeq}{\stackrel{\rm def}{=}}
\nc{\dis}[1]{\displaystyle{#1}} \nc{\dotcup}{\,
\displaystyle{\bigcup^\bullet}\ } \nc{\sdotcup}{\tiny{
\displaystyle{\bigcup^\bullet}\ }} \nc{\hcm}{\ \hat{,}\ }
\nc{\hcirc}{\hat{\circ}} \nc{\hts}{\hat{\shpr}}
\nc{\lts}{\stackrel{\leftarrow}{\shpr}}
\nc{\rts}{\stackrel{\rightarrow}{\shpr}}
\nc{\lleft}{[}
\nc{\lright}{]}
\nc{\uni}[1]{\tilde{#1}}
\nc{\wor}[1]{\check{#1}}
\nc{\free}[1]{\bar{#1}} \nc{\den}[1]{\check{#1}} \nc{\lrpa}{\wr}
\nc{\curlyl}{\left \{ \begin{array}{c} {} \\ {} \end{array}
    \right .  \!\!\!\!\!\!\!}
\nc{\curlyr}{ \!\!\!\!\!\!\!
    \left . \begin{array}{c} {} \\ {} \end{array}
    \right \} }
\nc{\leaf}{\ell}       
\nc{\longmid}{\left | \begin{array}{c} {} \\ {} \end{array}
    \right . \!\!\!\!\!\!\!}
\nc{\ot}{\otimes} \nc{\sot}{{\scriptstyle{\ot}}}
\nc{\otm}{\overline{\ot}}
\nc{\ora}[1]{\stackrel{#1}{\rar}}
\nc{\ola}[1]{\stackrel{#1}{\la}}
\nc{\pltree}{\calt^\pl}
\nc{\epltree}{\calt^{\pl,\NC}}
\nc{\rbpltree}{\calt^r}
\nc{\scs}[1]{\scriptstyle{#1}} \nc{\mrm}[1]{{\rm #1}}
\nc{\dirlim}{\displaystyle{\lim_{\longrightarrow}}\,}
\nc{\invlim}{\displaystyle{\lim_{\longleftarrow}}\,}
\nc{\mvp}{\vspace{0.5cm}} \nc{\svp}{\vspace{2cm}}
\nc{\vp}{\vspace{8cm}} \nc{\proofbegin}{\noindent{\bf Proof: }}
\nc{\proofend}{$\blacksquare$ \vspace{0.5cm}}
\nc{\freerbpl}{{F^{\mathrm RBPL}}}
\nc{\sha}{{\mbox{\cyr X}}}  
\nc{\ncsha}{{\mbox{\cyr X}^{\mathrm NC}}} \nc{\ncshao}{{\mbox{\cyr
X}^{\mathrm NC,\,0}}}
\nc{\shpr}{\diamond}    
\nc{\shprm}{\overline{\diamond}}    
\nc{\shpro}{\diamond^0} 
\nc{\shprr}{\diamond^r}  
\nc{\shpra}{\overline{\diamond}^r}
\nc{\shpru}{\check{\diamond}} \nc{\catpr}{\diamond_l}
\nc{\rcatpr}{\diamond_r} \nc{\lapr}{\diamond_a}
\nc{\sqcupm}{\ot}
\nc{\lepr}{\diamond_e} \nc{\vep}{\varepsilon} \nc{\labs}{\mid\!}
\nc{\rabs}{\!\mid} \nc{\hsha}{\hat{\sha}}
\nc{\lsha}{\stackrel{\leftarrow}{\sha}}
\nc{\rsha}{\stackrel{\rightarrow}{\sha}} \nc{\lc}{\lfloor}
\nc{\rc}{\rfloor}
\nc{\tpr}{\sqcup}
\nc{\nctpr}{\vee}
\nc{\plpr}{\star}
\nc{\rbplpr}{\bar{\plpr}}
\nc{\sqmon}[1]{\langle #1\rangle}
\nc{\forest}{\calf}
\nc{\altx}{\Lambda_X} \nc{\vecT}{\vec{T}} \nc{\onetree}{\bullet}
\nc{\Ao}{\check{A}}
\nc{\seta}{\underline{\Ao}}
\nc{\deltaa}{\overline{\delta}}
\nc{\trho}{\tilde{\rho}}
\nc{\rpr}{\circ}
\nc{\dpr}{{\tiny\diamond}}
\nc{\rprpm}{{\rpr}}
\nc{\mmbox}[1]{\mbox{\ #1\ }} \nc{\ann}{\mrm{ann}}
\nc{\Aut}{\mrm{Aut}} \nc{\can}{\mrm{can}}
\nc{\twoalg}{{two-sided algebra}\xspace}
\nc{\colim}{\mrm{colim}}
\nc{\Cont}{\mrm{Cont}} \nc{\rchar}{\mrm{char}}
\nc{\cok}{\mrm{coker}} \nc{\dtf}{{R-{\rm tf}}} \nc{\dtor}{{R-{\rm
tor}}}
\nc{\depth}{{\mrm d}}
\nc{\Div}{{\mrm Div}} \nc{\End}{\mrm{End}} \nc{\Ext}{\mrm{Ext}}
\nc{\Fil}{\mrm{Fil}} \nc{\Frob}{\mrm{Frob}} \nc{\Gal}{\mrm{Gal}}
\nc{\GL}{\mrm{GL}} \nc{\Hom}{\mrm{Hom}} \nc{\hsr}{\mrm{H}}
\nc{\hpol}{\mrm{HP}} \nc{\id}{\mrm{id}} \nc{\im}{\mrm{im}}
\nc{\incl}{\mrm{incl}} \nc{\length}{\mrm{length}}
\nc{\LR}{\mrm{LR}} \nc{\mchar}{\rm char} \nc{\NC}{\mrm{NC}}
\nc{\mpart}{\mrm{part}} \nc{\pl}{\mrm{PL}}
\nc{\ql}{{\QQ_\ell}} \nc{\qp}{{\QQ_p}}
\nc{\rank}{\mrm{rank}} \nc{\rba}{\rm{RBA }} \nc{\rbas}{\rm{RBAs }}
\nc{\rbpl}{\mrm{RBPL}}
\nc{\rbw}{\rm{RBW }} \nc{\rbws}{\rm{RBWs }} \nc{\rcot}{\mrm{cot}}
\nc{\rest}{\rm{controlled}\xspace}
\nc{\rdef}{\mrm{def}} \nc{\rdiv}{{\rm div}} \nc{\rtf}{{\rm tf}}
\nc{\rtor}{{\rm tor}} \nc{\res}{\mrm{res}} \nc{\SL}{\mrm{SL}}
\nc{\Spec}{\mrm{Spec}} \nc{\tor}{\mrm{tor}} \nc{\Tr}{\mrm{Tr}}
\nc{\mtr}{\mrm{sk}}
\nc{\ab}{\mathbf{Ab}} \nc{\Alg}{\mathbf{Alg}}
\nc{\Algo}{\mathbf{Alg}^0} \nc{\Bax}{\mathbf{Bax}}
\nc{\Baxo}{\mathbf{Bax}^0} \nc{\RB}{\mathbf{RB}}
\nc{\RBo}{\mathbf{RB}^0} \nc{\BRB}{\mathbf{RB}}
\nc{\Dend}{\mathbf{DD}} \nc{\bfk}{{\bf k}} \nc{\bfone}{{\bf 1}}
\nc{\base}[1]{{a_{#1}}} \nc{\detail}{\marginpar{\bf More detail}
    \noindent{\bf Need more detail!}
    \svp}
\nc{\Diff}{\mathbf{Diff}} \nc{\gap}{\marginpar{\bf
Incomplete}\noindent{\bf Incomplete!!}
    \svp}
\nc{\FMod}{\mathbf{FMod}} \nc{\mset}{\mathbf{MSet}}
\nc{\rb}{\mathrm{RB}} \nc{\Int}{\mathbf{Int}}
\nc{\Mon}{\mathbf{Mon}}
\nc{\remarks}{\noindent{\bf Remarks: }}
\nc{\OS}{\mathbf{OS}} 
\nc{\Rep}{\mathbf{Rep}}
\nc{\Rings}{\mathbf{Rings}} \nc{\Sets}{\mathbf{Sets}}
\nc{\DT}{\mathbf{DT}}
\nc{\BA}{{\mathbb A}} \nc{\CC}{{\mathbb C}} \nc{\DD}{{\mathbb D}}
\nc{\EE}{{\mathbb E}} \nc{\FF}{{\mathbb F}} \nc{\GG}{{\mathbb G}}
\nc{\HH}{{\mathbb H}} \nc{\LL}{{\mathbb L}} \nc{\NN}{{\mathbb N}}
\nc{\QQ}{{\mathbb Q}} \nc{\RR}{{\mathbb R}} \nc{\BS}{{\mathbb{S}}} \nc{\TT}{{\mathbb T}}
\nc{\VV}{{\mathbb V}} \nc{\ZZ}{{\mathbb Z}}
\nc{\calao}{{\mathcal A}} \nc{\cala}{{\mathcal A}}
\nc{\calc}{{\mathcal C}} \nc{\cald}{{\mathcal D}}
\nc{\cale}{{\mathcal E}} \nc{\calf}{{\mathcal F}}
\nc{\calfr}{{{\mathcal F}^{\,r}}} \nc{\calfo}{{\mathcal F}^0}
\nc{\calfro}{{\mathcal F}^{\,r,0}} \nc{\oF}{\overline{F}}
\nc{\calg}{{\mathcal G}} \nc{\calh}{{\mathcal H}}
\nc{\cali}{{\mathcal I}} \nc{\calj}{{\mathcal J}}
\nc{\call}{{\mathcal L}} \nc{\calm}{{\mathcal M}}
\nc{\caln}{{\mathcal N}} \nc{\calo}{{\mathcal O}}
\nc{\calp}{{\mathcal P}} \nc{\calq}{{\mathcal Q}} \nc{\calr}{{\mathcal R}}
\nc{\calt}{{\mathcal T}} \nc{\caltr}{{\mathcal T}^{\,r}}
\nc{\calu}{{\mathcal U}} \nc{\calv}{{\mathcal V}}
\nc{\calw}{{\mathcal W}} \nc{\calx}{{\mathcal X}}
\nc{\CA}{\mathcal{A}}
\nc{\fraka}{{\mathfrak a}} \nc{\frakB}{{\mathfrak B}}
\nc{\frakb}{{\mathfrak b}} \nc{\frakd}{{\mathfrak d}}
\nc{\oD}{\overline{D}}
\nc{\frakF}{{\mathfrak F}} \nc{\frakg}{{\mathfrak g}}
\nc{\frakm}{{\mathfrak m}} \nc{\frakM}{{\mathfrak M}}
\nc{\frakMo}{{\mathfrak M}^0} \nc{\frakp}{{\mathfrak p}}
\nc{\frakS}{{\mathfrak S}} \nc{\frakSo}{{\mathfrak S}^0}
\nc{\fraks}{{\mathfrak s}} \nc{\os}{\overline{\fraks}}
\nc{\frakT}{{\mathfrak T}}
\nc{\oT}{\overline{T}}
\nc{\frakX}{{\mathfrak X}} \nc{\frakXo}{{\mathfrak X}^0}
\nc{\frakx}{{\mathbf x}}
\nc{\frakTx}{\frakT}      
\nc{\frakTa}{\frakT^a}    
\nc{\frakTxo}{\frakTx^0}   
\nc{\caltao}{\calt^{a,0}}   
\nc{\ox}{\overline{\frakx}} \nc{\fraky}{{\mathfrak y}}
\nc{\frakz}{{\mathfrak z}} \nc{\oX}{\overline{X}}
\font\cyr=wncyr10
\nc{\al}{\alpha}
\nc{\lam}{\lambda}
\nc{\lr}{\longrightarrow}
\nc{\lyc}[1]{\textcolor{blue}{Lin Yuanchang: #1}}
\begin{document}

\title{Infinite-dimensional Lie bialgebras via affinization of perm bialgebras and pre-Lie bialgebras}


\author{Yuanchang Lin}
\address{Chern Institute of Mathematics \& LPMC, Nankai University, Tianjin 300071 China}
\curraddr{School of Mathematics, North University of China, Taiyuan 030051, China}
\email{linyuanchang@mail.nankai.edu.cn}

\author{Peng Zhou}
\address{Chern Institute of Mathematics \& LPMC, Nankai University, Tianjin 300071 China}
\email{zhoupeng\_math@mail.nankai.edu.cn}

\author{Chengming Bai$^\ast$}
\address{Chern Institute of Mathematics \& LPMC, Nankai University, Tianjin 300071, China }
\email{baicm@nankai.edu.cn}
\thanks{$^\ast$Corresponding author}

\subjclass[2020]{16T10, 16T25, 16W99, 17A30, 17A60, 17B38, 17B60, 17B62, 17B65, 17D25}

\keywords{Lie bialgebra, classical Yang-Baxter equation, perm
algebra, perm bialgebra,  pre-Lie algebra, pre-Lie bialgebra,
$\mathcal{O}$-operator, pre-perm algebra}


\begin{abstract}
It is known that the operads of perm algebras and pre-Lie algebras
are the Koszul dual each other and hence there is a Lie algebra
structure on the tensor product of a perm algebra and a pre-Lie
algebra. Conversely, we construct a special perm algebra structure
and a special pre-Lie algebra structure on the vector space of
Laurent polynomials such that the tensor product with a pre-Lie
algebra and a perm algebra being a Lie algebra structure
characterizes the pre-Lie algebra and the perm algebra
respectively. This is called the affinization of a pre-Lie algebra
and a perm algebra respectively. Furthermore we extend such
correspondences to the context of bialgebras, that is, there is a
bialgebra structure for a perm algebra or a pre-Lie algebra which
could be characterized by the fact that its affinization by a
quadratic pre-Lie algebra or a quadratic perm algebra respectively
gives an infinite-dimensional Lie bialgebra. In the case of perm
algebras,  the corresponding bialgebra structure is called a
perm bialgebra, which can be independently characterized by a
Manin triple of perm algebras as well as a matched pair of perm
algebras. The notion of the perm Yang-Baxter equation is
introduced, whose symmetric solutions give rise to perm
bialgebras. There is a correspondence between symmetric solutions
of the perm Yang-Baxter equation in perm algebras and certain
skew-symmetric solutions of the classical Yang-Baxter equation in
the infinite-dimensional Lie algebras induced from the perm
algebras. In the case of pre-Lie algebras, the corresponding
bialgebra structure is a pre-Lie bialgebra which is
well-constructed. The similar correspondences for the related
structures are given.
\end{abstract}

\maketitle

\tableofcontents

\section{Introduction}
In this paper, we construct infinite-dimensional Lie algebras on the tensor products of perm algebras and pre-Lie algebras,
which fits into the Koszul duality interpretation.
Furthermore, we extend such constructions in the context of bialgebras,
that is, we construct Lie bialgebras from the affinization of perm bialgebras by quadratic pre-Lie algebras and from the affinization of pre-Lie bialgebras by quadratic perm algebras respectively.

\subsection{Operadic Koszul duality and affinization}
A perm algebra is an associative algebra with the left-commutative identity \cite{chapoton2001on}.
It can be obtained by a commutative associative algebra with an averaging operator \cite{aguiar2000pre}.
A pre-Lie algebra is a Lie-admissible algebra whose left multiplication operators form a Lie algebra.
Classical examples of pre-Lie algebras could be derived from commutative associative algebras with commuting derivations and $W_n$ is the typical one defined on Laurent polynomial algebra $\mathbf{k}[x_1^{\pm},\cdots,x_n^{\pm}]$ with partial differential operators \cite{burde2006left}.

The operads of perm algebras and pre-Lie algebras are the operadic Koszul dual each other \cite{chapoton2001pre}.
Then there is a Lie algebra structure on the tensor product of a perm algebra and a pre-Lie algebra, as \cite{ginzburg1994koszul,loday2012algebraic} indicate that there is a Lie algebra structure on the tensor product of a $\mathcal{P}$-algebra and a $\mathcal{P}^{!}$-algebra, where $\mathcal{P}$ is a binary
quadratic operad and $\mathcal{P}^{!}$ is its operadic Koszul dual.
Moreover, we observe that, fixing the pre-Lie algebra $W_n$ ($n \geq 2$), the perm algebra structure on $P$ could be resolved if there is a Lie algebra structure on the tenser product of $P$ and $W_n$.

Such processes could be viewed as obtaining Lie algebras via
affinization of perm algebras by pre-Lie algebras, and perm
algebras are characterized by such property. Roughly speaking, the
affinization of a given algebraic structure is to define another
algebraic structure  on the vector space of Laurent polynomials
and obtain a Lie algebra structure
on the tensor product, which could resolve the given algebraic
structure in turn. The Lie affinization $L \otimes \mathbf{k}[t, t^{-1}]$ is a classical instance of the affinization, where $L$ is
a Lie algebra and $\mathbf{k}[t, t^{-1}]$ is the usual commutative
associative algebra. The affinization of Novikov algebras by right
Novikov algebras \cite{balinsky1985poisson, hong2023infinite} is
another example. All these examples fit into the interpretation of
Koszul duality.

Note that the roles of perm algebras and pre-Lie algebras are
symmetric, there is also a Lie algebra on the tensor product of a
pre-Lie algebra and a perm algebra. We show that the tensor
product of a pre-Lie algebra and a special perm algebra on the
vector space of Laurent polynomials being a Lie algebra
characterizes the pre-Lie algebra, as what we have anticipated.
That is, we obtain Lie algebras via affinization of pre-Lie
algebras by perm algebras, which gives an affinization
characterization of pre-Lie algebras.

Dually, we consider the coalgebra versions of the affinization of
a pre-Lie and a perm algebra respectively. The notion of a
completed tensor product is first introduced to serve as the
target space of more general coproducts, leading to the notions of
a completed pre-Lie algebra, a completed  perm coalgebra and a
completed Lie coalgebra, together with special examples on the
vector of Laurent polynomials. Then we obtain a completed Lie
coalgebra structure on the tensor product of a perm coalgebra and
a completed pre-Lie coalgebra. Furthermore, we could resolve the
perm coalgebra structure when fixing the special completed pre-Lie
coalgebra. That is, we obtain the affinization of perm coalgebras
by completed pre-Lie coalgebras, which characterizes the perm
coalgebras. Symmetrically, we show that there is a completed Lie
coalgebra structure on the tensor product of a pre-Lie coalgebra
and a completed perm coalgebra, and obtain an affinization
characterization of pre-Lie coalgebras by completed pre-Lie
algebras.

In this paper, we try to lift the affinization of perm
(co)algebras and pre-Lie (co)algebras to the level of bialgebras,
and show that it is feasible for the affinization of perm
bialgebras and pre-Lie bialgebras to obtain Lie bialgebras.

\subsection{Perm bialgebras and their affinization constructions of Lie
bialgebras} Recently, the notion of a perm bialgebra was first
introduced in the thesis of one of the authors \cite{zhou2023perm}
for Master degree and a bialgebra theory for right perm algebras
was obtained independently in \cite{hou2023extending}. A perm
bialgebra is composed of a perm algebra and a perm coalgebra
satisfying certain compatibility conditions. We show that perm
bialgebras are characterized equivalently by Manin triples of perm
algebras, as well as matched pairs of perm algebras. We also
introduce the notion of the perm Yang-Baxter equation (perm-YBE)
in a perm algebra as analogue of the classical Yang-Baxter
equation (CYBE) in a Lie algebra, whose symmetric solutions give
rise to perm bialgebras. Moreover, we introduce the notions of
$\mathcal{O}$-operators and pre-perm algebras to provide symmetric
solutions of the perm-YBE. The relationships between these notions
are illustrated in the following diagram.
\begin{equation}\label{dia:d1}
        \xymatrix@C=0.9cm@R=0.6cm{
            \txt{pre-perm \\ algebras} \ar[r] & \txt{$\mathcal{O}$-operators of \\ perm algebras} \ar[r] & \txt{solutions of \\ the perm-YBE } \ar[r] & \txt{perm \\ bialgebras} \ar@{<->}[r] & \txt{Manin triples of \\ perm algebras}
        }
\end{equation}

One of our goals is to construct a Lie bialgebra by lifting the
affinization of a perm algebra to the affinization of a perm
bialgebra. Recall that a Lie bialgebra, introduced in
\cite{drinfeld1983hamiltonian}, is composed of a Lie algebra and a
Lie coalgebra satisfying a cocycle condition, and could be
characterized by a Manin triple of Lie algebras
\cite{chari1995guide}.
The key for our goal is to find a suitable algebraic structure for
the perm bialgebra affinization. The Manin triple characterization
of a finite-dimensional perm bialgebra enlightens us that a
quadratic pre-Lie algebra should be employed for the perm
bialgebra affinization. Note that there is a natural coalgebra
structure induced by the nondegenerate bilinear form on a
finite-dimensional quadratic pre-Lie algebra.

Unfortunately, the typical pre-Lie algebras $W_n$ could not be
quadratic pre-Lie algebras. As a first step, we have to overcome
this obstacle. Instead, starting from $W_1$, we get a natural
symplectic Lie algebra structure on the direct sum of $W_1$ and
the dual space of $W_1$, which produces a quadratic pre-Lie
algebra on $\mathbf{k}[t, t^{-1}] \oplus \mathbf{k}[s, s^{-1}]$.
After obtaining the special quadratic pre-Lie algebra on
$\mathbf{k}[t, t^{-1}] \oplus \mathbf{k}[s, s^{-1}]$, we show that
it could also be used for the affinization of perm algebras as
$W_n$ does. Moreover, this quadratic pre-Lie algebra also induces
a natural completed pre-Lie coalgebra used for the affinization of
perm coalgebras as previously shown.

Finally, we show that the tensor product of a perm bialgebra and a
quadratic $\mathbb{Z}$-graded pre-Lie algebra can be endowed with
a completed Lie bialgebra structure. Such property characterizes
the perm bialgebra when provided the aforementioned special
quadratic $\mathbb{Z}$-graded pre-Lie algebra on $\mathbf{k}[t,
t^{-1}] \oplus \mathbf{k}[s, s^{-1}]$. In summary, we obtain the
affinization of a perm bialgebra by a quadratic
$\mathbb{Z}$-graded pre-Lie algebra, lifting the affinization of a
perm algebra to the affinization of a perm bialgebra.

In addition, from symmetric solutions of the perm-YBE in a perm
algebra, we obtain constructions of skew-symmetric solutions of
the CYBE in the induced Lie algebra which is obtained from the
tensor product of the perm algebra and a quadratic
$\mathbb{Z}$-graded pre-Lie algebra. Such constructions could be
viewed as the affinization of solutions of the perm-YBE.

In conclusion, we have the following commutative diagram in view of the aforementioned procedure of perm bialgebra affinization.
\begin{equation*}
    \xymatrix@C=2cm@R=0.7cm{
        \txt{solutions of \\ perm-YBE} \ar[r] \ar[d] & \txt{perm \\ bialgebras} \ar@{<->}[r] \ar[d] & \txt{Manin triples of \\ perm algebras} \ar[d] \\
        \txt{solutions of \\ CYBE} \ar[r] & \txt{Lie \\ bialgebras} \ar@{<->}[r] & \txt{Manin triples of \\ Lie algebras}
    }
\end{equation*}
Note that relationships illustrated in the below horizontal row
are known in Lie bialgebras \cite{chari1995guide}. Furthermore,
combining with Diagram~(\ref{dia:d1}), there are constructions of
Lie bialgebras from pre-perm algebras.

\subsection{Pre-Lie bialgebras and their affinization constructions of Lie bialgebras}
By the same token, we could lift the affinization of a pre-Lie
algebra to the affinization of a pre-Lie bialgebra. The notion of
a pre-Lie bialgebra was introduced in \cite{bai2008left} and it is
characterized by the para-K\"ahler Lie algebra structure, which
appears in both geometry and mathematical physics
\cite{bai2006further, kupershmidt1994non, kupershmidt1999on,
libermann1954on}. Moreover, there is also an analogue of CYBE in a
Lie algebra, namely, $S$-equation in a pre-Lie algebra, whose
symmetric solutions give rise to pre-Lie bialgebras
\cite{bai2008left}. Parallel to the  affinization of perm
bialgebras, we obtain an infinite-dimensional Lie bialgebra on the
tensor product of a pre-Lie bialgebra and a quadratic perm
algebra, and gain the affinization of pre-Lie bialgebras. Similar
approaches are adopted producing similar results, and the
processes could be summarized into the following commutative
diagram.
\begin{equation*}
    \xymatrix@C=2cm@R=0.7cm{
        \txt{solutions of \\ $S$-equation} \ar[r] \ar[d] & \txt{pre-Lie \\ bialgebras} \ar@{<->}[r] \ar[d] & \txt{para-K\"ahler \\ Lie algebras} \ar[d] \\
        \txt{solutions of \\ CYBE} \ar[r] & \txt{Lie \\ bialgebras} \ar@{<->}[r] & \txt{Manin triples of \\ Lie algebras}
    }
\end{equation*}

Note that, in both the perm bialgebra affinization and pre-Lie
bialgebra affinization, the algebraic structures employed for the
affinization are ``quadratic'' in the sense that the algebras are
endowed with symmetric or skew-symmetric nondegenerate bilinear
forms satisfying certain conditions such that in the
finite-dimensional case, the algebra structures naturally induced
by the nondegenerate bilinear forms on the dual spaces are
isomorphic to the algebras themselves. This is because the Manin
triple (resp. para-K\"ahler Lie algebra) characterization of perm
(resp. pre-Lie) bialgebras should be compatible with the Manin
triple characterization of Lie bialgebras, i.e., the tensor
product of a Manin triple of perm algebras (resp. a para-K\"ahler
Lie algebra) and the algebraic structure employed for the
affinization should give a Manin triple of Lie algebras naturally,
which requires the algebraic structure employed for the
affinization should have the property that both the algebraic
structures on the underlying vector space and the dual space are
isomorphic. Hence the aforementioned ``quadratic" property is
exactly the needed. Moreover, the  perm bialgebra affinization and
pre-Lie bialgebra affinization would be examples of generalized
phenomena for bialgebra affinization. That is, the affinization of
bialgebras having a Manin triple characterization or a Manin
triple like characterization could be realized by algebras with
such a ``quadratic" property. The affinization of Novikov
bialgebras \cite{hong2023infinite} is another example and more
examples are expected.

Moreover, these examples of bialgebra affinization should also
have an operadic interpretation in terms of a Koszul duality of
properads (or dioperads), that can be applied to the properads of
perm (resp. pre-Lie) bialgebras and quadratic pre-Lie (resp. perm)
algebras. Such a duality has been established for certain
quadratic dioperads and properads
\cite{gan2003koszul,vallette2007koszul}. Lately,
\cite{hong2023infinite} suggests another instance of Koszul
duality for properads. The constructions in our paper give a
strong motivation to develop the duality further and might provide
a very general procedure to construct Lie bialgebras as a guide.

\subsection{Outline of the paper}
This paper is organized as follows.

In Section~\ref{sec:aa}, we recall the notions of perm algebras
and pre-Lie algebras, together with special examples on the vector
space of Laurent polynomials. Then we show in
Proposition~\ref{pro:ppl2l} and Remark~\ref{rmk:plp2l} that there
are Lie algebra structures on the tensor products of perm algebras
and pre-Lie algebras. Furthermore, we obtain an affinization
characterization of a perm algebra and a pre-Lie algebra based on
the special pre-Lie algebra and perm  algebra in
Theorems~\ref{thm:pa2l} and~\ref{thm:pla2l} respectively. Dually,
after introducing the notions of completed perm and pre-Lie
coalgebras, together with special examples on the vector space of
Laurent polynomials, we show in Proposition~\ref{pro:ppl2lc}
(resp. Proposition~\ref{pro:plp2lc}) that there exists a completed
Lie coalgebra structure on the tensor product of a perm (resp.
pre-Lie) coalgebra and a completed pre-Lie (resp. perm) coalgebra,
which could give a characterization of the perm (resp. pre-Lie)
coalgebra by its affinization, see Theorems~\ref{thm:pca2lc}
and~\ref{thm:plca2lc} respectively.

In Section~\ref{sec:ilb}, we show in Theorem~\ref{thm:pb2lb} that
there is a natural completed Lie bialgebra structure on the tensor
product of a perm bialgebra and a quadratic $\mathbb{Z}$-graded
pre-Lie algebra. In the special case when the quadratic
$\mathbb{Z}$-graded pre-Lie algebra takes $\mathbf{k}[t, t^{-1}]
\oplus \mathbf{k}[s, s^{-1}]$ as the underlying vector space, we
obtain an affinization characterization of a perm bialgebra (also
see Theorem~\ref{thm:pb2lb}). We also give the equivalent
characterizations for finite-dimensional perm bialgebras in terms
of matched pairs of perm algebras and Manin triples of perm
algebras (Theorem~\ref{thm:pmmb}). A natural construction of a
Manin triple of Lie algebras from a Manin triple of perm algebras
and a quadratic pre-Lie algebra is presented
(Proposition~\ref{pro:pmt2lmp}). Symmetric solutions of the
perm-YBE in perm algebras are shown to produce a special class of
perm bialgebras (Corollary~\ref{cor:pbc}), and the notions of an
$\mathcal{O}$-operator of a perm algebra and a pre-perm algebra
are also introduced to provide symmetric solutions of the perm-YBE
(Theorem~\ref{thm:o2pybe} and Proposition~\ref{pro:pp2pb}).
Moreover, constructions of skew-symmetric solutions of the CYBE in
the corresponding Lie algebra from the symmetric solutions of the
perm-YBE in a perm algebra are obtained when provided a quadratic
$\mathbb{Z}$-graded pre-Lie algebra
(Proposition~\ref{pro:pybe2cybe}).

In Section~\ref{sec:plb}, we show in Theorem~\ref{thm:plb2lb} that
there is a completed Lie bialgebra structure on the tensor product
of a pre-Lie bialgebra and a quadratic $\mathbb{Z}$-graded perm
algebra. In the special case when the quadratic
$\mathbb{Z}$-graded perm algebra takes $\{f_1 \partial_1 + f_2
\partial_2: f_1, f_2 \in \mathbf{k}[x_1^{\pm}, x_2^{\pm}]\}$ as
the underlying vector space, we obtain a characterization of a
pre-Lie bialgebra by its affinization (also see
Theorem~\ref{thm:plb2lb}). We present a natural construction of a
Manin triple of Lie algebras from a para-K\"ahler Lie algebra and
a quadratic perm algebra (Proposition~\ref{pro:plmt2lmp}).
Constructions of skew-symmetric solutions of the CYBE in the
corresponding Lie algebra from the symmetric solutions of the
$S$-equation in a pre-Lie algebra are also obtained when a
quadratic $\mathbb{Z}$-graded perm algebra is given
(Proposition~\ref{pro:s2cybe}).

In Appendix, the construction of the particular quadratic
$\mathbb{Z}$-graded pre-Lie algebra taking $\mathbf{k}[t, t^{-1}]
\oplus \mathbf{k}[s, s^{-1}]$ as the underlying vector space is
illustrated (Proposition~\ref{pro:plfd2qpl}).

Throughout this paper, all vector spaces and algebras are over a
field $\mathbf{k}$ of characteristic 0. For a vector space $V$,
let
\begin{equation*}
    \sigma: V \otimes V \to V \otimes V, \quad a \otimes b \mapsto b\otimes a,\;\;\;\; a, b \in V,
\end{equation*}
be the flip operator. Let $A$ be a vector space with a binary
operation $\circ$. Define linear maps $L_{\circ}, R_{\circ}:A \to
\End_\mathbf{k}(A)$ respectively by
\begin{equation*}
L_{\circ}(a)b=a\circ b,\;\; R_{\circ}(a)b=b\circ a, \;\;\;a, b\in
A.
\end{equation*}

\section{Affinization of perm (co)algebras and pre-Lie (co)algebras}\label{sec:aa}
There are Lie algebra structures on the tensor products of perm
algebras and pre-Lie algebras, which could give affinization
characterizations of perm algebras and pre-Lie algebras
respectively. Dually, we show that there is a completed Lie
coalgebra structure on the tensor product of a perm (resp.
pre-Lie) coalgebra and a completed pre-Lie (resp. perm) coalgebra,
which could give an affinization characterization of perm (resp. pre-Lie) coalgebras.

\subsection{Affinization of perm algebras and pre-Lie algebras}

We start with notions of perm algebras and pre-Lie algebras.
\begin{defi}\label{defi:perm}
    A \textbf{perm algebra} $(P, \cdot)$ is a vector space $P$ with a binary operation $\cdot: P \otimes P \to P$ such that
    \begin{equation}\label{eq:perm}
        p \cdot (q \cdot r)= (p \cdot q) \cdot r = (q \cdot p) \cdot r , \;\; \forall p, q, r \in P.
    \end{equation}
    A \textbf{$\mathbb{Z}$-graded perm algebra} is a perm algebra $(P, \cdot)$ with a linear decomposition $P = \oplus_{i \in \mathbb{Z}} P_i$ such that each $P_i$ is finite-dimensional and $P_i \cdot P_j \subset P_{i+j}$ for all $i, j \in \mathbb{Z}$.
\end{defi}

The following example is inspired by the free perm algebra on a 2-dimensional vector space (\cite{gnedbaye2017operads}).
\begin{ex}\label{ex:zgp}
    Let $P = \{f_1 \partial_1 + f_2 \partial_2: f_1, f_2 \in \mathbf{k}[x_1^{\pm}, x_2^{\pm}]\}$ and define a binary operation $\cdot: P \otimes P \to P$ by
    \begin{equation*}
        (x_1^{i_1} x_2^{i_2} \partial_s) \cdot (x_1^{j_1} x_2^{j_2} \partial_t) := \delta_{s, 1} x_1^{i_1+j_1+1} x_2^{i_2+j_2} \partial_t + \delta_{s, 2} x_1^{i_1+j_1} x_2^{i_2+j_2+1} \partial_t, \;\; \forall i_1, i_2, j_1, j_2 \in \mathbb{Z}, s, t \in \{1, 2\}.
    \end{equation*}
    Then $(P, \cdot)$ is a $\mathbb{Z}$-graded perm algebra with the linear decomposition $P = \oplus_{i \in \mathbb{Z}} P_i$
    where
    \begin{equation*}
        P_i = \{\sum_{k=1}^2 f_k \partial_k: f_k \text{ is a homogeneous polynomial with } \deg(f_k) = i - 1, k=1,2\}
    \end{equation*}
    for all $i \in \mathbb{Z}$.
\end{ex}

\begin{defi}
    A \textbf{(left) pre-Lie algebra} $(A, \diamond)$ is a vector space $A$ with a binary operation $\diamond: A \otimes A \to A$ such that
    \begin{equation}\label{eq:prelie}
        (a \diamond b) \diamond c - a \diamond (b \diamond c) = (b \diamond a) \diamond c - b \diamond (a \diamond c), \;\; \forall a, b, c \in A.
    \end{equation}
    A \textbf{$\mathbb{Z}$-graded pre-Lie algebra} is a pre-Lie algebra $(A$, $\diamond)$ with a linear decomposition $A = \oplus_{i \in \mathbb{Z}} A_i$ such that each $A_i$ is finite-dimensional and $A_i \diamond A_j \subset A_{i+j}$ for all $i, j \in \mathbb{Z}$.
\end{defi}

Classical examples of pre-Lie algebras are induced by derivations on commutative associative algebras.
\begin{ex}\label{ex:cd2pl}
    {\rm (\cite{burde2006left})}
    Let $\mathfrak{D} = \{\partial_1, \cdots, \partial_n\}$ be a set of commuting derivations on a commutative associative algebra $(U, \cdot)$, and $U\mathfrak{D}=\{\sum_{i=1}^n u_i \partial_i: u_i \in U, \partial_i \in \mathfrak{D}\}$.
    Define a binary operation $\diamond: U\mathfrak{D} \otimes U\mathfrak{D} \to U\mathfrak{D}$ by
    \begin{equation*}
        u\partial_i \diamond v \partial_j = u \cdot (\partial_i(v)) \partial_j, \;\; \forall u, v \in U, \partial_i, \partial_j \in \mathfrak{D}.
    \end{equation*}
    Then $(U\mathfrak{D}, \diamond)$ is a pre-Lie algebra.
    Special cases are $\mathfrak{D} = \{\frac{\partial}{\partial x_1}, \cdots, \frac{\partial}{\partial x_n}\}$ on the Laurent polynomial algebra $\mathbf{k}[x_1^{\pm}, \cdots$, $x_n^{\pm}]$,
    denoted by $(W_n, \diamond)$.
    Evidently, $(W_n, \diamond)$ is a $\mathbb{Z}$-graded pre-Lie algebra with the linear decomposition $W_n = \oplus_{k \in \mathbb{Z}} (W_{n})_k$
    where
    \begin{equation*}
        (W_{n})_k = \{\sum_{i=1}^n u_i \partial_i: u_i \text{ is a homogeneous polynomial with } \deg(u_i) = k+1, 1\leq i\leq
        n\}.
    \end{equation*}
    In particular, when $n=1$, we obtain a pre-Lie algebra $(W_1, \diamond)$ satisfying an additional identity:
    \begin{equation*}
        (a \diamond b) \diamond c = (a \diamond c) \diamond b, \;\; \forall a, b, c \in W_1.
    \end{equation*}
    A pre-Lie algebra satisfying such an identity is called a \textbf{(left) Novikov algebra}.
\end{ex}

The following example is the very vital pre-Lie algebra in our
paper. It is constructed from $(W_1, \diamond)$, and for the sake of coherence,
we put the details into Appendix.

\begin{ex}\label{ex:zgpl}
    Let $A = \mathbf{k}[t, t^{-1}] \oplus \mathbf{k}[s, s^{-1}]$ and define a binary operation $\diamond: A \otimes A \to A$ by
    \begin{equation*}
        t^i \diamond t^j = j t^{i+j-1}, \; t^i \diamond s^j = (2i+j-1)s^{i+j-1}, \; s^j \diamond t^i = i s^{i+j-1},\; s^i \diamond s^j = 0, \;\; \forall i, j \in \mathbb{Z}.
    \end{equation*}
    Then $(A, \diamond)$ is a $\mathbb{Z}$-graded pre-Lie algebra with the linear decomposition $A = \oplus_{i \in \mathbb{Z}}A_i$ where $A_i = \mathbf{k}t^{i+1} \oplus \mathbf{k} s^{i+1}$ for all $i \in \mathbb{Z}$.
\end{ex}

Note that the operads of perm algebras and pre-Lie algebras are
the operadic Koszul dual each other (\cite{chapoton2001pre}) and
hence by \cite[Proposition~7.6.5]{loday2012algebraic}, we have the
following conclusion.
\begin{pro}\label{pro:ppl2l}
    Let $(P, \cdot)$ be a perm algebra and $(A, \diamond)$ be a pre-Lie algebra.
    Define a binary operation $[-, -]: (P \otimes A) \otimes (P \otimes A) \to P \otimes A$ by
    \begin{equation}\label{eq:ppl2l}
        [p_1 \otimes a_1, p_2 \otimes a_2] := p_1 \cdot p_2 \otimes a_1 \diamond a_2 - p_2 \cdot p_1 \otimes a_2 \diamond a_1, \;\; \forall p_1, p_2 \in P, a_1, a_2 \in A.
    \end{equation}
    Then $(P \otimes A, [-, -])$ is a Lie algebra, called \textbf{the induced Lie algebra from $(P, \cdot)$ and $(A, \diamond)$}.
\end{pro}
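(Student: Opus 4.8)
The bracket in \eqref{eq:ppl2l} is visibly skew-symmetric: interchanging $p_1 \otimes a_1$ with $p_2 \otimes a_2$ sends the right-hand side to its negative, so $[x,x]=0$ and it remains only to establish the Jacobi identity. Although the statement is an instance of the Koszul-duality fact cited just before it, I would give a direct computational verification, since the cancellations make transparent exactly how the two defining identities interact. The plan is to write $x_i = p_i \otimes a_i$, expand the Jacobiator $J := [[x_1,x_2],x_3] + [[x_2,x_3],x_1] + [[x_3,x_1],x_2]$, and show it is zero. Each double bracket unfolds into four terms, so $J$ is a sum of twelve terms, every one of the form (a perm triple product) $\otimes$ (a pre-Lie triple product).

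The key step is to normalize the perm factors. By the perm identities \eqref{eq:perm}, every triple product obeys $(p\cdot q)\cdot r = (q\cdot p)\cdot r = p\cdot(q\cdot r)$; hence each of the twelve perm factors can be rewritten in the canonical form $(p_i\cdot p_j)\cdot p_k$ in which the rightmost factor $p_k$ is distinguished while the other two factors $p_i,p_j$ may be freely transposed. In particular the ``right-type'' factors $p_3\cdot(p_1\cdot p_2)$ arising from the second summand of each bracket get re-expressed as $(p_3\cdot p_1)\cdot p_2$, i.e.\ as products with a \emph{new} distinguished rightmost element. I would then sort the twelve terms into three groups according to whether $p_1$, $p_2$, or $p_3$ is the distinguished rightmost factor; the perm identities guarantee that the four terms in each group carry a common perm coefficient $(p_i\cdot p_j)\cdot p_k$, up to the permitted transposition of $p_i$ and $p_j$.

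Within one group, after factoring out the common perm element, the four surviving pre-Lie factors assemble (for the group with rightmost factor $p_k$) into $\mathrm{as}(a_i,a_j,a_k) - \mathrm{as}(a_j,a_i,a_k)$, where $\mathrm{as}(a,b,c) := (a\diamond b)\diamond c - a\diamond(b\diamond c)$ is the pre-Lie associator. The defining identity \eqref{eq:prelie} says precisely that $\mathrm{as}$ is symmetric in its first two arguments, so each group vanishes and therefore $J=0$. The only genuine work—and the place demanding care—is the bookkeeping: tracking the signs across the three cyclic double brackets and applying \eqref{eq:perm} correctly so that each right-type term is reassigned to the group indexed by its new rightmost factor. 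Once the twelve terms are organized in this way, the cancellation is immediate from the left-symmetry of the pre-Lie associator, completing the proof that $(P\otimes A,[-,-])$ is a Lie algebra.
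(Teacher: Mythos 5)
Your proof is correct, and I checked the bookkeeping: the Jacobiator consists of exactly twelve terms; the perm identities \eqref{eq:perm} give $(p_ip_j)p_k=(p_jp_i)p_k$ and $p_k(p_ip_j)=(p_kp_i)p_j$, so the terms do sort into three groups of four with common perm coefficient, and each group reduces to $\mathrm{as}(a_i,a_j,a_k)-\mathrm{as}(a_j,a_i,a_k)$, which vanishes by \eqref{eq:prelie}. However, your route differs from the paper's: the paper gives no computation at all, instead deducing the proposition in one line from the fact that the perm and pre-Lie operads are Koszul dual, via the general result (Loday--Vallette, Proposition~7.6.5) that the tensor product of a $\mathcal{P}$-algebra and a $\mathcal{P}^{!}$-algebra carries a Lie bracket. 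The operadic argument buys brevity and generality -- it works uniformly for any binary quadratic operad and its dual, which is exactly the conceptual frame the paper wants for its affinization program -- but it hides where the two defining identities act. Your direct verification is more elementary and self-contained, and it makes visible the precise mechanism (left-commutativity normalizing the perm factors so that the left-symmetry of the pre-Lie associator can cancel them) that also underlies the converse computations in Theorems~\ref{thm:pa2l} and~\ref{thm:pla2l}, where specific elements of $W_n$ or of Example~\ref{ex:zgpl} are used to isolate exactly these associator combinations. One small point worth stating explicitly: since the Jacobiator is trilinear, checking it on simple tensors $p_i\otimes a_i$ suffices, and in characteristic~$0$ the skew-symmetry you observe on simple tensors extends bilinearly and yields $[x,x]=0$ for all $x\in P\otimes A$.
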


\begin{rmk}\label{rmk:plp2l}
    Due to the symmetric roles of perm algebras and pre-Lie algebras,
    there is also a Lie algebra structure defined on $A \otimes P$ by
    \begin{equation}\label{eq:plp2l}
        [a_1 \otimes p_1, a_2 \otimes p_2] := a_1 \diamond a_2 \otimes p_1 \cdot p_2 - a_2 \diamond a_1 \otimes p_2 \cdot p_1, \;\; \forall a_1, a_2 \in A, p_1, p_2 \in P,
    \end{equation}
    called \textbf{the induced Lie algebra from $(A, \diamond)$ and $(P, \cdot)$}.
\end{rmk}

\begin{thm}\label{thm:pa2l}
    Let $P$ be a vector space and $\cdot: P \otimes P \to P$ be a binary operation.
    Suppose that $(A, \diamond)$ is the $\mathbb{Z}$-graded pre-Lie algebra $(W_n, \diamond)$ with $n \geq 2$ given in Example~\ref{ex:cd2pl} or the $\mathbb{Z}$-graded pre-Lie algebra given in Example~\ref{ex:zgpl}.
    Then Eq.~\eqref{eq:ppl2l} defines a Lie algebra structure on $P \otimes A$ if and only if $(P, \cdot)$ is a perm algebra.
\end{thm}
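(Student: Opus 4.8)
The plan is to prove the theorem as a biconditional, exploiting the fact that Proposition~\ref{pro:ppl2l} already gives one direction for free: if $(P, \cdot)$ is a perm algebra, then Eq.~\eqref{eq:ppl2l} defines a Lie algebra on $P \otimes A$ for \emph{any} pre-Lie algebra $(A, \diamond)$, in particular for the specific choices named. So the entire content lies in the converse: assuming Eq.~\eqref{eq:ppl2l} defines a Lie algebra on $P \otimes A$ for the \emph{specific} graded pre-Lie algebra $A$ (either $W_n$ with $n \geq 2$ or the algebra of Example~\ref{ex:zgpl}), I must recover the perm identity~\eqref{eq:perm} on $(P, \cdot)$.

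For the converse, I would first observe that the bracket in Eq.~\eqref{eq:ppl2l} is skew-symmetric by construction, so the only Lie-algebra axiom carrying information is the Jacobi identity. The strategy is to expand the Jacobi identity for three generic elements $p_1 \otimes a_1$, $p_2 \otimes a_2$, $p_3 \otimes a_3$ and collect the coefficients of each monomial basis element of $A$ appearing on the right-hand side. Each such coefficient is a $\mathbf{k}$-linear combination of the six products $(p_{\pi(1)} \cdot p_{\pi(2)}) \cdot p_{\pi(3)}$ and $p_{\pi(1)} \cdot (p_{\pi(2)} \cdot p_{\pi(3)})$ (over permutations $\pi$), weighted by structure constants of $\diamond$. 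Because $A$ is $\mathbb{Z}$-graded and the products $a_i \diamond a_j$ have explicit, nondegenerate structure constants, I can choose the $a_i$ to be homogeneous generators (e.g. powers $t^{i}$, $s^{j}$ in Example~\ref{ex:zgpl}, or $u_i \partial_i$ in $W_n$) so that comparing coefficients of independent monomials in $A$ forces separate polynomial identities among the $p$-products. The key point is that the specific structure constants are rich enough that these forced identities, taken together, are \emph{equivalent} to the perm identity~\eqref{eq:perm}, rather than merely implied by it.

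The \textbf{main obstacle} is choosing the test elements $a_1, a_2, a_3$ in $A$ shrewdly enough that the resulting system of equations isolates exactly the two perm relations $p \cdot (q \cdot r) = (p \cdot q) \cdot r$ and $(p \cdot q) \cdot r = (q \cdot p) \cdot r$, with no spurious extra constraints and no missing ones. For $W_n$ this is precisely where the hypothesis $n \geq 2$ enters: with at least two commuting derivations one has enough independent directions $\partial_i, \partial_j$ to separate the left- and right-associativity pieces; for $n=1$ (the Novikov case) the extra Novikov identity collapses the system and the perm identity cannot be fully recovered. For Example~\ref{ex:zgpl} the presence of both the $t$-sector and the $s$-sector, with the asymmetric action $t^i \diamond s^j = (2i+j-1)s^{i+j-1}$ versus $s^j \diamond t^i = i s^{i+j-1}$, plays the analogous separating role. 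Concretely, I expect to pick $a_1, a_2, a_3$ so that, say, the coefficient of one target monomial yields $p_1 \cdot (p_2 \cdot p_3) = (p_1 \cdot p_2) \cdot p_3$ while the coefficient of another yields the left-commutativity $(p_1 \cdot p_2) \cdot p_3 = (p_2 \cdot p_1) \cdot p_3$.

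Once the two perm relations are extracted, I would note that they hold for all $p_1, p_2, p_3 \in P$ (since the test elements were arbitrary generators and the $p_i$ were arbitrary), so $(P, \cdot)$ satisfies~\eqref{eq:perm} and is a perm algebra. Combining this with the forward direction from Proposition~\ref{pro:ppl2l} completes the biconditional. I would handle the two cases ($W_n$ and Example~\ref{ex:zgpl}) in parallel, presenting the coefficient-comparison computation in full for one case and indicating that the other is entirely analogous with the stated structure constants, since the routine expansion of the Jacobiator is mechanical and the only genuinely delicate step is the selection of separating test elements described above.
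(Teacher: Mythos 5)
Your proposal is correct and follows essentially the same route as the paper: the ``if'' direction is quoted from Proposition~\ref{pro:ppl2l}, and the converse is obtained by evaluating the Jacobi identity on shrewdly chosen homogeneous test elements and comparing coefficients, exactly as the paper does with $qt^i, pt^0, rt^0$ and $pt^0, qt^i, rs^1$ in the case of Example~\ref{ex:zgpl}, and with $p\partial_1, q\partial_1, rx_1^2\partial_1$ and $px_2\partial_1, qx_1\partial_1, r\partial_2$ for $W_n$ with $n\geq 2$. The only thing your sketch leaves implicit is the explicit choice of these separating elements, which is precisely the step the paper's proof supplies, so your plan is executable as stated.
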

\begin{proof}
    Due to Proposition~\ref{pro:ppl2l}, we only need to prove the ``only if'' part.
    Suppose that $(P \otimes A, [-, -])$ is a Lie algebra where $(A, \diamond)$ is the $\mathbb{Z}$-graded pre-Lie algebra given in
    Example~\ref{ex:zgpl}.
    Denote $p \otimes t^i$ by $p t^i$ and $p \otimes s^i$ by  $p s^i$, for all $p\in P$ and $i \in \mathbb{Z}$.
    Let
    $p, q, r \in P$. Then we have    \begin{eqnarray*}
        0 &=& [[qt^i, p t^0], r t^0] + [[pt^0, r t^0], q t^i] + [[r t^0, q t^i], p t^0] \;\;=\;\; i(i-1)(r \cdot (p \cdot q) - p \cdot (r \cdot q)) t^{i-2}, \\
        0 &=& [[p t^0, q t^i], r s^1] + [[q t^i, r s^1], p t^0] + [[r s^1, p t^0], q t^i] \\
        &=& i(i-1)( 2((p \cdot q)\cdot r - p \cdot (q \cdot r)) + (p \cdot (r \cdot q) - r \cdot (p \cdot q))  )   s^{i-1}.
    \end{eqnarray*}
    Hence,
    \begin{equation*}
        2((p \cdot q)\cdot r - p \cdot (q \cdot r)) + (p \cdot (r \cdot q) - r \cdot (p \cdot q)) = r \cdot (p \cdot q) - p \cdot (r \cdot q) = 0,
    \end{equation*}
    which shows that $(P, \cdot)$ is a perm algebra.

    For the case of $W_n$ ($n \geq 2$), denote $p\otimes
    u_i\partial_i$ by $p  u_i\partial_i$ for all $p\in P,
    u_i\partial_i\in W_n$ and $i \in \{1, \cdots, n\}$.
    Let $p,q,r\in P$.
Then the following equations
    \begin{eqnarray*}
            &&0 = [[p \partial_1, q \partial_1], r x_1^2 \partial_1] + [[q \partial_1, r x_1^2 \partial_1], p \partial_1] + [[r x_1^2 \partial_1, p \partial_1], q \partial_1] = 2(q \cdot (p \cdot r)-p \cdot (q \cdot r)) \partial_1, \\
            &&0 = [[p x_2 \partial_1, q x_1 \partial_1], r \partial_2] + [[q x_1 \partial_1, r \partial_2], p x_2 \partial_1] + [[r \partial_2, p x_2 \partial_1], q x_1 \partial_1] = ((r \cdot p) \cdot q - r \cdot (p \cdot q)) \partial_1,
        \end{eqnarray*}
    imply that $(P, \cdot)$ is a perm algebra.
    The proof is completed.
\end{proof}

\begin{rmk}
    Note that $n \neq 1$ in Theorem~\ref{thm:pa2l}.
    In fact, $(A = W_1, \diamond)$ is a Novikov algebra and Eq.~\eqref{eq:ppl2l} defines a Lie algebra structure on $P \otimes A$ if and only if $(P, \cdot)$ is a right Novikov algebra as \cite{balinsky1985poisson} shows.
\end{rmk}

\begin{rmk}
Note that both the $\mathbb{Z}$-graded pre-Lie algebra given in
Example~\ref{ex:zgpl}
    and the $\mathbb{Z}$-graded pre-Lie algebra $(W_n, \diamond)$ with $n \geq 2$ given in Example~\ref{ex:cd2pl} could be used for the affinization of perm
    algebras and hence in this sense, the affinization is usually not
    unique. However, as one would find out later,
    only the former could be used for the affinization of perm bialgebras in our approach.
\end{rmk}

\begin{thm}\label{thm:pla2l}
    Let $A$ be a vector space and $\diamond: A \otimes A \to A$ be a binary operation.
    Suppose that $(P, \cdot)$ is the $\mathbb{Z}$-graded perm algebra given in Example~\ref{ex:zgp}. Then Eq.~\eqref{eq:plp2l} defines a Lie algebra structure on $A \otimes P$ if and only if $(A, \diamond)$ is a pre-Lie algebra.
\end{thm}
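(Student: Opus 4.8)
The plan is to reduce everything to the Jacobi identity and then isolate the left-symmetry defect of $\diamond$ with a single well-chosen triple of test elements. First, the ``if'' direction is already supplied by Remark~\ref{rmk:plp2l} (equivalently, Proposition~\ref{pro:ppl2l} with the roles of the two algebras interchanged): if $(A,\diamond)$ is pre-Lie, then $(A\otimes P,[-,-])$ is a Lie algebra. So I would only treat the ``only if'' direction. Next, observe that the bracket in Eq.~\eqref{eq:plp2l} is skew-symmetric for \emph{any} binary operation $\diamond$, since interchanging $a_1\otimes p_1$ and $a_2\otimes p_2$ manifestly negates the right-hand side; in characteristic $0$ this also gives $[w,w]=0$. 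Hence the Lie algebra axioms reduce to the single requirement that the Jacobiator vanish on all of $A\otimes P$, i.e.\ on all tensors $a\otimes\mu$ with $a\in A$ and $\mu$ a monomial basis element of $P$.

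The guiding observation is that, in the perm algebra of Example~\ref{ex:zgp}, a product $\mu\cdot\nu$ of two monomial basis elements is again a single monomial basis element: its $\partial$-direction equals that of the right factor $\nu$, and its position is the sum of the two positions shifted by $e_s$, where $s$ is the direction of the left factor $\mu$ (here $e_1=(1,0)$, $e_2=(0,1)$). Consequently every iterated product $(\mu\cdot\nu)\cdot\rho=\mu\cdot(\nu\cdot\rho)=(\nu\cdot\mu)\cdot\rho$ has direction that of $\rho$ and position shifted by $e_{s_\mu}+e_{s_\nu}$, which is exactly the chain of perm-algebra relations in Eq.~\eqref{eq:perm}. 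Expanding the Jacobiator of $a\otimes\xi$, $b\otimes\eta$, $c\otimes\zeta$ and tracking these directions and shifts, one finds that the twelve resulting terms organize into three cyclic groups, and that the $A$-coefficient of each group is precisely a left-symmetry defect $(x\diamond y)\diamond z-x\diamond(y\diamond z)-(y\diamond x)\diamond z+y\diamond(x\diamond z)$ for a cyclic rotation of $(a,b,c)$.

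The main obstacle is that one cannot isolate all three defects at once: the perm basis element carrying each group depends only on the three $\partial$-directions (not on the freely chosen positions), and since directions take only the two values $1,2$, at least two of the three groups always land on the same basis element. The resolution is to isolate a single group by its output direction. Concretely I would take $\xi=\eta=\partial_1$ and $\zeta=\partial_2$, and compute the Jacobiator of $a\otimes\partial_1$, $b\otimes\partial_1$, $c\otimes\partial_2$. A direct expansion shows that every contribution lands on one of the two monomials $x_1^2\partial_2$ and $x_1x_2\partial_1$ (the two groups of direction $1$ combining harmlessly on the latter), and that the coefficient of $x_1^2\partial_2$ is exactly
\[
(a\diamond b)\diamond c-(b\diamond a)\diamond c-a\diamond(b\diamond c)+b\diamond(a\diamond c).
\]
Since the Jacobiator vanishes and $x_1^2\partial_2$ is linearly independent from $x_1x_2\partial_1$ in $P$, this coefficient must vanish in $A$ for all $a,b,c$, which is precisely the pre-Lie identity Eq.~\eqref{eq:prelie}. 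The only computations left are the handful of explicit perm products among $\partial_1,\partial_2,x_1\partial_1,x_1\partial_2,x_2\partial_1$ needed to confirm where each term lands, which are routine.
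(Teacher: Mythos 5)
Your proposal is correct and follows essentially the same route as the paper: the ``if'' direction via Remark~\ref{rmk:plp2l}, and for the converse the Jacobi identity applied to exactly the triple $a\otimes\partial_1$, $b\otimes\partial_1$, $c\otimes\partial_2$, extracting the coefficient of $x_1^2\partial_2$, which is precisely $(a\diamond b-b\diamond a)\diamond c-a\diamond(b\diamond c)+b\diamond(a\diamond c)$ as in the paper's proof. Your additional bookkeeping (direction of the right factor, position shift $e_s$, the observation that all remaining terms land on $x_1x_2\partial_1$) checks out against the product rule of Example~\ref{ex:zgp} and just makes explicit what the paper leaves as a routine expansion.
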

\begin{proof}
    The ``if'' part follows from Remark~\ref{rmk:plp2l}.
    Conversely, suppose that $(A \otimes P, [-, -])$ is a Lie
    algebra. Denote $a \otimes x_1^i x_2^j \partial_s$ by $a x_1^i x_2^j \partial_s$ for
    all $a \in A$, $i, j \in \mathbb{Z}$ and $s=1,2$.
    Let  $a, b, c \in A$. Then we have
    \begin{equation*}
        [[a \partial_1, b \partial_1], c\partial_2] + [[b\partial_1, c\partial_2], a\partial_1] + [[c\partial_2, a \partial_1], b \partial_1] =
        0.
    \end{equation*}
    Comparing the coefficients of $x_1^2 \partial_2$ in the expansion, we have
    \begin{equation*}
        (a \diamond b - b \diamond a) \diamond c - a \diamond (b \diamond c)+ b \diamond (a \diamond c) = 0,
    \end{equation*}
    which completes the proof.
\end{proof}

\subsection{Affinization of perm coalgebras and pre-Lie coalgebras}
Next we give the coalgebra versions of
Proposition~\ref{pro:ppl2l}, Theorems~\ref{thm:pa2l} and
\ref{thm:pla2l}. Before proceeding further, we introduce the
notion of a completed tensor product to serve as the target space
of more general coproducts. See \cite{hong2023infinite,
takeuchi1985topological} for more details.

Let $C = \oplus_{i \in \mathbb{Z}} C_i$ and $D = \oplus_{j \in
\mathbb{Z}}D_j$ be $\mathbb{Z}$-graded vector spaces with all
$C_i$ and $D_i$ finite-dimensional. We call the \textbf{completed
tensor product} of $C$ and $D$ to be the vector space
\begin{equation*}
    C \hat{\otimes} D := \prod_{i, j \in \mathbb{Z}} C_i \otimes D_j.
\end{equation*}
Thus a general term of $C\hat{\otimes}D$ is a possibly infinite
sum
\begin{equation*}\label{eq:ssum}
 \sum_{i,j,\alpha} c_{i\alpha}\ot d_{j\alpha},
\end{equation*}
where $i,j\in \ZZ$ and $\alpha$ is in a finite index set (which
might depend on $i$ and $j$).

With these notations, for linear maps $f: C \to C^\prime$ and $g:
D \to D^\prime$, define
\begin{equation*}
    f \hat{\otimes} g: C \hat{\otimes} D \to C^\prime \hat{\otimes} D^\prime, \sum_{i,j,\alpha} c_{i\alpha} \otimes d_{j\alpha} \mapsto \sum_{i,j,\alpha} f(c_{i\alpha}) \otimes g(d_{j\alpha}).
\end{equation*}
Also the flip operator $\sigma$ has its completion
\begin{equation*}
    \hat{\sigma}: C \hat{\otimes} C \to C \hat{\otimes} C, \sum_{i,j,\alpha}c_{i\alpha} \otimes d_{j\alpha} \mapsto \sum_{i,j,\alpha}d_{j\alpha} \otimes c_{i\alpha}.
\end{equation*}
Finally, let $C$ be a vector space and $D = \oplus_{j \in
\mathbb{Z}}D_j$ be a $\mathbb{Z}$-graded vector space with all
$D_i$ finite-dimensional. For all $\sum_{i_1, \cdots, i_l} c_{i_1}
\otimes \cdots \otimes c_{i_l} \in C \otimes \cdots \otimes C$ and
$\sum_{j_1, \cdots, j_l, \alpha} d_{j_1\alpha} \otimes \cdots
\otimes d_{j_l\alpha} \in D \hat{\otimes} \cdots \hat{\otimes} D$,
set
\begin{equation*}
    \sum_{i_1, \cdots, i_l} c_{i_1} \otimes \cdots \otimes c_{i_l} \bullet \sum_{j_1, \cdots, j_l, \alpha} d_{j_1\alpha} \otimes \cdots \otimes d_{j_l\alpha} :=  \sum_{j_1, \cdots, j_l, \alpha}  \sum_{i_1, \cdots, i_l}  (c_{i_1} \otimes d_{j_1\alpha}) \otimes \cdots \otimes (c_{i_l} \otimes d_{j_l\alpha}) \in (C \otimes D) \hat{\otimes} \cdots \hat{\otimes} (C \otimes D).
\end{equation*}

\begin{defi}\label{def:cpc}
    A \textbf{completed perm coalgebra} is a pair $(P, \Delta_P)$ where $P = \oplus_{i \in \mathbb{Z}} P_i$ is a $\mathbb{Z}$-graded vector space with all $P_i$ finite-dimensional and $\Delta_P: P \to P \hat{\otimes} P$ is a linear map such that
    \begin{equation}\label{eq:cpc}
        (\Delta_P \hat{\otimes} \id)\Delta_P = (\id \hat{\otimes} \Delta_P)\Delta_P = (\hat{\sigma} \hat{\otimes} \id) (\Delta_P \hat{\otimes} \id)\Delta_P.
    \end{equation}
    When $P = P_0$, it is simply called a \textbf{perm coalgebra}.
\end{defi}
The notion of a perm coalgebra could be viewed as the duality of the notion of a perm algebra.
That is, if $P$ is a finite-dimensional vector space, $\Delta_P: P \to P \otimes P$ is a linear map and $\Delta_P^*: P^* \otimes P^* \to P$ is the linear dual of $\Delta_P$,
then $(P, \Delta_P)$ is a perm coalgebra if and only if $(P^*, \Delta_P^*)$ is a perm algebra.

\begin{ex}\label{ex:cpc}
    Let $P = \{f_1 \partial_1 + f_2 \partial_2: f_1, f_2 \in \mathbf{k}[x_1^{\pm}, x_2^{\pm}]\}$.
    Define a linear map $\Delta_P: P \to P \hat{\otimes} P$ by
    \begin{eqnarray*}
        \Delta_P(x_1^{m}x_2^{n}\partial_1) &=& \sum_{i_1, i_2 \in \mathbb{Z}} (x_1^{i_1}x_2^{i_2} \partial_1 \otimes x_1^{m-i_1}x_2^{n-i_2+1}\partial_1 - x_1^{i_1}x_2^{i_2} \partial_2 \otimes x_1^{m-i_1+1}x_2^{n-i_2}\partial_1), \\
        \Delta_P(x_1^{m}x_2^{n}\partial_2) &=& \sum_{i_1, i_2 \in \mathbb{Z}} (x_1^{i_1}x_2^{i_2} \partial_1 \otimes x_1^{m-i_1}x_2^{n-i_2+1}\partial_2 - x_1^{i_1}x_2^{i_2} \partial_2 \otimes x_1^{m-i_1+1}x_2^{n-i_2}\partial_2), \;\; \forall m, n \in \mathbb{Z}.
    \end{eqnarray*}
    Then $(P, \Delta_P)$ is a completed perm coalgebra.
\end{ex}

\begin{defi}
    A \textbf{completed pre-Lie coalgebra} is a pair $(A, \Delta_A)$ where $A = \oplus_{i \in \mathbb{Z}} A_i$ is a $\mathbb{Z}$-graded vector space with all $A_i$ finite-dimensional and $\Delta_A: A \to A \hat{\otimes} A$ is a linear map such that
    \begin{equation}\label{eq:cplc}
        (\Delta_A \hat{\otimes} \id)\Delta_A(a) - (\hat{\sigma} \hat{\otimes} \id)(\Delta_A \hat{\otimes} \id)\Delta_A(a) = (\id \hat{\otimes} \Delta_A)\Delta_A(a) - (\hat{\sigma} \hat{\otimes} \id)(\id \hat{\otimes} \Delta_A)\Delta_A(a), \;\; \forall a \in A.
    \end{equation}
    When $A = A_0$, it is simply called a \textbf{pre-Lie coalgebra}.
\end{defi}

\begin{ex}\label{ex:cplc}
    Let $A = \mathbf{k}[t, t^{-1}]\oplus \mathbf{k}[s, s^{-1}]$.
    Define a linear map $\Delta_A: A \to A \hat{\otimes} A$ by
    \begin{equation*}
        \Delta_A(t^i) = \sum_{j \in \mathbb{Z}} ((i+j-1)t^{-j} \otimes s^{i+j-1} + (i-j)s^{-j} \otimes t^{i+j-1} ), \;
        \Delta_A(s^i) = \sum_{j \in \mathbb{Z}} (i+j-1)s^{-j} \otimes s^{i+j-1},
        \;\; \forall i \in \mathbb{Z}.
    \end{equation*}
    Then $(A, \Delta_A)$ is a completed pre-Lie coalgebra.
\end{ex}

\begin{defi}
    A \textbf{completed Lie coalgebra} is a pair $(L, \delta)$ where $L = \oplus_{i \in \mathbb{Z}} L_i$ is a $\mathbb{Z}$-graded vector space with all $L_i$ finite-dimensional and $\delta: L \to L \hat{\otimes} L$ is a linear map such that
    \begin{eqnarray}
        \delta(a) + \hat{\sigma}\delta(a) &=& 0, \label{eq:clca} \\
        (\id \hat{\otimes} \delta)\delta(a) - (\hat{\sigma} \hat{\otimes} \id)(\id \hat{\otimes} \delta)\delta(a) - (\delta \hat{\otimes} \id)\delta(a) &=& 0, \;\; \forall a \in L. \label{eq:clcj}
    \end{eqnarray}
\end{defi}

With the help of the preceding notions, we give the dual version of Proposition~\ref{pro:ppl2l}.
\begin{pro}\label{pro:ppl2lc}
    Let $(P, \Delta_P)$ be a perm coalgebra and $(A, \Delta_A)$ be a completed pre-Lie coalgebra.
    Then $(L:=P \otimes A, \delta)$ is a completed Lie coalgebra, where $\delta: L \to L \hat{\otimes} L$ is defined by
    \begin{equation}\label{eq:pc2lc}
        \delta(p \otimes a) := (\id_{L \hat{\otimes} L} - \hat{\sigma})(\Delta_P(p) \bullet \Delta_A(a)), \;\; \forall p \in P, a \in A.
    \end{equation}
\end{pro}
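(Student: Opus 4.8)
The plan is to deduce the statement from Proposition~\ref{pro:ppl2l} by graded duality, so that the two completed Lie coalgebra axioms \eqref{eq:clca} and \eqref{eq:clcj} become the transposes of antisymmetry and the Jacobi identity. First I would pass to graded duals. Since $P = P_0$ is finite-dimensional, its dual $P^*$ is an ordinary perm algebra by the duality recorded after Definition~\ref{def:cpc}, the product $\cdot$ on $P^*$ being the transpose of $\Delta_P$ and \eqref{eq:cpc} transposing to \eqref{eq:perm}. Writing $A^* = \bigoplus_i A_i^*$ for the graded dual of $A$, the transpose of $\Delta_A$ is a binary operation $\diamond$ on $A^*$; here one checks that $(\Delta_A \hat{\otimes} \id)\Delta_A$ and $(\id \hat{\otimes} \Delta_A)\Delta_A$ transpose to the associators $(a\diamond b)\diamond c$ and $a\diamond(b\diamond c)$, while applying $\hat{\sigma}\hat{\otimes}\id$ transposes to interchanging the first two arguments, so that \eqref{eq:cplc} transposes exactly to the pre-Lie identity \eqref{eq:prelie}. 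Thus $(P^*, \cdot)$ is a perm algebra and $(A^*, \diamond)$ is a pre-Lie algebra.

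By Proposition~\ref{pro:ppl2l}, the space $\mathfrak{g} := P^* \otimes A^*$ then carries the induced Lie bracket \eqref{eq:ppl2l}, and this bracket is graded with finite-dimensional components. The crucial bookkeeping is the identification of graded duals: since each component of $\mathfrak{g}$ is finite-dimensional one has $\mathfrak{g}^* \cong P \otimes A = L$, and the completed tensor product $L \hat{\otimes} L$ is precisely the target making the pairing between $\mathfrak{g} \otimes \mathfrak{g}$ and $L \hat{\otimes} L$ nondegenerate on each graded piece, under which the algebraic flip transposes to $\hat{\sigma}$. I would then verify by a direct pairing computation that $\delta$ of \eqref{eq:pc2lc} is the transpose of \eqref{eq:ppl2l}: using $\langle \Delta_P(p), \xi_1 \otimes \xi_2\rangle = \langle p, \xi_1 \cdot \xi_2\rangle$ together with its $A$-analogue, the operation $\bullet$ assembles these into $\langle \delta(p \otimes a), (\xi_1 \otimes \alpha_1)\otimes(\xi_2 \otimes \alpha_2)\rangle = \langle p \otimes a, [\xi_1 \otimes \alpha_1, \xi_2 \otimes \alpha_2]\rangle$ for all $\xi_i \in P^*$, $\alpha_i \in A^*$, with the subtracted flipped term realizing the prefactor $\id_{L \hat{\otimes} L} - \hat{\sigma}$.

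Finally I would transpose the Lie algebra axioms term by term. Antisymmetry of $[-,-]$ transposes to \eqref{eq:clca} --- equivalently this is immediate from $\hat{\sigma}(\id_{L \hat{\otimes} L} - \hat{\sigma}) = -(\id_{L \hat{\otimes} L} - \hat{\sigma})$ --- while the Jacobi identity transposes to \eqref{eq:clcj}, since $(\id \hat{\otimes} \delta)\delta$, $(\hat{\sigma}\hat{\otimes}\id)(\id\hat{\otimes}\delta)\delta$ and $(\delta\hat{\otimes}\id)\delta$ are the transposes of the three iterated brackets in Jacobi. As $\mathfrak{g}$ is a Lie algebra both identities hold, so $(L, \delta)$ is a completed Lie coalgebra. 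I expect the main obstacle to be the duality bookkeeping of the previous paragraph: confirming that the completed tensor product is genuinely the graded dual of the ordinary tensor product, that $\hat{\sigma}$ and the iterated coproducts dualize to the stated algebraic operations, and that the finiteness built into $\hat{\otimes}$ keeps every pairing well-defined despite the possibly infinite sums in $\Delta_A$. As an alternative one could avoid duality altogether and verify \eqref{eq:clca} and \eqref{eq:clcj} directly, reducing the co-Jacobi identity to \eqref{eq:cpc} and \eqref{eq:cplc}, mirroring the verification underlying Proposition~\ref{pro:ppl2l}.
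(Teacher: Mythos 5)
Your reduction breaks at the dualization of $A$: the claim that the transpose of $\Delta_A$ defines a pre-Lie algebra structure on the graded dual $A^\circ = \oplus_i A_i^*$ is false in the generality of the proposition. The definition of a completed pre-Lie coalgebra imposes no gradedness on $\Delta_A$; it is an arbitrary linear map $A \to A \hat{\otimes} A$ satisfying \eqref{eq:cplc}. For homogeneous $\alpha, \beta \in A^\circ$ the functional $a \mapsto \langle \alpha \otimes \beta, \Delta_A(a)\rangle$ is well defined (pairing picks one graded component of $\Delta_A(a)$), but it lives only in the full dual $\prod_i A_i^*$ and need not have finite support, so $\diamond$ need not close on $A^\circ$; and on the full dual further products are divergent scalar sums. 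Concretely, let $A_i = \mathbf{k}a_i$ for $i \in \mathbb{Z}$ and set $\Delta_A(a_i) := \mu_i\, a_0 \otimes a_0$ with infinitely many $\mu_i \neq 0$. All composites in \eqref{eq:cplc} are well defined and both sides vanish (each equals $\mu_i \mu_0\, a_0 \otimes a_0 \otimes a_0$ minus its own flip image), so this is a genuine completed pre-Lie coalgebra; yet $\alpha_0 \diamond \alpha_0 = \sum_i \mu_i \alpha_i \notin \oplus_i A_i^*$. Hence $(A^\circ, \diamond)$ is not an algebra, Proposition~\ref{pro:ppl2l} cannot be invoked, and everything downstream (the identification of $\delta$ as the transpose of \eqref{eq:ppl2l}, the transposition of the Jacobi identity to \eqref{eq:clcj}) presupposes exactly this step. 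Your duality is sound precisely when $\Delta_A$ is graded of fixed degree --- e.g.\ for the coproducts produced by Lemma~\ref{lem:qgpl2cplc} from quadratic $\mathbb{Z}$-graded pre-Lie algebras, such as Example~\ref{ex:cplc} --- but the proposition is stated, and is later needed, for the notion as defined. (Your treatment of $P$ is fine: by Definition~\ref{def:cpc} a perm coalgebra has $P = P_0$ finite-dimensional, and \eqref{eq:clca} is indeed immediate from $\hat{\sigma}(\id_{L\hat{\otimes}L} - \hat{\sigma}) = -(\id_{L\hat{\otimes}L} - \hat{\sigma})$.)

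The paper proves the proposition by the direct verification you relegate to your final sentence: after noting $\delta = -\hat{\sigma}\delta$, it expands the co-Jacobiator $(\id \hat{\otimes} \delta)\delta - (\hat{\sigma}\hat{\otimes}\id)(\id\hat{\otimes}\delta)\delta - (\delta\hat{\otimes}\id)\delta$ applied to $p \otimes a$ into twelve terms of the form (composite of $\Delta_P$ and flips) $\bullet$ (composite of $\Delta_A$ and flips), uses the perm coalgebra identity \eqref{eq:cpc} to rewrite all $P$-side composites into three canonical ones, and then groups the twelve terms into three families whose $A$-side factors are flip images of the co-pre-Lie combination in \eqref{eq:cplc}, hence zero. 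If you want to keep the spirit of your argument, the fix is to retain the pairing but never form the algebra $(A^\circ, \diamond)$: pair the co-Jacobi expression against homogeneous $(\xi_1 \otimes \alpha_1) \otimes (\xi_2 \otimes \alpha_2) \otimes (\xi_3 \otimes \alpha_3)$ with $\xi_i \in P^*$, $\alpha_i \in A^\circ$; each such pairing collapses to finitely many graded components, identities \eqref{eq:cpc} and \eqref{eq:cplc} hold after pairing, and left nondegeneracy of the pairing on $L \hat{\otimes} L \hat{\otimes} L$ concludes. But that is no longer an application of Proposition~\ref{pro:ppl2l}; it is dual bookkeeping for the same direct computation the paper performs.
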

\begin{proof}
    Obviously, $\delta = -\hat{\sigma}\delta$.
    For all $p \in P$ and $a \in A$, we have
    \begin{eqnarray*}
        &&(\id \hat{\otimes} \delta)\delta(p \otimes a) - (\hat{\sigma} \hat{\otimes} \id)(\id \hat{\otimes} \delta) \delta(p \otimes a) - (\delta \hat{\otimes} \id)\delta(p \otimes a) \\
        &&=  (\id \otimes \Delta_P) \Delta_P(p) \bullet  (\id \hat{\otimes} \Delta_A) \Delta_A(a) - (\id \otimes \sigma)(\id \otimes \Delta_P)\Delta_P(p) \bullet (\id \hat{\otimes} \hat{\sigma})(\id \hat{\otimes} \Delta_A)\Delta_A(a) \\
        &&\quad - (\id \otimes \Delta_P)\sigma \Delta_P(p) \bullet (\id \hat{\otimes} \Delta_A)\hat{\sigma} \Delta_A(a) + (\id \otimes \sigma)(\id \otimes \Delta_P)\sigma \Delta_P(p) \bullet (\id \hat{\otimes} \hat{\sigma})(\id \hat{\otimes} \Delta_A)\hat{\sigma} \Delta_A(a) \\
        &&\quad - (\sigma \otimes \id)(\id \otimes \Delta_P) \Delta_P(p) \bullet (\hat{\sigma} \hat{\otimes} \id)(\id \hat{\otimes} \Delta_A) \Delta_A(a) \\
        &&\quad + (\sigma \otimes \id)(\id \otimes \sigma)(\id \otimes \Delta_P)\Delta_P(p) \bullet (\hat{\sigma} \hat{\otimes} \id)(\id \hat{\otimes} \hat{\sigma})(\id \hat{\otimes} \Delta_A)\Delta_A(a) \\
        &&\quad + (\sigma \otimes \id)(\id \otimes \Delta_P)\sigma \Delta_P(p) \bullet (\hat{\sigma} \hat{\otimes} \id)(\id \hat{\otimes} \Delta_A)\hat{\sigma} \Delta_A(a) \\
        &&\quad - (\sigma \otimes \id)(\id \otimes \sigma)(\id \otimes \Delta_P)\sigma \Delta_P(p) \bullet (\hat{\sigma} \hat{\otimes} \id)(\id \hat{\otimes} \hat{\sigma})(\id \hat{\otimes} \Delta_A)\hat{\sigma} \Delta_A(a) \\
        &&\quad - (\Delta_P \otimes \id)\Delta_P(p) \bullet (\Delta_A \hat{\otimes} \id )\Delta_A(a) +  (\sigma \otimes \id)(\Delta_P \otimes \id)\Delta_P(p) \bullet (\hat{\sigma} \otimes \id)(\Delta_A \hat{\otimes} \id )\Delta_A(a) \\
        &&\quad + (\Delta_P \otimes \id)\sigma\Delta_P(p) \bullet (\Delta_A \hat{\otimes} \id)\hat{\sigma}\Delta_A(a)  - (\sigma \otimes \id)(\Delta_P \otimes \id)\sigma\Delta_P(p) \bullet (\hat{\sigma} \hat{\otimes} \id)(\Delta_A \hat{\otimes} \id)\hat{\sigma}\Delta_A(a).
    \end{eqnarray*}
    Note that
    \begin{eqnarray*}
        &&(\id \otimes \sigma)(\id \otimes \Delta_P)\Delta_P = (\sigma \otimes \id)(\id \otimes \Delta_P)\sigma \Delta_P = (\sigma \otimes \id)(\id \otimes \sigma)(\id \otimes \Delta_P)\sigma \Delta_P = (\Delta_P \otimes \id) \sigma\Delta_P, \\
        &&(\id \otimes \Delta_P)\sigma \Delta_P = (\id \otimes \sigma)(\id \otimes \Delta_P)\sigma\Delta_P = (\sigma \otimes \id)(\id \otimes \sigma)(\id \otimes \Delta_P)\Delta_P = (\sigma \otimes \id)(\Delta_P \otimes \id )\sigma \Delta_P, \\
        &&(\hat{\sigma} \hat{\otimes} \id)(\id \hat{\otimes} \Delta_A)\hat{\sigma}\Delta_A = (\id \hat{\otimes} \hat{\sigma})(\Delta_A \hat{\otimes} \id)\Delta_A, \\
        &&(\hat{\sigma} \hat{\otimes} \id)(\id \hat{\otimes} \hat{\sigma})(\id \hat{\otimes} \Delta_A)\hat{\sigma} \Delta_A = (\id \hat{\otimes} \hat{\sigma})(\hat{\sigma} \hat{\otimes} \id)(\Delta_A \hat{\otimes} \id)\Delta_A, \\
        &&(\Delta_A \hat{\otimes} \id)\hat{\sigma}\Delta_A = (\id \hat{\otimes} \hat{\sigma})(\hat{\sigma} \hat{\otimes} \id)(\id \hat{\otimes} \Delta_A) \Delta_A, \\
        &&(\id \hat{\otimes} \Delta_A)\hat{\sigma} \Delta_A = (\hat{\sigma} \hat{\otimes} \id)(\id \hat{\otimes} \hat{\sigma})(\Delta_A \hat{\otimes} \id) \Delta_A, \\
        &&(\id \hat{\otimes} \hat{\sigma})(\id \hat{\otimes} \Delta_A)\hat{\sigma} \Delta_A = (\hat{\sigma} \hat{\otimes} \id)(\id \hat{\otimes} \hat{\sigma})(\hat{\sigma} \hat{\otimes} \id)(\Delta_A \hat{\otimes} \id) \Delta_A, \\
        &&(\hat{\sigma} \hat{\otimes} \id)(\Delta_A \hat{\otimes} \id)\hat{\sigma} \Delta_A = (\hat{\sigma} \hat{\otimes} \id)(\id \hat{\otimes} \hat{\sigma})(\hat{\sigma} \hat{\otimes} \id)(\id \hat{\otimes} \Delta_A)\Delta_A,
    \end{eqnarray*}
    we have
    \begin{eqnarray*}
        &&(\id \hat{\otimes} \delta)\delta(p \otimes a) - (\sigma \hat{\otimes} \id)(\id \hat{\otimes} \delta) \delta(p \otimes a) - (\delta \hat{\otimes} \id)\delta(p \otimes a) \\
        &&= (\Delta_P \otimes \id) \Delta_P(p) \bullet  \\
        &&\quad\quad\quad ( (\id \hat{\otimes} \Delta_A) \Delta_A(a) - (\hat{\sigma} \hat{\otimes} \id)(\id \hat{\otimes} \Delta_A) \Delta_A(a) - (\Delta_A \hat{\otimes} \id )\Delta_A(a) + (\hat{\sigma} \hat{\otimes} \id)(\Delta_A \hat{\otimes} \id )\Delta_A(a) ) \\
        &&\quad - ( (\Delta_P \otimes \id)\sigma\Delta_P(p) ) \bullet  \\
        &&\quad\quad\quad (\id \hat{\otimes} \hat{\sigma})( (\id \hat{\otimes} \Delta_A)\Delta_A(a) - (\Delta_A \hat{\otimes} \id) \Delta_A(a) + (\hat{\sigma} \hat{\otimes} \id)(\Delta_A \hat{\otimes} \id) \Delta_A(a) - (\hat{\sigma} \hat{\otimes} \id)(\id \hat{\otimes} \Delta_A) \Delta_A(a) ) \\
        &&\quad - ( (\id \otimes \Delta_P)\sigma \Delta_P(p) ) \bullet  \\
        &&\quad\quad\quad  (\hat{\sigma} \hat{\otimes} \id) (\id \hat{\otimes} \hat{\sigma})( (\Delta_A \hat{\otimes} \id) \Delta_A(a) - (\hat{\sigma} \hat{\otimes} \id)(\Delta_A \hat{\otimes} \id) \Delta_A(a) - (\id \hat{\otimes} \Delta_A)\Delta_A(a) + (\hat{\sigma} \hat{\otimes} \id)(\id \hat{\otimes} \Delta_A)\Delta_A(a) ) \\
        &&= 0.
    \end{eqnarray*}
    Therefore, $(L, \delta)$ is a completed Lie coalgebra.
\end{proof}

Similarly to Proposition~\ref{pro:ppl2l}, the roles of perm
algebras and pre-Lie algebras in Proposition~\ref{pro:ppl2lc} are
somehow symmetric, and we have the following result as what we
have anticipated. The proofs are almost identical, with the major
change being the replacement of a perm coalgebra and a completed
pre-Lie coalgebra by a pre-Lie coalgebra and a completed perm
coalgebra respectively.

\begin{pro}\label{pro:plp2lc}
    Let $(A, \Delta_A)$ be a pre-Lie coalgebra and $(P, \Delta_P)$ be a completed perm coalgebra.
    Then $(L:=A \otimes P, \delta)$ is a completed Lie coalgebra, where $\delta: L \to L \hat{\otimes} L$ is defined by
    \begin{equation}\label{eq:plc2lc}
        \delta(a \otimes p) := (\id_{L \hat{\otimes} L} - \hat{\sigma})(\Delta_A(a) \bullet \Delta_P(p)), \;\; \forall a \in A, p \in P.
    \end{equation}
\end{pro}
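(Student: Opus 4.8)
The plan is to follow the proof of Proposition~\ref{pro:ppl2lc} essentially verbatim, interchanging the roles of the perm and pre-Lie coalgebras and, accordingly, transferring the completion (the hats) from the pre-Lie factor to the perm factor. The cocommutativity \eqref{eq:clca} is immediate: by \eqref{eq:plc2lc} the map $\delta$ has the form $(\id_{L \hat{\otimes} L} - \hat{\sigma})(-)$ and $\hat{\sigma}^2 = \id_{L \hat{\otimes} L}$, so $\hat{\sigma}\delta(a \otimes p) = (\hat{\sigma} - \id_{L \hat{\otimes} L})(\Delta_A(a) \bullet \Delta_P(p)) = -\delta(a \otimes p)$, whence $\delta + \hat{\sigma}\delta = 0$.

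For the co-Jacobi identity \eqref{eq:clcj}, I would substitute \eqref{eq:plc2lc} twice into $(\id \hat{\otimes} \delta)\delta(a \otimes p) - (\hat{\sigma} \hat{\otimes} \id)(\id \hat{\otimes} \delta)\delta(a \otimes p) - (\delta \hat{\otimes} \id)\delta(a \otimes p)$ and use that the product $\bullet$ interleaves an iterated coproduct built from $\Delta_A$ and the ordinary flip $\sigma$ (since $A = A_0$) with one built from $\Delta_P$ and the completed flip $\hat{\sigma}$. This yields a sum of terms of the shape $(A\text{-expression}) \bullet (P\text{-expression})$, which is precisely the transpose of the sum appearing in the proof of Proposition~\ref{pro:ppl2lc}.

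Next I would reduce both sides simultaneously: the completed perm coalgebra axiom \eqref{eq:cpc}, which equates $(\Delta_P \hat{\otimes} \id)\Delta_P$, $(\id \hat{\otimes} \Delta_P)\Delta_P$ and $(\hat{\sigma} \hat{\otimes} \id)(\Delta_P \hat{\otimes} \id)\Delta_P$, merges the many $P$-compositions into the three common forms $(\Delta_P \hat{\otimes} \id)\Delta_P(p)$, $(\Delta_P \hat{\otimes} \id)\hat{\sigma}\Delta_P(p)$ and $(\id \hat{\otimes} \Delta_P)\hat{\sigma}\Delta_P(p)$, while the purely formal flip identities (valid for any coproduct) reorganize the $A$-compositions. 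After this regrouping the expression becomes a sum of three bullet products, in each of which the surviving $A$-factor is exactly the alternating combination of iterated coproducts of $\Delta_A$ on the two sides of \eqref{eq:cplc} (with $\hat{\sigma}$ replaced by the ordinary flip $\sigma$, as $A = A_0$); this combination vanishes by the pre-Lie coalgebra axiom, so every term vanishes and \eqref{eq:clcj} holds, giving the completed Lie coalgebra $(L, \delta)$. The main obstacle is purely the bookkeeping in this regrouping: one must track carefully which flips are the ordinary $\sigma$ acting on the pre-Lie slot and which are the completed $\hat{\sigma}$ acting on the perm slot, since relative to Proposition~\ref{pro:ppl2lc} the hats have migrated from the pre-Lie side to the perm side and the two tensor slots have been interchanged. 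Once the axioms and flip identities are applied in the correct slots, the cancellation is identical to that in Proposition~\ref{pro:ppl2lc}.
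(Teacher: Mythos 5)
Your proposal is correct and takes essentially the same approach as the paper: the paper gives no separate argument for Proposition~\ref{pro:plp2lc}, stating only that the proof is almost identical to that of Proposition~\ref{pro:ppl2lc} with the perm coalgebra and the completed pre-Lie coalgebra replaced by a pre-Lie coalgebra and a completed perm coalgebra, which is exactly your plan. Your only substantive addition is the careful bookkeeping of which flips are the ordinary $\sigma$ (pre-Lie slot) and which are the completed $\hat{\sigma}$ (perm slot), and you handle that correctly.
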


The following two theorems are the dual versions of Theorem~\ref{thm:pa2l} and \ref{thm:pla2l} respectively.
\begin{thm}\label{thm:pca2lc}
    Let $P$ be a vector space and $\Delta_P: P \to P \otimes P$ be a linear map.
    Suppose that $(A, \Delta_A)$ is the completed pre-Lie coalgebra given in Example~\ref{ex:cplc}, then Eq.~\eqref{eq:pc2lc} defines a completed Lie coalgebra structure on $P \otimes A$ if and only if $(P, \Delta_P)$ is a perm coalgebra.
\end{thm}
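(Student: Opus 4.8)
The ``if'' direction is exactly Proposition~\ref{pro:ppl2lc}, so I only treat the ``only if'' direction. Assume that $\delta$, defined by Eq.~\eqref{eq:pc2lc}, makes $(P\otimes A,\delta)$ a completed Lie coalgebra, where $(A,\Delta_A)$ is the completed pre-Lie coalgebra of Example~\ref{ex:cplc}. The skew-symmetry Eq.~\eqref{eq:clca} is automatic, being built into the factor $\id_{L \hat{\otimes} L}-\hat{\sigma}$ in Eq.~\eqref{eq:pc2lc}; hence the entire content of the hypothesis is the co-Jacobi identity Eq.~\eqref{eq:clcj}. The plan is to expand Eq.~\eqref{eq:clcj} on a single generator and read off, from its $A^{\hat{\otimes}3}$-components, the three perm-coalgebra identities packaged in Eq.~\eqref{eq:cpc}. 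This is precisely the transpose of the coefficient comparison performed in the proof of Theorem~\ref{thm:pa2l}: there, three inputs and one output slot were used to extract the perm-\emph{algebra} axioms, whereas here one input and three output slots extract the perm-\emph{coalgebra} axioms.

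First I would repeat verbatim the reorganization already carried out in the proof of Proposition~\ref{pro:ppl2lc}, but \emph{without} invoking the perm-coalgebra relations, which are now the unknown to be established. That computation rewrites $(\id\hat{\otimes}\delta)\delta-(\hat{\sigma}\hat{\otimes}\id)(\id\hat{\otimes}\delta)\delta-(\delta\hat{\otimes}\id)\delta$ applied to $p\otimes a$ as a sum of $\bullet$-products, each factor being one of the iterated coproducts $(\Delta_P\otimes\id)\Delta_P(p)$, $(\id\otimes\Delta_P)\Delta_P(p)$, $(\Delta_P\otimes\id)\sigma\Delta_P(p)$, $(\id\otimes\Delta_P)\sigma\Delta_P(p)$ on the $P$-side, paired with the corresponding iterated coproducts of $\Delta_A(a)$ on the $A$-side. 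Since the outcome lives in $P^{\otimes3}\otimes A^{\hat{\otimes}3}$, for each fixed monomial triple in $A^{\hat{\otimes}3}$ the vanishing of the total coefficient is one linear relation among the (as yet unconstrained) $P$-side iterated coproducts, weighted by explicit scalars coming from the structure constants $(i+j-1)$ and $(i-j)$ of $\Delta_A$.

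Next I would choose two pieces of test data dual to the two Jacobi tests of Theorem~\ref{thm:pa2l}. In that proof the two tests yielded $r\cdot(p\cdot q)-p\cdot(r\cdot q)=0$ and $2((p\cdot q)\cdot r-p\cdot(q\cdot r))+(p\cdot(r\cdot q)-r\cdot(p\cdot q))=0$, whose combination is the full set of perm relations, with both carrying the nonzero weight $i(i-1)$ for suitable $i$. Dually, taking $a=t^{i}$ and substituting the explicit $\Delta_A(t^{i})$ and its iterated coproducts, I pick the two $A^{\hat{\otimes}3}$-monomial slots transposing the output monomials $t^{\,i-2}$ and $s^{\,i-1}$ of those tests. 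For $i$ making the accompanying weights (the transposes of the factors $i(i-1)$) nonzero, the two resulting relations among $(\Delta_P\otimes\id)\Delta_P(p)$, $(\id\otimes\Delta_P)\Delta_P(p)$ and $(\sigma\otimes\id)(\Delta_P\otimes\id)\Delta_P(p)$ combine to give exactly the three equalities of Eq.~\eqref{eq:cpc}; that is, $(P,\Delta_P)$ is a perm coalgebra.

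The main obstacle is bookkeeping rather than conceptual. I must track the infinite sums introduced by $\Delta_A$ and by the completed tensor product carefully enough to guarantee that the coefficient of each selected $A$-monomial triple is a genuinely finite scalar combination of the $P$-side coproducts, so that termwise comparison in $A^{\hat{\otimes}3}$ is legitimate, and that the two chosen triples produce a linear system whose only consequence is Eq.~\eqref{eq:cpc}, with no spurious extra constraints. Because the structure constants of $\Delta_A$ in Example~\ref{ex:cplc} are the graded transpose of the products $\diamond$ in Example~\ref{ex:zgpl}, the nonvanishing of the relevant weights is inherited directly from the nonvanishing of the factors $i(i-1)$ exploited in Theorem~\ref{thm:pa2l}, which is exactly what makes the coefficient extraction succeed.
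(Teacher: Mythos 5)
Your overall plan is the paper's: the ``if'' part is Proposition~\ref{pro:ppl2lc}, and the ``only if'' part is proved by writing out $\delta$ on the generators via the explicit $\Delta_A$ of Example~\ref{ex:cplc} (the paper's Eqs.~\eqref{eq:tic} and \eqref{eq:sic}), expanding the co-Jacobi identity \eqref{eq:clcj} into formal series, and comparing coefficients of two well-chosen $A$-monomial triples to obtain two relations (the paper's \eqref{eq:pf7} and \eqref{eq:pf8}) that together yield Eq.~\eqref{eq:cpc}. However, your execution has a genuine gap: you claim that both relations can be read off from the expansion on $a=t^{i}$ alone. They cannot. Under the pairing $\widetilde{\omega}(\Delta_A(a),b\otimes c)=-\omega(a,b\diamond c)$ that produces Example~\ref{ex:cplc} from Example~\ref{ex:qgpl}, the coalgebra \emph{input} transposes the algebra \emph{output}, not the inputs; the first test in Theorem~\ref{thm:pa2l} has inputs $qt^{i},pt^{0},rt^{0}$ and output $t^{i-2}$, so its transpose forces $a$ to be the $\omega$-dual of a $t$-power, namely an $s$-power, with a slot of pure type $s\otimes s\otimes s$. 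Such slots simply do not occur in the expansion on $p\otimes t^{i}$: by Eq.~\eqref{eq:tic} every term of $\delta(p\otimes t^{i})$ carries exactly one $t$-factor and one $s$-factor on the $A$-side, and a second application of $\delta$ (which splits an $s$-factor into $s\otimes s$, or a $t$-factor into one $t$ and one $s$) leaves exactly one $t$ among the three $A$-factors in every monomial.

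Nor can the defect be repaired by choosing other slots within the $t^{i}$-expansion. Dually, the constraints available there correspond exactly to Jacobi tests on triples carrying two $t$-powers and one $s$-power, and expanding that three-parameter family of identities in the twelve formal triple products $(x\cdot y)\cdot z$, $x\cdot(y\cdot z)$ shows that its coefficient vectors span only a proper subspace of the space of perm relations; in particular the relation dual to \eqref{eq:pf7}, namely $r\cdot(p\cdot q)=p\cdot(r\cdot q)$, is not a linear consequence of this family, so a $\Delta_P$ satisfying all constraints visible on the generators $p\otimes t^{i}$ need not be a perm coalgebra. The repair is exactly the paper's choice of test data: extract \eqref{eq:pf7} from $\delta(p\otimes s^{0})$ at the slot $s^{0}\otimes s^{0}\otimes s^{-2}$, and \eqref{eq:pf8} from $\delta(p\otimes t^{1})$ at the slot $s^{-2}\otimes s^{0}\otimes t^{1}$. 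With that single correction---using both families of generators $p\otimes t^{i}$ and $p\otimes s^{i}$---your argument coincides with the paper's proof, including the role of the nonvanishing weights inherited from the factors $i(i-1)$ in Theorem~\ref{thm:pa2l}.
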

\begin{proof}
    The ``if" part follows from Proposition~\ref{pro:ppl2lc}.
    Conversely, suppose that $(P \otimes A, \delta)$ is a completed Lie coalgebra.
    Then for all $p \in P$ and $i \in \mathbb{Z}$, using the same notations in the proof of Theorem~\ref{thm:pa2l} and the Sweedler notation, we have
    \begin{eqnarray}
        \delta(p t^i) &=& \sum_{j}\sum_{(p)} \big( (i+j-1)p_{(1)} t^{-j} \otimes p_{(2)} s^{i+j-1} + (i-j)p_{(1)} s^{-j} \otimes p_{(2)} t^{i+j-1} \nonumber \\
        &&\quad - (i+j-1)p_{(2)} s^{i+j-1} \otimes p_{(1)} t^{-j} - (i-j) p_{(2)} t^{i+j-1} \otimes p_{(1)} s^{-j} \big), \label{eq:tic}\\
        \delta(p s^i) &=& \sum_{j}\sum_{(p)} (i+j-1)\big(p_{(1)} s^{-j} \otimes p_{(2)} s^{i+j-1} - p_{(2)} s^{i+j-1} \otimes p_{(1)} s^{-j}\big). \label{eq:sic}
    \end{eqnarray}
    Expanding the left hand side of Eq.~\eqref{eq:clcj} into formal series, we have
    \begin{eqnarray*}
        &&(\delta \hat{\otimes} \id)\delta(p s^i) + (\hat{\sigma} \hat{\otimes} \id)(\id \hat{\otimes} \delta) \delta(p s^i) -(\id \hat{\otimes} \delta)\delta(p s^i)  \\
        &&=\sum_{j,k}\sum_{(p)} \big( \\
        &&\quad (i+j-1)(j-k+1)(-p_{(11)}s^{-k} \otimes p_{(12)}s^{-j+k-1} \otimes p_{(2)} s^{i+j-1} + p_{{(12)}}s^{-j+k-1} \otimes p_{(11)}s^{-k} \otimes p_{(2)} s^{i+j-1} ) \\
        &&\quad + (i+j-1)(i+j+k-2)(
            -p_{(21)} s^{-k} \otimes p_{(22)} s^{i+j+k-2} \otimes p_{(1)} s^{-j} + p_{(22)} s^{i+j+k-2} \otimes p_{(21)} s^{-k} \otimes p_{(1)} s^{-j}) \\
        &&\quad + (i+j-1)(j-k+1)(p_{(11)}s^{-k} \otimes p_{(2)} s^{i+j-1} \otimes p_{(12)}s^{-j+k-1} - p_{(12)}s^{-j+k-1} \otimes p_{(2)} s^{i+j-1} \otimes p_{(11)}s^{-k} ) \\
        &&\quad + (i+j-1)(i+j+k-2)(
            -p_{(22)} s^{i+j+k-2} \otimes p_{(1)} s^{-j} \otimes p_{(21)} s^{-k} + p_{(21)} s^{-k} \otimes p_{(1)} s^{-j} \otimes p_{(22)} s^{i+j+k-2}) \\
        &&\quad + (i+j-1)(j-k+1)(-p_{(2)} s^{i+j-1} \otimes p_{(11)}s^{-k} \otimes p_{(12)}s^{-j+k-1} + p_{(2)} s^{i+j-1} \otimes p_{(12)}s^{-j+k-1} \otimes p_{(11)}s^{-k}) \\
        &&\quad + (i+j-1)(i+j+k-2)(
            -p_{(1)} s^{-j} \otimes p_{(21)} s^{-k} \otimes p_{(22)} s^{i+j+k-2} + p_{(1)} s^{-j} \otimes p_{(22)} s^{i+j+k-2} \otimes p_{(21)} s^{-k})
        \big) \\
        &&=0.
    \end{eqnarray*}
    Taking $i=0$ and comparing the coefficients of $s^0 \otimes s^0 \otimes s^{-2}$ above, we have
    \begin{equation}
        \sum_{(p)} p_{(21)} \otimes p_{(1)} \otimes p_{(22)} = \sum_{(p)} p_{(1)} \otimes p_{(21)} \otimes p_{(22)}.
        \label{eq:pf7}
    \end{equation}
    Similarly, taking $i=1$ and comparing the coefficients of $s^{-2} \otimes s^{0} \otimes t^{1}$ in the expansion of
    \begin{equation*}
        (\delta \hat{\otimes} \id)\delta(p t^i) + (\hat{\sigma} \hat{\otimes} \id)(\id \hat{\otimes} \delta) \delta(p t^i) -(\id \hat{\otimes} \delta)\delta(p t^i) = 0,
    \end{equation*}
    we have
    \begin{equation}
        \sum_{(p)} ( p_{(22)} \otimes p_{(21)} \otimes p_{(1)} - p_{(21)} \otimes p_{(1)} \otimes p_{(22)} + p_{(22)} \otimes p_{(1)} \otimes p_{(21)} + p_{(1)} \otimes p_{(21)} \otimes p_{(22)} ) = 2 \sum_{(p)} p_{(2)} \otimes p_{(11)} \otimes p_{(12)}.
        \label{eq:pf8}
    \end{equation}
    Combining Eqs.~\eqref{eq:pf7} and \eqref{eq:pf8}, we show that $(P, \Delta_P)$ is a perm coalgebra.
    This completes the proof.
\end{proof}

\begin{rmk}
    Although $(W_n, \diamond)$ with $n \geq 2$ given in Example~\ref{ex:cd2pl} would not be used for affinization of perm bialgebras in our approach, we still give the corresponding dual version of
the part involving it in
    Theorem~\ref{thm:pa2l} as follows.
    Let $P$ be a vector space and $\Delta_P: P \to P \otimes P$ be a linear map.
    Let $A$ be the underlying vector space of $(W_n, \diamond)$ and define a linear map  $\Delta_A: A \to A \hat{\otimes} A$ by
    \begin{align*}
        \Delta_A(x_1^{i_i}\cdots x_n^{i_n} \partial_s) := \sum_{j_1, \cdots, j_n \in \mathbb{Z}} \sum_{t=1}^n j_t x_1^{i_1-j_1+\delta_{1,t}} \cdots x_n^{i_n-j_n+\delta_{n,t}} \partial_t \otimes x_1^{j_1} \cdots x_n^{j_n} \partial_s,
        \;\; \forall i_1, \cdots, i_n \in \mathbb{Z}, s = 1, \cdots, n.
    \end{align*}
    Then $(A, \Delta_A)$ is a completed pre-Lie coalgebra.
    Moreover, Eq.~\eqref{eq:pc2lc} defines a completed Lie coalgebra structure on $P \otimes A$ if and only if $(P, \Delta_P)$ is a perm coalgebra.
\end{rmk}

\begin{thm}\label{thm:plca2lc}
    Let $A$ be a vector space and $\Delta_A: A \to A \otimes A$ be a linear map.
    Suppose that $(P, \Delta_P)$ is the completed perm algebra given in Example~\ref{ex:cpc},
    then Eq.~\eqref{eq:plc2lc} defines a completed Lie coalgebra structure on $A \otimes P$ if and only if $(A, \Delta_A)$ is a pre-Lie coalgebra.
\end{thm}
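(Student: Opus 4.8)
The \emph{if} direction is exactly Proposition~\ref{pro:plp2lc}, so the plan is to establish the converse: assuming that Eq.~\eqref{eq:plc2lc} makes $(A \otimes P, \delta)$ into a completed Lie coalgebra, I would recover the pre-Lie coalgebra identity Eq.~\eqref{eq:cplc} for $(A, \Delta_A)$. Because $\delta$ is built by applying $\id_{L \hat{\otimes} L} - \hat{\sigma}$, the skew-symmetry axiom Eq.~\eqref{eq:clca} holds automatically, so all the content is carried by the co-Jacobi identity Eq.~\eqref{eq:clcj}. This parallels the proof of Theorem~\ref{thm:pla2l}, where verifying the pre-Lie law reduced to a single instance of the Jacobi identity. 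The guiding observation, read off from the computation proving Proposition~\ref{pro:ppl2lc} (with the perm and pre-Lie sides interchanged as in Proposition~\ref{pro:plp2lc}), is that the co-Jacobiator of $\delta$ on $a \otimes p$ factors as a bullet-combination of iterated coproducts of $\Delta_P$ applied to $p$ against the pre-Lie coalgebra combination applied to $a$; hence the task is to show that the specific $\Delta_P$ of Example~\ref{ex:cpc} is rich enough to force that $A$-side combination to vanish.

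Concretely, I would first compute $\delta(a \otimes x_1^m x_2^n \partial_s)$ explicitly for $s \in \{1, 2\}$, in the same spirit as Eqs.~\eqref{eq:tic} and \eqref{eq:sic}. Writing $\Delta_A(a) = \sum_{(a)} a_{(1)} \otimes a_{(2)}$ in Sweedler notation and inserting the explicit formula for $\Delta_P$ from Example~\ref{ex:cpc}, the bullet product $\Delta_A(a) \bullet \Delta_P(p)$ becomes a formal series doubly indexed by $i_1, i_2 \in \mathbb{Z}$ that pairs the Sweedler legs of $A$ with those of $P$; applying $\id - \hat{\sigma}$ then gives a concrete formal-series expression for $\delta$ on each spanning element of $A \otimes P$. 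I would then substitute these expressions into the left-hand side of Eq.~\eqref{eq:clcj} and expand the result as a triple formal series in $(A \otimes P)^{\hat{\otimes} 3}$.

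Next I would compare the coefficient of one carefully chosen monomial in the three $P$-legs. Since $\Delta_P$ from Example~\ref{ex:cpc} raises total $P$-degree by $2$, the operator $\delta$ applied twice raises it by $4$; so starting from a degree-$1$ generator $a \otimes \partial_s$ the relevant coefficients sit in total $P$-degree $5$, and by analogy with the $x_1^2 \partial_2$-coefficient in Theorem~\ref{thm:pla2l} and the distinguished triple monomials in Theorem~\ref{thm:pca2lc}, I expect that reading off the coefficient of a degree-$5$ monomial such as $\partial_1 \otimes \partial_1 \otimes x_1^2 \partial_2$ isolates exactly the combination
\[
(\Delta_A \otimes \id)\Delta_A(a) - (\sigma \otimes \id)(\Delta_A \otimes \id)\Delta_A(a) - (\id \otimes \Delta_A)\Delta_A(a) + (\sigma \otimes \id)(\id \otimes \Delta_A)\Delta_A(a),
\]
whose vanishing is precisely the pre-Lie coalgebra identity Eq.~\eqref{eq:cplc}.

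The main obstacle will be the bookkeeping of the infinite formal sums. Unlike the purely algebraic Theorem~\ref{thm:pla2l}, here the $P$-legs range over the whole monomial lattice indexed by $\mathbb{Z}^2$, so several distinct cross-terms contribute to a given monomial and one must track how the summation indices collapse under the degree shifts built into $\Delta_P$. The decisive point is to verify that the chosen monomial receives contributions only from the terms that assemble the full pre-Lie combination above—with the remaining terms either canceling in pairs or landing on other monomials—so that the comparison returns the complete identity Eq.~\eqref{eq:cplc} rather than a strictly weaker consequence of it.
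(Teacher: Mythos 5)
Your overall strategy coincides with the paper's: the ``if'' direction is Proposition~\ref{pro:plp2lc}; Eq.~\eqref{eq:clca} is automatic because $\delta$ factors through $\id_{L\hat{\otimes}L}-\hat{\sigma}$; one computes $\delta(a\otimes x_1^mx_2^n\partial_s)$ explicitly (these are the paper's Eqs.~\eqref{eq:mpc1}--\eqref{eq:mpc2}) and recovers Eq.~\eqref{eq:cplc} by comparing a single coefficient in the co-Jacobi identity~\eqref{eq:clcj}. However, the one concrete detail you commit to fails: the coefficient of $\partial_1\otimes\partial_1\otimes x_1^2\partial_2$ in the co-Jacobiator of $a\otimes\partial_s$ is identically zero, for $s=1$ and $s=2$ alike, so your comparison returns $0=0$ and no information. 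The obstruction is the bigraded bookkeeping you yourself flagged: each application of $\delta$ adds total exponent $(1,0)$ or $(0,1)$ distributed over the legs, and it correlates the $\partial_1/\partial_2$ leg patterns. Concretely, from Eq.~\eqref{eq:mpc2} one sees that $\delta(b\,x_1^mx_2^n\partial_2)$ has \emph{no} $\partial_1\otimes\partial_1$ component at all, while from Eq.~\eqref{eq:mpc1} the $\partial_1\otimes\partial_1$ components of $\delta(b\,x_1^mx_2^n\partial_1)$ carry total exponent $(m,n+1)$. Chasing the three terms of Eq.~\eqref{eq:clcj} applied to $a\otimes\partial_1$: forcing the third leg to be $x_1^2\partial_2$ in $(\delta\hat{\otimes}\id)\delta$ leaves an inner argument $a'\,x_1^{-1}\partial_1$ whose $\partial_1\otimes\partial_1$ part has total exponent $(-1,1)\neq(0,0)+(0,0)$; forcing the first leg to be $\partial_1$ in $(\id\hat{\otimes}\delta)\delta$ (and its $\hat{\sigma}$-twist) leaves inner arguments $a'\,x_2\partial_1$ or $a'\,x_1\partial_2$, and neither $\delta(a'\,x_2\partial_1)$ nor $\delta(a'\,x_1\partial_2)$ contains a $\partial_1\otimes x_1^2\partial_2$ term. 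So every contribution to your target monomial vanishes before the pre-Lie combination can form.

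The repair is exactly the paper's choice of target: keep a general source $a\,x_1^mx_2^n\partial_1$ and compare the coefficient of $x_1^{j_1}x_2^{j_2}\partial_2\otimes x_1^{i_1-j_1+1}x_2^{i_2-j_2}\partial_2\otimes x_1^{m-i_1+1}x_2^{n-i_2}\partial_1$, i.e.\ the leg pattern $\partial_2\otimes\partial_2\otimes\partial_1$ rather than $\partial_1\otimes\partial_1\otimes\partial_2$. With that pattern all four iterated coproducts $(\Delta_A\otimes\id)\Delta_A$, $(\sigma\otimes\id)(\Delta_A\otimes\id)\Delta_A$, $(\id\otimes\Delta_A)\Delta_A$, $(\sigma\otimes\id)(\id\otimes\Delta_A)\Delta_A$ survive with the correct signs, and one single coefficient comparison yields the full identity Eq.~\eqref{eq:cplc} (your expectation that one monomial can suffice is correct, since the Sweedler legs of $A$ are insensitive to the $P$-exponents). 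With that substitution your proposal becomes the paper's proof; everything else in it is accurate.
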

\begin{proof}
    The ``if'' part follows from Proposition~\ref{pro:plp2lc}.
    Conversely, suppose that $(A \otimes P, \delta)$ is a completed Lie coalgebra.
    For all $a \in A$ and  $m, n \in \mathbb{Z}$, using the same notations in the proof of
    Theorem~\ref{thm:pla2l}, we have
    \begin{eqnarray}
        \delta(a x_1^{m}x_2^n \partial_1) &=& \sum_{i_1, i_2}\sum_{(a)}( a_{(1)} x_1^{i_1}x_2^{i_2} \partial_1 \otimes a_{(2)} x_1^{m-i_1}x_2^{n-i_2+1}\partial_1 - a_{(1)} x_1^{i_1}x_2^{i_2} \partial_2 \otimes a_{(2)} x_1^{m-i_1+1}x_2^{n-i_2}\partial_1  \nonumber \\
        &&\quad - a_{(2)} x_1^{m-i_1}x_2^{n-i_2+1}\partial_1 \otimes a_{(1)} x_1^{i_1}x_2^{i_2} \partial_1 + a_{(2)} x_1^{m-i_1+1}x_2^{n-i_2}\partial_1 \otimes a_{(1)} x_1^{i_1}x_2^{i_2} \partial_2 ), \label{eq:mpc1}\\
        \delta(a x_1^{m}x_2^n \partial_2) &=& \sum_{i_1, i_2}\sum_{(a)} (
        a_{(1)} x_1^{i_1}x_2^{i_2} \partial_1 \otimes a_{(2)} x_1^{m-i_1}x_2^{n-i_2+1}\partial_2 - a_{(1)} x_1^{i_1}x_2^{i_2} \partial_2 \otimes a_{(2)} x_1^{m-i_1+1}x_2^{n-i_2}\partial_2 \nonumber \\
        &&\quad - a_{(2)} x_1^{m-i_1}x_2^{n-i_2+1}\partial_2 \otimes a_{(1)} x_1^{i_1}x_2^{i_2} \partial_1 + a_{(2)} x_1^{m-i_1+1}x_2^{n-i_2}\partial_2 \otimes a_{(1)} x_1^{i_1}x_2^{i_2} \partial_2). \label{eq:mpc2}
    \end{eqnarray}
    Comparing the coefficients of $x_1^{j_1}x_2^{j_2} \partial_2 \otimes x_1^{i_1-j_1+1}x_2^{i_2-j_2}\partial_2 \otimes x_1^{m-i_1+1}x_2^{n-i_2}\partial_1$ in the expansion of
    \begin{equation*}
        0 = (\delta \hat{\otimes} \id)\delta(a x_1^{m}x_2^n \partial_1) + (\hat{\sigma} \hat{\otimes} \id)(\id \hat{\otimes} \delta) \delta(a x_1^{m}x_2^n \partial_1) -(\id \hat{\otimes} \delta)\delta(a x_1^{m}x_2^n \partial_1),
    \end{equation*}
    we obtain
    \begin{equation*}
        (\Delta_A \otimes \id)\Delta_A(a) - (\sigma \otimes \id)(\Delta_A \otimes \id)\Delta_A(a) + (\sigma \otimes \id)(\id \otimes \Delta_A)\Delta_A(a) - (\id \otimes \Delta_A)\Delta_A(a) = 0,
    \end{equation*}
    which shows that $(A, \Delta_A)$ is a pre-Lie coalgebra.
\end{proof}

\section{Infinite-dimensional Lie bialgebras from perm bialgebras}\label{sec:ilb}
We construct infinite-dimensional Lie bialgebras from pairs of a
perm bialgebra and a quadratic $\mathbb{Z}$-graded pre-Lie algebra, and show that a
perm bialgebra could be characterized by such a construction
provided a quadratic $\mathbb{Z}$-graded pre-Lie algebra on
$\mathbf{k}[t, t^{-1}] \oplus \mathbf{k}[s, s^{-1}]$. We also show
that a finite-dimensional perm bialgebra is characterized by a
Manin triple of perm algebras, as well as a matched pair of perm
algebras. There is a construction of a Manin triple of Lie
algebras from a Manin triple of perm algebras and a quadratic
pre-Lie algebra, interpreting the corresponding construction of
the Lie bialgebra from a perm bialgebra and a quadratic pre-Lie
algebra. The perm Yang-Baxter equation (perm-YBE) in a perm
algebra is introduced, whose symmetric solutions give rise to perm
bialgebras. The notions of an $\mathcal{O}$-operator of a perm
algebra and a pre-perm algebra are introduced to construct
symmetric solutions of the perm-YBE. Moreover, a symmetric
solution of the perm-YBE in a perm algebra gives a skews-symmetric
solution of the CYBE in the corresponding Lie algebra.

\subsection{Lie bialgebras from perm bialgebras and quadratic pre-Lie algebras}
We start with the notion of a perm bialgebra.
\begin{defi}
    A \textbf{perm bialgebra} is a triple $(P, \cdot, \Delta_P)$ where $(P, \cdot)$ is a perm algebra and $(P, \Delta_P)$ is a perm coalgebra such that the following equations hold:
    \begin{eqnarray}
        \Delta_P(p \cdot q) &=& ((L_\cdot-R_\cdot)(p) \otimes \id)\Delta_P(q) + (\id \otimes R_\cdot(q))\Delta_P(p), \label{eq:pb1}\\
        \sigma (R_\cdot(q) \otimes \id)\Delta_P(p) &=& (R_\cdot(p) \otimes \id)\Delta_P(q), \label{eq:pb2}\\
        \Delta_P(p \cdot q) &=& (\id \otimes L_\cdot(p))\Delta_P(q) + ((L_\cdot-R_\cdot)(q) \otimes \id)(\Delta_P(p)-\sigma \Delta_P(p)), \label{eq:pb3}
    \end{eqnarray}
    for all $p, q \in P$.
\end{defi}

\begin{ex}\label{ex:1p}
    Let $P$ be a vector space with a basis $\{e\}$.
    Define a binary operation $\cdot: P \otimes P \to P$ and a linear map $\Delta_P: P \to P \otimes P$ by
    \begin{equation*}
        e \cdot e = e \;\;{\rm and }\;\; \Delta_P(e) = e \otimes e
    \end{equation*}
    respectively.
    Then $(P, \cdot, \Delta_P)$ is a perm bialgebra.
\end{ex}


We now turn to the construction of a Lie bialgebra from a perm
bialgebra, and the quadratic $\mathbb{Z}$-graded pre-Lie algebra
is the last ingredient employed. We follow the notations given in
\cite{hong2023infinite}.
\begin{defi}
    A bilinear form $\omega$ on a $\mathbb{Z}$-graded vector space $V = \oplus_{i \in \mathbb{Z}} V_i$ is called \textbf{graded} if there exists some $m \in \mathbb{Z}$ such that
    \begin{equation*}
        \omega(V_i, V_j) = 0, \;{\rm whenever }\; i + j \neq m.
    \end{equation*}
\end{defi}
Let $\omega$ be a nondegenerate graded bilinear form on a
$\mathbb{Z}$-graded vector space $V = \oplus_{i \in \mathbb{Z}}
V_i$. For all $l \geq 2$,  the bilinear form
$\widetilde{\omega}_l: \overbrace{(V \hat{\otimes} \cdots
\hat{\otimes} V)}^{l-\text{fold}} \otimes \overbrace{(V \otimes
\cdots \otimes V)}^{l-\text{fold}} \to \mathbf{k}$  defined by
\begin{equation*}
    \widetilde{\omega}_l(\sum_{i_1, \cdots, i_l, \alpha} v_{1i_1\alpha} \otimes \cdots \otimes v_{li_l\alpha}, \sum_{j_1, \cdots, j_l}u_{j_1} \otimes \cdots \otimes u_{j_l}) := \sum_{i_1,\cdots, i_k, \alpha} \sum_{j_1, \cdots, j_l} \prod_{m=1}^l \omega(v_{mi_m \alpha}, u_{j_l}),
\end{equation*}
is \textbf{left nondegenerate} in the sense that
$\widetilde{\omega}_l(\sum_{i_1, \cdots, i_l, \alpha}
v_{1i_1\alpha} \otimes \cdots \otimes v_{li_l\alpha}, u_1 \otimes
\cdots \otimes u_l) = 0$ for all homogeneous elements $u_1,
\cdots, u_l \in V$ implies $\sum_{i_1, \cdots, i_l, \alpha}
v_{1i_1\alpha} \otimes \cdots \otimes v_{li_l\alpha} = 0$. For
brevity, we will suppress the index $l$ without ambiguity.

\begin{rmk}
    Let $\omega$ be a nondegenerate graded bilinear form on a $\mathbb{Z}$-graded vector space $V = \oplus_{i \in \mathbb{Z}} V_i$ with all $V_i$ finite-dimensional and $\{e_\lambda\}_{\lambda \in \Lambda}$ be a basis of $V$ consisting of homogeneous elements.
    Then we can always find its graded dual basis $\{f_\lambda\}_{\lambda \in \Lambda}$ with respect to $\omega$ consisting of homogeneous elements,
    that is, $\omega(f_\lambda, e_\eta) = \delta_{\lambda, \eta}$ for all $\lambda, \eta \in \Lambda$.
\end{rmk}

\begin{defi}
    A graded bilinear form $\omega$ on a $\mathbb{Z}$-grade pre-Lie algebra $(A = \oplus_{i \in \mathbb{Z}} A_i, \diamond)$ is called \textbf{invariant} if it satisfies
    \begin{equation*}
        \omega(a_1 \diamond a_2, a_3) = - \omega(a_2, a_1 \diamond a_3 - a_3 \diamond a_1), \;\; \forall a_1, a_2, a_3 \in A.
    \end{equation*}
    A \textbf{quadratic $\mathbb{Z}$-graded pre-Lie algebra $(A, \diamond, \omega)$} is a $\mathbb{Z}$-graded pre-Lie algebra $(A = \oplus_{i \in \mathbb{Z}} A_i, \diamond)$ with a skew-symmetric nondegenerate invariant graded bilinear form $\omega$.
    In particular, when $A = A_0$, it is simply called a \textbf{quadratic pre-Lie algebra}.
\end{defi}

\begin{ex}\label{ex:qgpl}
    Let $(A = \oplus_{i \in \mathbb{Z}} A_i, \diamond)$ be the $\mathbb{Z}$-graded pre-Lie algebra given in Example~\ref{ex:zgpl}.
    Define a bilinear form $\omega$ on $(A, \diamond)$ by
    \begin{equation*}
        \omega(s^i, t^j) = -\omega(t^j, s^i) = \delta_{i+j,0}, \; \omega(t^i, t^j) = 0, \;\; \omega(s^i, s^j) = 0, \;\; \forall i, j \in \mathbb{Z}.
    \end{equation*}
    Then $(A, \diamond, \omega)$ is a quadratic $\mathbb{Z}$-graded pre-Lie algebra.
    Moreover, $\{s^{-i}, -t^{i}\}_{i \in \mathbb{Z}}$ is the dual basis of $\{t^i, s^{-i}\}_{i \in \mathbb{Z}}$, consisting of homogeneous elements.
\end{ex}

\begin{lem}\label{lem:qgpl2cplc}
    Let $(A = \oplus_{i \in \mathbb{Z}} A_i, \diamond, \omega)$ be a quadratic $\mathbb{Z}$-graded pre-Lie algebra.
    Define a linear map $\Delta_A: A \to A \hat{\otimes} A$ by
    \begin{equation}\label{eq:qgpl2cplc}
        \widetilde{\omega}(\Delta_A(a), b \otimes c) = -\omega(a, b \diamond c), \;\; \forall a, b, c \in A.
    \end{equation}
    Then $(A, \Delta_A)$ is a completed pre-Lie coalgebra.
\end{lem}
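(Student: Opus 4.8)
The plan is to transfer the defining identity \eqref{eq:cplc} across the pairing $\widetilde{\omega}$, so that it becomes the pre-Lie identity \eqref{eq:prelie} for $\diamond$, after which the left-nondegeneracy of $\widetilde{\omega}$ does the remaining work. Before that, I would check that \eqref{eq:qgpl2cplc} really determines a map into the completed tensor product. Fix a homogeneous basis $\{e_\lambda\}$ with graded dual basis $\{f_\lambda\}$, $\omega(f_\lambda, e_\eta) = \delta_{\lambda\eta}$, and write $|\lambda|$ for the degree of $e_\lambda$; if $\omega(A_i, A_j) = 0$ unless $i + j = m$, then $f_\lambda \in A_{m - |\lambda|}$. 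Recovering coefficients by pairing against the dual basis (the two sign changes produced by the skew-symmetry of $\omega$ cancel), one obtains the explicit formula
\[
    \Delta_A(a) = -\sum_{\lambda, \mu} \omega(a, f_\lambda \diamond f_\mu)\, e_\lambda \otimes e_\mu.
\]
For homogeneous $a \in A_k$ the summand is nonzero only when $f_\lambda \diamond f_\mu \in A_{2m - |\lambda| - |\mu|}$ pairs nontrivially with $a$, i.e.\ when $|\lambda| + |\mu| = m + k$; for each such bidegree the contribution is finite since the $A_i$ are finite-dimensional, so $\Delta_A(a) \in \prod_{i,j} A_i \otimes A_j = A \hat{\otimes} A$. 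Uniqueness, hence independence of the basis, is immediate from the left-nondegeneracy of $\widetilde{\omega}$.

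Next I would pair each side of \eqref{eq:cplc} with an arbitrary homogeneous $b \otimes c \otimes d$. Writing $\Delta_A(a) = \sum a_{(1)} \otimes a_{(2)}$ and applying \eqref{eq:qgpl2cplc} first to the inner and then to the outer copy of $\Delta_A$ (for fixed homogeneous $a, b, c, d$ all sums involved collapse to finite ones by the constraint $|\lambda| + |\mu| = m + k$, so these Sweedler manipulations are legitimate), I expect
\[
    \widetilde{\omega}\big((\Delta_A \hat{\otimes} \id)\Delta_A(a), b \otimes c \otimes d\big) = \omega\big(a, (b \diamond c) \diamond d\big),
\]
\[
    \widetilde{\omega}\big((\id \hat{\otimes} \Delta_A)\Delta_A(a), b \otimes c \otimes d\big) = \omega\big(a, b \diamond (c \diamond d)\big).
\]
Since $(\hat{\sigma} \hat{\otimes} \id)$ merely transposes the first two test slots, so that $\widetilde{\omega}((\hat{\sigma} \hat{\otimes} \id)X, b \otimes c \otimes d) = \widetilde{\omega}(X, c \otimes b \otimes d)$, the two remaining terms of \eqref{eq:cplc} are obtained by exchanging $b \leftrightarrow c$, yielding $\omega(a, (c \diamond b) \diamond d)$ and $\omega(a, c \diamond (b \diamond d))$.

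Subtracting, the image under $\widetilde{\omega}(-, b \otimes c \otimes d)$ of the difference of the two sides of \eqref{eq:cplc} equals
\[
    \omega\big(a,\ (b \diamond c - c \diamond b) \diamond d - b \diamond (c \diamond d) + c \diamond (b \diamond d)\big),
\]
and the bracketed element vanishes for all $b, c, d$ precisely because it is a rearrangement of the pre-Lie identity \eqref{eq:prelie}, namely $(b \diamond c - c \diamond b) \diamond d = b \diamond (c \diamond d) - c \diamond (b \diamond d)$. As $b \otimes c \otimes d$ was an arbitrary homogeneous tensor, the left-nondegeneracy of $\widetilde{\omega}$ forces the two sides of \eqref{eq:cplc} to agree, so $(A, \Delta_A)$ is a completed pre-Lie coalgebra. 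I expect the only delicate point to be the first step: ensuring that \eqref{eq:qgpl2cplc} lands in $A \hat{\otimes} A$ and that the a priori infinite Sweedler sums truncate for fixed homogeneous arguments, both of which are controlled by the grading of $\omega$. It is worth noting that the invariance of $\omega$ plays no role in this particular statement; it belongs to the ambient quadratic structure exploited in the later results.
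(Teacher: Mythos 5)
Your proposal is correct and follows essentially the same route as the paper's proof: pair the defect of Eq.~\eqref{eq:cplc} against an arbitrary homogeneous test tensor via $\widetilde{\omega}$, reduce it through Eq.~\eqref{eq:qgpl2cplc} to $\omega\big(a,\,(b \diamond c)\diamond d - (c \diamond b)\diamond d - b \diamond (c \diamond d) + c \diamond (b \diamond d)\big)=0$ by the pre-Lie identity, and conclude by the left nondegeneracy of $\widetilde{\omega}$. Your preliminary verification that Eq.~\eqref{eq:qgpl2cplc} actually determines an element of $A \hat{\otimes} A$ (via the graded dual basis and the constraint $|\lambda|+|\mu|=m+k$), together with the observation that the invariance of $\omega$ is not used in this lemma, supplies detail the paper's proof leaves implicit, but it is the same argument.
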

\begin{proof}
    For all homogeneous element $a_1, a_2, a_3 \in A$, we have
    \begin{eqnarray*}
        &&\widetilde{\omega}((\Delta_A \hat{\otimes} \id)\Delta_A(a) - (\hat{\sigma} \hat{\otimes} \id)(\Delta_A \hat{\otimes} \id)\Delta_A(a) - (\id \hat{\otimes} \Delta_A)\Delta_A(a) + (\hat{\sigma} \hat{\otimes} \id)(\id \hat{\otimes} \Delta_A)\Delta_A(a), a_1 \otimes a_2 \otimes a_3) \\
        &&=\omega(a, (a_1 \diamond a_2) \diamond a_3 - (a_2 \diamond a_1) \diamond a_3 - a_1 \diamond (a_2 \diamond a_3) + a_2 \diamond (a_1 \diamond a_3)) = 0.
    \end{eqnarray*}
    Since $\widetilde{\omega}$ is left nondegenerate, Eq.~\eqref{eq:cplc} holds.
    The proof is completed.
\end{proof}

\begin{rmk}\label{rmk:qcp}
    The completed pre-Lie coalgebra obtained by Lemma~\ref{lem:qgpl2cplc} from the quadratic $\mathbb{Z}$-graded pre-Lie algebra $(A = \oplus_{i \in \mathbb{Z}} A_i, \diamond, \omega)$ in Example~\ref{ex:qgpl} is exactly the one given in Example~\ref{ex:cplc}.
\end{rmk}

The next notion and theorem deal with the completed Lie bialgebras, lifting Proposition~\ref{pro:ppl2l} and Theorem~\ref{thm:pa2l} to the level of bialgebras.
\begin{defi}
    A \textbf{completed Lie bialgebra} is a triple $(L, [-, -], \delta)$ such that
    $(L, [-, -])$ is a Lie algebra,
    $(L, \delta)$ is a completed Lie coalgebra,
    and the following compatibility condition holds:
    \begin{equation}\label{eq:lb}
        \delta([a, b]) = (\ad(a) \hat{\otimes} \id + \id \hat{\otimes} \ad(a))\delta(b) - (\ad(b) \hat{\otimes} \id + \id \hat{\otimes} \ad(b))\delta(a), \;\; \forall a, b \in L,
    \end{equation}
    where $\ad(a)(b) := [a, b]$ for all $a, b \in L$.
\end{defi}

\begin{thm}\label{thm:pb2lb}
    Let $(P, \cdot, \Delta_P)$ be a perm bialgebra, $(A = \oplus_{i \in \mathbb{Z}} A_i, \diamond, \omega)$ be a quadratic $\mathbb{Z}$-graded pre-Lie algebra
  and $(L := P \otimes A, [-, -])$ be the induced Lie algebra from $(P, \cdot)$ and $(A, \diamond)$.
   Let $\Delta_A: A \to A \hat{\otimes} A$ be the linear map defined by Eq.~\eqref{eq:qgpl2cplc}
    and $\delta: L \to L \hat{\otimes} L$ be the linear map defined by Eq.~\eqref{eq:pc2lc}.
    Then $(L, [-, -], \delta)$ is a completed Lie bialgebra.
    Further, if $(A, \diamond, \omega)$ is the quadratic $\mathbb{Z}$-graded pre-Lie algebra given in Example~\ref{ex:qgpl}, then $(L, [-, -], \delta)$ is a completed Lie bialgebra if and only if $(P, \cdot, \Delta_P)$ is a perm bialgebra.
\end{thm}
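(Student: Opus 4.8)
The plan is to treat the two assertions separately. For the first assertion, note that $(L,[-,-])$ is already a Lie algebra by Proposition~\ref{pro:ppl2l}, while Lemma~\ref{lem:qgpl2cplc} shows that $(A,\Delta_A)$ is a completed pre-Lie coalgebra, so that Proposition~\ref{pro:ppl2lc} makes $(L,\delta)$ a completed Lie coalgebra. Hence the only thing left to verify is the cocycle compatibility Eq.~\eqref{eq:lb}, and the entire content of the ``completed Lie bialgebra'' claim is this one identity.

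To check Eq.~\eqref{eq:lb} I would substitute $a = p_1 \otimes a_1$ and $b = p_2 \otimes a_2$, expand $\delta([a,b])$ via Eqs.~\eqref{eq:ppl2l} and \eqref{eq:pc2lc} together with the definition of $\bullet$, and likewise expand the two adjoint terms on the right-hand side. Rather than comparing the resulting elements of $(P\otimes A)\hat{\otimes}(P\otimes A)$ directly, I would exploit the left nondegeneracy of $\widetilde{\omega}$: fixing the $P$-components and pairing the two $A$-slots against arbitrary homogeneous $b,c \in A$ reduces the whole identity to a scalar identity, in which every occurrence of $\Delta_A$ is converted into a pre-Lie product through Eq.~\eqref{eq:qgpl2cplc}, that is, $\widetilde{\omega}(\Delta_A(a), b\otimes c) = -\omega(a, b\diamond c)$. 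After this reduction the surviving identity separates into a $P$-part, governed by $\cdot$ and $\Delta_P$, and an $A$-part, governed by $\diamond$ and $\omega$; the three perm bialgebra axioms Eqs.~\eqref{eq:pb1}--\eqref{eq:pb3} are exactly what make the $P$-part vanish, while the invariance of $\omega$ disposes of the $A$-part. I expect this bookkeeping---keeping track of the $(\id - \hat{\sigma})$ antisymmetrizations and the completed (infinite) sums while isolating the three axioms---to be the main obstacle.

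For the converse, take $(A,\diamond,\omega)$ to be the algebra of Example~\ref{ex:qgpl}; by Remark~\ref{rmk:qcp} the induced $\Delta_A$ is precisely the completed pre-Lie coalgebra of Example~\ref{ex:cplc}. If $(L,[-,-],\delta)$ is a completed Lie bialgebra, then in particular $(L,[-,-])$ is a Lie algebra and $(L,\delta)$ is a completed Lie coalgebra, so Theorem~\ref{thm:pa2l} forces $(P,\cdot)$ to be a perm algebra and Theorem~\ref{thm:pca2lc} forces $(P,\Delta_P)$ to be a perm coalgebra. It then remains to recover the three compatibility conditions. Following the method used in those two theorems, I would substitute graded generators $p \otimes t^i$, $q\otimes t^j$, $p\otimes s^i$, and so on, into Eq.~\eqref{eq:lb}, expand using the explicit formulas for $\diamond$, $\Delta_A$ and the dual basis $\{s^{-i}, -t^i\}$, and compare the coefficients of suitably chosen monomials in $t$ and $s$ (as in the derivations of Eqs.~\eqref{eq:pf7} and \eqref{eq:pf8}). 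Each such comparison should yield one of Eqs.~\eqref{eq:pb1}--\eqref{eq:pb3}, and together they give that $(P,\cdot,\Delta_P)$ is a perm bialgebra.

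The delicate point in the converse is to verify that the coefficient comparisons produce exactly the three perm bialgebra axioms and nothing stronger: one must choose the test generators and monomials so that the extracted relations are equivalent to, rather than merely implied by, Eqs.~\eqref{eq:pb1}--\eqref{eq:pb3}, matching the ``if'' direction. A secondary technical care throughout both directions is the well-definedness of the completed tensor sums appearing in $\Delta_A$ and $\delta$, which is guaranteed by the grading hypotheses on $A$ and the finite-dimensionality of each $A_i$.
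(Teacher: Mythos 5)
Your proposal is correct and follows essentially the same route as the paper's proof: the forward direction reduces to the cocycle condition Eq.~\eqref{eq:lb} via Lemma~\ref{lem:qgpl2cplc} and Propositions~\ref{pro:ppl2l} and \ref{pro:ppl2lc}, then uses the left nondegeneracy of $\widetilde{\omega}$ and the invariance of $\omega$ to trade $\Delta_A$ for pre-Lie products before applying Eqs.~\eqref{eq:pb1}--\eqref{eq:pb3} (the paper pairs against homogeneous $c\otimes d$ to first derive identities in $A\hat{\otimes}A$, which is your scalar reduction done in a preliminary step), and the converse obtains the perm (co)algebra structures from Theorems~\ref{thm:pa2l} and \ref{thm:pca2lc} with Remark~\ref{rmk:qcp} and recovers the compatibility conditions by comparing coefficients on graded generators such as $p t^i$ and $q s^j$, exactly as the paper does. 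Two execution details differ from your gloss: in the forward direction the axioms do not annihilate everything term by term---a mixed residual term $(\id_{L\hat{\otimes}L}-\hat{\sigma})\bigl((\Delta_P(p\cdot q)-\Delta_P(q\cdot p))\bullet X\bigr)$ with $X\in A\hat{\otimes}A$ survives and vanishes only because both factors are symmetric under $\sigma$ and $\hat{\sigma}$ respectively (the symmetry of $\Delta_P(p\cdot q)-\Delta_P(q\cdot p)$ being a derived consequence of the axioms, not one of them verbatim)---and in the converse each coefficient comparison yields an intermediate relation (Eqs.~\eqref{eq:pf9}, \eqref{eq:pf14}, \eqref{eq:pf15}) which must be combined with the others to isolate Eqs.~\eqref{eq:pb1}--\eqref{eq:pb3} rather than one axiom per comparison; also your worry about extracting something ``stronger'' than the axioms is moot, since the already-established ``if'' direction guarantees every relation extracted from Eq.~\eqref{eq:lb} is implied by them.
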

\begin{proof}
    By Lemma~\ref{lem:qgpl2cplc}, $(A, \Delta_A)$ is a completed pre-Lie coalgebra.
    Then by Proposition~\ref{pro:ppl2lc}, $(L, \delta)$ is a completed Lie coalgebra.
    To show that $(L, [-, -], \delta)$ is a completed Lie bialgebra, it is sufficient to prove Eq.~\eqref{eq:lb} holds.
    Let $p \otimes a, q \otimes b \in P \otimes A$ with $a, b \in A$ homogeneous elements.
    Suppose that
    \begin{equation*}
        \Delta_A(a) = \sum_{i,j,\alpha}a_{1i\alpha} \otimes a_{2j\alpha}, \;\; \Delta_A(b) = \sum_{i,j,\alpha}b_{1i\alpha} \otimes b_{2j\alpha}.
    \end{equation*}
    For all homogeneous elements $c, d \in A$, we have
    \begin{eqnarray*}
        &&\widetilde{\omega}(\sum_{i,j,\alpha}b_{1i\alpha} \otimes a \diamond b_{2j\alpha} - \sum_{i,j,\alpha} b_{1i\alpha} \otimes b_{2j\alpha} \diamond a + \sum_{i,j,\alpha} b \diamond a_{1i\alpha} \otimes a_{2j\alpha}, c \otimes d) \\
        &&= -\omega(b, c \diamond (d \diamond a) - c \diamond (a \diamond d)) + \omega(b, c \diamond (d \diamond a)) - \omega(b, c \diamond (a \diamond d)) = 0.
    \end{eqnarray*}
    Since $\widetilde{\omega}$ is left nondegenerate, we obtain
    \begin{equation*}
        \sum_{i,j,\alpha}b_{1i\alpha} \otimes a \diamond b_{2j\alpha} = \sum_{i,j,\alpha} b_{1i\alpha} \otimes b_{2j\alpha} \diamond a - \sum_{i,j,\alpha} b \diamond a_{1i\alpha} \otimes a_{2j\alpha}.
    \end{equation*}
    Similarly, we have
    \begin{eqnarray*}
        \sum_{i,j,\alpha} a \diamond b_{1i\alpha} \otimes b_{2j\alpha} &=& \sum_{i,j,\alpha} b \diamond a_{1i\alpha} \otimes a_{2j\alpha} + \Delta_A(a \diamond b) - \Delta_A(b \diamond a), \\
        \sum_{i,j,\alpha} b_{1i\alpha} \otimes a \diamond b_{2j\alpha} &=& \sum_{i,j,\alpha} b_{1i\alpha} \otimes b_{2j\alpha} \diamond a - \sum_{i,j,\alpha} b \diamond a_{1i\alpha} \otimes a_{2j\alpha}, \\
        \sum_{i,j,\alpha} a_{1i\alpha} \diamond b \otimes a_{2j\alpha} &=& \sum_{i,j,\alpha} b \diamond a_{1i\alpha} \otimes a_{2j\alpha} \\
        && + \hat{\sigma}(\Delta_A(a \diamond b) - \Delta_A(b \diamond a) + \sum_{i,j,\alpha} b \diamond a_{1i\alpha} \otimes a_{2j\alpha} - \sum_{i,j,\alpha} b_{1i\alpha} \diamond a \otimes b_{2j\alpha}), \\
        \sum_{i,j,\alpha} a_{1i\alpha} \otimes b \diamond a_{2j\alpha} &=&  \hat{\sigma}(\sum_{i,j,\alpha} b \diamond a_{1i\alpha} \otimes a_{2j\alpha} - \sum_{i,j,\alpha} b_{1i\alpha} \otimes b_{2j\alpha} \diamond a), \\
        \sum_{i,j,\alpha} a_{1i\alpha} \otimes a_{2j\alpha} \diamond b &=& \sum_{i,j,\alpha} b \diamond a_{1i\alpha} \otimes a_{2j\alpha} + \Delta_A(a \diamond b) - \Delta_A(b \diamond a) \\
        && + \hat{\sigma}(\sum_{i,j,\alpha} b \diamond a_{1i\alpha} \otimes a_{2j\alpha} - \sum_{i,j,\alpha} b_{1i\alpha} \otimes b_{2j\alpha} \diamond a).
    \end{eqnarray*}
    Using these equations and Eqs.~\eqref{eq:pb1}-\eqref{eq:pb3} together, we have
    \begin{eqnarray*}
        &&(\ad(p \otimes a) \hat{\otimes} \id + \id \hat{\otimes} \ad(p \otimes a))\delta(q \otimes b) - (\ad(q \otimes b) \hat{\otimes} \id + \id \hat{\otimes} \ad(q \otimes b))\delta(p \otimes a) \\
        &&=(\id_{L \hat{\otimes}L} - \hat{\sigma}) \biggl(
            (\sum_{(q)} p \cdot q_{(1)} \otimes q_{(2)}) \bullet (\sum_{i,j,\alpha}a \diamond b_{1i\alpha} \otimes b_{2j\alpha}) - (\sum_{(q)}q_{(1)} \cdot p \otimes q_{(2)}) \bullet (\sum_{i,j,\alpha} b_{1i\alpha} \diamond a \otimes b_{2j\alpha}) \\
        &&\quad + (\sum_{(q)} q_{(1)} \otimes p \cdot  q_{(2)}) \bullet (\sum_{i,j,\alpha} b_{1i\alpha} \otimes a \diamond b_{2j\alpha}) - (\sum_{(q)}q_{(1)} \otimes q_{(2)} \cdot p ) \bullet (\sum_{i,j,\alpha} b_{1i\alpha} \otimes b_{2j\alpha} \diamond a ) \\
        &&\quad - (\sum_{(p)} q \cdot p_{(1)} \otimes p_{(2)}) \bullet (\sum_{i,j,\alpha} b \diamond a_{1i\alpha} \otimes a_{2j\alpha}) + (\sum_{(p)} p_{(1)} \cdot q \otimes p_{(2)}) \bullet (\sum_{i,j,\alpha} a_{1i\alpha} \diamond b \otimes a_{2j\alpha}) \\
        &&\quad - (\sum_{(p)}p_{(1)} \otimes q \cdot p_{(2)}) \bullet (\sum_{i,j,\alpha}a_{1i\alpha} \otimes b \diamond a_{2j\alpha}) + (\sum_{(p)}p_{(1)} \otimes p_{(2)} \cdot q ) \bullet (\sum_{i,j,\alpha} a_{1i\alpha} \otimes a_{2j\alpha} \diamond b )
        \biggr) \\
        &&=(\id_{L \hat{\otimes}L} - \hat{\sigma})\biggl( \\
        &&\quad ( \sum_{(p)}(p_{(1)} \cdot q \otimes p_{(2)} - q \cdot p_{(1)} \otimes p_{(2)} - p_{(2)} \otimes p_{(1)} \cdot q + q \cdot p_{(2)} \otimes p_{(1)} + p_{(1)} \otimes p_{(2)} \cdot q ) \\
        &&\quad\quad -  \sum_{(p)} p_{(2)} \cdot q \otimes p_{(1)} +  \sum_{(q)}(p \cdot q_{(1)} \otimes q_{(2)} - q_{(1)} \otimes p \cdot q_{(2)}) ) \bullet (\sum_{i,j,\alpha} b \diamond a_{1i\alpha} \otimes a_{2j\alpha}) \\
        &&\quad + (-\sum_{(q)}q_{(1)} \cdot p \otimes q_{(2)} + \sum_{(p)}p_{(2)} \otimes p_{(1)} \cdot q) \bullet (\sum_{i,j,\alpha}b_{1i\alpha} \diamond a \otimes b_{2j\alpha}) \\
        &&\quad + (\sum_{(q)}(q_{(1)} \otimes p \cdot q_{(2)} - q_{(1)} \otimes q_{(2)} \cdot p) - \sum_{(p)}(q \cdot p_{(2)} \otimes p_{(1)} - p_{(2)} \cdot q \otimes p_{(1)})) \bullet (\sum_{i,j,\alpha} b_{1i\alpha} \otimes b_{2j\alpha} \diamond a) \\
        &&\quad + (\sum_{(q)}p \cdot q_{(1)} \otimes q_{(2)} - \sum_{(p)}p_{(2)} \otimes p_{(1)} \cdot q + \sum_{(p)}p_{(1)} \otimes p_{(2)} \cdot q) \bullet \Delta_A(a \diamond b) \\
        &&\quad - (\sum_{(q)}p \cdot q_{(1)} \otimes q_{(2)} - \sum_{(p)}p_{(2)} \otimes p_{(1)} \cdot q + \sum_{(p)}p_{(1)} \otimes p_{(2)} \cdot q) \bullet \Delta_A(b \diamond a)
        \biggr) \\
        &&= (\id_{L \hat{\otimes}L} - \hat{\sigma})( (\Delta_P(p \cdot q) - \Delta_P(q \cdot p)) \bullet (\sum_{i,j,\alpha} b_{1i\alpha} \otimes b_{2j\alpha} \diamond a) + (\Delta_P(q \cdot p) - \Delta_P(p \cdot q)) \bullet \Delta_A(b \diamond a) ) \\
        &&\quad + \delta([p \otimes a, q \otimes b]) \\
        &&= (\id_{L \hat{\otimes}L} - \hat{\sigma})( (\Delta_P(p \cdot q) - \Delta_P(q \cdot p)) \bullet (\sum_{i,j,\alpha}(b_{1i\alpha} \otimes b_{2j\alpha} \diamond a) - \Delta_A(b \diamond a) ) ) + \delta([p \otimes a, q \otimes b]).
    \end{eqnarray*}
    Note that
    \begin{eqnarray*}
        \sigma(\Delta_P(p \cdot q) - \Delta_P(q \cdot p)) &=& \Delta_P(p \cdot q) - \Delta_P(q \cdot p),\\
        \hat{\sigma}(\sum_{i,j,\alpha}b_{1i\alpha} \otimes b_{2j\alpha} \diamond a - \Delta_A(b \diamond a) )&=& \sum_{i,j,\alpha}b_{1i\alpha} \otimes b_{2j\alpha} \diamond a - \Delta_A(b \diamond a),
    \end{eqnarray*}
    we show that
    \begin{equation*}
        (\ad(p \otimes a) \hat{\otimes} \id + \id \hat{\otimes} \ad(p \otimes a))\delta(q \otimes b) - (\ad(q \otimes b) \hat{\otimes} \id + \id \hat{\otimes} \ad(q \otimes b))\delta(p \otimes a) = \delta([p \otimes a, q \otimes b]).
    \end{equation*}
    Conversely, suppose that $(L, [-, -], \delta)$ is a completed Lie bialgebra and $(A, \diamond, \omega)$ is the quadratic $\mathbb{Z}$-graded pre-Lie algebra given in Example~\ref{ex:qgpl}.
    Then $(P, \cdot)$ is a perm algebra by Theorem~\ref{thm:pa2l} and $(P, \Delta_P)$ is a perm coalgebra by Theorem~\ref{thm:pca2lc} and Remark~\ref{rmk:qcp}.
    Thus it is sufficient to show that Eqs.~\eqref{eq:pb1}-\eqref{eq:pb3} hold.
    Note that in this case, $\delta$ is defined by Eqs.~\eqref{eq:tic} and \eqref{eq:sic}.
    Taking $i=j=0$ and comparing the coefficients of $s^{0} \otimes s^{-2}$ in the expansion of
    \begin{equation}
        \delta([p t^i, q s^j]) = (\ad(p t^i) \hat{\otimes} \id + \id \hat{\otimes} \ad(p t^i))\delta(q s^j) - (\ad(q s^j) \hat{\otimes} \id + \id \hat{\otimes} \ad(q s^j))\delta(p t^i), \label{eq:pf50}
    \end{equation}
    we have
    \begin{eqnarray}
        &&\Delta_P(p \cdot q) = (\id \otimes L_\cdot(p)) \Delta_P(q) + ( (L_\cdot-R_\cdot)(q) \otimes \id) \Delta_P(p) - ((L_\cdot-R_\cdot)(q) \otimes \id) \sigma \Delta_P(p). \label{eq:pf9}
    \end{eqnarray}
    Taking $i=-1,j=1$ and comparing the coefficients of $s^0 \otimes s^{-2}$ in the expansion of Eq.~\eqref{eq:pf50}, we have
    \begin{equation}
        2 \Delta_P(p \cdot q) = \Delta_P(q \cdot p) + ((L_\cdot-R_\cdot)(a) \otimes \id) \Delta_P(q) + ((L_\cdot-R_\cdot)(p) \otimes \id) \sigma\Delta_P(q) +  (2 \id \otimes R_\cdot(q) - \id \otimes L_\cdot(q)) \Delta_P(p).  \label{eq:pf14}
    \end{equation}
    Similarly, taking $i=0,j=2$ and comparing the coefficients of $s^{1} \otimes t^{-1}$ in the expansion of
    \begin{equation*}
        0 = \delta([p t^i, q t^j]) - (\ad(p t^i) \hat{\otimes} \id + \id \hat{\otimes} \ad(p t^i))\delta(q t^j) + (\ad(q t^j) \hat{\otimes} \id + \id \hat{\otimes} \ad(q t^j))\delta(p t^i),
    \end{equation*}
    we have
    \begin{eqnarray}
        2\Delta_P(p \cdot q) &=& \sigma\Delta_P(p \cdot q) + 2 (L_\cdot(p) \otimes \id) \Delta_P(q)  - (L_\cdot(p) \otimes \id) \sigma \Delta_P(q) + (\id \otimes L_\cdot(q))\Delta_P(p) \nonumber\\
        &&+  (\id \otimes R_\cdot(q))\Delta_P(p)  - (\id \otimes L_\cdot(q))\sigma\Delta_P(p) - (\id \otimes R_\cdot(q))\sigma\Delta_P(p). \label{eq:pf15}
    \end{eqnarray}
    Replacing $\Delta_P(q \cdot p)$ in Eq.~\eqref{eq:pf14} by Eq.~\eqref{eq:pf9}, we obtain
    \begin{equation}
        \Delta_P(p \cdot q) = ((L_\cdot-R_\cdot)(p) \otimes \id) \Delta_P(q) + (\id \otimes R_\cdot(q)) \Delta_P(p). \label{eq:pf10}
    \end{equation}
    Replacing $\Delta_P(p \cdot q)$ in the right hand side of Eq.~\eqref{eq:pf15} by Eq.~\eqref{eq:pf9}, we obtain
    \begin{equation}
        \Delta_P(p \cdot q) = (L_\cdot(p) \otimes \id)\Delta_P(p) - (\id \otimes R_\cdot(q)) \sigma \Delta_P(p)+ (\id \otimes R_\cdot(q))\Delta_P(p). \label{eq:pf11}
    \end{equation}
    Eqs.~\eqref{eq:pf10} and \eqref{eq:pf11} show that
    \begin{equation}
        (R_\cdot(p) \otimes \id)\Delta_P(q) = (\id \otimes R_\cdot(q))\sigma \Delta_P(p). \label{eq:pf12}
    \end{equation}
    Now Eqs.~\eqref{eq:pf9}, \eqref{eq:pf10} and \eqref{eq:pf12} show that $(P, \cdot, \Delta_P)$ is a perm bialgebra.
\end{proof}

\begin{ex}
    Let $(P, \cdot, \Delta_P)$ be the perm bialgebra given in Example~\ref{ex:1p} and $(A, \diamond, \omega)$ be the quadratic $\mathbb{Z}$-graded pre-Lie algebra given in Example~\ref{ex:qgpl}.
    Then by Theorem~\ref{thm:pb2lb}, there is a completed Lie bialgebra $(L:=P \otimes A,  [-,-], \delta)$ defined by
    \begin{eqnarray*}
        [et^i, e t^j] = (j-i) e t^{i+j-1}, \; [e t^i, e s^j] = -[e s^j, e t^i] = (i+j-1)e s^{i+j-1}, \;  [e s^i, e s^j] = 0, \\
        \delta(e t^i) = \sum_{k \in \mathbb{Z}} i( e s^{-k} \otimes e t^{i+k-1} - e t^{i+k-1} \otimes e s^{-k}), \; \delta(e s^i) = \sum_{k \in \mathbb{Z}} (i+2k-1) e s^{-k} \otimes e s^{i+k-1}, \;\; \forall i, j \in \mathbb{Z}.
    \end{eqnarray*}
\end{ex}

\subsection{Characterizations of perm bialgebras}
We now turn to the equivalent characterizations of a perm bialgebra
in terms of a Manin triple of perm algebras and a matched pair of
perm algebras.
In the rest of this section, we assume that all perm algebras and
their representations are finite-dimensional, although some
results still hold in the infinite-dimensional case.
\begin{defi}\label{defi:prep}
    A \textbf{representation} of a perm algebra $(P, \cdot)$ is a triple $(V, l, r)$, where $V$ is a vector space and $l, r: P \to \End_\mathbf{k}(V)$ are linear maps satisfying
    \begin{eqnarray}
        && l(p_1 \cdot p_2) = l(p_1) l(p_2) = l(p_2) l(p_1), \label{eq:prep1}\\
        && r(p_1 \cdot p_2) = r(p_2) r(p_1) = r(p_2) l(p_1) = l(p_1) r(p_2), \;\; \forall p_1, p_2 \in P. \label{eq:prep2}
    \end{eqnarray}
\end{defi}
Obviously $(P, L_\cdot, R_\cdot)$ is a representation of $(P, \cdot)$, called the \textbf{adjoint representation} of $(P, \cdot)$.

\begin{pro}
    Let $(P, \cdot)$ be a perm algebra.
    Let $V$ be a vector space and $l, r: P \to \End_\mathbf{k}(V)$ be linear maps.
    Define a binary operation, still denoted by $\cdot$, on $P \oplus V$ by
    \begin{equation*}
        (p_1 + v_1) \cdot (p_2 + v_2) := p_1 \cdot p_2 + (l(p_1) v_2 + r(p_2) v_1), \;\; \forall p_1, p_2 \in P, \; v_1, v_2 \in V.
    \end{equation*}
    Then $(V, l, r)$ is a representation of $(P, \cdot)$ if and only if $(P \oplus V, \cdot)$ is a perm algebra.
    In such a case, we call $(P \oplus V, \cdot)$ the \textbf{semi-direct product perm algebra by $(P, \cdot)$ and $(V, l, r)$} and denote it by $(P \ltimes_{l, r} V, \cdot)$.
\end{pro}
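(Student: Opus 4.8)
The plan is to verify the defining perm-algebra identity \eqref{eq:perm} on $P \oplus V$ by a direct expansion, exploiting the fact that the multiplication splits every product into a $P$-component and a $V$-component. For arbitrary elements $x_i = p_i + v_i$ with $p_i \in P$, $v_i \in V$ ($i=1,2,3$), I would compute the three products $x_1 \cdot (x_2 \cdot x_3)$, $(x_1 \cdot x_2) \cdot x_3$ and $(x_2 \cdot x_1) \cdot x_3$ using the given operation. In each case the $P$-component is precisely the corresponding term of \eqref{eq:perm} for $(P, \cdot)$, which holds by hypothesis and therefore contributes nothing to the equivalence; all the content resides in the $V$-components, which come out as
\begin{gather*}
l(p_1)l(p_2)v_3 + l(p_1)r(p_3)v_2 + r(p_2 \cdot p_3)v_1, \\
l(p_1 \cdot p_2)v_3 + r(p_3)l(p_1)v_2 + r(p_3)r(p_2)v_1, \\
l(p_2 \cdot p_1)v_3 + r(p_3)l(p_2)v_1 + r(p_3)r(p_1)v_2,
\end{gather*}
respectively.

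Since $v_1, v_2, v_3 \in V$ are arbitrary and mutually independent, equality of any two of these expressions for all choices of the $v_i$ is equivalent to equality of the operator coefficients of each $v_i$ separately; concretely, each coefficient is isolated by setting the other two vectors to zero. Matching the first two $V$-components (the left-associative half of \eqref{eq:perm}) then yields $l(p_1 \cdot p_2) = l(p_1)l(p_2)$, $l(p_1)r(p_3) = r(p_3)l(p_1)$ and $r(p_2 \cdot p_3) = r(p_3)r(p_2)$, while matching the second and third (the left-commutative half) yields $l(p_1 \cdot p_2) = l(p_2 \cdot p_1)$ and $r(p_3)l(p) = r(p_3)r(p)$ for all $p, p_3 \in P$.

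The remaining step is to check that, after relabeling the $p_i$, this collection of coefficient identities is exactly equivalent to the representation axioms \eqref{eq:prep1} and \eqref{eq:prep2}. Combining $l(p_1 \cdot p_2) = l(p_1)l(p_2)$ with $l(p_1 \cdot p_2) = l(p_2 \cdot p_1)$ recovers the full \eqref{eq:prep1}, and the three $r$-identities assemble into the chain $r(p_1 \cdot p_2) = r(p_2)r(p_1) = r(p_2)l(p_1) = l(p_1)r(p_2)$ of \eqref{eq:prep2}. Because both directions of the asserted equivalence are governed by this single coefficient comparison, establishing it proves the proposition in both directions at once.

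There is no genuine obstacle beyond careful bookkeeping; the only point demanding attention is confirming that the somewhat redundant-looking list of coefficient identities is \emph{precisely} the representation conditions, neither stronger nor weaker, so that no part of \eqref{eq:prep1} or \eqref{eq:prep2} is missed and none is spuriously imposed. In particular, I would emphasize that \emph{both} halves of the perm identity \eqref{eq:perm} are needed: the left-associative half produces the ``product'' identities $l(p_1 \cdot p_2) = l(p_1)l(p_2)$ and $r(p_1 \cdot p_2) = r(p_2)r(p_1)$, whereas the left-commutative half produces the commutation and symmetry identities relating $l$ and $r$.
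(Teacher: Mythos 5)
Your proposal is correct and takes essentially the same approach as the paper: the paper's proof simply states that the result follows directly from Definitions~\ref{defi:perm} and \ref{defi:prep} (or alternatively by specializing Theorem~\ref{thm:mc} to the case where $P_2 = V$ carries the trivial product), and your coefficient-by-coefficient expansion is exactly that direct verification carried out in full, with the extracted operator identities assembling precisely into Eqs.~\eqref{eq:prep1} and \eqref{eq:prep2} in both directions. No gaps.
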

\begin{proof}
    It can be proved directly by Definitions~\ref{defi:perm} and \ref{defi:prep} or as a direct consequence of
    Theorem~\ref{thm:mc} in the special case that $P_2$ is a perm
    algebra with trivial products.
\end{proof}

Denote the standard pairing between the dual space $V^*$ and $V$ by
\begin{equation*}
    \langle \ ,\ \rangle: V^* \otimes V \to \mathbf{k}, \;\; \langle v^*, v \rangle := v^*(v), \;\; \forall v \in V, v^* \in V^*.
\end{equation*}
For a linear map $\varphi: V \to W$, the transpose map $\varphi^*: W^* \to V^*$ is defined by
\begin{equation*}
    \langle \varphi^*(w^*), v \rangle := \langle w^*, \varphi(v)\rangle, \;\; \forall v \in V, w^* \in W^*.
\end{equation*}
Evidently, $(\varphi \psi)^* = \psi^* \varphi^*$, where $\psi: U \to V$ and $\varphi: V \to W$ are linear maps.

Let $(P, \cdot)$ be a perm algebra and $V$ be a vector space.
For a linear map $\rho: P \to \End_\mathbf{k}(V)$, the linear map $\rho^*: P \to \End_\mathbf{k}(V^*)$ is defined by
\begin{equation*}
    \langle \rho^*(p)v^*, v\rangle := \langle v^*, \rho(p) v\rangle, \;\; \forall p \in P, v \in V, v^* \in V^*.
\end{equation*}
Clearly, $\rho^*(p) = \rho(p)^*$ for all $p \in P$.

\begin{pro}
    Let $(P, \cdot)$ be a perm algebra and $(V, l, r)$ be a representation of $(P, \cdot)$.
    Then $(V^*, l^*, l^* - r^*)$ is a representation of $(P, \cdot)$.
    In particular, $(P^*, L_\cdot^*, L_\cdot^* - R_\cdot^*)$ is a representation of $(P, \cdot)$.
\end{pro}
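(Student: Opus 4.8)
The plan is to derive the representation axioms for $(V^*, l^*, l^* - r^*)$ directly from those for $(V, l, r)$ by transposition, using only the rules $(\varphi\psi)^* = \psi^*\varphi^*$ and $\rho^*(p) = \rho(p)^*$ recorded above. Since transposition reverses the order of composition, applying $(-)^*$ to Eq.~\eqref{eq:prep1} turns $l(p_1 \cdot p_2) = l(p_1)l(p_2) = l(p_2)l(p_1)$ into
\[
    l^*(p_1 \cdot p_2) = l^*(p_2)l^*(p_1) = l^*(p_1)l^*(p_2),
\]
which is precisely Eq.~\eqref{eq:prep1} for the map $l^*$. Thus the ``left'' axiom holds for $(V^*, l^*, l^*-r^*)$ with no further work, and it remains only to verify Eq.~\eqref{eq:prep2} for the pair $(l^*, \tilde r)$, where I write $\tilde r := l^* - r^*$.

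The engine for this is the transpose of Eq.~\eqref{eq:prep2}, namely the four-fold identity
\[
    r^*(p_1 \cdot p_2) = r^*(p_1)r^*(p_2) = l^*(p_1)r^*(p_2) = r^*(p_2)l^*(p_1), \qquad \forall p_1, p_2 \in P.
\]
Two of the three required equalities follow by plain substitution. Expanding $\tilde r(p_2)l^*(p_1) = l^*(p_2)l^*(p_1) - r^*(p_2)l^*(p_1)$ and recognizing $l^*(p_2)l^*(p_1) = l^*(p_1 \cdot p_2)$ and $r^*(p_2)l^*(p_1) = r^*(p_1 \cdot p_2)$ gives $\tilde r(p_2)l^*(p_1) = l^*(p_1 \cdot p_2) - r^*(p_1 \cdot p_2) = \tilde r(p_1 \cdot p_2)$; the same step applied to $l^*(p_1)\tilde r(p_2) = l^*(p_1)l^*(p_2) - l^*(p_1)r^*(p_2)$ yields $\tilde r(p_1 \cdot p_2)$ as well.

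The one step requiring care is the product $\tilde r(p_2)\tilde r(p_1)$. Expanding
\[
    (l^*(p_2) - r^*(p_2))(l^*(p_1) - r^*(p_1)) = l^*(p_2)l^*(p_1) - l^*(p_2)r^*(p_1) - r^*(p_2)l^*(p_1) + r^*(p_2)r^*(p_1),
\]
the crucial observation is that the transposed Eq.~\eqref{eq:prep2}, applied to the pair $(p_2, p_1)$, gives $l^*(p_2)r^*(p_1) = r^*(p_2 \cdot p_1) = r^*(p_2)r^*(p_1)$, so the two mixed terms cancel; the survivors $l^*(p_2)l^*(p_1) = l^*(p_1 \cdot p_2)$ and $-r^*(p_2)l^*(p_1) = -r^*(p_1 \cdot p_2)$ again recombine to $\tilde r(p_1 \cdot p_2)$. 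This cancellation is the sole subtlety of the proof. Once it is noted, all three equalities of Eq.~\eqref{eq:prep2} hold for $(l^*, \tilde r)$, so $(V^*, l^*, l^* - r^*)$ is a representation of $(P, \cdot)$; the final assertion is the special case obtained by taking the adjoint representation $(P, L_\cdot, R_\cdot)$.
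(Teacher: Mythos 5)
Your verification is correct: the transposed identities $l^*(p_1\cdot p_2)=l^*(p_1)l^*(p_2)=l^*(p_2)l^*(p_1)$ and $r^*(p_1\cdot p_2)=r^*(p_1)r^*(p_2)=l^*(p_1)r^*(p_2)=r^*(p_2)l^*(p_1)$ are exactly what transposition of Eqs.~\eqref{eq:prep1}--\eqref{eq:prep2} yields, and your cancellation of the mixed terms in $(l^*-r^*)(p_2)(l^*-r^*)(p_1)$ via $l^*(p_2)r^*(p_1)=r^*(p_2\cdot p_1)=r^*(p_2)r^*(p_1)$ is valid, so all axioms hold for $(V^*,l^*,l^*-r^*)$. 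The paper dismisses this as a straightforward verification, and your argument is precisely that verification carried out in full, so the approaches coincide.
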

\begin{proof}
    It follows from a straightforward verification.
\end{proof}

\begin{thm}\label{thm:mc}
    Let $(P_1, \cdot)$ and $(P_2, \circ)$ be perm algebras.
    Suppose that $(P_2, l_{P_1}, r_{P_1})$ and $(P_1, l_{P_2}, r_{P_2})$ are representations of $(P_1, \cdot)$ and $(P_2, \circ)$ respectively satisfying
    \begin{eqnarray}
        l_{P_1}(p_1) ( p_2 \circ p_2^\prime) &=& (l_{P_1}(p_1) p_2) \circ p_2^\prime + l_{P_1}(r_{P_2}(p_2)p_1)p_2^\prime, \label{eq:pmp1}\\
        r_{P_1}(p_1)(p_2 \circ p_2^\prime) &=& p_2 \circ (r_{P_1}(p_1)p_2^\prime) + r_{P_1}(l_{P_2}(p_2^\prime)p_1)p_2, \label{eq:pmp2}\\
        l_{P_2}(p_2) ( p_1 \cdot p_1^\prime) &=& (l_{P_2}(p_2) p_1) \cdot p_1^\prime + l_{P_2}(r_{P_1}(p_1)p_2)p_1^\prime, \label{eq:pmp3}\\
        r_{P_2}(p_2)(p_1 \cdot p_1^\prime) &=& p_1 \cdot (r_{P_2}(p_2)p_1^\prime) + r_{P_2}(l_{P_1}(p_1^\prime)p_2)p_1, \label{eq:pmp4}\\
        (r_{P_1}(p_1)p_2) \circ p_2^\prime + l_{P_1}(l_{P_2}(p_2)p_1) p_2^\prime &=& p_2 \circ (l_{P_1}(p_1)p_2^\prime) + r_{P_1}(r_{P_2}(p_2^\prime)p_1) p_2, \label{eq:pmp5}\\
        (r_{P_2}(p_2)p_1) \cdot p_1^\prime + l_{P_2}(l_{P_1}(p_1)p_2) p_1^\prime &=& p_1 \cdot (l_{P_2}(p_2)p_1^\prime) + r_{P_2}(r_{P_1}(p_1^\prime)p_2) p_1, \label{eq:pmp6}\\
        (l_{P_1}(p_1) p_2) \circ p_2^\prime + l_{P_1}(r_{P_2}(p_2)p_1)p_2^\prime &=& (r_{P_1}(p_1)p_2) \circ p_2^\prime + l_{P_1}(l_{P_2}(p_2)p_1)p_2^\prime, \label{eq:pmp7}\\
        (l_{P_2}(p_2) p_1) \cdot p_1^\prime + l_{P_2}(r_{P_1}(p_1)p_2)p_1^\prime &=& (r_{P_2}(p_2)p_1) \cdot p_1^\prime + l_{P_2}(l_{P_1}(p_1)p_2)p_1^\prime, \label{eq:pmp8}\\
        r_{P_1}(p_1)(p_2 \circ p_2^\prime) &=& r_{P_1}(p_1)(p_2^\prime \circ p_2), \label{eq:pmp9}\\
        r_{P_2}(p_2)(p_1 \cdot p_1^\prime) &=& r_{P_2}(p_2)(p_1^\prime \cdot p_1), \label{eq:pmp10}
    \end{eqnarray}
    for all $p_1, p_1^\prime \in P_1, p_2, p_2^\prime \in P_2$.
    Then there is a perm algebra structure $\star$ on the direct sum $P_1 \oplus P_2$ of the underlying vector spaces of $(P_1, \cdot)$ and $(P_2, \circ)$ given by
    \begin{equation}\label{eq:mp2m}
        (p_1 + p_2) \star (p_1^\prime + p_2^\prime) := (p_1 \cdot p_1^\prime + l_{P_2}(p_2) p_1^\prime + r_{P_2}(p_2^\prime)p_1) + (p_2 \circ p_2^\prime + l_{P_1}(p_1) p_2^\prime + r_{P_1}(p_1^\prime)
        p_2),
    \end{equation}
    for all $p_1, p_1^\prime \in P_1, p_2, p_2^\prime \in P_2$.
    The resulting perm algebra $(P_1 \oplus P_2, \star)$ is denoted by $(P_1 \bowtie_{l_{P_1}, r_{P_1}}^{l_{P_2}, r_{P_2}} P_2, \star)$ or simply $(P_1 \bowtie P_2, \star)$, and the sextuple $((P_1, \cdot), (P_2, \circ), l_{P_1}, r_{P_1}, l_{P_2}, r_{P_2})$ satisfying the above conditions is called a \textbf{matched pair of perm algebras}.
    Conversely, every perm algebra with a decomposition into the direct sum of the underlying vector spaces of two perm subalgebras can be obtained in this way.
\end{thm}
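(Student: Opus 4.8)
The plan is to verify directly that the operation $\star$ defined by Eq.~\eqref{eq:mp2m} satisfies the two perm-algebra identities packaged in Eq.~\eqref{eq:perm}, and then to obtain the converse by recovering the four module maps from the projections onto $P_1$ and $P_2$. Since $\star$ is bilinear and the underlying space is the direct sum $P_1 \oplus P_2$, it suffices to check the identities $x \star (y \star z) = (x \star y) \star z = (y \star x) \star z$ when each of $x, y, z$ lies entirely in $P_1$ or entirely in $P_2$, which gives $2^3 = 8$ cases. In each case I would expand both compositions using Eq.~\eqref{eq:mp2m} and separate the outcome into its $P_1$- and $P_2$-components.

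The two pure cases $(P_1, P_1, P_1)$ and $(P_2, P_2, P_2)$ reduce immediately to the perm identities of $(P_1, \cdot)$ and $(P_2, \circ)$, since these are subalgebras. The six mixed cases reduce, component by component, to the hypotheses: the representation axioms Eqs.~\eqref{eq:prep1}-\eqref{eq:prep2} for $(P_2, l_{P_1}, r_{P_1})$ and $(P_1, l_{P_2}, r_{P_2})$ (these surface in the cases with two arguments in the same factor, where products internal to $P_1$ or $P_2$ act on the remaining element), together with the ten compatibility conditions Eqs.~\eqref{eq:pmp1}-\eqref{eq:pmp10}. For instance, comparing the $P_2$-components in the case $x \in P_1$, $y, z \in P_2$ yields Eq.~\eqref{eq:pmp1}, while the left-commutativity $(x \star y) \star z = (y \star x) \star z$ in the case $x, y \in P_2$, $z \in P_1$ forces Eq.~\eqref{eq:pmp9} (and Eq.~\eqref{eq:pmp10} arises symmetrically). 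Each of Eqs.~\eqref{eq:pmp1}-\eqref{eq:pmp10} occupies a definite case-and-component slot, so no condition is redundant and none is missing.

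For the converse, suppose $(P, \star)$ is a perm algebra with $P = P_1 \oplus P_2$ as vector spaces, where $(P_1, \cdot)$ and $(P_2, \circ)$ are perm subalgebras, and let $\pi_1, \pi_2$ be the projections onto $P_1, P_2$. I would define
\begin{equation*}
    l_{P_1}(p_1)p_2 := \pi_2(p_1 \star p_2), \quad r_{P_1}(p_1)p_2 := \pi_2(p_2 \star p_1), \quad l_{P_2}(p_2)p_1 := \pi_1(p_2 \star p_1), \quad r_{P_2}(p_2)p_1 := \pi_1(p_1 \star p_2),
\end{equation*}
for $p_1 \in P_1$, $p_2 \in P_2$. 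Because $P_1, P_2$ are subalgebras, one has $\pi_1(p_1 \star p_1') = p_1 \cdot p_1'$ and $\pi_2(p_2 \star p_2') = p_2 \circ p_2'$, so expanding $(p_1 + p_2) \star (p_1' + p_2')$ by bilinearity and sorting by components reproduces exactly Eq.~\eqref{eq:mp2m}. Applying the perm identities of $(P, \star)$ to elements with one or two arguments in $P_1$ and projecting onto each summand then yields the representation axioms and all of Eqs.~\eqref{eq:pmp1}-\eqref{eq:pmp10}; this is literally the reverse reading of the case analysis from the forward direction.

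The main obstacle is organizational rather than conceptual: the sheer number of identities to match up (eight cases, two components each, plus both the associativity-type and the left-commutativity parts of Eq.~\eqref{eq:perm}) makes the bookkeeping error-prone. The key to controlling it is the observation that the forward and converse arguments are genuinely inverse expansions of the same finite list of equalities, so it is enough to tabulate once which case-and-component slot produces each of Eqs.~\eqref{eq:pmp1}-\eqref{eq:pmp10} and then read that table in both directions.
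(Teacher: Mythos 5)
Your proposal is correct and coincides with the paper's approach: the paper's proof of Theorem~\ref{thm:mc} is simply ``It is straightforward,'' meaning exactly the direct case-by-case verification of Eq.~\eqref{eq:perm} on $P_1 \oplus P_2$ that you outline, with the converse recovered via the projections as you describe. Your sample slot assignments (e.g.\ Eq.~\eqref{eq:pmp1} from the $P_2$-component of associativity with $x \in P_1$, $y, z \in P_2$, and Eqs.~\eqref{eq:pmp9}--\eqref{eq:pmp10} from left-commutativity in the mixed cases) check out, so the plan is sound.
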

\begin{proof}
    It is straightforward.
\end{proof}

\begin{thm}\label{thm:mp2pb}
    Let $(P, \cdot)$ be a perm algebra and $\Delta_P: P \to P \otimes P$ be a linear map.
    Suppose that the linear dual of $\Delta_P$ gives a perm algebra structure $\circ$ on $P^*$.
    Then $((P, \cdot), (P^*, \circ), L_\cdot^* , L_\cdot^* - R_\cdot^*, L_\circ^*, L_\circ^* - R_\circ^*)$ is a matched pair of perm algebras if and only if $(P, \cdot, \Delta_P)$ is a perm bialgebra.
\end{thm}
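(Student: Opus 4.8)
The plan is to establish both implications simultaneously by translating each of the ten matched pair conditions \eqref{eq:pmp1}--\eqref{eq:pmp10} into a tensor identity in $P \otimes P$ (or $P \otimes P \otimes P$) via the canonical pairing, and to recognize the resulting identities as \eqref{eq:pb1}--\eqref{eq:pb3} modulo the perm algebra axiom \eqref{eq:perm} and its coalgebra dual. Since each such translation is a reversible equivalence, the ``if'' and ``only if'' directions come out together; no separate argument for the two directions is needed.

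First I would fix the dictionary between operators and coproducts. Writing $\langle \xi \circ \eta, p\rangle = \langle \xi \otimes \eta, \Delta_P(p)\rangle$ for the dual product and recording $\langle L_\cdot^*(p)\xi, q\rangle = \langle \xi, p \cdot q\rangle$, $\langle R_\cdot^*(p)\xi, q\rangle = \langle \xi, q \cdot p\rangle$ (and symmetrically for $L_\circ^*, R_\circ^*$), every occurrence of $l_{P_1} = L_\cdot^*$, $r_{P_1} = L_\cdot^* - R_\cdot^*$, $l_{P_2} = L_\circ^*$, $r_{P_2} = L_\circ^* - R_\circ^*$ becomes a transpose. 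The action of $P$ on $P^*$ thus carries the product $\cdot$, while the action of $P^*$ on $P$ carries $\circ$, hence $\Delta_P$. Pairing a matched pair identity against a suitable covector and expanding $\langle \xi \circ \eta, p \cdot q\rangle = \langle \xi \otimes \eta, \Delta_P(p \cdot q)\rangle$ then produces $\Delta_P$ evaluated on a product, while a transpose crossing the pairing is exactly what generates the flip $\sigma$ appearing in \eqref{eq:pb2} and \eqref{eq:pb3}.

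Next I would dualize equation by equation, exploiting the natural split of the list into the five conditions \eqref{eq:pmp3}, \eqref{eq:pmp4}, \eqref{eq:pmp6}, \eqref{eq:pmp8}, \eqref{eq:pmp10} involving the product $\cdot$ on $P_1 = P$ and the five conditions \eqref{eq:pmp1}, \eqref{eq:pmp2}, \eqref{eq:pmp5}, \eqref{eq:pmp7}, \eqref{eq:pmp9} involving the product $\circ$ on $P_2 = P^*$. Each $\cdot$-condition, paired against a covector and read off against the free arguments, reduces to a statement about $\Delta_P$ on a product $p \cdot q$, and I expect the $\cdot$-family to be collectively equivalent to the cocycle identities \eqref{eq:pb1}--\eqref{eq:pb3}. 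The $\circ$-conditions dualize to the mirror identities obtained by exchanging the roles of $\cdot$ and $\Delta_P$ and inserting $\sigma$; because \eqref{eq:pb1}--\eqref{eq:pb3} are arranged to be self-dual as a set under this exchange, I expect the $\circ$-family to impose no constraints beyond those already extracted from the $\cdot$-family.

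The main obstacle I anticipate is not any single computation but the redundancy bookkeeping. One must track the transposes, the combinations $L^\ast - R^\ast$, and the flip $\sigma$ consistently across all ten equations, and, crucially, verify that the five $\circ$-conditions genuinely collapse onto the same three identities \eqref{eq:pb1}--\eqref{eq:pb3} produced by the $\cdot$-conditions rather than yielding independent constraints. Establishing this self-duality of the compatibility system — so that exactly three of the ten matched pair equations are independent and coincide with \eqref{eq:pb1}--\eqref{eq:pb3} — is the delicate part; once it is in place, each remaining verification is a routine unwinding of the pairing dictionary, and the equivalence, being reversible at every step, delivers both directions of the theorem at once.
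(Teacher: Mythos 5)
Your outline has the same skeleton as the paper's proof: fix the pairing dictionary, translate each of the ten matched-pair conditions \eqref{eq:pmp1}--\eqref{eq:pmp10} into a tensor identity for $\Delta_P$ (each translation being an equivalence by nondegeneracy of the pairing, so both implications of the theorem come out simultaneously), and then show that the ten translated identities collapse to \eqref{eq:pb1}--\eqref{eq:pb3}. The divergence is in how you organize the collapse. The paper translates \emph{all ten} conditions into identities on $(P,\cdot,\Delta_P)$ (its Eqs.~\eqref{pf:pmp1}--\eqref{pf:pmp10}) and then proves redundancy by an explicit list of implications, ending with the generating set $\{\eqref{pf:pmp3},\eqref{pf:pmp7},\eqref{pf:pmp10}\}$. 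Note that this set mixes your two families --- \eqref{eq:pmp7} is a $\circ$-condition --- and that \eqref{pf:pmp7} equals \eqref{eq:pb3} only modulo \eqref{eq:pb1}, so your statement that three of the ten equations literally ``coincide with'' the bialgebra axioms is a slight overstatement. Your factual claims are nonetheless correct: one can check that the translated $\cdot$-family $\{\eqref{pf:pmp3},\eqref{pf:pmp8},\eqref{pf:pmp10}\}$ already implies \eqref{eq:pb1}--\eqref{eq:pb3}, and conversely the bialgebra axioms imply all ten identities, so the $\circ$-family is indeed redundant.

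The weak point is the mechanism you propose for dispatching the $\circ$-family. You read those five conditions on the dual side, as ``mirror'' bialgebra conditions for $(P^*,\circ,\Delta_{P^*})$, and then appeal to self-duality of the axiom set. But that self-duality is not a formal symmetry one can read off: the axioms \eqref{eq:pb1} and \eqref{eq:pb3} do not individually dualize to members of the set, and verifying that the set as a whole is self-dual is a computation of the same type and length as the redundancy check you are trying to avoid. Worse, the tempting shortcut --- deducing self-duality from the symmetry of the matched-pair notion plus the theorem --- is circular, since the theorem is what is being proved. The clean repair, which is exactly what the paper does, is to pair the $\circ$-conditions against elements of $P$ as well (using finite-dimensionality, $P^{**}\cong P$), so that they too become explicit identities on $(P,\cdot,\Delta_P)$; then the entire redundancy question is a finite bookkeeping of implications among ten concrete identities, with no self-duality needed. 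In short, your strategy is sound and your intermediate claims are true, but the ``delicate part'' you defer is the actual content of the proof, and the self-duality framing does not reduce it --- it either reproduces the paper's computation in disguise or risks circularity.
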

\begin{proof}
    Let $m^*: P^* \to P^* \otimes P^*$ be the linear dual of $\cdot$ and write $m^*(q^*) = \sum_{(q^*)} q_{(1)}^* \otimes q_{(2)}^*$ for all $q^* \in P^*$.
  Let $p, q \in P$ and $p^*, q^* \in P^*$. Then we have
    \begin{eqnarray*}
        &&\langle q^*, (L_\circ^*(p^*) p) \cdot q + L_\circ^*((L_\cdot^* - R_\cdot^*)(p)p^*)q - L_\circ^*(p^*) ( p \cdot q)\rangle \\
        &&=\sum_{(q^*)}\langle q_{(1)}^*, L_\circ^*(p^*) p\rangle \langle q_{(2)}^*, q\rangle + \langle ((L_\cdot^* - R_\cdot^*)(p)p^*) \circ q^* ,q\rangle - \langle p^* \otimes q^*, \Delta_P(p \cdot q)\rangle \\
        &&= \sum_{(q^*)}\langle p^* \circ q_{(1)}^*, p\rangle \langle q_{(2)}^*, q\rangle + \langle p^* \otimes q^* ,((L_\cdot - R_\cdot)(p) \otimes \id)\Delta_P(q) - \Delta_P(p \cdot q)\rangle \\
        &&= \sum_{(p)}\sum_{(q^*)}\langle p^*, p_{(1)}\rangle \langle q_{(1)}^*, p_{(2)}\rangle \langle q_{(2)}^*, q\rangle + \langle p^* \otimes q^* ,((L_\cdot - R_\cdot)(p) \otimes \id)\Delta_P(q) - \Delta_P(p \cdot q)\rangle \\
        &&= \sum_{(p)}\langle p^*, p_{(1)}\rangle \langle q^*, p_{(2)} \cdot q\rangle + \langle p^* \circ q^* ,((L_\cdot - R_\cdot)(p) \otimes \id)\Delta_P(q) - \Delta_P(p \cdot q)\rangle \\
        &&= \langle p^* \otimes q^* , ((L_\cdot - R_\cdot)(p) \otimes \id)\Delta_P(q) + (\id \otimes R_\cdot(q))\Delta_P(p) - \Delta_P(p \cdot q)\rangle.
    \end{eqnarray*}
    Hence Eq.~\eqref{eq:pmp3} holds , where $r_{P_1} = L_\cdot^* - R_\cdot^*, l_{P_2}=L_\circ^*$, if and only if Eq.~\eqref{eq:pb1} holds.
    Similar argument shows that Eqs.~\eqref{eq:pmp1}-\eqref{eq:pmp10}
    (here Eq.~\eqref{eq:pmp3} is still kept for completeness) hold, where $l_{P_1}=L_\cdot^*, r_{P_1} = L_\cdot^* - R_\cdot^*, l_{P_2}=L_\circ^*, r_{P_2} = L_\circ^* - R_\circ^*$, if and only if the following equations hold respectively:
    \begin{eqnarray}
        \Delta_P(p \cdot q) &=& (L_\cdot(p) \otimes \id)\Delta_P(q) + (\id \otimes R_\cdot(q))\Delta_P(p) - (\id \otimes R_\cdot(q))\sigma\Delta(p), \label{pf:pmp1} \\
        \Delta_P(p \cdot q) - \Delta_P(q \cdot p) &=& (\id \otimes (L_\cdot - R_\cdot)(p) )\Delta_P(p) - ((L_\cdot-R_\cdot)(q) \otimes \id)\sigma\Delta_P(p), \label{pf:pmp2} \\
        \Delta_P(p \cdot q) &=& ((L_\cdot-R_\cdot)(p) \otimes \id)\Delta_P(q) + (\id \otimes R_\cdot(q))\Delta_P(p), \label{pf:pmp3} \\
        \Delta_P(p \cdot q) - \sigma\Delta_P(p \cdot q) &=& (\id \otimes L_\cdot(p))(\Delta_P(q) - \sigma\Delta_P(q)) + (L_\cdot(q) \otimes \id)(\Delta_P(p) - \sigma\Delta_P(p)), \label{pf:pmp4} \\
        \Delta_P(p \cdot q) - \Delta_P(q \cdot p) &=& \sigma\Delta_P(p \cdot q) - \sigma\Delta_P(q \cdot p), \label{pf:pmp5} \\
        \Delta_P(p \cdot q) - \sigma\Delta_P(p \cdot q) &=& \Delta_P(q \cdot p) - \sigma\Delta_P(q \cdot p), \label{pf:pmp6} \\
        ((L_\cdot-R_\cdot)(p) \otimes \id)\Delta_P(q) &=& (\id \otimes L_\cdot(p))\Delta_P(q) -(\id \otimes R_\cdot(q))\Delta_P(p) \nonumber \\
        && + ((L_\cdot-R_\cdot)(q) \otimes \id)(\Delta_P(p)-\sigma \Delta_P(p)) ,  \label{pf:pmp7} \\
        (L_\cdot(p) \otimes \id)\Delta_P(q) &=& (\id \otimes L_\cdot(p))\Delta_P(q) -(\id \otimes R_\cdot(q))(\Delta_P(p) - \sigma\Delta_P(p)) \nonumber \\
        &&+ ((L_\cdot-R_\cdot)(q) \otimes \id)(\Delta_P(p)-\sigma \Delta_P(p)),  \label{pf:pmp8} \\
        (\id \otimes R_\cdot(q))\sigma\Delta_P(p) &=& (R_\cdot(p) \otimes \id)\Delta_P(q),  \label{pf:pmp9} \\
        (\id \otimes R_\cdot(q))\sigma\Delta_P(p) &=& (R_\cdot(p) \otimes \id)\Delta_P(q).  \label{pf:pmp10}
    \end{eqnarray}
    Note that
    \begin{eqnarray*}
        &&{\rm Eq.~\eqref{pf:pmp7}~and~Eq.~\eqref{pf:pmp10}} \Longrightarrow {\rm Eq.~\eqref{pf:pmp8}}, \;\; {\rm Eq.~\eqref{pf:pmp10}} \Longleftrightarrow {\rm Eq.~\eqref{pf:pmp9}}, \\
        &&{\rm Eq.~\eqref{pf:pmp3}~and~Eq.~\eqref{pf:pmp10}} \Longrightarrow {\rm Eq.~\eqref{pf:pmp1}}, \;\;
        {\rm Eq.~\eqref{pf:pmp3}~and~Eq.~\eqref{pf:pmp7}} \Longrightarrow {\rm Eq.~\eqref{pf:pmp2}}, \;\;\\
        &&{\rm Eq.~\eqref{pf:pmp3},~ Eq.~\eqref{pf:pmp7}~and~Eq.~\eqref{pf:pmp10}} \Longrightarrow {\rm Eq.~\eqref{pf:pmp4}} \Longrightarrow {\rm Eq.~\eqref{pf:pmp5}} \Longleftrightarrow {\rm
        Eq.~\eqref{pf:pmp6}}.
    \end{eqnarray*}
   Hence $((P, \cdot), (P^*, \circ), L_\cdot^* , L_\cdot^* - R_\cdot^*, L_\circ^*, L_\circ^* - R_\circ^*)$ is a matched pair of perm algebras if and only if Eqs.~\eqref{pf:pmp3}, \eqref{pf:pmp7} and \eqref{pf:pmp10}
   hold, that is,  $(P, \cdot, \Delta_P)$ is a perm bialgebra.
\end{proof}

\begin{defi}\label{defi:qp}
    Let $(P, \cdot)$ be a perm algebra.
    A bilinear form $\kappa$ on $(P, \cdot)$ is called \textbf{invariant} if
    \begin{equation}\label{eq:qpi}
        \kappa(p_1 \cdot p_2, p_3) = \kappa(p_1, p_2 \cdot p_3 - p_3 \cdot p_2), \;\; \forall p_1, p_2, p_3 \in P.
    \end{equation}
    A \textbf{quadratic perm algebra} $(P, \cdot, \kappa)$ is a perm algebra $(P, \cdot)$ with a skew-symmetric nondegenerate invariant bilinear form $\kappa$.
\end{defi}

\begin{defi}
    Let $(P, \cdot)$ and $(P^*, \circ)$ be perm algebras.
    Suppose that there is a perm algebra structure $\star$ on the direct sum $P \oplus P^*$ of the underlying vector spaces $(P, \cdot)$ and $(P^*, \circ)$, which contains both $(P, \cdot)$ and $(P^*, \circ)$ as perm subalgebras.
    Define a bilinear form on $P \oplus P^*$ by
    \begin{equation}
        \kappa_d(p_1 + p_1^*, p_2+ p_2^*):= \langle p_1^*, p_2 \rangle - \langle p_2^*, p_1 \rangle, \;\; \forall p_1, p_2 \in P, p_1^*, p_2^* \in P^*. \label{eq:mtpa}
    \end{equation}
    If $\kappa_d$ is invariant on $(P \oplus P^*, \star)$, then $(P \oplus P^*, \star, \kappa_d)$ is a quadratic perm algebra and the triple $((P \oplus P^*, \star, \kappa_d), P, P^*)$ is called a \textbf{(standard) Manin triple of perm algebras associated to $(P, \cdot)$ and $(P^*, \circ)$}.
\end{defi}

\begin{thm}\label{thm:mt2mp}
    Let $(P, \cdot)$ and $(P^*, \circ)$ be perm algebras.
    Then there is a Manin triple of perm algebras associated to $(P, \cdot)$ and $(P^*, \circ)$ if and only if $((P, \cdot), (P^*, \circ), L_\cdot^* , L_\cdot^* - R_\cdot^*, L_\circ^*, L_\circ^* - R_\circ^*)$ is a matched pair of perm algebras.
\end{thm}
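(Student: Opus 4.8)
The plan is to combine the matched-pair construction of Theorem~\ref{thm:mc} with a direct analysis of when the canonical form $\kappa_d$ is invariant, reducing the whole statement to a single equivalence between invariance of $\kappa_d$ and the representations being the prescribed duals.

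First I would set up the dictionary. Suppose $\star$ is a perm algebra structure on $P \oplus P^*$ having $(P, \cdot)$ and $(P^*, \circ)$ as subalgebras. By the converse half of Theorem~\ref{thm:mc}, $\star$ comes from a matched pair $((P, \cdot), (P^*, \circ), l_{P_1}, r_{P_1}, l_{P_2}, r_{P_2})$ with $l_{P_1}, r_{P_1}: P \to \End_\mathbf{k}(P^*)$ and $l_{P_2}, r_{P_2}: P^* \to \End_\mathbf{k}(P)$, and Eq.~\eqref{eq:mp2m} specializes the mixed products to
\begin{equation*}
    p \star q^* = r_{P_2}(q^*) p + l_{P_1}(p) q^*, \qquad q^* \star p = l_{P_2}(q^*) p + r_{P_1}(p) q^*, \quad p \in P,\ q^* \in P^*.
\end{equation*}
Conversely, any such matched pair yields such a $\star$. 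Hence, granting Theorem~\ref{thm:mc}, the theorem is equivalent to the claim that, for $\star$ of this form, the form $\kappa_d$ of Eq.~\eqref{eq:mtpa} is invariant if and only if $l_{P_1} = L_\cdot^*$, $r_{P_1} = L_\cdot^* - R_\cdot^*$, $l_{P_2} = L_\circ^*$ and $r_{P_2} = L_\circ^* - R_\circ^*$.

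To prove this equivalence I would expand the invariance identity $\kappa_d(x \star y, z) = \kappa_d(x, y \star z - z \star y)$ over the eight placements of $x, y, z$ in $P$ or $P^*$. Since $\kappa_d$ pairs the $P^*$-part of one argument against the $P$-part of the other, the two cases with $x, y, z$ all in $P$ or all in $P^*$ are vacuous. For the case $x = p^* \in P^*$, $y = q \in P$, $z = r \in P$, substituting the mixed products gives $\langle r_{P_1}(q) p^*, r \rangle$ on the left and $\langle p^*, q \cdot r - r \cdot q \rangle$ on the right, which by the defining relations of the transposes equals $\langle (L_\cdot^* - R_\cdot^*)(q) p^*, r \rangle$; left nondegeneracy then forces $r_{P_1} = L_\cdot^* - R_\cdot^*$. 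The symmetric case $(P, P^*, P^*)$ forces $r_{P_2} = L_\circ^* - R_\circ^*$, while the cases $(P, P, P^*)$ and $(P^*, P^*, P)$ yield $l_{P_1} - r_{P_1} = R_\cdot^*$ and $l_{P_2} - r_{P_2} = R_\circ^*$, hence $l_{P_1} = L_\cdot^*$ and $l_{P_2} = L_\circ^*$. The remaining two cases $(P, P^*, P)$ and $(P^*, P, P^*)$ are then identities automatically satisfied by these operators. Reading each computation in reverse shows conversely that the four equalities imply invariance in every case, completing the equivalence.

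Finally I would assemble the two implications. If a Manin triple exists, then $\star$ is a perm algebra containing $P$ and $P^*$ as subalgebras, so it is a matched pair with some representations, and invariance of $\kappa_d$ forces those to be $L_\cdot^*, L_\cdot^* - R_\cdot^*, L_\circ^*, L_\circ^* - R_\circ^*$; thus the prescribed sextuple is a matched pair. Conversely, if the prescribed sextuple is a matched pair, Theorem~\ref{thm:mc} produces a perm algebra $(P \oplus P^*, \star)$ with $P$ and $P^*$ as subalgebras, and since the representations are the prescribed ones the equivalence makes $\kappa_d$ invariant; as $P$ and $P^*$ are visibly $\kappa_d$-isotropic, $((P \oplus P^*, \star, \kappa_d), P, P^*)$ is a Manin triple. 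The main obstacle is purely the bookkeeping in the case analysis: one must track the transpose conventions and the identification $(P^*)^* = P$ carefully so that each nonvacuous invariance case lands on the correct one of the four operator identities, after which the individual verifications are routine.
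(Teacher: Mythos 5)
Your proof is correct and takes essentially the approach the paper intends: the paper proves Theorem~\ref{thm:mt2mp} only by reference to the associative analogue \cite[Theorem~2.2.1]{bai2010double}, whose method is exactly your strategy of invoking Theorem~\ref{thm:mc} and expanding the invariance of $\kappa_d$ componentwise to force $l_{P_1}=L_\cdot^*$, $r_{P_1}=L_\cdot^*-R_\cdot^*$, $l_{P_2}=L_\circ^*$, $r_{P_2}=L_\circ^*-R_\circ^*$. Your case analysis is sound (the four determining cases, the two automatically satisfied mixed cases, and the two vacuous isotropic cases all check out), so you have in effect supplied the details the paper omits.
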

\begin{proof}
    The proof is similar to that of \cite[Theorem~2.2.1]{bai2010double} for associative algebras.
\end{proof}

Combining Theorems~\ref{thm:mp2pb} and \ref{thm:mt2mp}, we give the following characterization of perm bialgebras.
\begin{thm}\label{thm:pmmb}
    Let $(P, \cdot)$ be a perm algebra and $\Delta_P: P \to P \otimes P$ be a linear map.
    Suppose that the linear dual of $\Delta_P$ gives a perm algebra structure $\circ$ on $P^*$.
    Then the following conditions are equivalent.
    \begin{enumerate}
        \item $(P, \cdot, \Delta_P)$ is a perm bialgebra;
        \item $((P, \cdot), (P^*, \circ), L_\cdot^* , L_\cdot^* - R_\cdot^*, L_\circ^*, L_\circ^* - R_\circ^*)$ is a matched pair of perm algebras;
        \item There is a Manin triple of perm algebras associated to $(P, \cdot)$ and $(P^*, \circ)$.
    \end{enumerate}
\end{thm}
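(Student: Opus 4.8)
The plan is to obtain the three-way equivalence by chaining the two equivalences already established in Theorems~\ref{thm:mp2pb} and \ref{thm:mt2mp}, using condition~(2) as the shared pivot. Concretely, I would first invoke Theorem~\ref{thm:mp2pb}: under the standing hypothesis that the linear dual of $\Delta_P$ endows $P^*$ with a perm algebra structure $\circ$, that theorem asserts precisely that $(P, \cdot, \Delta_P)$ is a perm bialgebra if and only if $((P, \cdot), (P^*, \circ), L_\cdot^*, L_\cdot^* - R_\cdot^*, L_\circ^*, L_\circ^* - R_\circ^*)$ is a matched pair of perm algebras. This gives the equivalence $(1) \Longleftrightarrow (2)$ directly, with no additional work.

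Next I would invoke Theorem~\ref{thm:mt2mp}, applied to the two perm algebras $(P, \cdot)$ and $(P^*, \circ)$ furnished by the hypothesis. That theorem states that there is a Manin triple of perm algebras associated to $(P, \cdot)$ and $(P^*, \circ)$ if and only if the same sextuple $((P, \cdot), (P^*, \circ), L_\cdot^*, L_\cdot^* - R_\cdot^*, L_\circ^*, L_\circ^* - R_\circ^*)$ is a matched pair of perm algebras. This yields the equivalence $(2) \Longleftrightarrow (3)$. Combining the two equivalences through the common condition~(2) then gives $(1) \Longleftrightarrow (2) \Longleftrightarrow (3)$, which is exactly the asserted statement.

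The only point requiring attention — and it is essentially bookkeeping rather than a genuine obstacle — is that both cited theorems are phrased with respect to the \emph{same} representations $(P^*, L_\cdot^*, L_\cdot^* - R_\cdot^*)$ and $(P, L_\circ^*, L_\circ^* - R_\circ^*)$, and that the hypothesis of the present theorem (the dual of $\Delta_P$ being a perm multiplication on $P^*$) coincides with the running hypothesis of Theorem~\ref{thm:mp2pb} and supplies precisely the second perm algebra needed as input to Theorem~\ref{thm:mt2mp}. Once one observes that the matched-pair datum appearing in both theorems is verbatim identical, the two biconditionals splice together automatically, and the proof is complete. I do not anticipate any hard step here: all the substantive computations have been carried out in the proofs of the two preceding theorems.

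\begin{proof}
    By Theorem~\ref{thm:mp2pb}, condition~(1) is equivalent to condition~(2). By Theorem~\ref{thm:mt2mp}, condition~(3) is equivalent to condition~(2). Hence the three conditions are equivalent.
\end{proof}
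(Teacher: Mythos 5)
Your proposal is correct and coincides with the paper's own argument: the paper introduces this theorem with the phrase ``Combining Theorems~\ref{thm:mp2pb} and \ref{thm:mt2mp}'' and gives no further proof, exactly as you chain $(1)\Longleftrightarrow(2)$ from Theorem~\ref{thm:mp2pb} and $(2)\Longleftrightarrow(3)$ from Theorem~\ref{thm:mt2mp}. Your bookkeeping remark that the matched-pair sextuple is identical in both cited theorems is accurate and suffices to splice the equivalences together.
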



Note that a Lie bialgebra is also characterized by a Manin triple
of Lie algebras. Hence we give the correspondence between perm
bialgebras and Lie bialgebras in terms of Manin triples as
follows.

Recall that a bilinear form $\mathfrak{B}$ on a Lie algebra $(L, [-, -])$ is called \textbf{invariant} if
\begin{equation*}
    \mathfrak{B}([a, b], c) = \mathfrak{B}(a, [b, c]), \;\; \forall a, b, c \in L.
\end{equation*}

\begin{defi}
    {\rm (\cite{chari1995guide})}
    Let $L, L_1$ and $L_2$ be Lie algebras.
    If there is a symmetric nondegenerate invariant bilinear form $\mathfrak{B}$ on $L$ such that
    \begin{enumerate}
        \item $L_1$ and $L_2$ are Lie subalgebras of $L$ and $L = L_1 \oplus L_2$ as a direct sum of vector spaces;
        \item $L_1$ and $L_2$ are isotropic with respect to $\mathfrak{B}$, that is, $\mathfrak{B}(L_i, L_i)$ = 0 for $i=1, 2$,
    \end{enumerate}
   then the triple $(L, L_1, L_2)$ is called a \textbf{Manin triple of Lie algebras} associated with $\mathfrak{B}$.
    Two Manin triples of Lie algebras $(L, L_1, L_2)$ and $(L^\prime, L_1^\prime, L_2^\prime)$ associated with $\mathfrak{B}$ and $\mathfrak{B}^\prime$ respectively are called \textbf{isomorphic} if there exists an isomorphism $\varphi: L \to L^\prime$ of Lie algebras such that $\varphi(L_i) = L_i^\prime$ for $i=1, 2$ and $\mathfrak{B}^\prime(\varphi(a), \varphi(b)) = \mathfrak{B}(a, b)$ for all $a, b \in L$.
\end{defi}

\begin{pro}\label{pro:lb2lmt}
    {\rm (\cite{chari1995guide})}
    Consider a finite-dimensional Lie algebra $(L, [-, -])$ and a Lie coalgebra $(L, \delta)$, such that the linear dual $\delta^*: L^* \otimes L^* \to L^*$ defines a Lie algebra $(L^*,\{-, -\})$.
    The triple $(L, [-, -], \delta)$ is a Lie bialgebra if and only if $(L \oplus L^*, L, L^*)$ is a Manin triple of Lie algebras associated with the bilinear form defined by
    \begin{equation}\label{eq:smp}
        \mathfrak{B}_d(a + a^*, b + b^*) := \langle a^*, b\rangle + \langle b^*, a \rangle, \;\; \forall a, b \in L, a^*, b^* \in L^*.
    \end{equation}
\end{pro}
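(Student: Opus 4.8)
The plan is to realize $L \oplus L^*$ as the classical \emph{Drinfeld double}, deducing its Lie bracket from the invariance of $\mathfrak{B}_d$ and then matching the Jacobi identity of the double against the cocycle condition Eq.~\eqref{eq:lb}. First I would record the three automatic features of $\mathfrak{B}_d$: it is symmetric and nondegenerate directly from its definition, and both $L$ and $L^*$ are isotropic, since $\mathfrak{B}_d$ vanishes on $L \times L$ and on $L^* \times L^*$. Consequently the only substantive content of a Manin triple structure on $L \oplus L^*$ is a Lie bracket that restricts to $[-,-]$ on $L$ and to $\{-,-\}$ on $L^*$ and for which $\mathfrak{B}_d$ is invariant.

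The next step is to observe that these requirements determine the bracket uniquely. Writing the mixed bracket $[a, \xi]$ for $a \in L$, $\xi \in L^*$ and imposing $\mathfrak{B}_d([a,\xi], b + \eta) = \mathfrak{B}_d(a, [\xi, b + \eta])$ for all $b + \eta$, together with the isotropy of $L$ and $L^*$, forces the $L^*$-component of $[a,\xi]$ to be the coadjoint action of $a$ on $\xi$ and the $L$-component to be (minus) the coadjoint action of $\xi$ on $a$. Thus the candidate bracket is
\begin{equation*}
[a + \xi, b + \eta] = \bigl([a,b] + \ad^*(\xi) b - \ad^*(\eta) a\bigr) + \bigl(\{\xi, \eta\} + \ad^*(a)\eta - \ad^*(b)\xi\bigr),
\end{equation*}
where $\ad^*(a)$ denotes the coadjoint action of $a \in L$ on $L^*$ and $\ad^*(\xi)$ that of $\xi \in L^*$ on $L$, each defined through the pairing $\langle\ ,\ \rangle$. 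Hence $(L \oplus L^*, L, L^*)$ is a Manin triple associated with $\mathfrak{B}_d$ if and only if this displayed bracket satisfies the Jacobi identity.

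It then remains to prove that the Jacobi identity for the displayed bracket is equivalent to Eq.~\eqref{eq:lb}, and I would organize the verification by the number of $L^*$-arguments. The cases with all three arguments in $L$ or all three in $L^*$ collapse to the Jacobi identities of $(L, [-,-])$ and $(L^*, \{-,-\})$, which hold by hypothesis. The mixed case with two arguments $a, b \in L$ and one $\xi \in L^*$ is the crucial one: expanding the cyclic sum and pairing its $L^*$-component against an arbitrary $\eta \otimes \zeta \in L^* \otimes L^*$, the coadjoint definitions convert every term into the pairing of $\eta \otimes \zeta$ against $\delta([a,b]) - (\ad(a)\otimes\id + \id\otimes\ad(a))\delta(b) + (\ad(b)\otimes\id + \id\otimes\ad(b))\delta(a)$, using the relation $\langle \delta(x), \eta \otimes \zeta\rangle = \langle x, \{\eta, \zeta\}\rangle$ coming from $\delta^* = \{-,-\}$. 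Thus this family of Jacobi identities holds for all $a, b, \xi$ precisely when Eq.~\eqref{eq:lb} holds. The remaining mixed case, with one argument in $L$ and two in $L^*$, yields by the dual computation the cocycle condition for $\{-,-\}$ on $L^*$, which is equivalent to Eq.~\eqref{eq:lb} by the symmetry of the pairing. Reading this chain of equivalences in both directions proves the ``if'' and ``only if'' parts simultaneously.

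The main obstacle is the sign bookkeeping in the two-$L$/one-$L^*$ computation: one must fix the two coadjoint-action conventions consistently and repeatedly rewrite coadjoint expressions as coproducts via the defining identity relating $\delta$ and $\{-,-\}$. Once the conventions are pinned down, the identification with Eq.~\eqref{eq:lb} is a term-by-term match. Since this is exactly the Drinfeld double construction for Lie bialgebras, one may alternatively invoke \cite{chari1995guide} directly.
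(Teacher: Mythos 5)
The paper gives no proof of this proposition — it is quoted from \cite{chari1995guide} — so the benchmark is the standard argument, and your proposal is exactly that standard Drinfeld-double argument: isotropy of $L$ and $L^*$ plus invariance of $\mathfrak{B}_d$ determine the mixed brackets uniquely (your uniqueness computation is correct), so the Manin triple exists if and only if the one canonical candidate bracket satisfies the Jacobi identity, which in the mixed cases reduces to the cocycle condition Eq.~\eqref{eq:lb}. The overall reduction is sound and matches the cited source.

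One local slip should be fixed, because as literally written the crucial step produces a trivial identity. In the two-$L$/one-$L^*$ case, the $L^*$-component of the cyclic sum $[[a,b],\xi]+[[b,\xi],a]+[[\xi,a],b]$ vanishes identically: it assembles into $\big(\ad^*([a,b])-[\ad^*(a),\ad^*(b)]\big)\xi$, i.e., the representation property of the coadjoint action of $L$ on $L^*$, which holds automatically because $(L,[-,-])$ is a Lie algebra. So pairing the $L^*$-component against $\eta\otimes\zeta$ yields $0=0$, not Eq.~\eqref{eq:lb}. The content sits in the $L$-component, paired against a \emph{single} covector $\eta$; the two covectors entering the coproduct are then $\xi$ and $\eta$. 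For instance $\langle \eta,\,\ad^*(\xi)[a,b]\rangle=-\langle [a,b],\{\xi,\eta\}\rangle=-\langle \delta([a,b]),\,\xi\otimes\eta\rangle$, and the remaining terms likewise convert to $\langle(\ad(a)\otimes\id+\id\otimes\ad(a))\delta(b),\,\xi\otimes\eta\rangle$ etc., recovering Eq.~\eqref{eq:lb} tested against arbitrary $\xi\otimes\eta$. Dually, in the one-$L$/two-$L^*$ case it is the $L$-component that vanishes automatically (the representation property of $\ad^*$ for $(L^*,\{-,-\})$), and the $L^*$-component gives the dual cocycle condition, which — as you correctly note — is equivalent to Eq.~\eqref{eq:lb} by finite-dimensional duality of the pairing. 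With this bookkeeping corrected, your proof goes through and is the proof the paper implicitly invokes via \cite{chari1995guide}.
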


\begin{pro}\label{pro:qp2snib}
    Let $(P, \cdot, \kappa)$ be a quadratic perm algebra,
    $(A, \diamond, \omega)$ be a quadratic pre-Lie algebra
    and $(L := P \otimes A, [-, -])$ be the induced Lie algebra from $(P, \cdot)$ and $(A, \diamond)$.
    Define a bilinear form $\mathfrak{B}$ on $L$ by
    \begin{equation*}
        \mathfrak{B}(p_1 \otimes a_1, p_2 \otimes a_2) = \kappa(p_1, p_2) \omega(a_1, a_2), \;\; \forall p_1, p_2 \in P, a_1, a_2 \in A.
    \end{equation*}
    Then $\mathfrak{B}$ is a symmetric nondegenerate invariant bilinear form on the Lie algebra $(L, [-, -])$.
\end{pro}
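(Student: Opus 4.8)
The plan is to check the three required properties of $\mathfrak{B}$ in turn. Symmetry is immediate from the skew-symmetry of both forms: for simple tensors,
\[
\mathfrak{B}(p_2 \otimes a_2, p_1 \otimes a_1) = \kappa(p_2, p_1)\,\omega(a_2, a_1) = (-\kappa(p_1,p_2))(-\omega(a_1,a_2)) = \mathfrak{B}(p_1 \otimes a_1, p_2 \otimes a_2).
\]
For nondegeneracy I would use that $\mathfrak{B}$ is the tensor product $\kappa \otimes \omega$ of two nondegenerate forms: fixing bases of $P$ and of $A$, the Gram matrix of $\mathfrak{B}$ in the induced basis of $P \otimes A$ is the Kronecker product of the Gram matrices of $\kappa$ and $\omega$, hence invertible (we are in the finite-dimensional setting here).

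The substance is the invariance $\mathfrak{B}([x,y],z) = \mathfrak{B}(x,[y,z])$, which by bilinearity need only be checked on simple tensors $x = p_1 \otimes a_1$, $y = p_2 \otimes a_2$, $z = p_3 \otimes a_3$. Using the bracket~\eqref{eq:ppl2l}, the left-hand side expands to
\[
\kappa(p_1 \cdot p_2, p_3)\,\omega(a_1 \diamond a_2, a_3) - \kappa(p_2 \cdot p_1, p_3)\,\omega(a_2 \diamond a_1, a_3),
\]
while the right-hand side is
\[
\kappa(p_1, p_2 \cdot p_3)\,\omega(a_1, a_2 \diamond a_3) - \kappa(p_1, p_3 \cdot p_2)\,\omega(a_1, a_3 \diamond a_2).
\]
I would then apply the invariance of $\kappa$, in the form $\kappa(p \cdot q, r) = \kappa(p, q \cdot r) - \kappa(p, r \cdot q)$, and of $\omega$, in the form $\omega(a \diamond b, c) = -\omega(b, a \diamond c) + \omega(b, c \diamond a)$, to rewrite the left-hand side as a sum of eight products of the shape $\kappa(p_i, p_j \cdot p_k)\,\omega(a_{i'}, a_{j'} \diamond a_{k'})$.

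The key step, carrying the real content, is to observe that the six scalars $\kappa(p_i, p_j \cdot p_k)$ indexed by permutations of $\{1,2,3\}$ are linearly dependent: combining skew-symmetry with invariance gives, for example, $\kappa(p_1, p_2 \cdot p_3) = -\kappa(p_2 \cdot p_3, p_1) = \kappa(p_2, p_1 \cdot p_3) - \kappa(p_2, p_3 \cdot p_1)$, and running this over all permutations expresses all six scalars in terms of just two, say $A = \kappa(p_1, p_2 \cdot p_3)$ and $B = \kappa(p_1, p_3 \cdot p_2)$. The identical mechanism applied to $\omega$ reduces the six scalars $\omega(a_i, a_j \diamond a_k)$ to two, say $R = \omega(a_1, a_2 \diamond a_3)$ and $S = \omega(a_1, a_3 \diamond a_2)$. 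Substituting these reductions into the eight-term expansion of the left-hand side collapses it, after collecting the coefficients of $AR$, $AS$, $BR$, $BS$, to exactly $AR - BS$, which is precisely the right-hand side. I expect the bookkeeping of these linear relations among the $\kappa$- and $\omega$-values to be the main obstacle; I note in passing that the argument uses only the skew-symmetry and invariance of the two forms, and not the perm or pre-Lie defining identities directly.
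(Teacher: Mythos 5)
Your proposal is correct and takes essentially the same route as the paper: the paper likewise reduces to simple tensors and verifies invariance using only the skew-symmetry and invariance of $\kappa$ and $\omega$ (never the perm or pre-Lie identities), its three-line chain of equalities being exactly the collapse you describe to $\kappa(p_1,p_2\cdot p_3)\,\omega(a_1,a_2\diamond a_3)-\kappa(p_1,p_3\cdot p_2)\,\omega(a_1,a_3\diamond a_2)$, i.e.\ your $AR-BS$, while symmetry and nondegeneracy are dismissed as straightforward. Your bookkeeping checks out: the two forms' relations do reduce the six scalars to two parameters each (equivalently, one may use the derived identities $\kappa(q\cdot p,r)=\kappa(p,q\cdot r)$ and $\omega(a\diamond b,c)=\omega(a,c\diamond b)$, which is in effect what the paper's compressed computation does), and the finite-dimensionality needed for your Kronecker-product argument is guaranteed by the standing assumptions of that subsection.
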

\begin{proof}
    For all $p_1, p_2, p_3 \in P, a_1, a_2, a_3 \in A$, we have
    \begin{eqnarray*}
        &&\mathfrak{B}([p_1 \otimes a_1, p_2 \otimes a_2], p_3 \otimes a_3) =\kappa(p_1 \cdot p_2, p_3) \omega(a_1 \diamond a_2, a_3) - \kappa(p_2 \cdot p_1, p_3) \omega(a_2 \diamond a_1, a_3)\\
        &&= \kappa(p_1, p_2 \cdot p_3 - p_3 \cdot p_2) \omega(a_1, a_3 \diamond a_2) + \kappa(p_1, p_2 \cdot p_3) \omega(a_1, a_2 \diamond a_3 - a_3 \diamond a_2) \\
        &&= -\kappa(p_1, p_3 \cdot p_2) \omega(a_1, a_3 \diamond a_2) + \kappa(p_1, p_2 \cdot p_3)\omega(a_1, a_2 \diamond a_3) =\mathfrak{B}(p_1 \otimes a_1, [p_2 \otimes a_2, p_3 \otimes a_3]).
    \end{eqnarray*}
    That is, $\mathfrak{B}$ is invariant.
    It is straightforward to verify that $\mathfrak{B}$ is symmetric and nondegenerate.
\end{proof}

\begin{pro}\label{pro:pmt2lmp}
    Let $(A, \diamond, \omega)$ be a quadratic pre-Lie algebra.
    Let $(P, \cdot)$ and $(P^*, \circ)$ be perm algebras,
    and $(P \otimes A, [-, -]_{P \otimes A})$ and $(P^* \otimes A, [-, -]_{P^* \otimes A})$ be the induced Lie algebras from $(P, \cdot)$ and $(A,\diamond)$ as well as  $(P^*, \circ)$ and $(A,\diamond)$ respectively.
    If $((P \oplus P^*, \star, \kappa_d), P, P^*)$
is a Manin triple of perm algebras, then $((P \oplus P^*)\otimes
A, P \otimes A, P^* \otimes A)$ is a Manin triple of Lie algebras
associated with the bilinear form
    \begin{equation}\label{eq:pmt2lmp}
        \mathfrak{B}(p_1 \otimes a_1 + p_1^* \otimes a_2 , p_2 \otimes a_3 + p_2^* \otimes a_4) := \langle p_1^*, p_2 \rangle \omega( a_2, a_3 ) - \langle p_2^*, p_1 \rangle  \omega(a_1, a_4),
    \end{equation}
    for all $p_1, p_2 \in P, p_1^*, p_2^* \in P^*$ and $a_1, a_2, a_3, a_4 \in A$.
\end{pro}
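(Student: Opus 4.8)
The plan is to reduce everything to Proposition~\ref{pro:qp2snib}, by recognizing that the hypotheses already assemble a quadratic perm algebra together with a quadratic pre-Lie algebra. Since $((P \oplus P^*, \star, \kappa_d), P, P^*)$ is a Manin triple of perm algebras, by definition $(P \oplus P^*, \star, \kappa_d)$ is a quadratic perm algebra, that is, $\kappa_d$ is a skew-symmetric nondegenerate invariant bilinear form on $(P \oplus P^*, \star)$. Together with the quadratic pre-Lie algebra $(A, \diamond, \omega)$, Proposition~\ref{pro:ppl2l} yields the induced Lie algebra $L := (P \oplus P^*) \otimes A$, and Proposition~\ref{pro:qp2snib} shows that the bilinear form
\begin{equation*}
    \mathfrak{B}_0(x \otimes a, y \otimes b) := \kappa_d(x, y)\,\omega(a, b), \quad x, y \in P \oplus P^*,\ a, b \in A,
\end{equation*}
is symmetric, nondegenerate, and invariant on $(L, [-, -])$. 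Thus all the bilinear-form axioms in the definition of a Manin triple of Lie algebras come for free.

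First I would check that $\mathfrak{B}_0$ coincides with the form $\mathfrak{B}$ of Eq.~\eqref{eq:pmt2lmp}. Expanding $\mathfrak{B}_0$ bilinearly on $p_1 \otimes a_1 + p_1^* \otimes a_2$ and $p_2 \otimes a_3 + p_2^* \otimes a_4$ produces four terms with coefficients $\kappa_d(p_1, p_2)$, $\kappa_d(p_1, p_2^*)$, $\kappa_d(p_1^*, p_2)$ and $\kappa_d(p_1^*, p_2^*)$. By the defining formula~\eqref{eq:mtpa} for $\kappa_d$, the first and last vanish (which is precisely the isotropy of $P$ and $P^*$ in the perm Manin triple), while $\kappa_d(p_1^*, p_2) = \langle p_1^*, p_2\rangle$ and $\kappa_d(p_1, p_2^*) = -\langle p_2^*, p_1\rangle$; substituting gives exactly Eq.~\eqref{eq:pmt2lmp}.

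Next I would verify the three remaining Manin-triple conditions. Since $P$ and $P^*$ are perm subalgebras of $(P \oplus P^*, \star)$, the induced Lie bracket~\eqref{eq:ppl2l} restricts to $P \otimes A$ and to $P^* \otimes A$ (the bracket of two elements of $P \otimes A$ only involves the products $p_1 \star p_2 = p_1 \cdot p_2 \in P$ and $p_2 \star p_1 \in P$), so $L_1 := P \otimes A$ and $L_2 := P^* \otimes A$ are Lie subalgebras of $L$; the decomposition $L = L_1 \oplus L_2$ as vector spaces is immediate after tensoring $P \oplus P^*$ with $A$. Finally, the isotropy $\mathfrak{B}(L_1, L_1) = \mathfrak{B}(L_2, L_2) = 0$ is already recorded above: on $L_1 \times L_1$ one has $\mathfrak{B}(p_1 \otimes a_1, p_2 \otimes a_3) = \kappa_d(p_1, p_2)\,\omega(a_1, a_3) = 0$ because $\kappa_d(P, P) = 0$, and symmetrically on $L_2 \times L_2$. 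This establishes that $((P \oplus P^*) \otimes A, P \otimes A, P^* \otimes A)$ is a Manin triple of Lie algebras associated with $\mathfrak{B}$.

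There is no serious analytic or combinatorial obstacle here; the substantive content is the observation that $\kappa_d \otimes \omega$ is exactly the form $\mathfrak{B}$. The step deserving the most care is the sign and bookkeeping verification that $\mathfrak{B}_0 = \mathfrak{B}$, since $\kappa_d$ is skew and pairs $P$ with $P^*$ asymmetrically; once that identification is in place, the subalgebra and isotropy conditions are transparent consequences of the perm Manin triple structure.
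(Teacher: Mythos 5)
Your proof is correct and takes essentially the same route as the paper: the paper's proof is a one-line citation of Proposition~\ref{pro:qp2snib}, applied (as you do) to the quadratic perm algebra $(P \oplus P^*, \star, \kappa_d)$ furnished by the Manin triple hypothesis together with the quadratic pre-Lie algebra $(A, \diamond, \omega)$. Your explicit verifications — that $\kappa_d \otimes \omega$ coincides with the form in Eq.~\eqref{eq:pmt2lmp}, that $P \otimes A$ and $P^* \otimes A$ are Lie subalgebras, and the isotropy conditions — are exactly the routine details the paper leaves implicit.
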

\begin{proof}
    It follows from Proposition~\ref{pro:qp2snib}.
\end{proof}

With the same assumptions in Proposition~\ref{pro:pmt2lmp}, due to
the nondegeneracy of the bilinear form $\omega$ in the quadratic
pre-Lie algebra $(A, \diamond, \omega)$, and hence by the linear
isomorphism $\varphi: A \to A^*$ defined by $\langle \varphi(a),
b\rangle = \omega(a, b)$ for all $a, b \in A$, we obtain a pre-Lie
algebra $(A^*, \diamond^\prime)$ on $A^*$ defined by
\begin{equation*}
    a^* \diamond^\prime b^* := \varphi(\varphi^{-1}(a^*) \diamond \varphi^{-1}(b^*)), \;\; \forall a^*, b^* \in A^*.
\end{equation*}
Note that $(A,\diamond)$ and $(A^*, \diamond^\prime)$ are isomorphic as pre-Lie algebras.
Then we obtain the induced Lie algebra
$(L^*=P^* \otimes A^*, [-, -]_{P^* \otimes A^*})$ from $(P^*,
\circ)$ and $(A^*, \diamond^\prime)$. Further, we can define a
linear map $f: (P \oplus P^*) \otimes A \to (P \otimes A) \oplus
(P^* \otimes A^*)$ by
\begin{equation*}
    p_1 \otimes a_1 + p_2^* \otimes a_2 \mapsto p_1 \otimes a_1 + p_2^* \otimes \varphi(a_2), \;\; \forall p_1 \in P, p_2^* \in P^*, a_1, a_2 \in A.
\end{equation*}
Evidently, $f$ is a linear isomorphism.
Thus we can transport the Lie algebra structure on $(P \oplus P^*) \otimes A$ to $(P \otimes A) \oplus (P^* \otimes A^*)$ by
\begin{equation*}
    [p_1 \otimes a_1 + p_1^* \otimes a_1^*, p_2 \otimes a_2 + p_2^* \otimes a_2^*]^\prime := f[p_1 \otimes a_1 + p_1^* \otimes \varphi^{-1}(a_1^*), p_2 \otimes a_2 + p_2^* \otimes
    \varphi^{-1}(a_2^*)],
\end{equation*}
for all $a_1, a_2 \in A, a_1^*, a_2^* \in A^*, p_1, p_2 \in P,
p_1^*, p_2^* \in P^*$. Moreover, the restrictions of the Lie
bracket $[-,-]^\prime$ on $P\otimes A$ and $P^* \otimes A^*$
coincide with $[-, -]_{P \otimes A}$ and $[-, -]_{P^* \otimes
A^*}$ respectively. Hence, we arrive at the following conclusion
immediately.
\begin{lem}\label{lem:mtpeq}
    With the notations above,
    $(L \oplus L^*, L = P \otimes A, L^* = P^* \otimes A^*)$ is a Manin triple of Lie algebras associated with the bilinear form defined by Eq.~\eqref{eq:smp}.
    Moreover, this Manin triple is isomorphic to the Manin triple $((P \oplus P^*) \otimes A, P \otimes A, P^* \otimes A)$ associated with the bilinear form defined by Eq.~\eqref{eq:pmt2lmp}.
\end{lem}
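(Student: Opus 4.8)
The plan is to treat the whole statement as a transport-of-structure argument, since essentially nothing new has to be proved beyond matching the two bilinear forms along $f$. First I would recall that Proposition~\ref{pro:pmt2lmp} already gives that $((P \oplus P^*) \otimes A, P \otimes A, P^* \otimes A)$ is a Manin triple of Lie algebras associated with $\mathfrak{B}$ of Eq.~\eqref{eq:pmt2lmp}. Because $f$ is a linear isomorphism and the bracket $[-,-]^\prime$ on $(P \otimes A) \oplus (P^* \otimes A^*)$ is \emph{defined} by pulling back the bracket on $(P \oplus P^*) \otimes A$ along $f$, the map $f$ is tautologically a Lie algebra isomorphism; moreover $f$ carries $P \otimes A$ onto $L = P \otimes A$ and $P^* \otimes A$ onto $L^* = P^* \otimes A^*$. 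The compatibility of $[-,-]^\prime$ with the induced brackets $[-,-]_{P \otimes A}$ and $[-,-]_{P^* \otimes A^*}$ on the two summands is exactly the fact recorded just before the lemma.

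The only genuine computation is to verify that $f$ is also an isometry, i.e.\ $\mathfrak{B}_d(f(u), f(v)) = \mathfrak{B}(u, v)$ for all $u, v \in (P \oplus P^*) \otimes A$, where $\mathfrak{B}_d$ is the form of Eq.~\eqref{eq:smp} and the pairing of $L^* = P^* \otimes A^*$ against $L = P \otimes A$ is the natural one $\langle p^* \otimes \alpha^*, p \otimes a\rangle = \langle p^*, p\rangle \langle \alpha^*, a\rangle$. Writing $u = p_1 \otimes a_1 + p_1^* \otimes a_2$ and $v = p_2 \otimes a_3 + p_2^* \otimes a_4$, the definitions of $f$ and of $\varphi$ give
\begin{align*}
    \mathfrak{B}_d(f(u), f(v)) &= \langle p_1^* \otimes \varphi(a_2), p_2 \otimes a_3\rangle + \langle p_2^* \otimes \varphi(a_4), p_1 \otimes a_1\rangle \\
    &= \langle p_1^*, p_2\rangle \omega(a_2, a_3) + \langle p_2^*, p_1\rangle \omega(a_4, a_1),
\end{align*}
and the skew-symmetry of $\omega$ rewrites the last term as $-\langle p_2^*, p_1\rangle \omega(a_1, a_4)$, which is precisely $\mathfrak{B}(u, v)$.

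Finally I would transport the Manin triple axioms across $f$. Since $f$ is simultaneously a Lie algebra isomorphism and an isometry with $f(P \otimes A) = L$ and $f(P^* \otimes A) = L^*$, the symmetry, nondegeneracy and invariance of $\mathfrak{B}$ pass to $\mathfrak{B}_d$, and the facts that $P \otimes A$, $P^* \otimes A$ are isotropic Lie subalgebras pass to $L$, $L^*$. This yields that $(L \oplus L^*, L, L^*)$ is a Manin triple of Lie algebras associated with $\mathfrak{B}_d$, and the same map $f$ witnesses its isomorphism with $((P \oplus P^*) \otimes A, P \otimes A, P^* \otimes A)$. There is no real obstacle here; the one point demanding care is the bookkeeping of the natural pairing on $P^* \otimes A^*$ against $P \otimes A$ together with the single invocation of the skew-symmetry of $\omega$ used to reconcile the two forms.
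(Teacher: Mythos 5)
Your proposal is correct and follows exactly the route the paper intends: the paper states the lemma as an immediate consequence of the construction of $\varphi$, $f$, and the transported bracket $[-,-]^\prime$, and your argument is precisely that transport-of-structure, with the isometry computation $\mathfrak{B}_d(f(u), f(v)) = \mathfrak{B}(u,v)$ (via the skew-symmetry of $\omega$) spelled out where the paper leaves it implicit. No gap; your version simply makes explicit the one verification the paper suppresses.
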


\begin{rmk}\label{rmk:qzplip}
    The above discussions explicitly explain the appearance of the quadratic property of $(A, \diamond)$ in
    Theorem~\ref{thm:pb2lb}. That is, a quadratic pre-Lie algebra $(A, \diamond, \omega)$
    naturally induces a pre-Lie algebra $(A^*,\diamond')$ from $\omega$ which is isomorphic to
    $(A,\diamond)$ itself,
   guaranteeing that $((P \oplus P^*) \otimes A, P \otimes A, P^* \otimes A)$ is isomorphic to $(L \oplus L^*, L = P \otimes A, L^* = P^* \otimes A^*)$ as Manin triples of Lie algebras.
\end{rmk}

Summarizing the above study, we have the following conclusion.

\begin{pro}\label{pro:pblbmm}
Let $(P, \cdot, \Delta_P)$ be a perm bialgebra and $(A, \diamond, \omega)$ be a quadratic pre-Lie algebra.
Then the Lie bialgebra $(L=P\otimes A, [-, -], \delta)$ obtained
in Theorem~\ref{thm:pb2lb} coincides with the one obtained from
the Manin triple $(L \oplus L^*, L = P \otimes A, L^* = P^*
\otimes A^*)$ of Lie algebras given in Lemma~\ref{lem:mtpeq}.
    That is, we have the following commutative diagram.
    \begin{equation*}
        \xymatrix@C=3cm@R=0.75cm{
            \txt{$(P, \cdot, \Delta_P)$ \\ a perm bialgerbra} \ar@{<->}[r]^-{Thm.~\ref{thm:pmmb}} \ar[d]^-{Thm.~\ref{thm:pb2lb}} & \txt{$((P \oplus P^*, \star, \kappa_d), P, P^*)$ \\ a Manin triple of perm algebras}
            \ar[d]^-{Prop.~\ref{pro:pmt2lmp},~Lem.~\ref{lem:mtpeq}}\\
            \txt{$(L = P \otimes A, [-, -], \delta)$ \\ a Lie bialgebra} \ar@{<->}[r]^-{Prop.~\ref{pro:lb2lmt}} & \txt{$(L \oplus L^*, L = P \otimes A, L^* = P^* \otimes A^*)$ \\ a Manin triple of Lie algebras}
        }
    \end{equation*}
\end{pro}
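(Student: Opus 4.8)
The plan is to exploit that both routes around the diagram produce a Lie bialgebra on the \emph{same} underlying Lie algebra, namely the induced Lie algebra $(L = P \otimes A, [-,-])$ of Proposition~\ref{pro:ppl2l}. On the left edge this is the hypothesis of Theorem~\ref{thm:pb2lb}; on the right edge it is the Lagrangian subalgebra $L = P \otimes A$ of the Manin triple in Lemma~\ref{lem:mtpeq}, whose bracket was already identified with $[-,-]_{P\otimes A}$. Thus the two Lie brackets agree verbatim, and the whole statement reduces to showing that the two cobrackets on $L$ coincide.

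For the right-hand route, Proposition~\ref{pro:lb2lmt} characterizes the cobracket $\delta_2$ obtained from the Manin triple as the linear dual of the Lie bracket on $L^* = P^* \otimes A^*$, that is,
\begin{equation*}
    \langle \delta_2(x), \xi \otimes \eta \rangle = \langle x, [\xi, \eta]_{P^* \otimes A^*} \rangle, \qquad \forall\, x \in L,\; \xi, \eta \in L^*,
\end{equation*}
where $[-,-]_{P^* \otimes A^*}$ is the induced Lie bracket of $(P^*, \circ)$ and $(A^*, \diamond^\prime)$ appearing in Lemma~\ref{lem:mtpeq}. Hence it is enough to verify that the cobracket $\delta$ produced by Eq.~\eqref{eq:pc2lc} in Theorem~\ref{thm:pb2lb} satisfies this very duality identity.

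To check this I would pair $\delta(p \otimes a)$ against a decomposable tensor $(p_1^* \otimes a_1^*) \otimes (p_2^* \otimes a_2^*)$. Expanding Eq.~\eqref{eq:pc2lc} and the operation $\bullet$, the pairing factorizes into a $P$-factor and an $A$-factor. The $P$-factor is controlled by the fact that $\circ$ is the linear dual of $\Delta_P$, giving $\langle \Delta_P(p), p_1^* \otimes p_2^* \rangle = \langle p, p_1^* \circ p_2^* \rangle$. The decisive point is the compatibility identity
\begin{equation*}
    \langle \Delta_A(a), a_1^* \otimes a_2^* \rangle = \langle a, a_1^* \diamond^\prime a_2^* \rangle,
\end{equation*}
which I would derive by writing $a_1^* = \varphi(b)$, $a_2^* = \varphi(c)$ and using the defining Eq.~\eqref{eq:qgpl2cplc} for $\Delta_A$, the definition $a^* \diamond^\prime b^* = \varphi(\varphi^{-1}(a^*) \diamond \varphi^{-1}(b^*))$, and the skew-symmetry of $\omega$: both sides collapse to $-\omega(a, b \diamond c)$. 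Substituting these two identities together with the antisymmetrization $\id - \hat\sigma$ of Eq.~\eqref{eq:pc2lc} reproduces exactly $\langle p \otimes a, [p_1^* \otimes a_1^*, p_2^* \otimes a_2^*]_{P^* \otimes A^*}\rangle$, whence $\delta = \delta_2$ and the diagram commutes.

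I expect the main difficulty to be bookkeeping rather than anything conceptual: one must carefully carry the isomorphism $\varphi\colon A \to A^*$ and the transport map $f$ of Lemma~\ref{lem:mtpeq} used to identify $(P \oplus P^*)\otimes A$ with $L \oplus L^*$, and track the signs coming from the skew-symmetry of $\omega$ and from the flips $\sigma$ and $\hat\sigma$, so that the antisymmetrization building $\delta$ matches the one hidden in the bracket on $L^*$ term by term. A final routine check is that the Manin-triple-to-cobracket convention of Proposition~\ref{pro:lb2lmt} is the duality convention used above.
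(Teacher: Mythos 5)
Your proposal is correct and follows the same route the paper takes: the paper offers no separate proof (it states the proposition as ``summarizing the above study''), relying on Lemma~\ref{lem:mtpeq} to identify the brackets and on the implicit check that the cobracket $\delta$ of Theorem~\ref{thm:pb2lb} dualizes to the induced bracket on $P^* \otimes A^*$ under Proposition~\ref{pro:lb2lmt}. Your explicit computation --- that $\langle \Delta_P(p), p_1^*\otimes p_2^*\rangle = \langle p_1^*\circ p_2^*, p\rangle$ and that both $\langle \Delta_A(a), \varphi(b)\otimes\varphi(c)\rangle$ and $\langle \varphi(b)\diamond'\varphi(c), a\rangle$ collapse to $-\omega(a, b\diamond c)$, whence $\delta^* = [-,-]_{P^*\otimes A^*}$ --- is exactly the verification the paper leaves implicit, and it is carried out with the correct sign conventions.
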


\subsection{The perm Yang-Baxter equation, \texorpdfstring{$\mathcal{O}$}{O}-operators and pre-perm algebras}
Consider a special class of perm bialgebras $(P, \cdot, \Delta_P)$, where $\Delta_P$ is defined by
\begin{equation}\label{eq:pcob}
    \Delta_P(p) := (\id \otimes R_\cdot(p) - (L_\cdot - R_\cdot)(p) \otimes \id)(r), \;\; \forall p \in
    P,
\end{equation}
for some $r \in P \otimes P$.

Let $r = \sum_{\alpha \in I} p_\alpha \otimes q_\alpha \in P
\otimes P$ and $r^\prime = \sum_{\beta \in J} p_\beta^\prime
\otimes q_\beta^\prime \in P \otimes P$ where $I, J$ are finite
index sets. Set
\begin{eqnarray*}
    &&r_{12} \cdot r_{23}^\prime := \sum_{\alpha, \beta} p_\alpha \otimes q_\alpha \cdot p_\beta^\prime \otimes q_\beta^\prime, \;\;\; r_{13} \cdot r_{23}^\prime := \sum_{\alpha, \beta} p_\alpha \otimes p_\beta^\prime \otimes q_\alpha \cdot q_\beta^\prime, \\
    &&r_{13} \cdot r_{12}^\prime := \sum_{\alpha, \beta} p_\alpha \cdot p_\beta^\prime \otimes q_\beta^\prime \otimes q_\alpha, \;\;\; r_{12} \cdot r_{13}^\prime := \sum_{\alpha, \beta} p_\alpha \cdot p_\beta^\prime \otimes q_\alpha \otimes q_\beta^\prime, \\
    &&r_{23} \cdot r_{13}^\prime := \sum_{\alpha, \beta} p_\beta^\prime \otimes p_\alpha \otimes q_\alpha \cdot q_\beta^\prime, \;\;\; r_{23} \cdot r_{12}^\prime := \sum_{\alpha, \beta} p_\beta^\prime \otimes p_\alpha \cdot q_\beta^\prime \otimes q_\alpha.
\end{eqnarray*}

\begin{lem}\label{lem:pbco}
    Let $(P, \cdot)$ be a perm algebra and $r \in P \otimes P$.
    Define a linear map $\Delta_P: P \to P \otimes P$ by Eq.~\eqref{eq:pcob}.
    Then the following conclusions hold.
    \begin{enumerate}
        \item \label{it:pb1} Eq.~\eqref{eq:pb1} holds automatically.
        \item \label{it:pb2}  Eq.~\eqref{eq:pb2} holds if and only if the following equation holds:
        \begin{equation}
            (R_\cdot(p) \otimes R_\cdot(q))(r - \sigma(r)) = 0, \;\; \forall p, q \in P. \label{eq:pb2co}
        \end{equation}
        \item \label{it:pb3}  Eq.~\eqref{eq:pb3} holds if and only if the following equation holds:
        \begin{equation}
            ((R_\cdot(q \cdot p)-R_\cdot(p \cdot q)) \otimes \id - (L_\cdot - R_\cdot )(q) \otimes (L_\cdot - R_\cdot )(p) )(r - \sigma(r)) = 0, \;\; \forall p, q \in A. \label{eq:pb3co}
        \end{equation}
        \item \label{it:pco} $(P, \Delta_P)$ is a perm coalgebra if and only if the following equations hold:
        \begin{eqnarray}
            &&(\id \otimes \id \otimes R_\cdot(p) - ((L_\cdot-R_\cdot)(p) \otimes \id \otimes \id)(\sigma \otimes \id))(r_{12} \cdot r_{23} - r_{13} \cdot r_{23} + r_{12} \cdot r_{13} - r_{13} \cdot r_{12} ) \nonumber \\
            &&= ((L_\cdot-R_\cdot)(p) \otimes \id \otimes \id)(\sigma \otimes \id) ( (\sigma(r)-r)_{12} \cdot r_{23} + (\sigma(r)-r)_{12} \cdot r_{13} - r_{13} \cdot (\sigma(r)-r)_{12}), \label{eq:pcass} \\
            &&(\id \otimes \id \otimes R_\cdot(p))(\id \otimes \id \otimes \id - \sigma \otimes \id )(r_{12} \cdot r_{23} - r_{13} \cdot r_{23} + r_{12}\cdot r_{13} - r_{13} \cdot r_{12}  ) \nonumber \\
            &&= (\id \otimes (L_\cdot - R_\cdot)(p) \otimes \id)( (r - \sigma(r))_{12} \cdot r_{23}) + ((L_\cdot - R_\cdot)(a) \otimes \id \otimes \id)((r-\sigma(r))_{12} \cdot r_{13}), \label{eq:pclc}
        \end{eqnarray}
        for all $p, q \in P$.
    \end{enumerate}
\end{lem}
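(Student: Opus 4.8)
The plan is to substitute the coboundary definition \eqref{eq:pcob} directly into each target condition and reduce everything, using the adjoint-representation identities \eqref{eq:prep1}--\eqref{eq:prep2} and the perm relation \eqref{eq:perm}, to conditions on $r$ and $\sigma(r)$. Writing $r = \sum_\alpha p_\alpha \otimes q_\alpha$, we have
\[
\Delta_P(p) = \sum_\alpha ( p_\alpha \otimes (q_\alpha \cdot p) - ((p \cdot p_\alpha) - (p_\alpha \cdot p)) \otimes q_\alpha ) = (\id \otimes R_\cdot(p) - (L_\cdot - R_\cdot)(p) \otimes \id)(r).
\]
Before expanding, I would first record the operator identities that do all the work. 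From \eqref{eq:prep1}--\eqref{eq:prep2} one gets $R_\cdot(q)R_\cdot(p) = R_\cdot(p \cdot q)$, $L_\cdot(p)R_\cdot(q) = R_\cdot(p \cdot q)$, $L_\cdot(q)R_\cdot(p) = R_\cdot(q \cdot p)$ and $R_\cdot(p)L_\cdot(q) = R_\cdot(q \cdot p)$, whence $(L_\cdot - R_\cdot)(p)(L_\cdot - R_\cdot)(q) = (L_\cdot - R_\cdot)(p \cdot q)$, $R_\cdot(q)(L_\cdot - R_\cdot)(p) = 0$ and $(L_\cdot - R_\cdot)(q)R_\cdot(p) = R_\cdot(q \cdot p) - R_\cdot(p \cdot q)$; from \eqref{eq:perm} one also gets $L_\cdot(p \cdot q) = L_\cdot(q \cdot p)$.

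For part (a), substituting \eqref{eq:pcob} into both sides of \eqref{eq:pb1} and collecting the operator coefficient in each tensor slot, the right-hand side becomes $(\id \otimes R_\cdot(q)R_\cdot(p) - (L_\cdot - R_\cdot)(p)(L_\cdot - R_\cdot)(q) \otimes \id)(r)$; the first two operator identities identify this with $\Delta_P(p \cdot q)$, so \eqref{eq:pb1} holds for every $r$. For part (b), expanding $\sigma(R_\cdot(q) \otimes \id)\Delta_P(p)$ and $(R_\cdot(p) \otimes \id)\Delta_P(q)$ produces one ``$R \otimes R$'' term on each side together with single-slot terms carrying the factor $R_\cdot(q)(L_\cdot - R_\cdot)(p)$ or $R_\cdot(p)(L_\cdot - R_\cdot)(q)$, which vanish by the identity above; the surviving terms collapse to $(R_\cdot(p) \otimes R_\cdot(q))(r - \sigma(r)) = 0$, i.e. \eqref{eq:pb2co}. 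For part (c), the same substitution into \eqref{eq:pb3} --- now also using $L_\cdot(p \cdot q) = L_\cdot(q \cdot p)$ to simplify the first-slot coefficient --- shows that the coefficient of $r$ and the coefficient of $\sigma(r)$ are negatives of one another, both equal up to sign to $(R_\cdot(q \cdot p) - R_\cdot(p \cdot q)) \otimes \id - (L_\cdot - R_\cdot)(q) \otimes (L_\cdot - R_\cdot)(p)$, so that \eqref{eq:pb3} reduces exactly to \eqref{eq:pb3co}.

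For part (d), the perm coalgebra axiom \eqref{eq:cpc} splits into the coassociativity $(\Delta_P \otimes \id)\Delta_P = (\id \otimes \Delta_P)\Delta_P$ and the co-left-commutativity $(\Delta_P \otimes \id)\Delta_P = (\sigma \otimes \id)(\Delta_P \otimes \id)\Delta_P$. I would compute each of the three triple compositions by applying $\Delta_P$ twice; each yields a double sum over two copies of $r$, and the nested products that appear (for instance $q_\beta \cdot (q_\alpha \cdot p) = (q_\beta \cdot q_\alpha) \cdot p$) are rewritten via \eqref{eq:perm} so that the whole $p$-dependence is pulled out as a single $R_\cdot(p)$ or $(L_\cdot - R_\cdot)(p)$ acting on one slot of a $p$-independent core. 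These cores are precisely the combinations $r_{12} \cdot r_{23} - r_{13} \cdot r_{23} + r_{12} \cdot r_{13} - r_{13} \cdot r_{12}$ and the $(r - \sigma(r))$-products recorded before the lemma. Matching the two identities against these normal forms then yields \eqref{eq:pcass} from coassociativity and \eqref{eq:pclc} from co-left-commutativity.

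The routine parts (a)--(c) are short once the four operator identities are in hand; the genuine obstacle is part (d), where the bookkeeping is heavy. Each of the three compositions expands into four double-summed terms, and repeated applications of \eqref{eq:perm} together with the representation identities are needed to reduce every nested product to the standard $r_{ij} \cdot r_{kl}$ form before the coefficients can be compared. Careful tracking of which tensor slot carries the residual $R_\cdot(p)$ or $(L_\cdot - R_\cdot)(p)$, and of the flips $\sigma$, is where errors are most likely, so I would organize that final computation slot by slot.
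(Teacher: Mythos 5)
Your proposal is correct and follows essentially the same route as the paper's proof: direct substitution of Eq.~\eqref{eq:pcob} together with the adjoint-representation identities (in particular $R_\cdot(q)(L_\cdot-R_\cdot)(p)=0$, $(L_\cdot-R_\cdot)(q)R_\cdot(p)=R_\cdot(q\cdot p)-R_\cdot(p\cdot q)$ and $(L_\cdot-R_\cdot)(p)(L_\cdot-R_\cdot)(q)=(L_\cdot-R_\cdot)(p\cdot q)$) handles (a)--(c) exactly as in the paper, and for (d) the paper likewise splits Eq.~\eqref{eq:cpc} into coassociativity and co-left-commutativity, extracts the $p$-dependence as a single $R_\cdot(p)$ or $(L_\cdot-R_\cdot)(p)$ acting on $p$-independent $r_{ij}\cdot r_{kl}$ cores, and uses the intermediate identity Eq.~\eqref{eq:eee} — which is precisely the conversion between the two core combinations that your plan anticipates (it is established inside the proof, not before the lemma, but that is a cosmetic point). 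No gaps.
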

\begin{proof}
    Let $p, q \in P$.

    (\ref{it:pb1}).
    By Eq.~\eqref{eq:perm}, we have $R_\cdot(q) R_\cdot(p)  = R_\cdot(p \cdot q)$ and $(L_\cdot-R_\cdot)(p)(L_\cdot - R_\cdot)(q) = (L_\cdot-R_\cdot)(p \cdot q)$.
    Therefore,
    \begin{eqnarray*}
        &&((L_\cdot-R_\cdot)(p) \otimes \id)\Delta_P(q) + (\id \otimes R_\cdot(q))\Delta_P(p) - \Delta_P(p \cdot q)\\
        &&=( (L_\cdot-R_\cdot)(p) \otimes R_\cdot(q) - (L_\cdot-R_\cdot)(p)(L_\cdot - R_\cdot)(q) \otimes \id + \id \otimes R_\cdot(q) R_\cdot(p) - (L_\cdot - R_\cdot)(p) \otimes R_\cdot(q) )(r) \\
        &&\quad - (\id \otimes R_\cdot(p \cdot q) - (L_\cdot-R_\cdot)(p \cdot q)\otimes \id )(r) = 0 .
    \end{eqnarray*}
    Hence, Eq.~\eqref{eq:pb1} holds automatically.
    Similarly, we show that Items (\ref{it:pb2}) and (\ref{it:pb3}) hold.

    (\ref{it:pco}).
    Writing $r = \sum_\alpha p_\alpha \otimes q_\alpha \in P \otimes P$, we have
    \begin{eqnarray*}
        &&(\Delta \otimes \id)\Delta(p) = \sum_\alpha(\Delta \otimes \id)(p_\alpha \otimes q_\alpha \cdot p - (p \cdot p_\alpha - p_\alpha \cdot p) \otimes q_\alpha) \\
        &&= \sum_{\alpha, \beta}( p_\alpha \otimes q_\alpha \cdot p_\beta \otimes q_\beta \cdot p - (p_\beta \cdot p_\alpha - p_\alpha \cdot p_\beta) \otimes q_\alpha \otimes q_\beta \cdot p \\
        &&\quad - p_\alpha \otimes (p \cdot q_\alpha \cdot p_\beta - q_\alpha \cdot p_\beta \cdot p) \otimes q_\beta -  (p \cdot p_\alpha \cdot p_\beta - p_\alpha \cdot p_\beta \cdot p) \otimes q_\alpha \otimes q_\beta )
    \end{eqnarray*}
    and similarly
    \begin{eqnarray*}
        (\id \otimes \Delta)\Delta(p) &=& \sum_{\alpha, \beta} ( p_\beta \otimes p_\alpha \otimes q_\beta \cdot q_\alpha \cdot p - p_\alpha \otimes (p \cdot q_\alpha \cdot p_\beta - q_\alpha \cdot p_\beta \cdot p) \otimes q_\beta \\
        &&\quad - (p \cdot p_\beta - p_\beta \cdot p) \otimes p_\alpha \otimes q_\alpha \cdot q_\beta + (p \cdot p_\beta - p_\beta \cdot p) \otimes (q_\beta \cdot p_\alpha - p_\alpha \cdot q_\beta) \otimes q_\alpha ).
    \end{eqnarray*}
    Therefore,
    \begin{eqnarray*}
        (\Delta \otimes \id)\Delta(p) - (\id \otimes \Delta)\Delta(p) &=& (\id \otimes \id \otimes R_\cdot(p))(r_{12} \cdot r_{23} - r_{13} \cdot r_{23} + r_{12} \cdot r_{13} - r_{13} \cdot r_{12}  ) \\
        && - ((L_\cdot - R_\cdot)(p) \otimes \id \otimes \id)(r_{12} \cdot r_{13} - r_{23} \cdot r_{13}+r_{12} \cdot r_{23}-r_{23} \cdot r_{12}).
    \end{eqnarray*}
    It is straightforward to show that
    \begin{eqnarray}\label{eq:eee}
        &&(r_{12} \cdot r_{13} - r_{23} \cdot r_{13} + r_{12} \cdot r_{23} - r_{23} \cdot r_{12}) - (\sigma \otimes \id)(r_{12} \cdot r_{23} - r_{13} \cdot r_{23} + r_{12} \cdot r_{13} - r_{13} \cdot r_{12} ) \nonumber\\
        &&= (\sigma \otimes \id) ( (\sigma(r)-r)_{12} \cdot r_{23} + (\sigma(r)-r)_{12} \cdot r_{13} - r_{13} \cdot (\sigma(r)-r)_{12}).
    \end{eqnarray}
    Then $(\Delta \otimes \id)\Delta = (\id \otimes \Delta)\Delta$ if
    and only if Eq.~\eqref{eq:pcass} holds.
    On the other hand, note that
    \begin{equation*}
        (\id \otimes \id \otimes R_\cdot(p)) (r_{13} \cdot r_{23}) = (\id \otimes \id \otimes R_\cdot(p))(\sigma \otimes \id) (r_{13} \cdot r_{23}),
    \end{equation*}
    we have
    \begin{eqnarray*}
        &&(\Delta \otimes \id)\Delta(p) - (\sigma \otimes \id)(\Delta \otimes \id)\Delta(p) \\
        &&=\sum_{\alpha, \beta}( p_\alpha \otimes q_\alpha \cdot p_\beta \otimes q_\beta \cdot p - (p_\beta \cdot p_\alpha - p_\alpha \cdot p_\beta) \otimes q_\alpha \otimes q_\beta \cdot p \\
        &&\quad - p_\alpha \otimes (p \cdot q_\alpha \cdot p_\beta - q_\alpha \cdot p_\beta \cdot p) \otimes q_\beta -  (p \cdot p_\alpha \cdot p_\beta - p_\alpha \cdot p_\beta \cdot p) \otimes q_\alpha \otimes q_\beta \\
        &&\quad -q_\alpha \cdot p_\beta \otimes p_\alpha \otimes  q_\beta \cdot p + q_\alpha \otimes (p_\beta \cdot p_\alpha - p_\alpha \cdot p_\beta) \otimes q_\beta \cdot p \\
        &&\quad + (p \cdot q_\alpha \cdot p_\beta - q_\alpha \cdot p_\beta \cdot p) \otimes p_\alpha \otimes q_\beta +  q_\alpha \otimes (p \cdot p_\alpha \cdot p_\beta - p_\alpha \cdot p_\beta \cdot p) \otimes q_\beta ) \\
        &&= (\id \otimes \id \otimes R_\cdot(p)) (
        r_{12} \cdot r_{23} - r_{13} \cdot r_{23} + r_{12} \cdot r_{13} - r_{13} \cdot r_{12}  ) \\
        &&\quad + (\id \otimes \id \otimes R_\cdot(p))(\sigma \otimes \id)(-r_{12} \cdot r_{23} + r_{13} \cdot r_{23} - r_{12} \cdot r_{13} + r_{13} \cdot r_{12}) \\
        &&\quad - (\id \otimes (L_\cdot - R_\cdot)(p) \otimes \id)((r - \sigma(r))_{12} \cdot r_{23}) - ((L_\cdot - R_\cdot)(p) \otimes \id \otimes \id)((r-\sigma(r))_{12} \cdot r_{13}),
    \end{eqnarray*}
    that is, $(\Delta \otimes \id)\Delta = (\sigma \otimes \id)(\Delta \otimes \id)\Delta$ if and only if Eq.~\eqref{eq:pclc} holds.
    The proof is completed.
\end{proof}

There are the following two conclusions immediately.

\begin{cor}
    Let $(P, \cdot)$ be a perm algebra and $r \in P \otimes P$.
    Define a linear map $\Delta_P: P \to P \otimes P$ by Eq.~\eqref{eq:pcob}.
    Then $(P, \cdot, \Delta_P)$ is a perm bialgebra if and only if Eqs.~\eqref{eq:pb2co}-\eqref{eq:pclc} hold.
\end{cor}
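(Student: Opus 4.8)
The plan is to read the result off directly from Lemma~\ref{lem:pbco}, since all of the substantive computation has already been carried out there. By definition, given that $(P,\cdot)$ is a perm algebra, the triple $(P,\cdot,\Delta_P)$ is a perm bialgebra if and only if $(P,\Delta_P)$ is a perm coalgebra and the three compatibility conditions Eqs.~\eqref{eq:pb1}--\eqref{eq:pb3} all hold. Thus the whole task reduces to translating each of these requirements, for the specific $\Delta_P$ defined by Eq.~\eqref{eq:pcob}, into the corresponding tensor identity in $P \otimes P$ or $P \otimes P \otimes P$.

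First I would dispose of Eq.~\eqref{eq:pb1}, which by Lemma~\ref{lem:pbco}(\ref{it:pb1}) holds automatically for any $r \in P \otimes P$ and hence imposes no constraint. Next, Lemma~\ref{lem:pbco}(\ref{it:pb2}) and Lemma~\ref{lem:pbco}(\ref{it:pb3}) replace Eqs.~\eqref{eq:pb2} and \eqref{eq:pb3} by the equivalent identities \eqref{eq:pb2co} and \eqref{eq:pb3co} respectively. Finally, Lemma~\ref{lem:pbco}(\ref{it:pco}) shows that $(P,\Delta_P)$ is a perm coalgebra precisely when both \eqref{eq:pcass} and \eqref{eq:pclc} hold. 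Conjoining these four equivalences yields that $(P,\cdot,\Delta_P)$ is a perm bialgebra if and only if all of Eqs.~\eqref{eq:pb2co}, \eqref{eq:pb3co}, \eqref{eq:pcass}, \eqref{eq:pclc} hold, which is exactly the assertion that Eqs.~\eqref{eq:pb2co}--\eqref{eq:pclc} hold.

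There is essentially no obstacle here: the corollary is a pure bookkeeping consequence of the lemma, and the only point requiring any attention is to verify that the four equations cited in the various items of Lemma~\ref{lem:pbco} are precisely the ones carrying the labels between \eqref{eq:pb2co} and \eqref{eq:pclc}, so that the compact statement ``Eqs.~\eqref{eq:pb2co}--\eqref{eq:pclc}'' faithfully records all and only these constraints.
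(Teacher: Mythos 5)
Your proof is correct and matches the paper's treatment: the paper states this corollary as an immediate consequence of Lemma~\ref{lem:pbco}, exactly as you argue, with item (a) disposing of Eq.~\eqref{eq:pb1}, items (b) and (c) converting Eqs.~\eqref{eq:pb2}--\eqref{eq:pb3} into Eqs.~\eqref{eq:pb2co}--\eqref{eq:pb3co}, and item (d) handling the perm coalgebra condition via Eqs.~\eqref{eq:pcass}--\eqref{eq:pclc}.
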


\begin{cor}\label{cor:cpbr}
    Let $(P, \cdot)$ be a perm algebra and $r \in P \otimes P$ be symmetric.
    Define a linear map $\Delta_P: P \to P \otimes P$ by Eq.~\eqref{eq:pcob}.
    Then $(P, \cdot, \Delta)$ is a perm bialgebra if and only if the following equations hold:
    \begin{eqnarray*}
        (\id \otimes \id \otimes R_\cdot(p) - ((L_\cdot-R_\cdot)(p) \otimes \id \otimes \id)(\sigma \otimes \id))(r_{12} \cdot r_{23} - r_{13} \cdot r_{23} + r_{12} \cdot r_{13} - r_{13} \cdot r_{12} ) &=& 0, \\
        (\id \otimes \id \otimes R_\cdot(p))(\id \otimes \id \otimes \id - \sigma \otimes \id )(r_{12} \cdot r_{23} - r_{13} \cdot r_{23} + r_{12} \cdot r_{13} - r_{13} \cdot r_{12} ) &=& 0,
    \end{eqnarray*}
    for all $p \in P$.
\end{cor}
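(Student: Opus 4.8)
The plan is to deduce this directly from Lemma~\ref{lem:pbco}, equivalently from the immediately preceding corollary, by exploiting the single extra hypothesis that $r$ is symmetric, i.e.\ $\sigma(r) = r$. By that corollary, $(P, \cdot, \Delta_P)$ is a perm bialgebra if and only if the four conditions \eqref{eq:pb2co}, \eqref{eq:pb3co}, \eqref{eq:pcass} and \eqref{eq:pclc} hold for all $p, q \in P$. Hence the entire task reduces to simplifying these four conditions under the assumption $\sigma(r) = r$; no new computation with the coproduct \eqref{eq:pcob} is needed.

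First I would record that symmetry forces $r - \sigma(r) = 0$ in $P \otimes P$, and consequently $(\sigma(r) - r)_{12} = (r - \sigma(r))_{12} = 0$ in $P \otimes P \otimes P$. With this, Eqs.~\eqref{eq:pb2co} and \eqref{eq:pb3co} hold automatically, since each is the value of a linear operator applied to $r - \sigma(r)$; thus these two conditions impose no constraint and can be discarded.

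Next I would substitute $\sigma(r) - r = 0$ into the right-hand sides of Eqs.~\eqref{eq:pcass} and \eqref{eq:pclc}. In \eqref{eq:pcass} the right-hand side is a linear operator applied to a sum of terms, each carrying a factor $(\sigma(r) - r)_{12}$, so it vanishes and leaves exactly the first displayed equation of the corollary. Likewise the right-hand side of \eqref{eq:pclc} is built from terms each carrying the factor $(r - \sigma(r))_{12}$, hence vanishes and leaves exactly the second displayed equation. Combining these simplifications, the four conditions of the preceding corollary collapse to the two stated equations, which is the desired equivalence.

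There is essentially no genuine obstacle here: the argument is a direct specialization of Lemma~\ref{lem:pbco}. The only point requiring a moment of care is the bookkeeping of the leg notation, namely confirming that in every surviving summand on the right-hand sides of \eqref{eq:pcass} and \eqref{eq:pclc} the factor $r - \sigma(r)$ really occupies the first two tensor slots, so that the symmetry of $r$ genuinely annihilates those terms while the left-hand sides, which involve only $r_{12}, r_{13}, r_{23}$, are untouched.
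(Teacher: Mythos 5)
Your proposal is correct and matches the paper's (implicit) argument exactly: the corollary is stated as an immediate consequence of Lemma~\ref{lem:pbco}, obtained precisely by setting $r-\sigma(r)=0$, which makes Eqs.~\eqref{eq:pb2co} and \eqref{eq:pb3co} vacuous and kills the right-hand sides of Eqs.~\eqref{eq:pcass} and \eqref{eq:pclc} by bilinearity of the leg products, leaving the two displayed equations. Your care about the factor $(r-\sigma(r))_{12}$ appearing in every right-hand-side summand (including as the second argument in $r_{13}\cdot(\sigma(r)-r)_{12}$) is exactly the right bookkeeping.
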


Corollary~\ref{cor:cpbr} motivates to give the following notion,
which is an analogue of the classical Yang-Baxter equation (CYBE)
in a Lie algebra (\cite{chari1995guide}).
\begin{defi}
    Let $(P, \cdot)$ be a perm algebra and $r \in P \otimes P$.
    The equation
    \begin{equation*}
        r_{12} \cdot r_{23} - r_{13} \cdot r_{23} + r_{12} \cdot r_{13} - r_{13} \cdot r_{12}  = 0
    \end{equation*}
    is called the \textbf{perm Yang-Baxter equation} in $(P, \cdot)$ or simply the \textbf{perm-YBE} in $(P, \cdot)$.
\end{defi}

By Corollary~\ref{cor:cpbr}, we have the following conclusion.
\begin{cor}\label{cor:pbc}
    Let $(P, \cdot)$ be a perm algebra and $r \in P \otimes P$.
    If $r$ is a symmetric solution of the perm-YBE in $(P, \cdot)$,
    then $(P, \cdot, \Delta_P)$ is a perm bialgebra,
    where $\Delta_P$ is defined by Eq.~\eqref{eq:pcob}.
\end{cor}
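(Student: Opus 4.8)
The plan is to deduce this immediately from Corollary~\ref{cor:cpbr}. Since $r$ is symmetric by hypothesis, Corollary~\ref{cor:cpbr} applies directly and reduces the claim that $(P, \cdot, \Delta_P)$ is a perm bialgebra to verifying the two displayed tensor identities appearing in that corollary. The key observation I would make is that both of those identities share the common factor
\[
r_{12} \cdot r_{23} - r_{13} \cdot r_{23} + r_{12} \cdot r_{13} - r_{13} \cdot r_{12},
\]
to which a linear operator on $P \otimes P \otimes P$ (built from $R_\cdot(p)$, $L_\cdot(p)$ and $\sigma$) is applied in each case.

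First I would note that this common factor is precisely the left-hand side of the perm-YBE. By the hypothesis that $r$ is a solution of the perm-YBE, this element equals zero. Since any linear operator sends the zero element to zero, both identities required by Corollary~\ref{cor:cpbr} hold automatically. Hence Corollary~\ref{cor:cpbr} yields that $(P, \cdot, \Delta_P)$ is a perm bialgebra, completing the argument. There is no genuine obstacle here: all of the substantive computation has already been discharged in Lemma~\ref{lem:pbco} (which translates the defining relations of a perm bialgebra into tensor conditions on $r$) and in Corollary~\ref{cor:cpbr} (which specializes those conditions to symmetric $r$). The only thing to check is the purely formal match between the perm-YBE expression and the common factor in the two conditions, after which the corollary follows by a one-line substitution.
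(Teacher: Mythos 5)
Your proposal is correct and matches the paper's own (implicit) argument exactly: the paper derives Corollary~\ref{cor:pbc} directly from Corollary~\ref{cor:cpbr} by observing that both displayed conditions there apply linear operators to the perm-YBE expression $r_{12} \cdot r_{23} - r_{13} \cdot r_{23} + r_{12} \cdot r_{13} - r_{13} \cdot r_{12}$, which vanishes by hypothesis. Nothing is missing.
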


For a vector space $P$, through the isomorphism $P \otimes P \cong \Hom_\mathbf{k}(P^*, P)$,
any $r = \sum\limits_{\alpha \in I} p_\alpha \otimes q_\alpha \in P \otimes P$ can be identified as a map $r^\sharp$ from $P^*$ to $P$:
\begin{equation*}
    r^\sharp: P^* \to P, \;\; p^* \mapsto \sum\limits_{\alpha \in I} \langle p^*, p_\alpha \rangle q_\alpha, \;\;
    \forall p^* \in P^*.
\end{equation*}

\begin{thm}\label{thm:pybeo}
    Let $(P, \cdot)$ be a perm algebra and $r \in P \otimes P$ be symmetric.
    Then $r$ is a solution of the perm-YBE in $(P, \cdot)$ if and only if $r^\sharp$ satisfies the following equation:
    \begin{equation*}
        r^\sharp(p^*) \cdot r^\sharp(q^*) = r^\sharp( L_\cdot^*(r^\sharp(p^*)) q^* + (L_\cdot^* - R_\cdot^*)(r^\sharp(q^*)) p^* ), \;\; \forall p^*, q^* \in P^*.
    \end{equation*}
\end{thm}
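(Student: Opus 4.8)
The plan is to fix a symmetric $r = \sum_{\alpha} p_\alpha \otimes q_\alpha \in P \otimes P$ (so that $\sum_\alpha p_\alpha \otimes q_\alpha = \sum_\alpha q_\alpha \otimes p_\alpha$) and to regard the left-hand side of the perm-YBE,
\[
X := r_{12}\cdot r_{23} - r_{13}\cdot r_{23} + r_{12}\cdot r_{13} - r_{13}\cdot r_{12},
\]
as a fixed element of $P\otimes P\otimes P$. The idea is that $X$ vanishes if and only if it is annihilated by every contraction $\langle p^*, -\rangle \otimes \langle q^*, -\rangle \otimes \id$ of its first two legs against covectors $p^*, q^* \in P^*$, and that this contraction of $X$ is precisely the difference of the two sides of the asserted operator identity. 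Thus the whole theorem reduces to a single computation together with a nondegeneracy argument.

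I would carry out the contraction term by term, using the adjoint relations $\langle q^*, x\cdot y\rangle = \langle L_\cdot^*(x)q^*, y\rangle$ and $\langle p^*, x\cdot y\rangle = \langle R_\cdot^*(y)p^*, x\rangle$ together with the definition $r^\sharp(p^*) = \sum_\alpha \langle p^*, p_\alpha\rangle q_\alpha$. The term $-r_{13}\cdot r_{23}$ contracts directly to $-r^\sharp(p^*)\cdot r^\sharp(q^*)$, and $r_{12}\cdot r_{23}$ contracts to $r^\sharp(L_\cdot^*(r^\sharp(p^*))q^*)$ after pulling the scalar $\langle p^*, p_\alpha\rangle$ inside $L_\cdot^*$ by linearity. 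The remaining two terms $r_{12}\cdot r_{13} - r_{13}\cdot r_{12}$ involve the ``opposite'' contraction $\sum_\alpha \langle q^*, q_\alpha\rangle p_\alpha$; here is exactly where the symmetry of $r$ is used, since $\sigma(r)=r$ identifies this with $r^\sharp(q^*)$. They combine to $r^\sharp\big((L_\cdot^* - R_\cdot^*)(r^\sharp(q^*))p^*\big)$. Collecting the four contributions yields the identity
\[
(\langle p^*,-\rangle \otimes \langle q^*,-\rangle \otimes \id)(X) = r^\sharp\big(L_\cdot^*(r^\sharp(p^*))q^* + (L_\cdot^* - R_\cdot^*)(r^\sharp(q^*))p^*\big) - r^\sharp(p^*)\cdot r^\sharp(q^*),
\]
valid for all $p^*, q^* \in P^*$ and for any symmetric $r$.

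With this identity in hand, both implications are immediate. If $r$ solves the perm-YBE then $X=0$, so the left-hand side above vanishes for all $p^*, q^*$, which is exactly the stated equation for $r^\sharp$. Conversely, if $r^\sharp$ satisfies that equation for all $p^*, q^*$, then the contraction of $X$ against every pair of covectors vanishes, and it remains to deduce $X = 0$. I expect this separation step to be the only genuine obstacle: writing $X = \sum_k u_k \otimes v_k \otimes w_k$ with the $w_k$ linearly independent, the vanishing of $\sum_k \langle p^*, u_k\rangle\langle q^*, v_k\rangle w_k$ for all $p^*, q^*$ forces $\langle p^*, u_k\rangle\langle q^*, v_k\rangle = 0$ for each $k$ (finite-dimensionality makes the evaluation pairing nondegenerate), whence $u_k \otimes v_k = 0$ for each $k$ and $X = 0$. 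This gives the perm-YBE and completes the equivalence. The rest is careful bookkeeping of the six triple-index sums, which is routine once the adjoint relations and the symmetry substitution are in place.
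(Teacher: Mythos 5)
Your proposal is correct and takes essentially the same route as the paper, which simply defers to the analogous computation for associative algebras (\cite{bai2010double}, Theorem~2.4.7): that argument is precisely your contraction-plus-nondegeneracy scheme, and your term-by-term evaluation is accurate, including the observation that symmetry of $r$ is needed exactly in the terms $r_{12}\cdot r_{13}-r_{13}\cdot r_{12}$ to identify $\sum_\alpha \langle q^*, q_\alpha\rangle p_\alpha$ with $r^\sharp(q^*)$. One cosmetic remark: when you write $X=\sum_k u_k\otimes v_k\otimes w_k$ with the $w_k$ linearly independent, the coefficient of each $w_k$ is in general an element $T_k\in P\otimes P$ rather than a pure tensor, but since functionals of the form $p^*\otimes q^*$ separate points of $P\otimes P$, the vanishing of all contractions still forces each $T_k=0$ and hence $X=0$, so this does not affect the validity of your argument.
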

\begin{proof}
The proof is similar to that of \cite[Theorem~2.4.7]{bai2010double}.
\end{proof}

The notion of an  $\mathcal{O}$-operator of a Lie algebra was
introduced in \cite{kupershmidt1999what} as a natural
generalization of the CYBE in a Lie algebra. Similarly, we give the following notion as
a generalization of the perm-YBE in a perm algebra.

\begin{defi}
    Let $(P, \cdot)$ be a perm algebra and $(V, l, r)$ be a representation of $(P, \cdot)$.
    A linear map $T: V \to P$ is called an $\mathcal{O}$-operator of $(P, \cdot)$ associated to $(V, l, r)$ if $T$ satisfies
    \begin{equation*}
        T(u) \cdot T(v) = T( l(T(u))v + r(T(v)) u ), \;\; \forall u, v \in V.
    \end{equation*}
\end{defi}

Thus Theorem~\ref{thm:pybeo} is rewritten in terms of
$\mathcal{O}$-operators.
\begin{cor}
    Let $(P, \cdot)$ be a perm algebra and $r \in P \otimes P$ be symmetric.
    Then $r$ is a solution of the perm-YBE in $(P, \cdot)$ if and only if
    $r^\sharp$ is an $\mathcal{O}$-operator of $(P, \cdot)$ associated to $(P^*, L_\cdot^*, L_\cdot^* - R_\cdot^*)$.
\end{cor}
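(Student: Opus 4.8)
The plan is to observe that this corollary is nothing more than a reformulation of Theorem~\ref{thm:pybeo}, obtained by specializing the defining equation of an $\mathcal{O}$-operator to the particular representation $(P^*, L_\cdot^*, L_\cdot^* - R_\cdot^*)$. Consequently, no new tensor computation is needed: the entire argument consists of matching two identities and invoking a representation fact already established.

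First I would recall that $(P^*, L_\cdot^*, L_\cdot^* - R_\cdot^*)$ is indeed a representation of $(P, \cdot)$. This is the earlier proposition stating that $(V^*, l^*, l^* - r^*)$ is a representation whenever $(V, l, r)$ is one, applied to the adjoint representation $(P, L_\cdot, R_\cdot)$. This guarantees that the notion of an $\mathcal{O}$-operator of $(P, \cdot)$ associated to $(P^*, L_\cdot^*, L_\cdot^* - R_\cdot^*)$ is well defined, so the statement of the corollary makes sense. Next, I would simply write out the $\mathcal{O}$-operator condition for $T = r^\sharp$ with $V = P^*$, $l = L_\cdot^*$, and $r = L_\cdot^* - R_\cdot^*$. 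By the definition of an $\mathcal{O}$-operator, $r^\sharp$ is such an operator precisely when
\[
    r^\sharp(p^*) \cdot r^\sharp(q^*) = r^\sharp\big( L_\cdot^*(r^\sharp(p^*)) q^* + (L_\cdot^* - R_\cdot^*)(r^\sharp(q^*)) p^* \big), \quad \forall\, p^*, q^* \in P^*,
\]
which is verbatim the equation appearing in the conclusion of Theorem~\ref{thm:pybeo}.

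Since $r$ is assumed symmetric, Theorem~\ref{thm:pybeo} asserts that this identity holds if and only if $r$ is a solution of the perm-YBE in $(P, \cdot)$, and chaining this equivalence with the identification just made yields the corollary at once. I expect there to be no genuine obstacle here: the only substantive inputs are the identification of the two equations and the fact that $(P^*, L_\cdot^*, L_\cdot^* - R_\cdot^*)$ is a representation, and both have already been supplied. In particular, none of the explicit tensor manipulations underlying Lemma~\ref{lem:pbco} or the proof of Theorem~\ref{thm:pybeo} needs to be repeated.
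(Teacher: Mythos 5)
Your proposal is correct and matches the paper exactly: the corollary is stated there as a direct rewriting of Theorem~\ref{thm:pybeo} in $\mathcal{O}$-operator language, with no further argument given. Your identification of the $\mathcal{O}$-operator condition for $T = r^\sharp$ with the equation in Theorem~\ref{thm:pybeo}, together with the earlier proposition guaranteeing that $(P^*, L_\cdot^*, L_\cdot^* - R_\cdot^*)$ is a representation, is precisely what the paper intends.
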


\begin{thm}\label{thm:o2pybe}
    Let $(P, \cdot)$ be a perm algebra and $(V, l, r)$ be a representation of $(P, \cdot)$.
    Let $T: V \to P$ be a linear map which is identified as an element in $(P \ltimes_{l^*, l^* - r^*} V^*) \otimes(P \ltimes_{l^*, l^* - r^*} V^*)$
    (through $\Hom_\mathbf{k}(V, P) \cong P \otimes V^* \subset (P \ltimes_{l^*, l^* - r^*} V^*) \otimes (P \ltimes_{l^*, l^* - r^*} V^*)$).
    Then $r = T + \sigma(T)$ is a symmetric solution of the perm-YBE in $(P \ltimes_{l^*, l^* - r^*}V^*, \cdot)$ if and only if $T$ is an $\mathcal{O}$-operator of $(P, \cdot)$ associated to $(V, l, r)$.
\end{thm}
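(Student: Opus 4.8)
The plan is to exploit the fact that $r=T+\sigma(T)$ is symmetric \emph{by construction} (since $\sigma(r)=\sigma(T)+T=r$), so the only content of the statement is the equivalence between $r$ being a solution of the perm-YBE in $\widehat{P}:=P\ltimes_{l^*,l^*-r^*}V^*$ and $T$ being an $\mathcal{O}$-operator. Fix dual bases $\{e_i\}\subset V$ and $\{e^i\}\subset V^*$; under the identification $\Hom_\mathbf{k}(V,P)\cong P\otimes V^*$ we have $T=\sum_i T(e_i)\otimes e^i$, hence $r=\sum_i\big(T(e_i)\otimes e^i+e^i\otimes T(e_i)\big)\in\widehat{P}\otimes\widehat{P}$. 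The whole argument rests on the multiplication rules in the semidirect product: for $p\in P$ and $\xi\in V^*$ one has $p\cdot\xi=l^*(p)\xi\in V^*$, $\xi\cdot p=(l^*-r^*)(p)\xi\in V^*$, and $V^*\cdot V^*=0$. I will write $D(u,v):=T(u)\cdot T(v)-T\big(l(T(u))v+r(T(v))u\big)$ for the $\mathcal{O}$-operator defect, so that $T$ is an $\mathcal{O}$-operator precisely when $D\equiv 0$.

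The key observation is a grading argument. Each leg-pair of $r$ is balanced (one factor in $P$, one in $V^*$), so in the perm-YBE expression $Q(r):=r_{12}\cdot r_{23}-r_{13}\cdot r_{23}+r_{12}\cdot r_{13}-r_{13}\cdot r_{12}$ the two copies of $r$ contribute exactly two $P$-legs and two $V^*$-legs, of which two legs are multiplied together. Any term multiplying the two $V^*$-legs vanishes, and every surviving product has exactly one $P$-slot; therefore $Q(r)$ lives entirely in the three components $P\otimes V^*\otimes V^*$, $V^*\otimes P\otimes V^*$ and $V^*\otimes V^*\otimes P$. I would then isolate the component in $V^*\otimes V^*\otimes P$: tracking for each of the six terms which summand ($T$ or $\sigma(T)$) each copy of $r$ contributes in each slot, one finds that it equals $\sum_{i,j}\xi_i\otimes(l^*(a_i)\xi_j)\otimes a_j-\sum_{i,j}\xi_i\otimes\xi_j\otimes(a_i\cdot a_j)+\sum_{i,j}(l^*(a_i)\xi_j)\otimes\xi_i\otimes a_j-\sum_{i,j}((l^*-r^*)(a_j)\xi_i)\otimes\xi_j\otimes a_i$, writing $a_i=T(e_i)$, $\xi_i=e^i$. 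Contracting the two $V^*$-slots against arbitrary $u,v\in V$ and using $\langle l^*(p)\xi,w\rangle=\langle\xi,l(p)w\rangle$, the dual identity for $r^*$, and $\sum_i T(e_i)\langle e^i,w\rangle=T(w)$, this component collapses to $T(l(T(u))v)-T(u)\cdot T(v)+T(r(T(v))u)=-D(u,v)$. This gives the ``only if'' direction at once: if $Q(r)=0$ then this component vanishes, forcing $D\equiv 0$, i.e. $T$ is an $\mathcal{O}$-operator.

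For the ``if'' direction I would perform the analogous contraction on the two remaining components and show that each is a fixed linear combination of $D(u,v)$ and $D(v,u)$; for instance the $P\otimes V^*\otimes V^*$ component contracts to $D(u,v)-D(v,u)$. Consequently, once $D\equiv 0$ all three components vanish and $Q(r)=0$, completing the equivalence. The main obstacle is purely organizational: the careful bookkeeping of the six terms across the graded components, keeping straight which of $T$ and $\sigma(T)$ each copy supplies and that $V^*\cdot V^*=0$ removes all cross-terms; the contraction identities themselves are routine once the dual-representation relations are in place. As a shortcut one could instead invoke Theorem~\ref{thm:pybeo} applied to $\widehat{P}$: a direct computation gives $r^\sharp|_V=T$ (with image in $P$) and $r^\sharp(P^*)\subseteq V^*$, so that the $\mathcal{O}$-operator relation for $r^\sharp$ restricted to arguments in $V$ is exactly the $\mathcal{O}$-operator condition for $T$, while the arguments involving $P^*$ produce identities that hold automatically because $V^*\cdot V^*=0$.
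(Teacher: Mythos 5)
Your main argument is correct, and I verified the bookkeeping: with $r=\sum_i(a_i\otimes\xi_i+\xi_i\otimes a_i)$, the perm-YBE expression indeed lands in the three mixed components, and the $V^*\otimes V^*\otimes P$, $V^*\otimes P\otimes V^*$ and $P\otimes V^*\otimes V^*$ components contract against $u,v\in V$ to $-D(u,v)$, $D(u,v)$ and $D(u,v)-D(v,u)$ respectively, which yields both directions exactly as you say. Note that the paper offers no computation here at all --- its proof of Theorem~\ref{thm:o2pybe} is a one-line pointer to the analogous Lie-algebra result in \cite{bai2007unified} --- and your expansion is precisely that standard argument transported to the perm setting, so you coincide with what the paper delegates while actually supplying the details. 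One caveat concerns your optional shortcut via Theorem~\ref{thm:pybeo}: the claim that the arguments involving $P^*$ ``hold automatically because $V^*\cdot V^*=0$'' is only true when \emph{both} arguments lie in $P^*$. For mixed arguments (one in $V\cong V^{**}$, one in $P^*$) the $\mathcal{O}$-operator identity for $r^\sharp$ unwinds, after pairing against $w\in V$, to
\begin{equation*}
\langle q^*, T(l(T(u))w)\rangle=\langle q^*,\, T(u)\cdot T(w)-T(r(T(w))u)\rangle,
\end{equation*}
i.e.\ once more the $\mathcal{O}$-operator condition on $T$, and in the other order to the symmetry $D(v,w)=D(w,v)$; so these cases are consequences of $D\equiv 0$ rather than trivialities. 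This does not damage the equivalence --- the ``only if'' direction uses only the $V$-arguments, and in the ``if'' direction all cases vanish once $D\equiv 0$ --- but the shortcut as stated is slightly too quick, and if you use it you should check the mixed cases explicitly rather than dismiss them.
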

\begin{proof}
    The proof follows the same argument as the one in \cite{bai2007unified} for Lie algebras.
\end{proof}

\begin{defi}
    A \textbf{pre-perm algebra} is a triple $(P, \lhd, \rhd)$, where $P$ is a vector space, and $\lhd$ and $\rhd$ are binary operations such that
    \begin{eqnarray*}
        &&p_1 \lhd (p_2 \lhd p_3 + p_2 \rhd p_3) = (p_1 \lhd p_2) \lhd p_3 = (p_2 \rhd p_1) \lhd p_3 = p_2 \rhd (p_1 \lhd p_3), \\
        &&(p_1 \lhd p_2 + p_1 \rhd p_2) \rhd p_3 = p_1 \rhd (p_2 \rhd p_3) = p_2 \rhd (p_1 \rhd p_3), \;\; \forall p_1, p_2, p_3 \in P.
    \end{eqnarray*}
\end{defi}

By a direct verification, we have the following conclusion.
\begin{pro}\label{pro:pp2p}
\begin{enumerate}
\item \label{it:a1} Let $(A, \lhd, \rhd)$ be a pre-perm algebra.
    Then the binary operation
    \begin{equation*}
        p \cdot q := p \lhd q + p \rhd q, \;\; \forall p, q \in P,
    \end{equation*}
    defines a perm algebra $(P, \cdot)$, called the \textbf{sub-adjacent perm algebra} of $(P, \lhd, \rhd)$.
    Moreover $(P, L_\rhd, R_\lhd)$ is a representation of $(P, \cdot)$ and
    the identity map $\id$ is an $\mathcal{O}$-operator of $(P, \cdot)$ associated to $(P, L_\rhd, R_\lhd)$.
\item Let $T: V \to P$ be an $\mathcal{O}$-operator of a perm
algebra $(P, \cdot)$ associated to $(V, l, r)$.
    Then there exists a pre-perm algebra structure $(V, \lhd, \rhd)$ on $V$,
    where $\lhd$ and $\rhd$ are respectively defined by
    \begin{equation*}
        u \rhd v := l(T(u)) v \;\; {\rm and } \;\;
        u \lhd v := r(T(v)) u, \;\;
        \forall u, v \in V.
    \end{equation*}
\end{enumerate}
\end{pro}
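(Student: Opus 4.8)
The plan is to treat both parts as direct structural verifications, where the only real content is matching up the pre-perm axioms with the perm, representation, and $\mathcal{O}$-operator axioms. For part (a), I would first set $p\cdot q = p\lhd q + p\rhd q$ and abbreviate $q\cdot r = q\lhd r + q\rhd r$ inside the defining relations, so the two displayed pre-perm identities read $p_1\lhd(p_2\cdot p_3)=(p_1\lhd p_2)\lhd p_3=(p_2\rhd p_1)\lhd p_3=p_2\rhd(p_1\lhd p_3)$ and $(p_1\cdot p_2)\rhd p_3 = p_1\rhd(p_2\rhd p_3)=p_2\rhd(p_1\rhd p_3)$. Expanding $(p\cdot q)\cdot r$ and $p\cdot(q\cdot r)$ by bilinearity, both collapse to $(p\lhd q)\lhd r + (p\rhd q)\lhd r + p\rhd(q\rhd r)$ once I use the left relation $p\lhd(q\cdot r)=(p\lhd q)\lhd r$, the right relation $(p\cdot q)\rhd r = p\rhd(q\rhd r)$, and the key identity $(p\rhd q)\lhd r = p\rhd(q\lhd r)$ (the last two equal terms of the left line); this yields associativity $p\cdot(q\cdot r)=(p\cdot q)\cdot r$. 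For left-commutativity $(p\cdot q)\cdot r=(q\cdot p)\cdot r$, I would invoke the middle equalities $(p\lhd q)\lhd r=(q\rhd p)\lhd r$ and $(p\rhd q)\lhd r=(q\lhd p)\lhd r$ of the left line together with $p\rhd(q\rhd r)=q\rhd(p\rhd r)$ from the right line.

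For the representation claim, with $L_\rhd(p)v=p\rhd v$ and $R_\lhd(p)v=v\lhd p$, each required identity is exactly one pre-perm relation read pointwise: $L_\rhd(p_1\cdot p_2)=L_\rhd(p_1)L_\rhd(p_2)=L_\rhd(p_2)L_\rhd(p_1)$ is the right line; $R_\lhd(p_1\cdot p_2)=R_\lhd(p_2)R_\lhd(p_1)$ comes from $v\lhd(p_1\cdot p_2)=(v\lhd p_1)\lhd p_2$ (the left line with $v$ in the first slot); and $R_\lhd(p_1\cdot p_2)=R_\lhd(p_2)L_\rhd(p_1)=L_\rhd(p_1)R_\lhd(p_2)$ follow from the chain $(v\lhd p_1)\lhd p_2=(p_1\rhd v)\lhd p_2=p_1\rhd(v\lhd p_2)$, again instances of the left line. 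That $\id$ is an $\mathcal{O}$-operator is immediate, since $L_\rhd(u)v+R_\lhd(v)u=u\rhd v+u\lhd v=u\cdot v$.

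For part (b), the governing observation is that the $\mathcal{O}$-operator condition says precisely $T(u\lhd v+u\rhd v)=T(u)\cdot T(v)$, i.e. $T$ intertwines the candidate sub-adjacent product on $V$ with $\cdot$ on $P$. I would then verify the two pre-perm lines by pushing everything through $T$ and invoking the representation axioms of $(V,l,r)$: for the left line, $u_1\lhd(u_2\cdot u_3)=r\big(T(u_2)\cdot T(u_3)\big)u_1$, and the three equalities follow from $r(p_1\cdot p_2)=r(p_2)r(p_1)=r(p_2)l(p_1)=l(p_1)r(p_2)$ applied with $p_1=T(u_2)$, $p_2=T(u_3)$; for the right line, $(u_1\cdot u_2)\rhd u_3=l\big(T(u_1)\cdot T(u_2)\big)u_3$, and $l(p_1\cdot p_2)=l(p_1)l(p_2)=l(p_2)l(p_1)$ supplies the two required equalities. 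I do not expect a genuine obstacle; the only point requiring care is correctly pairing each of the four mixed relations in the definition of a representation with the corresponding term, in particular keeping track of the order of arguments in $r(p_2)l(p_1)$ versus $l(p_1)r(p_2)$.
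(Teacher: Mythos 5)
Your proof is correct and is exactly the ``direct verification'' the paper asserts without writing out: the paper states Proposition~\ref{pro:pp2p} with no proof beyond that phrase, and your expansion of $p\cdot(q\cdot r)$ and $(p\cdot q)\cdot r$ into the three terms $(p\lhd q)\lhd r + (p\rhd q)\lhd r + p\rhd(q\rhd r)$, together with the pointwise matching of the pre-perm lines against Eqs.~\eqref{eq:prep1}--\eqref{eq:prep2}, is the intended computation. In particular you correctly identify that $(p\rhd q)\lhd r = p\rhd(q\lhd r)$ is an instance of the left pre-perm line and that in part (b) the $\mathcal{O}$-operator identity $T(u\cdot v)=T(u)\cdot T(v)$ is what lets the representation axioms be applied with $p_1=T(u_2)$, $p_2=T(u_3)$; no gaps.
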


\begin{pro}\label{pro:pp2pb}
    Let $(P, \lhd, \rhd)$ be a pre-perm algebra and $(P, \cdot)$ be the sub-adjacent perm algebra of $(P, \lhd, \rhd)$.
    Let $\{e_1, e_2, \cdots, e_n\}$ be a basis of $P$ and $\{e_1^*, e_2^*, \cdots, e_n^*\}$ be the dual basis.
    Then $$r = \sum_{i=1}^n (e_i \otimes e_i^* + e_i^* \otimes e_i)$$ is a symmetric solution of the perm-YBE in the perm algebra $(P \ltimes_{L_\rhd^*, L_\rhd^*-R_\lhd^*} P^*, \cdot)$.
    Therefore there is a perm bialgebra
    $(P \ltimes_{L_\rhd^*, L_\rhd^*-R_\lhd^*} P^*, \cdot, \Delta)$,
    where the linear map $\Delta$ is defined by Eq.~\eqref{eq:pcob} through the above $r$.
\end{pro}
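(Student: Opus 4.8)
The plan is to obtain this proposition as a short composition of Proposition~\ref{pro:pp2p}, Theorem~\ref{thm:o2pybe} and Corollary~\ref{cor:pbc}, the only genuine content being the identification of the tensor element $r$ with the identity operator. First I would invoke Proposition~\ref{pro:pp2p}(\ref{it:a1}): since $(P, \lhd, \rhd)$ is a pre-perm algebra with sub-adjacent perm algebra $(P, \cdot)$, the triple $(P, L_\rhd, R_\lhd)$ is a representation of $(P, \cdot)$ and the identity map $\id: P \to P$ is an $\mathcal{O}$-operator of $(P, \cdot)$ associated to $(P, L_\rhd, R_\lhd)$. This is exactly the hypothesis required to apply Theorem~\ref{thm:o2pybe} with $(V, l, r) = (P, L_\rhd, R_\lhd)$ and $T = \id$.

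Next I would make the identification explicit. With $l = L_\rhd$ and $r = R_\lhd$, the semidirect product $P \ltimes_{l^*, l^* - r^*} V^*$ of Theorem~\ref{thm:o2pybe} becomes precisely $P \ltimes_{L_\rhd^*, L_\rhd^* - R_\lhd^*} P^*$, matching the perm algebra in the statement. Under the isomorphism $\Hom_\mathbf{k}(P, P) \cong P \otimes P^*$, in which $e_i \otimes e_i^*$ acts as the map $x \mapsto \langle e_i^*, x\rangle e_i$, the identity map corresponds to $T = \sum_{i=1}^n e_i \otimes e_i^*$, regarded as an element of $P \otimes P^* \subset (P \ltimes_{L_\rhd^*, L_\rhd^* - R_\lhd^*} P^*) \otimes (P \ltimes_{L_\rhd^*, L_\rhd^* - R_\lhd^*} P^*)$. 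Hence $T + \sigma(T) = \sum_{i=1}^n (e_i \otimes e_i^* + e_i^* \otimes e_i) = r$, so the element displayed in the statement is exactly $T + \sigma(T)$ for this choice of $T$.

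Then Theorem~\ref{thm:o2pybe}, applied in the direction that an $\mathcal{O}$-operator yields a solution, gives that $r = T + \sigma(T)$ is a symmetric solution of the perm-YBE in $(P \ltimes_{L_\rhd^*, L_\rhd^* - R_\lhd^*} P^*, \cdot)$, since $T = \id$ is an $\mathcal{O}$-operator by the first step. Finally, feeding this symmetric solution into Corollary~\ref{cor:pbc} produces the perm bialgebra $(P \ltimes_{L_\rhd^*, L_\rhd^* - R_\lhd^*} P^*, \cdot, \Delta)$ with $\Delta$ defined by Eq.~\eqref{eq:pcob} through $r$, which is the asserted conclusion.

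As for difficulties, there is no conceptual obstacle: each of the three cited results carries the real weight and the argument is purely a bookkeeping composition. The single point that demands care is the tensor identification in the second step, namely checking that the image of $\id$ under $\Hom_\mathbf{k}(P, P) \cong P \otimes P^*$ is $\sum_i e_i \otimes e_i^*$ so that $T + \sigma(T)$ reproduces precisely the symmetric $r$ written in the statement; since $r$ is symmetric, any ambiguity in the ordering convention is immaterial to the final conclusion.
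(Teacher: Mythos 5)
Your proposal is correct and follows exactly the paper's own argument: the paper likewise combines Proposition~\ref{pro:pp2p}~(\ref{it:a1}) (identity map as an $\mathcal{O}$-operator), the identification $\id = \sum_{i=1}^n e_i \otimes e_i^*$, Theorem~\ref{thm:o2pybe}, and Corollary~\ref{cor:pbc}. Your write-up merely makes the bookkeeping (the semidirect-product identification and $r = T + \sigma(T)$) more explicit than the paper's three-line proof.
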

\begin{proof}
    By Proposition~\ref{pro:pp2p} (\ref{it:a1}), the identity map $\id$ is an $\mathcal{O}$-operator of $(P, \cdot)$ associated to $(P, L_\rhd, R_\lhd)$.
    Note that $\id = \sum_{i=1}^n e_i \otimes e_i^*$.
    Hence, the conclusion follows from Theorem~\ref{thm:o2pybe} and Corollary~\ref{cor:pbc}.
\end{proof}

\subsection{Infinite-dimensional Lie bialgebras from the perm Yang-Baxter equation}
We now turn to the constructions of skew-symmetric solutions of
the CYBE from symmetric solutions of the perm-YBE.

Let $(L = \oplus_{i \in \mathbb{Z}} L_i, [-, -])$ be a
$\mathbb{Z}$-graded Lie algebra and $r = \sum_{i,j,\alpha}
a_{i\alpha} \otimes b_{j\alpha} \in L \hat{\otimes} L$. Set
\begin{eqnarray*}
    \hphantom{}&&[r_{12}, r_{13}] := \sum_{i,j,k,l,\alpha,\beta} [a_{i\alpha}, a_{k\beta}] \otimes a_{j\alpha} \otimes b_{l\beta}, \;\;\; [r_{12}, r_{23}] := \sum_{i,j,k,l,\alpha,\beta} a_{i\alpha} \otimes [b_{j\alpha}, a_{k\beta}] \otimes b_{l\beta}, \\
    \hphantom{}&&[r_{13}, r_{23}] := \sum_{i,j,k,l,\alpha,\beta} a_{i\alpha} \otimes a_{k\beta} \otimes [b_{j\alpha}, b_{l\beta}]
\end{eqnarray*}
provided the sums make sense.
If $r \in L \hat{\otimes} L$ is skew-symmetric and satisfies the \textbf{classical Yang-Baxter equation (CYBE)}
\begin{equation*}
    [r_{12}, r_{13}] + [r_{12}, r_{23}] + [r_{13},r_{23}] = 0
\end{equation*}
as an element in $L \hat{\otimes} L \hat{\otimes} L$, then $r$ is called a \textbf{completed solution of the CYBE} in $(L, [-, -])$.

\begin{pro}\label{pro:lbc}
    {\rm (\cite{hong2023infinite})}
    Let $r \in L \hat{\otimes} L$ be a skew-symmetric completed solution of the CYBE in $(L, [-, -])$.
    Then the triple $(L, [-, -], \delta)$ is a completed Lie bialgebra,
    where $\delta: L \to L \hat{\otimes} L$ is defined by
    \begin{equation}\label{eq:lbc}
        \delta(a) := (\ad(a) \hat{\otimes} \id + \id \hat{\otimes} \ad(a))(r), \;\; \forall a \in L.
    \end{equation}
\end{pro}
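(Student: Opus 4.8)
The plan is to verify the three defining axioms of a completed Lie bialgebra for $(L, [-,-], \delta)$ in turn, isolating which hypotheses on $r$ each one consumes: the skew-symmetry of $r$ will give the cocommutativity axiom~\eqref{eq:clca}, the cocycle/compatibility condition~\eqref{eq:lb} will be automatic, and the completed CYBE hypothesis will enter only through the co-Jacobi identity~\eqref{eq:clcj}.

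First I would record that the assignment $a \mapsto \Phi(a) := \ad(a) \hat{\otimes} \id + \id \hat{\otimes} \ad(a)$ is a Lie algebra homomorphism from $(L, [-,-])$ into the endomorphisms of $L \hat{\otimes} L$, i.e. it is the completed tensor square of the adjoint representation; the relevant sums converge because each graded piece $L_i$ is finite-dimensional. Since $\delta(a)$ defined by~\eqref{eq:lbc} is exactly $\Phi(a)(r)$ for the fixed element $r$, the compatibility condition~\eqref{eq:lb} reads $\Phi([a,b])(r) = \Phi(a)\Phi(b)(r) - \Phi(b)\Phi(a)(r)$, which holds because $\Phi$ is a homomorphism, hence $\Phi([a,b]) = [\Phi(a), \Phi(b)]$. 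Thus~\eqref{eq:lb} follows formally and uses neither skew-symmetry nor the CYBE. For axiom~\eqref{eq:clca} I would use that $\hat{\sigma}$ commutes with $\Phi(a)$, namely $\hat{\sigma}\circ(\ad(a)\hat{\otimes}\id) = (\id\hat{\otimes}\ad(a))\circ\hat{\sigma}$ and symmetrically, so $\hat{\sigma}\Phi(a) = \Phi(a)\hat{\sigma}$; combined with $\hat{\sigma}(r) = -r$ this gives $\hat{\sigma}\delta(a) = \Phi(a)\hat{\sigma}(r) = -\Phi(a)(r) = -\delta(a)$, which is~\eqref{eq:clca}.

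The main work, and the expected obstacle, is the co-Jacobi identity~\eqref{eq:clcj}. Here I would expand $(\id \hat{\otimes} \delta)\delta(a)$ and $(\delta \hat{\otimes} \id)\delta(a)$ into iterated brackets of the components of $r$ by applying~\eqref{eq:lbc} twice, then reorganize the terms so that the left-hand side of~\eqref{eq:clcj} is exhibited as the image of the completed CYBE element $C(r) := [r_{12}, r_{13}] + [r_{12}, r_{23}] + [r_{13}, r_{23}]$ under the degree-three diagonal adjoint action $\ad(a)\hat{\otimes}\id\hat{\otimes}\id + \id\hat{\otimes}\ad(a)\hat{\otimes}\id + \id\hat{\otimes}\id\hat{\otimes}\ad(a)$, with the various $\hat{\sigma}$-symmetrizations in~\eqref{eq:clcj} supplied by the skew-symmetry of $r$ established above. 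Since $r$ is a completed solution of the CYBE, $C(r) = 0$ in $L \hat{\otimes} L \hat{\otimes} L$, and therefore its image under any such operator vanishes, yielding~\eqref{eq:clcj}.

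The delicate points here are essentially bookkeeping rather than conceptual: one must check that every infinite sum produced in the expansion genuinely lies in the completed tensor product and that the reorderings of the Jacobi and skew-symmetry relations are legitimate there. Both are guaranteed by the $\mathbb{Z}$-grading with finite-dimensional homogeneous components together with the standing hypothesis that the sums defining $[r_{12}, r_{13}]$, $[r_{12}, r_{23}]$ and $[r_{13}, r_{23}]$ make sense; modulo these convergence checks the computation is the classical coboundary Lie bialgebra identity transplanted verbatim to the completed setting. Once all three axioms are confirmed, $(L, [-,-], \delta)$ is a completed Lie bialgebra.
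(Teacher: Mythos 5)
Your proposal is correct, but note that the paper does not prove this proposition at all: it is quoted verbatim from \cite{hong2023infinite}, so there is no in-paper argument to compare against. What you give is exactly the classical Drinfeld coboundary computation transplanted to the completed setting, which is also the route taken in the cited reference: the homomorphism property of $a \mapsto \ad(a)\hat{\otimes}\id + \id\hat{\otimes}\ad(a)$ makes the cocycle condition \eqref{eq:lb} automatic, the commutation of this operator with $\hat{\sigma}$ together with $\hat{\sigma}(r) = -r$ yields \eqref{eq:clca}, and the co-Jacobiator in \eqref{eq:clcj} collapses to the image of $[r_{12},r_{13}]+[r_{12},r_{23}]+[r_{13},r_{23}]$ under the threefold adjoint action, which vanishes by hypothesis. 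The one point deserving more care than a passing mention: in the $\prod$-product completion $L\hat{\otimes}L\hat{\otimes}L$ an infinite sum of terms concentrated in a \emph{single} tridegree has no meaning, and the intermediate expansion of $(\id\hat{\otimes}\delta)\delta(a)$ produces, in a fixed tridegree, contributions indexed by all internal degrees $j\in\mathbb{Z}$ of the middle factor of $\delta(a)$; so well-definedness of the iterated coproducts is not a consequence of the grading alone but rests on the same ``provided the sums make sense'' convention under which the three CYBE brackets are assumed to exist (and which the special solutions $\tilde{r}$ constructed in Propositions~\ref{pro:pybe2cybe} and \ref{pro:s2cybe} satisfy, since the graded dual-basis pairing leaves only finitely many terms per tridegree). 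You flag this as bookkeeping, which is acceptable, but it is the only place where the completed setting genuinely differs from the finite-dimensional argument.
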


\begin{pro}\label{pro:pybe2cybe}
    Let $(P, \cdot)$ be a perm algebra, $(A = \oplus_{i \in \mathbb{Z}} A_i, \diamond, \omega)$ be a quadratic $\mathbb{Z}$-graded pre-Lie algebra, and $(L := P \otimes A, [-, -])$ be the induced Lie algebra from $(P, \cdot)$ and $(A,\diamond)$.
    Let $\{e_\lambda\}_{\lambda \in \Lambda}$ be a basis of $A$ consisting of homogeneous elements and $\{f_\lambda\}_{\lambda \in \Lambda}$ be the dual basis with respect to $\omega$ consisting of homogeneous elements.
    If $r = \sum_{\alpha} p_{\alpha} \otimes q_{\alpha} \in P \otimes P$ is a symmetric solution of the perm-YBE in $(P, \cdot)$,
    then the tensor element
    \begin{equation}\label{eq:pybe2cybe}
        \tilde{r} := \sum_{\lambda \in \Lambda} \sum_{\alpha} (p_\alpha \otimes e_\lambda) \otimes (q_\alpha \otimes f_\lambda) \in L \hat{\otimes} L
    \end{equation}
    is a skew-symmetric completed solution of the CYBE in $(L, [-, -])$.
    Further, if $(A, \diamond, \omega)$ is the quadratic $\mathbb{Z}$-graded pre-Lie algebra given in Example~\ref{ex:qgpl}, then
    \begin{equation*}
        \tilde{r} := \sum_{i \in \mathbb{Z}} \sum_{\alpha} (p_\alpha t^i \otimes q_\alpha s^{-i} - p_\alpha s^{-i} \otimes q_\alpha t^{i})  \in L \hat{\otimes} L
    \end{equation*}
    is a skew-symmetric completed solution of the CYBE in $(L, [-, -])$ if and only if $r$ is a symmetric solution of the perm-YBE in $(P, \cdot)$.
\end{pro}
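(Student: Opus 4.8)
The plan is to separate the two features of $\tilde r$ --- its skew-symmetry and the CYBE --- and to factor each computation into a ``perm part'' living in $P$ and a ``pre-Lie part'' living in $A$. Write $c := \sum_{\lambda} e_\lambda \otimes f_\lambda \in A \hat{\otimes} A$ for the Casimir element of $\omega$, so that $\tilde r$ is the interleaving of $r = \sum_\alpha p_\alpha \otimes q_\alpha$ with $c$ under the map $(P \otimes P) \otimes (A \hat{\otimes} A) \to L \hat{\otimes} L$, $(p \otimes p') \otimes (a \otimes a') \mapsto (p \otimes a) \otimes (p' \otimes a')$. The first thing I would record is that the skew-symmetry and invariance of $\omega$ translate into the identities $\hat{\sigma}(c) = -c$, $(L_\diamond(a) \otimes \id)c = (\id \otimes (R_\diamond(a) - L_\diamond(a)))c$ and $(R_\diamond(a) \otimes \id)c = (\id \otimes R_\diamond(a))c$ for all $a \in A$: the first is immediate from the skew-symmetry of the pairing defining the dual basis, and the latter two are a direct rewriting of $\omega(a_1 \diamond a_2, a_3) = -\omega(a_2, a_1 \diamond a_3 - a_3 \diamond a_1)$ combined with $\hat{\sigma}(c) = -c$. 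Skew-symmetry of $\tilde r$ is then immediate: under the interleaving, $\hat{\sigma}$ on $L \hat{\otimes} L$ corresponds to $\sigma \otimes \hat{\sigma}$ on the two factors, and since $r$ is symmetric and $\hat{\sigma}(c) = -c$, we get $\hat{\sigma}(\tilde r) = -\tilde r$.

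For the CYBE I would expand $[\tilde r_{12}, \tilde r_{13}] + [\tilde r_{12}, \tilde r_{23}] + [\tilde r_{13}, \tilde r_{23}]$ using the bracket of Proposition~\ref{pro:ppl2l}, namely $[p \otimes a, p' \otimes a'] = p \cdot p' \otimes a \diamond a' - p' \cdot p \otimes a' \diamond a$. Each of the three terms splits into two, and every summand carries a perm product ($p_\alpha \cdot p_\beta$ or $p_\beta \cdot p_\alpha$, placed in one of the three slots) tensored with a pre-Lie product of basis and dual-basis vectors. The double sum over the dual indices $\lambda, \mu$ is then collapsed using the three Casimir identities above: each pre-Lie product $e_\lambda \diamond e_\mu$, $f_\lambda \diamond e_\mu$, $f_\lambda \diamond f_\mu$ and companions, paired against the remaining $e$'s and $f$'s, is rewritten so that the $A$-dependence of all six summands becomes one common Casimir configuration. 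After this reduction the $P$-coefficients organize into exactly the combination $r_{12} \cdot r_{23} - r_{13} \cdot r_{23} + r_{12} \cdot r_{13} - r_{13} \cdot r_{12}$ of the perm-YBE, in the slot-orderings dictated by the conventions fixed just before the proposition, and this vanishes by hypothesis. Hence $\tilde r$ is a completed solution of the CYBE.

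For the explicit form in the case of Example~\ref{ex:qgpl}, I would substitute the graded dual basis $\{s^{-i}, -t^i\}_{i}$ of $\{t^i, s^{-i}\}_i$ recorded there, which gives $c = \sum_i (t^i \otimes s^{-i} - s^{-i} \otimes t^i)$ and hence the stated formula for $\tilde r$; the forward implication is then the special case of the first part. For the converse I would run the expansion backwards: comparing, in the CYBE identity for this explicit $\tilde r$, the coefficient of a single well-chosen monomial in $t, s$ across the three tensor slots --- exactly as in the coefficient-matching arguments of Theorems~\ref{thm:pa2l} and~\ref{thm:pca2lc} and the converse half of Theorem~\ref{thm:pb2lb} --- isolates $r_{12} \cdot r_{23} - r_{13} \cdot r_{23} + r_{12} \cdot r_{13} - r_{13} \cdot r_{12}$ in $P \otimes P \otimes P$ with a nonzero scalar factor, forcing it to vanish and thus forcing the perm-YBE. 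As a consistency check one can also observe that, via Corollary~\ref{cor:pbc}, Theorem~\ref{thm:pb2lb} and Proposition~\ref{pro:lbc}, the cobracket attached to $\tilde r$ by Eq.~\eqref{eq:lbc} agrees with the one produced in Theorem~\ref{thm:pb2lb}.

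The main obstacle I anticipate is the bookkeeping in the forward CYBE expansion: correctly tracking which of the three tensor slots each perm product and each pre-Lie product lands in, and applying the two nontrivial Casimir identities with the correct signs so that all six summands collapse onto the single perm-YBE combination. Matching the slot-orderings to the fixed conventions for $r_{12} \cdot r_{23}$ and its companions --- rather than to some permuted variant --- is where index and sign errors are most likely, and it is precisely this step that genuinely uses the invariance of $\omega$ (equivalently, the isomorphism $(A, \diamond) \cong (A^*, \diamond')$) and not merely its nondegeneracy.
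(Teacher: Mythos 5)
Your proposal is correct and follows essentially the same route as the paper's proof: skew-symmetry of $\tilde{r}$ from the antisymmetry of the Casimir element $\sum_\lambda e_\lambda \otimes f_\lambda$, the CYBE by expanding the bracket and collapsing the dual-index sums via invariance-of-$\omega$ rearrangement identities (your two Casimir identities are exactly the identities the paper derives from left nondegeneracy of $\widetilde{\omega}$), and the converse by comparing coefficients of explicit monomials in the basis of Example~\ref{ex:qgpl}, after first extracting the symmetry of $r$ from $\hat{\sigma}(\tilde{r}) = -\tilde{r}$. The one compression worth noting is that the collapse actually produces two permuted variants of the perm-YBE expression rather than the combination itself, and these vanish only after additionally invoking the symmetry of $r$ through Eq.~\eqref{eq:eee} --- precisely the slot-ordering bookkeeping you flagged as the main obstacle, and the same step the paper handles with Eq.~\eqref{eq:eee} in both directions.
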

\begin{proof}
    For all $\mu, \nu \in \Lambda$, we have
    \begin{equation*}
        \widetilde{\omega}(\sum_{\lambda \in \Lambda} e_\lambda \otimes f_\lambda, e_\mu \otimes e_\nu)  = \omega(e_\nu, e_\mu) =  \widetilde{\omega}(-\sum_{\lambda \in \Lambda} f_\lambda \otimes e_\lambda, e_\mu \otimes e_\nu).
    \end{equation*}
    Then the left nondegeneracy of $\widetilde{\omega}$ yields that $\sum_{\lambda \in \Lambda} e_\lambda \otimes f_\lambda = - \sum_{\lambda \in \Lambda} f_\lambda \otimes e_\lambda$.
    Therefore, $\tilde{r}$ is skew-symmetric.
    Furthermore,
    \begin{eqnarray*}
        &&[\tilde{r}_{12}, \tilde{r}_{13}] + [\tilde{r}_{12}, \tilde{r}_{23}] + [\tilde{r}_{13}, \tilde{r}_{23}] \\
        &&=\sum_{\lambda, \eta \in \Lambda}\sum_{\alpha, \beta} \big( (p_\alpha \cdot p_\beta \otimes e_\lambda \diamond e_\eta - p_\beta \cdot p_\alpha \otimes e_\eta \diamond e_\lambda) \otimes (q_\alpha \otimes f_\lambda) \otimes (q_\beta \otimes f_\eta) \\
        &&\quad + (p_\alpha \otimes e_\lambda) \otimes (q_\alpha \cdot p_\beta \otimes f_\lambda \diamond e_\eta - p_\beta \cdot q_\alpha \otimes e_\eta \diamond f_\lambda) \otimes (q_\beta \otimes f_\eta) \\
        &&\quad (p_\alpha \otimes e_\lambda) \otimes (p_\beta \otimes e_\eta) \otimes (q_\alpha \cdot q_\beta \otimes f_\lambda \diamond f_\eta - q_\beta \cdot q_\alpha \otimes f_\eta \diamond f_\lambda) \big)
    \end{eqnarray*}
    For all $s,u,v \in \Lambda$, we have
    \begin{equation*}
        \widetilde{\omega}(\sum_{\lambda, \eta \in \Lambda} e_\lambda \diamond e_\eta \otimes f_\lambda \otimes f_\eta, e_s \otimes e_u \otimes e_v) = \omega(e_u \diamond e_v, e_s)  = \widetilde{\omega}(\sum_{\lambda, \eta \in \Lambda} e_\lambda \otimes f_\lambda \diamond e_\eta \otimes f_\eta, e_s \otimes e_u \otimes e_v).
    \end{equation*}
    Since $\widetilde{\omega}$ is left nondegenerate, we obtain
    \begin{equation*}
        \sum_{\lambda, \eta \in \Lambda} e_\lambda \diamond e_\eta \otimes f_\lambda \otimes f_\eta = \sum_{\lambda, \eta \in \Lambda} e_\lambda \otimes f_\lambda \diamond e_\eta \otimes f_\eta.
    \end{equation*}
    Similarly, we show that
    \begin{eqnarray*}
        \sum_{\lambda, \eta \in \Lambda} e_\lambda \otimes e_\eta \diamond f_\lambda \otimes f_\eta &=& \sum_{\lambda, \eta \in \Lambda} e_\lambda \diamond e_\eta \otimes f_\lambda \otimes f_\eta - \sum_{\lambda, \eta \in \Lambda} e_\eta \diamond e_\lambda \otimes f_\lambda \otimes f_\eta, \\
        \sum_{\lambda, \eta \in \Lambda} e_\lambda \otimes e_\eta \otimes f_\lambda \diamond f_\eta &=& - \sum_{\lambda, \eta \in \Lambda} e_\eta \diamond e_\lambda \otimes f_\lambda \otimes f_\eta, \\
        \sum_{\lambda, \eta \in \Lambda} e_\lambda \otimes e_\eta \otimes f_\eta \diamond f_\lambda &=& \sum_{\lambda, \eta \in \Lambda} e_\lambda \diamond e_\eta \otimes f_\lambda \otimes f_\eta - \sum_{\lambda, \eta \in \Lambda} e_\eta \diamond e_\lambda \otimes f_\lambda \otimes f_\eta.
    \end{eqnarray*}
    Therefore,
    \begin{eqnarray*}
        [\tilde{r}_{12}, \tilde{r}_{13}] + [\tilde{r}_{12}, \tilde{r}_{23}] + [\tilde{r}_{13}, \tilde{r}_{23}] &=& (r_{12} \cdot r_{13} - r_{23} \cdot r_{13} + r_{12} \cdot r_{23} - r_{23} \cdot r_{12} ) \bullet (\sum_{\lambda, \eta \in \Lambda}e_\lambda \diamond e_\eta \otimes f_\lambda \otimes f_\eta) \\
        && + (r_{23} \cdot r_{12} - r_{13} \cdot r_{12} -r_{13} \cdot r_{23}+r_{23} \cdot r_{13})\bullet (\sum_{\lambda, \eta \in \Lambda}e_\eta \diamond e_\lambda \otimes f_\lambda \otimes f_\eta).
    \end{eqnarray*}
    By Eq.~(\ref{eq:eee}), we have
    \begin{eqnarray*}
        &&(-r_{13} \cdot r_{12} + r_{23} \cdot r_{12}-r_{13} \cdot r_{23}+r_{23} \cdot r_{13}) \\
        &&=-(r_{12} \cdot r_{13} -r_{23} \cdot r_{13} + r_{12} \cdot r_{23}-r_{23} \cdot r_{12})+(r_{12} \cdot r_{23} - r_{13} \cdot r_{23} + r_{12} \cdot r_{13} - r_{13} \cdot r_{12}  ) = 0.
    \end{eqnarray*}
    Hence,
    \begin{equation*}
        [\tilde{r}_{12}, \tilde{r}_{13}] + [\tilde{r}_{12}, \tilde{r}_{23}] + [\tilde{r}_{13}, \tilde{r}_{23}] = 0.
    \end{equation*}
    Next consider the case when $(A, \diamond, \omega)$ is the quadratic $\mathbb{Z}$-graded pre-Lie algebra given in Example~\ref{ex:qgpl}.
    Note that $\{s^{-i}, -t^{i}\}_{i \in \mathbb{Z}}$ is the dual basis of $\{t^i, s^{-i}\}_{i \in \mathbb{Z}}$,
    the ``if'' part follows immediately.
    Conversely, suppose that $\tilde{r}$ is a skew-symmetric completed solution of the CYBE in $(L, [-, -])$.
    Note that
    \begin{equation*}
        0 = \hat{\sigma}(\tilde{r}) + \tilde{r} = \sum_{i \in \mathbb{Z}}\sum_{\alpha}(q_\alpha s^{-i} \otimes p_\alpha t^i - q_\alpha t^i \otimes p_\alpha s^{-i} +p_\alpha t^i \otimes q_\alpha s^{-i} - p_\alpha s^{-i} \otimes q_\alpha t^i),
    \end{equation*}
    we show that $r$ is symmetric by comparing the coefficients of $s^{-i} \otimes t^i$.
    Moreover,
    \begin{eqnarray*}
        0 &=& [\tilde{r}_{12}, \tilde{r}_{13}] + [\tilde{r}_{12}, \tilde{r}_{23}] + [\tilde{r}_{13}, \tilde{r}_{23}] \\
        &=& (r_{12} \cdot r_{13} -r_{23} \cdot r_{13} + r_{12} \cdot r_{23}-r_{23} \cdot r_{12}) \\
        &&\quad \bullet \sum_{i,j}(j t^{i+j-1} \otimes s^{-i} \otimes s^{-j} - (2i-j-1) s^{i-j-1} \otimes s^{-i} \otimes t^{j} - i s^{i-j-1} \otimes t^j \otimes s^{-i}) \\
        && + (-r_{13} \cdot r_{12} + r_{23} \cdot r_{12}-r_{13} \cdot r_{23}+r_{23} \cdot r_{13}) \\
        &&\quad \bullet \sum_{i,j}(i t^{i+j-1} \otimes s^{-i} \otimes s^{-j} - i s^{i-j-1} \otimes s^{-i} \otimes t^{j} - (2i-j-1) s^{i-j-1} \otimes t^j \otimes s^{-i})
    \end{eqnarray*}
    Comparing the coefficients of $t^{0} \otimes s^{0} \otimes s^{-1}$, we get
    \begin{equation*}
        r_{12} \cdot r_{13} + r_{12} \cdot r_{23}-r_{23} \cdot r_{12}-r_{23} \cdot r_{13} = 0.
    \end{equation*}
    Then, by Eq.~(\ref{eq:eee}), we show that $r$ is a symmetric solution of the perm-YBE in $(P, \cdot)$.
\end{proof}

\begin{cor}\label{cor:pybe2lb}
    With the same assumptions in Proposition~\ref{pro:pybe2cybe},
    let $\Delta_P: P \to P \otimes P$ be a linear map defined by Eq.~\eqref{eq:pcob} through $r$,
    $\Delta_A: A \to A \hat{\otimes} A$ be the linear map defined by Eq.~\eqref{eq:qgpl2cplc}
    and $\delta: L \to L \hat{\otimes} L$ be the linear map defined by Eq.~\eqref{eq:pc2lc}.
    Then $(P, \cdot, \Delta_P)$ is a perm bialgebra by Corollary~\ref{cor:pbc} and hence $(L, [\cdot, \cdot], \delta)$ is a completed Lie bialgebra by  Theorem~\ref{thm:pb2lb}.
    It coincides with the completed Lie bialgebra with $\delta$ defined by Eq.~\eqref{eq:lbc} through $\tilde{r}$ by Proposition~\ref{pro:lbc}, where $\tilde{r}$ is defined by Eq.~\eqref{eq:pybe2cybe}.
    Hence, we have the following commutative diagram.
    \begin{equation*}
        \xymatrix@C=3cm@R=0.75cm{
            \txt{symmetric solutions of the perm-YBE} \ar[r]^-{Cor.~\ref{cor:pbc}} \ar[d]^-{Prop.~\ref{pro:pybe2cybe}} & \txt{perm bialgebras} \ar[d]^-{Thm.~\ref{thm:pb2lb}}\\
            \txt{skew-symmetric solutions of the CYBE} \ar[r]^-{Prop.~\ref{pro:lbc}} & \txt{Lie bialgebras}
        }
    \end{equation*}
\end{cor}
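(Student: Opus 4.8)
The plan is to prove the statement by checking that the two completed Lie bialgebra structures placed on $L = P \otimes A$ actually coincide. Their Lie brackets are identical by construction, since both are the induced bracket $[-,-]$ of Eq.~\eqref{eq:ppl2l} from $(P,\cdot)$ and $(A,\diamond)$; hence the only thing to verify is that the two cobrackets agree. Write $\delta$ for the cobracket of Eq.~\eqref{eq:pc2lc} (obtained via Corollary~\ref{cor:pbc} and Theorem~\ref{thm:pb2lb}) and $\delta'$ for the cobracket of Eq.~\eqref{eq:lbc} applied to $\tilde r$ (obtained via Proposition~\ref{pro:pybe2cybe} and Proposition~\ref{pro:lbc}). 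Since both are linear and the elements $p \otimes a$ with $p \in P$ and $a \in A$ homogeneous span $L$, it suffices to show $\delta(p \otimes a) = \delta'(p \otimes a)$ on such elements.

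The first step is to bring both cobrackets into a common shape. Because $\tilde r$ is skew-symmetric (as established in the proof of Proposition~\ref{pro:pybe2cybe}), we have $\hat{\sigma}(\ad(x) \hat{\otimes} \id)(\tilde r) = (\id \hat{\otimes} \ad(x))\hat{\sigma}(\tilde r) = -(\id \hat{\otimes} \ad(x))(\tilde r)$, so that Eq.~\eqref{eq:lbc} collapses to $\delta'(x) = (\id_{L \hat{\otimes} L} - \hat{\sigma})\big((\ad(x) \hat{\otimes} \id)(\tilde r)\big)$. This is exactly the shape of Eq.~\eqref{eq:pc2lc}, in which $\delta(p \otimes a) = (\id_{L \hat{\otimes} L} - \hat{\sigma})(\Delta_P(p) \bullet \Delta_A(a))$. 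Thus it is enough to compare the two inner expressions $(\ad(p \otimes a) \hat{\otimes} \id)(\tilde r)$ and $\Delta_P(p) \bullet \Delta_A(a)$ modulo terms fixed by $\hat{\sigma}$.

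The key computational input is an explicit closed form for $\Delta_A$. Using the defining relation Eq.~\eqref{eq:qgpl2cplc}, the skew-symmetry and invariance of $\omega$, and the expansions $b = \sum_\lambda \omega(f_\lambda, b) e_\lambda = \sum_\lambda \omega(b, e_\lambda) f_\lambda$, I would first establish
\[
    \Delta_A(a) = \sum_{\lambda} f_\lambda \otimes (e_\lambda \diamond a - a \diamond e_\lambda) = \sum_{\lambda} e_\lambda \otimes (a \diamond f_\lambda - f_\lambda \diamond a).
\]
Substituting one of these forms together with the coboundary formula Eq.~\eqref{eq:pcob} for $\Delta_P(p)$ into $\Delta_P(p) \bullet \Delta_A(a)$, and expanding $(\ad(p \otimes a) \hat{\otimes} \id)(\tilde r)$ through the induced bracket Eq.~\eqref{eq:ppl2l}, turns both inner expressions into elements of $(P \otimes A) \hat{\otimes} (P \otimes A)$. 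Reorganizing the two perm-slots and the two pre-Lie-slots, the required identity reduces to matching the $P \otimes P$ coefficients, which follows from the perm-algebra relations (e.g. $R_\cdot(q)R_\cdot(p) = R_\cdot(p \cdot q)$ and $(L_\cdot - R_\cdot)(p)(L_\cdot - R_\cdot)(q) = (L_\cdot - R_\cdot)(p \cdot q)$) and the symmetry $r = \sigma(r)$, against the $A \hat{\otimes} A$ data carried by the two equivalent forms of $\Delta_A$ and by the element $C = \sum_\lambda e_\lambda \otimes f_\lambda$.

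The main obstacle I expect is the bookkeeping in this matching step: the two inner expressions are naturally written in different bases — the $\{e_\lambda\}$ appearing in $\tilde r$ versus the $\{f_\lambda\}$ appearing in $\Delta_A$ — so the agreement is not term-by-term and emerges only after applying $\id_{L \hat{\otimes} L} - \hat{\sigma}$ and invoking both the symmetry of $r$ and the invariance of $\omega$ encoded in the two forms of $\Delta_A$; this is the same mechanism, at the $2$-fold level, as the identities used in the proof of Proposition~\ref{pro:pybe2cybe}. Once $\delta(p \otimes a) = \delta'(p \otimes a)$ is confirmed, the asserted square commutes: its top edge and right edge are Corollary~\ref{cor:pbc} and Theorem~\ref{thm:pb2lb}, its left edge is Proposition~\ref{pro:pybe2cybe}, its bottom edge is Proposition~\ref{pro:lbc}, and the computation just outlined shows that the two composite constructions of the completed Lie bialgebra coincide.
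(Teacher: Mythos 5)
Your proposal is correct and takes essentially the same route as the paper's proof: the paper likewise reduces the claim to a $\bullet$-notation computation of $(\ad(p\otimes a)\hat{\otimes}\id+\id\hat{\otimes}\ad(p\otimes a))(\tilde r)$, using exactly the ingredients you identify --- the identities $\sum_{\lambda} a\diamond e_\lambda\otimes f_\lambda=-\Delta_A(a)$ and $\sum_{\lambda} e_\lambda\diamond a\otimes f_\lambda=-\Delta_A(a)+\hat{\sigma}\Delta_A(a)$ (equivalent, after swapping bases, to your closed forms for $\Delta_A$, and obtained in the same way from the invariance of $\omega$ and the left nondegeneracy of $\widetilde{\omega}$), the coboundary form of $\Delta_P$ from Eq.~\eqref{eq:pcob}, and the symmetry $r=\sigma(r)$. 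Your preliminary step of folding Eq.~\eqref{eq:lbc} into $(\id_{L\hat{\otimes}L}-\hat{\sigma})\bigl((\ad(p\otimes a)\hat{\otimes}\id)(\tilde r)\bigr)$ via the skew-symmetry of $\tilde r$ is a minor streamlining of the paper's direct computation of both $\ad$-terms, not a genuinely different method.
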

\begin{proof}
    By Corollary~\ref{cor:pbc}, $(P, \cdot, \Delta_P)$ is a perm bialgebra.
    Then by Theorem~\ref{thm:pb2lb}, there is a completed Lie bialgebra $(L, [-, -],\delta)$ on $L$ where $\delta$ is defined by Eq.~\eqref{eq:pc2lc}.
    For all $\mu, \nu \in \Lambda$, we have
    \begin{eqnarray*}
        &&\widetilde{\omega}(\sum_{\lambda \in \Lambda} a \diamond e_\lambda \otimes f_\lambda, e_\mu \otimes e_\nu) = \omega(a \diamond e_\nu, e_\mu)= \omega(a, e_\mu \diamond e_\nu) = -\widetilde{\omega}(\Delta_A(a), e_\mu \otimes e_\nu), \\
        &&\widetilde{\omega}(\sum_{\lambda \in \Lambda} e_\lambda \diamond a \otimes f_\lambda, e_\mu \otimes e_\nu) = \omega(e_\nu \diamond a, e_\mu)= \omega(a, e_\mu \diamond e_\nu - e_\nu \diamond e_\mu) = -\widetilde{\omega}(\Delta_A(a) - \hat{\sigma}\Delta_A(a), e_\mu \otimes e_\nu).
    \end{eqnarray*}
    Since $\widetilde{\omega}$ is left nondegenerate, we obtain
    \begin{eqnarray*}
        &&\sum_{\lambda \in \Lambda} a \diamond e_\lambda \otimes f_\lambda = -\sum_{\lambda \in \Lambda} a \diamond f_\lambda \otimes e_\lambda = -\Delta_A(a), \\
        &&\sum_{\lambda \in \Lambda} e_\lambda \diamond a \otimes f_\lambda = - \sum_{\lambda \in \Lambda} f_\lambda \diamond a \otimes e_\lambda = -\Delta_A(a) + \hat{\sigma}(\Delta_A(a)).
    \end{eqnarray*}
    Therefore,
    \begin{eqnarray*}
        &&(\ad(p \otimes a) \hat{\otimes} \id + \id \hat{\otimes} \ad(p \otimes a))(\sum_{\lambda \in \Lambda} \sum_{\alpha} (p_\alpha \otimes e_\lambda)\otimes (q_\alpha \otimes f_\lambda)) \\
        &&=
        (\sum_\alpha p \cdot p_\alpha \otimes q_\alpha) \bullet (\sum_{\lambda \in \Lambda} a \diamond e_\lambda \otimes f_\lambda) - (\sum_\alpha p_\alpha \cdot p \otimes q_\alpha) \bullet (\sum_{\lambda \in \Lambda} e_\lambda \diamond a \otimes f_\lambda) \\
        &&\quad +  (\sum_\alpha p_\alpha \otimes p \cdot q_\alpha) \bullet (\sum_{\lambda \in \Lambda} e_\lambda \otimes a \diamond f_\lambda) - (\sum_\alpha p_\alpha \otimes q_\alpha \cdot p) \bullet (\sum_{\lambda \in \Lambda} e_\lambda \otimes f_\lambda \diamond a)  \\
        &&=
        ( \sum_\alpha -p \cdot p_\alpha \otimes q_\alpha) \bullet \Delta_A(a) +  (\sum_\alpha p_\alpha \cdot p \otimes q_\alpha) \bullet (\Delta_A(a) - \hat{\sigma}(\Delta_A(a))) \\
        &&\quad +  (\sum_\alpha p_\alpha \otimes p \cdot q_\alpha) \bullet \hat{\sigma}(\Delta_A(a)) -  (\sum_\alpha p_\alpha \otimes q_\alpha \cdot p) \bullet (\hat{\sigma
        }\Delta_A(a) - \Delta_A(a) )  \\
        &&= (\sum_\alpha
        (-p \cdot p_\alpha \otimes q_\alpha + p_\alpha \cdot p \otimes q_\alpha + p_\alpha \otimes q_\alpha \cdot p) ) \bullet \Delta_A(a) \\
        &&\quad + (\sum_\alpha \sigma(-q_\alpha \otimes p_\alpha \cdot p + p \cdot q_\alpha \otimes p_\alpha - q_\alpha \cdot p \otimes p_\alpha)) \bullet \hat{\sigma}(\Delta_A(a))  \\
        &&= \Delta_P(p) \bullet \Delta_A(a) + (\sum_\alpha \sigma(-p_\alpha \otimes q_\alpha \cdot p + p \cdot p_\alpha \otimes q_\alpha - p_\alpha \cdot p \otimes q_\alpha) ) \bullet \hat{\sigma}(\Delta_A(a)) \\
        &&= \Delta_P(p) \bullet \Delta_A(a) - \hat{\sigma}(\Delta_P(p) \bullet \Delta_A(a)) = \delta(p \otimes a),
    \end{eqnarray*}
    which completes the proof.
\end{proof}

Combining Propositions~\ref{pro:pp2pb}, \ref{pro:lbc} and
\ref{pro:pybe2cybe}, we have the following construction of a
completed Lie bialgebra from a pre-perm algebra.
\begin{pro}\label{pro:pp2clb}
    Let $(P, \lhd, \rhd)$ be a pre-perm algebra and $(P, \cdot)$ be the sub-adjacent perm algebra of $(P, \lhd, \rhd)$.
    Let $\{e_1, e_2, \cdots, e_n\}$ be a basis of $P$ and $\{e_1^*, e_2^*, \cdots, e_n^*\}$ be the dual basis.
    Let $r = \sum_{i=1}^n (e_i \otimes e_i^* + e_i^* \otimes e_i)$ be a symmetric solution of the perm-YBE in the perm algebra
    $(\widetilde{P} = P \ltimes_{L_\rhd^*, L_\rhd^*-R_\lhd^*}P^*, \cdot)$.
    Let $(A, \diamond, \omega)$ be a quadratic $\mathbb{Z}$-graded pre-Lie algebra, and $(L := \widetilde{P} \otimes A, [-, -])$ be the induced Lie algebra from $(\widetilde{P}, \cdot)$ and $(A, \diamond)$.
    Then $\tilde{r}$ defined by Eq.~\eqref{eq:pybe2cybe} is a skew-symmetric completed solution of the CYBE in $(L, [-, -])$.
    Hence there is a completed Lie bialgebra $(L, [-, -], \delta)$ where $\delta$ is defined by Eq.~\eqref{eq:lbc} through $\tilde{r}$.
\end{pro}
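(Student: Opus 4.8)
The plan is to recognize that this statement is a direct concatenation of three results already established in the preceding subsections, so the proof reduces to checking that the output of each feeds correctly as the input to the next. No new computation is required beyond verifying that the objects produced at each stage are precisely the ones demanded by the subsequent result; in particular, the bulk of the analytic work has already been discharged in Propositions~\ref{pro:pp2pb}, \ref{pro:pybe2cybe} and \ref{pro:lbc}.

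First I would invoke Proposition~\ref{pro:pp2pb}: starting from the pre-perm algebra $(P, \lhd, \rhd)$ and its sub-adjacent perm algebra $(P, \cdot)$, that proposition guarantees both that $\widetilde{P} = P \ltimes_{L_\rhd^*, L_\rhd^* - R_\lhd^*} P^*$ is a perm algebra and that $r = \sum_{i=1}^n (e_i \otimes e_i^* + e_i^* \otimes e_i)$ is a symmetric solution of the perm-YBE in $(\widetilde{P}, \cdot)$. Since $\{e_i\}$ is a basis of $P$ and $\{e_i^*\}$ its dual, $r$ manifestly lies in $\widetilde{P} \otimes \widetilde{P}$ and is symmetric, as required.

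Next I would apply Proposition~\ref{pro:pybe2cybe}, taking the perm algebra there to be $\widetilde{P}$ and the quadratic $\mathbb{Z}$-graded pre-Lie algebra to be the given $(A, \diamond, \omega)$. Because $r$ is a symmetric solution of the perm-YBE in $(\widetilde{P}, \cdot)$ and $(L = \widetilde{P} \otimes A, [-,-])$ is exactly the induced Lie algebra from $(\widetilde{P}, \cdot)$ and $(A, \diamond)$, that proposition yields that $\tilde{r}$, defined by Eq.~\eqref{eq:pybe2cybe}, is a skew-symmetric completed solution of the CYBE in $(L, [-,-])$. Finally, feeding this completed solution into Proposition~\ref{pro:lbc} produces the completed Lie bialgebra $(L, [-,-], \delta)$ with $\delta$ given by Eq.~\eqref{eq:lbc} through $\tilde{r}$, which is the assertion.

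I do not anticipate a genuine obstacle; the only points requiring care are bookkeeping ones. One must confirm that the finiteness of the index set $\{1, \ldots, n\}$ together with the graded dual basis $\{e_\lambda, f_\lambda\}_{\lambda \in \Lambda}$ of $A$ makes $\tilde{r}$ a well-defined element of the completed tensor product $L \hat{\otimes} L$, and that the summation conventions in Eq.~\eqref{eq:pybe2cybe} agree with those of Proposition~\ref{pro:pybe2cybe}. The single thing worth stating explicitly is the shift of roles between $P$ and $\widetilde{P}$ across the cited results, so that the reader sees the perm algebra fed into Proposition~\ref{pro:pybe2cybe} is the semi-direct product $\widetilde{P}$ and not $P$ itself.
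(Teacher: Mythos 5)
Your proposal matches the paper exactly: the paper states this proposition as an immediate consequence of combining Propositions~\ref{pro:pp2pb}, \ref{pro:pybe2cybe} and \ref{pro:lbc}, which is precisely the three-step chaining you describe, with the semi-direct product $\widetilde{P}$ playing the role of the perm algebra in the latter two results. Your bookkeeping remarks (symmetry and membership of $r$ in $\widetilde{P}\otimes\widetilde{P}$, well-definedness of $\tilde{r}$ in $L \hat{\otimes} L$) are sound and consistent with the paper's conventions.
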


We conclude this section by presenting an example illustrating the
above construction.

\begin{ex}
    Let $(P = \mathbf{k}e, \lhd, \rhd)$ be the one-dimensional pre-perm algebra given by
    \begin{equation*}
        e \lhd e = 0, \;\; e \rhd e = e.
    \end{equation*}
    Let $e^*$ be the dual basis.
    Then the perm algebra $(P \ltimes_{L_\rhd^*, L_\rhd^*-R_\lhd^*}P^*, \cdot)$ is the vector space $\mathbf{k}e \oplus \mathbf{k} e^*$ endowed with
    the following products:
    \begin{equation*}
        e \cdot e = e, \; e \cdot e^* = e^* \cdot e = e^*, \;\; e^* \cdot e^* = 0.
    \end{equation*}
    By Proposition~\ref{pro:pp2pb}, $r = e \otimes e^* + e^* \otimes e$ is a symmetric solution of the perm-YBE in $(P \ltimes_{L_\rhd^*, L_\rhd^*-R_\lhd^*}P^*, \cdot)$.
    Let $(A, \diamond, \omega)$ be the quadratic $\mathbb{Z}$-graded pre-Lie algebra given in Example~\ref{ex:qgpl}.
    Then the induced infinite-dimensional Lie algebra $(L, [-, -])$ from $(P \ltimes_{L_\rhd^*, L_\rhd^*-R_\lhd^*}P^*, \cdot)$ and $(A, \diamond)$ is the vector space spanned by $\{e t^i, e^* t^i, e s^{-i}, e^* s^{-i}: i \in \mathbb{Z}\}$ with the Lie brackets given by
    \begin{eqnarray*}
        \hphantom{}[e t^i, e t^j] = (j-i)et^{i+j-1}, \;\; [e t^i, e^* t^j] = (j-i) e^* t^{i+j-1}, \;\; [e t^i, e s^{-j}] = (i-j-1)e s^{i-j-1}, \\
        \hphantom{}[e t^i, e^* s^{-j}] = (i-j-1)e^* s^{i-j-1}, \;\; [e^* t^i, e s^{-j}] = (i-j-1)e^* s^{i-j-1},\\
        \hphantom{}[e^* t^i, e^* s^{-j}] = [e^* t^i, e^* t^j] = [e s^{-i}, e s^{-j}] = [e s^{-i}, e^* s^{-j}] = [e^* s^{-i}, e^* s^{-j}] = 0.
    \end{eqnarray*}
    By Proposition~\ref{pro:pybe2cybe}, $\tilde{r} := \sum_{i \in \mathbb{Z}}(e t^i \otimes e^* s^{-i} - e s^{-i} \otimes e^* t^{i} + e^* t^i \otimes e s^{-i} - e^* s^{-i} \otimes e t^{i})$ is a skew-symmetric completed solution of the CYBE in $(L, [-, -])$.
    Then by Proposition~\ref{pro:lbc}, there is a completed Lie bialgebra $(L, [-, -], \delta)$ where $\delta$ is given by
    \begin{eqnarray*}
        \delta(e t^i) &=& \sum_{j \in \mathbb{Z}} ( -i e t^{-j} \otimes e^* s^{i+j-1} - i e^* t^{-j} \otimes e s^{i+j-1} + i e^* s^{i+j-1} \otimes e t^{-j} + i e s^{i+j-1} \otimes e^* t^{-j}), \\
        \delta(e s^{-i}) &=& \sum_{j \in \mathbb{Z}}(i+2j-1)( e s^{-j} \otimes e^* s^{i+j-1} + e^* s^{-j} \otimes e s^{i+j-1} ) ,\\
        \delta(e^* t^i) &=&  \sum_{j \in \mathbb{Z}} i( e^* s^{i+j-1} \otimes e^* t^{-j} - e^* t^{-j} \otimes e^* s^{i+j-1}), \\
        \delta(e^* s^{-i}) &=&  \sum_{j \in \mathbb{Z}}(i+2j-1) e^* s^{-j} \otimes e^* s^{i+j-1}, \;\; \forall i \in \mathbb{Z}.
    \end{eqnarray*}
\end{ex}

\section{Infinite-dimensional Lie bialgebras from pre-Lie bialgebras}\label{sec:plb}
We construct an infinite-dimensional Lie bialgebra on the tensor
product of a pre-Lie bialgebra and a quadratic $\mathbb{Z}$-graded perm algebra, and
show that a pre-Lie bialgebra could be characterized by such a
construction provided a quadratic $\mathbb{Z}$-graded perm algebra
on $\{f_1 \partial_1 + f_2 \partial_2: f_1, f_2 \in
\mathbf{k}[x_1^{\pm}, x_2^{\pm}]\}$. A construction of a Manin
triple of Lie algebras from a para-K\"ahler Lie algebra and a
quadratic perm algebra is obtained. We also present a construction
of a skew-symmetric solution of the CYBE in the corresponding Lie
algebra from a symmetric solution of the $S$-equation in a pre-Lie
algebra.
The results in this section are similar to those for perm algebras
in Section~\ref{sec:ilb}, and hence most of the proofs are omitted.

\subsection{Lie bialgebras from pre-Lie bialgebras and quadratic perm algebras}

\begin{defi}
    A \textbf{quadratic $\mathbb{Z}$-graded perm algebra $(P, \cdot, \kappa)$} is a $\mathbb{Z}$-graded perm algebra $(P = \oplus_{i \in \mathbb{Z}} P_i, \cdot)$ with a skew-symmetric nondegenerate invariant graded bilinear form $\kappa$ on $(P, \cdot)$.
\end{defi}
In particular, when $P = P_0$, it coincides with the notion of a
quadratic perm algebra given in Definition~\ref{defi:qp}.

\begin{ex}\label{ex:qgp}
    Let $(P = \oplus_{i \in \mathbb{Z}} P_i, \cdot)$ be the $\mathbb{Z}$-graded perm algebra given in Example~\ref{ex:zgp}.
    Define a skew-symmetric bilinear form $\kappa$ on $(P, \cdot)$ by
    \begin{eqnarray*}
        &&\kappa(x_1^{i_1}x_2^{i_2} \partial_2, x_1^{j_1}x_2^{j_2} \partial_1) = -\kappa(x_1^{j_1}x_2^{j_2} \partial_1, x_1^{i_1}x_2^{i_2} \partial_2) = \delta_{i_1+j_1, 0}\delta_{i_2+j_2, 0}, \\
        &&\kappa(x_1^{i_1}x_2^{i_2} \partial_1, x_1^{j_1}x_2^{j_2} \partial_1) = \kappa(x_1^{i_1}x_2^{i_2} \partial_2, x_1^{j_1}x_2^{j_2} \partial_2) = 0, \;\; \forall i_1, i_2, j_1, j_2 \in \mathbb{Z}.
    \end{eqnarray*}
    Then $(P, \cdot, \kappa)$ is a quadratic $\mathbb{Z}$-graded perm algebra.
    Moreover, $\{x_1^{-i_1}x_2^{-i_2} \partial_2, -x_1^{-i_1}x_2^{-i_2} \partial_1: i_1,i_2 \in \mathbb{Z}\}$ is the dual basis of $\{x_1^{i_1}x_2^{i_2} \partial_1, x_1^{i_1}x_2^{i_2} \partial_2: i_1,i_2 \in \mathbb{Z}\}$ with respect to $\kappa$, consisting of homogeneous elements.
\end{ex}

\begin{lem}\label{lem:qp2pc}
    Let $(P = \oplus_{i \in \mathbb{Z}} P_i, \cdot, \kappa)$ be a quadratic $\mathbb{Z}$-graded perm algebra.
    Define a linear map $\Delta_P: P \to P \hat{\otimes} P$ by
    \begin{equation}\label{eq:qp2pc}
        \widetilde{\kappa}(\Delta_P(p), q \otimes r) = - \kappa(p, q \cdot r), \;\; \forall p, q, r \in P.
    \end{equation}
    Then $(P, \Delta_P)$ is a completed perm coalgebra.
\end{lem}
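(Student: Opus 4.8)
The plan is to imitate the proof of Lemma~\ref{lem:qgpl2cplc}, using the left nondegeneracy of the induced pairing $\widetilde{\kappa}$ to reduce the completed perm coalgebra identity~\eqref{eq:cpc} to the defining perm identity~\eqref{eq:perm} for $(P, \cdot)$. First I would check that $\Delta_P$ is well-defined as a map into the completed tensor product $P \hat{\otimes} P$: choosing a homogeneous basis $\{e_\lambda\}$ with graded dual basis $\{f_\lambda\}$ satisfying $\kappa(f_\lambda, e_\eta) = \delta_{\lambda\eta}$ (available since $\kappa$ is nondegenerate and graded), Eq.~\eqref{eq:qp2pc} forces $\Delta_P(p) = -\sum_{\lambda,\mu}\kappa(p, e_\lambda\cdot e_\mu)\, f_\lambda\otimes f_\mu$; because each graded piece $P_i$ is finite-dimensional, only finitely many terms contribute in each bidegree $(i,j)$, so the sum indeed lies in $\prod_{i,j} P_i\otimes P_j = P\hat{\otimes} P$.

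Next I would evaluate each of the three maps appearing in~\eqref{eq:cpc} against an arbitrary triple of homogeneous elements $q\otimes r\otimes s$, applying~\eqref{eq:qp2pc} iteratively. Writing $\Delta_P(p)=\sum_i u_i\otimes v_i$ formally, the key computation is
\[
\widetilde{\kappa}\big((\Delta_P\hat{\otimes}\id)\Delta_P(p),\, q\otimes r\otimes s\big) = \sum_i \widetilde{\kappa}(\Delta_P(u_i), q\otimes r)\,\kappa(v_i, s) = -\sum_i\kappa(u_i, q\cdot r)\,\kappa(v_i, s) = \kappa(p, (q\cdot r)\cdot s),
\]
where the last equality is again~\eqref{eq:qp2pc}. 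The same bookkeeping gives $\widetilde{\kappa}((\id\hat{\otimes}\Delta_P)\Delta_P(p), q\otimes r\otimes s) = \kappa(p, q\cdot(r\cdot s))$, and since $\hat{\sigma}\hat{\otimes}\id$ on the target amounts to swapping $q$ and $r$ in the pairing, $\widetilde{\kappa}((\hat{\sigma}\hat{\otimes}\id)(\Delta_P\hat{\otimes}\id)\Delta_P(p), q\otimes r\otimes s) = \kappa(p,(r\cdot q)\cdot s)$. The graded nature of $\kappa$ guarantees that these a priori infinite sums truncate to finite ones for fixed homogeneous $q,r,s$, so the manipulations are legitimate.

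Finally I would invoke the perm identity~\eqref{eq:perm}, in the form $(q\cdot r)\cdot s = q\cdot(r\cdot s) = (r\cdot q)\cdot s$, to conclude that all three expressions agree for every $p$. Left nondegeneracy of $\widetilde{\kappa}$ then yields equality of the three corresponding elements of $P\hat{\otimes} P\hat{\otimes} P$, which is precisely~\eqref{eq:cpc}, so $(P, \Delta_P)$ is a completed perm coalgebra.

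I do not expect a genuine obstacle beyond care with the completed tensor product and the verification that $\Delta_P$ lands in $P\hat{\otimes}P$; the entire content is the translation of the three associativity-type perm relations through the nondegenerate pairing. It is worth noting that the invariance of $\kappa$ plays no role here — only the perm identity and the nondegeneracy of $\kappa$ are used — and invariance will instead be needed later, for the compatibility with the Lie bialgebra/Manin-triple constructions.
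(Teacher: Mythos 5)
Your proposal is correct and follows essentially the same route as the paper, whose proof of Lemma~\ref{lem:qp2pc} simply defers to the argument of Lemma~\ref{lem:qgpl2cplc}: pair each side of Eq.~\eqref{eq:cpc} with homogeneous triples via $\widetilde{\kappa}$, reduce to the perm identity~\eqref{eq:perm}, and conclude by left nondegeneracy. Your added verification that $\Delta_P$ lands in $P\hat{\otimes}P$, and your observation that invariance of $\kappa$ is not used here, are accurate refinements of the same argument.
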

\begin{proof}
    The proof is similar to that of Lemma~\ref{lem:qgpl2cplc}.
\end{proof}

\begin{rmk}\label{rmk:qcpa}
    The completed perm coalgebra obtained by Lemma~\ref{lem:qp2pc} from the quadratic $\mathbb{Z}$-graded perm algebra $(P = \oplus_{i \in \mathbb{Z}} P_i, \cdot, \kappa)$ in Example~\ref{ex:qgp} coincides with the one given in Example~\ref{ex:cpc}.
\end{rmk}

The notion of a pre-Lie bialgebra was introduced in
\cite{bai2008left}, whereas an equivalent expression is given as follows.

\begin{defi}
    A \textbf{pre-Lie bialgebra} is a triple $(A, \diamond, \Delta_A)$ where $(A, \diamond)$ is a pre-Lie algebra and $(A, \Delta_A)$ is a pre-Lie coalgebra satisfying
    \begin{equation}\label{eq:plbi}
        \zeta(b, a) = \zeta(a, b) = \sigma(\zeta(a, b)), \;\; \forall a, b \in A,
    \end{equation}
    where $\zeta(a, b)$ is defined by
    \begin{equation*}
        \zeta(a, b) := \Delta_A(a \diamond b) - (L_\diamond(a) \otimes \id + \id \otimes L_\diamond(a))\Delta_A(b) - (\id \otimes R_\diamond(b))\Delta_A(a), \;\; \forall a, b \in A.
    \end{equation*}
\end{defi}

\begin{thm}\label{thm:plb2lb}
    Let $(A, \diamond, \Delta_A)$ be a pre-Lie bialgebra,  $(P = \oplus_{i \in \mathbb{Z}} P_i, \cdot, \kappa)$ be a quadratic $\mathbb{Z}$-graded perm algebra
    and $(L := A \otimes P, [-, -])$ be the induced Lie algebra from $(A, \diamond)$ and $(P, \cdot)$.
    Let
    $\Delta_P: P \to P \hat{\otimes} P$ be the linear map defined by Eq.~\eqref{eq:qp2pc},
    and $\delta: L \to L \hat{\otimes} L$ be the linear map defined by Eq.~\eqref{eq:plc2lc}.
    Then $(L, [-, -], \delta)$ is a completed Lie bialgebra.
    Further, if $(P, \cdot, \kappa)$ is the quadratic $\mathbb{Z}$-graded perm algebra given in Example~\ref{ex:qgp}, then $(L, [-, -], \delta)$ is a completed Lie bialgebra if and only if $(A, \diamond, \Delta_A)$ is a pre-Lie bialgebra.
\end{thm}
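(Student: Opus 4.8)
The plan is to mirror the proof of Theorem~\ref{thm:pb2lb}, exploiting the symmetry between perm and pre-Lie structures. For the first assertion, I would first invoke Lemma~\ref{lem:qp2pc} to see that $(P, \Delta_P)$ is a completed perm coalgebra, and then Proposition~\ref{pro:plp2lc} to conclude that $(L, \delta)$ is a completed Lie coalgebra. Since $(A, \diamond)$ is a pre-Lie algebra and $(P, \cdot)$ is a perm algebra, $(L, [-,-])$ is already the induced Lie algebra by Remark~\ref{rmk:plp2l}. It then remains only to verify the cocycle condition Eq.~\eqref{eq:lb}.

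The key preparatory step is to convert the quadratic data on $P$ into a list of identities relating $\Delta_P$ to the product $\cdot$, exactly as the proof of Theorem~\ref{thm:pb2lb} did for the quadratic pre-Lie algebra $A$. Writing $\Delta_P(p) = \sum p_{(1)} \otimes p_{(2)}$, the defining relation $\widetilde{\kappa}(\Delta_P(p), q \otimes r) = -\kappa(p, q\cdot r)$, together with the invariance $\kappa(p_1 \cdot p_2, p_3) = \kappa(p_1, p_2 \cdot p_3 - p_3\cdot p_2)$ and the left nondegeneracy of $\widetilde{\kappa}$, yields expressions for $\sum p_{(1)} \otimes (q \cdot p_{(2)})$, $\sum (q \cdot p_{(1)}) \otimes p_{(2)}$, $\sum p_{(1)} \otimes (p_{(2)} \cdot q)$, and the like, in terms of $\Delta_P$ applied to $p\cdot q$ and $q\cdot p$ and their $\hat{\sigma}$-twists. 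Feeding these into the expansions of both sides of Eq.~\eqref{eq:lb}, and using the single pre-Lie bialgebra compatibility Eq.~\eqref{eq:plbi} on the $A$-factor, the two sides should collapse to $\delta([a\otimes p, b\otimes q])$, establishing the cocycle condition.

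For the converse, assume $(P, \cdot, \kappa)$ is the quadratic $\mathbb{Z}$-graded perm algebra of Example~\ref{ex:qgp} and that $(L, [-,-], \delta)$ is a completed Lie bialgebra. By Theorem~\ref{thm:pla2l}, $(A, \diamond)$ is a pre-Lie algebra, and by Theorem~\ref{thm:plca2lc} together with Remark~\ref{rmk:qcpa}, $(A, \Delta_A)$ is a pre-Lie coalgebra; so only Eq.~\eqref{eq:plbi} remains to be shown. Here $\delta$ is given explicitly by the formulas derived in the proof of Theorem~\ref{thm:plca2lc}, namely Eqs.~\eqref{eq:mpc1} and~\eqref{eq:mpc2}. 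I would expand the cocycle condition Eq.~\eqref{eq:lb} on well-chosen homogeneous elements of the form $a \otimes x_1^{m}x_2^{n}\partial_s$ and $b \otimes x_1^{m'}x_2^{n'}\partial_{s'}$, and then compare the coefficients of carefully selected monomials in the $P$-variables to isolate, one at a time, the equalities packaged in Eq.~\eqref{eq:plbi}, that is $\zeta(a,b) = \zeta(b,a)$ and $\zeta(a,b) = \sigma(\zeta(a,b))$.

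The main obstacle will be the bookkeeping in the forward direction: assembling the correct set of auxiliary identities from the $\kappa$-invariance and then tracking the numerous tensor terms through the $\bullet$-product and the $(\id - \hat{\sigma})$ symmetrization so that everything cancels down to the pre-Lie bialgebra condition. In the converse, the delicate point is choosing monomials whose coefficients cleanly separate the symmetry relation $\zeta(a,b) = \sigma(\zeta(a,b))$ from the exchange relation $\zeta(a,b) = \zeta(b,a)$; the graded dual basis recorded in Example~\ref{ex:qgp} is what makes this coefficient extraction feasible.
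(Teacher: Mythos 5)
Your proposal is correct and follows essentially the same route as the paper: the forward direction mirrors the proof of Theorem~\ref{thm:pb2lb} (via Lemma~\ref{lem:qp2pc}, Proposition~\ref{pro:plp2lc}, and identities extracted from the invariance and nondegeneracy of $\kappa$), and the converse uses Theorems~\ref{thm:pla2l} and~\ref{thm:plca2lc} with Remark~\ref{rmk:qcpa}, then extracts $\zeta(a,b)=\zeta(b,a)$ and $\zeta(a,b)=\sigma(\zeta(a,b))$ by comparing coefficients in the cocycle condition, exactly as the paper does. The only difference is that the paper pins down the specific monomials (e.g., the coefficient of $x_1^{i_1+s+1}x_2^{i_2+t}\partial_2 \otimes x_1^{m-i_1+1}x_2^{n-i_2}\partial_1$ in the $\partial_1$--$\partial_1$ bracket, and of $x_1^{m+s-i_1+1}x_2^{n+t-i_2+1}\partial_2 \otimes x_1^{i_1}x_2^{i_2}\partial_1$ in the $\partial_2$--$\partial_1$ bracket), which your plan leaves as "carefully selected."
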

\begin{proof}
    The proof of the first part is similar to that of Theorem~\ref{thm:pb2lb}.
    Conversely, suppose that $(P, \cdot, \kappa)$ is the quadratic $\mathbb{Z}$-graded perm algebra given in Example~\ref{ex:qgp} and $(L, [-, -], \delta)$ is a completed Lie bialgebra.
    Then $(A, \diamond)$ is a pre-Lie algebra by Theorem~\ref{thm:pla2l} and $(A, \Delta_A)$ is a pre-Lie coalgebra by Theorem~\ref{thm:plca2lc} and Remark~\ref{rmk:qcpa}.
    Thus it is sufficient to show that Eq.~\eqref{eq:plbi} holds.
    Note that in this case, $\delta$ is defined by Eqs.~\eqref{eq:mpc1} and \eqref{eq:mpc2}.
    For all $a, b \in A$ and $m, n, s, t, i_1, i_2 \in \mathbb{Z}$, comparing the coefficients of $x_1^{i_1+s+1}x_2^{i_2+t} \partial_2 \otimes x_1^{m-i_1+1}x_2^{n-i_2}\partial_1$ in the expansion of
    \begin{eqnarray*}
        \delta([a x_1^m x_2^n \partial_1, b x_1^s x_2^t \partial_1]) &=& (\ad(a x_1^m x_2^n \partial_1) \otimes \id + \id \otimes \ad(a x_1^m x_2^n \partial_1))\delta(b x_1^s x_2^t \partial_1) \\
        &&- (\ad(b x_1^s x_2^t \partial_1) \otimes \id + \id \otimes \ad(b x_1^s x_2^t \partial_1))\delta(a x_1^m x_2^n \partial_1),
    \end{eqnarray*}
    we get $\zeta(a, b) = \zeta(b, a)$.
    Comparing the coefficients of $x_1^{m+s-i_1+1}x_2^{n+t-i_2+1} \partial_2 \otimes x_1^{i_1}x_2^{i_2}\partial_1$ in the expansion of
    \begin{eqnarray*}
        \delta([a x_1^m x_2^n \partial_2, b x_1^s x_2^t \partial_1]) &=& (\ad(a x_1^m x_2^n\partial_2) \otimes \id + \id \otimes \ad(a x_1^m x_2^n\partial_2))\delta(b x_1^s x_2^t \partial_1) \\
        &&- (\ad(b x_1^s x_2^t \partial_1) \otimes \id + \id \otimes \ad(b x_1^s x_2^t \partial_1))\delta(a x_1^m x_2^n \partial_2),
    \end{eqnarray*}
    we get $\zeta(a, b) = \sigma(\zeta(a, b))$.
    Hence, $(A, \diamond, \Delta_A)$ is a pre-Lie bialgebra.
\end{proof}

\subsection{Para-K\"ahler Lie algebras and Manin triples of Lie algebras}
In this subsection, we assume that all pre-Lie algebras and Lie
algebras are finite-dimensional.

\begin{defi}
    Let $(L, [-, -])$ be a  Lie algebra.
    If there is a skew-symmetric nondegenerate bilinear form $(\ ,\ )$ on $(L, [-, -])$ such that
    \begin{equation*}
        ([a, b], c) + ([b, c], a) + ([c, a], b) = 0, \;\; \forall a, b, c \in L,
    \end{equation*}
    then $(L, [-, -], (\ ,\ ))$ is called a \textbf{symplectic Lie algebra}.
\end{defi}
A symplectic Lie algebra is also called a \textbf{quasi-Frobenius Lie algebra} (\cite{beidar1997frobenius}).

\begin{defi}
    A symplectic Lie algebra $(\mathfrak{g}, [-, -], \omega)$ is called a \textbf{para-K\"ahler Lie algebra} if $\mathfrak{g} = \mathfrak{g}_1 \oplus \mathfrak{g}_2$ is the direct sum of the underlying vector spaces of two Lie subalgebras such that $\omega(\mathfrak{g}_i, \mathfrak{g}_i) = 0$ for $i=1, 2$,
    which is denoted by $(\mathfrak{g}_1 \bowtie \mathfrak{g}_2, \mathfrak{g}_1, \mathfrak{g}_2, \omega)$.
\end{defi}

Let $(A, \diamond)$ be a pre-Lie algebra. The commutator $[a, b]
:= a \diamond b - b \diamond a$ for all $a, b \in A$ defines a Lie
algebra $(A, [-, -])$, which is called the \textbf{sub-adjacent Lie algebra} of $(A, \diamond)$ and denoted by $(\mathfrak{g}(A),
[-, -])$, and $(A, \diamond)$ is also called a \textbf{compatible pre-Lie algebra structure} on the Lie algebra $(\mathfrak{g}(A), [-, -])$.

\begin{thm}\label{thm:plmmb}
    {\rm (\cite{bai2008left})}
    Let $(A, \diamond)$ be a pre-Lie algebra and $\Delta_A: A \to A \otimes A$ be a linear map.
    Suppose that the linear dual of $\Delta_A$ gives a pre-Lie algebra structure $\circ$ on $A^*$.
    Then $(A, \diamond, \Delta_A)$ is a pre-Lie bialgebra if and only if $(\mathfrak{g}(A) \bowtie \mathfrak{g}(A^*), \mathfrak{g}(A), \mathfrak{g}(A^*), \omega_p)$ is a para-K\"ahler Lie algebra where $\omega_p$ (also see
    Eq.~{\rm (\ref{eq:mtpa})}) is defined by
    \begin{equation}\label{eq:ff}
        \omega_p(a + a^*, b + b^*) := \langle a^*, b\rangle - \langle b^*, a\rangle, \;\; \forall a, b \in A, a^*, b^* \in A^*.
    \end{equation}
\end{thm}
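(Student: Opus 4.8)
The plan is to prove the equivalence by unfolding both sides into a single matched pair of Lie algebras together with a cocycle condition for $\omega_p$, paralleling the chain of equivalences established for perm bialgebras in Theorems~\ref{thm:mp2pb}, \ref{thm:mt2mp} and~\ref{thm:pmmb}; conceptually the argument follows \cite{bai2008left}. First I would set up the underlying Lie algebra on $T := \mathfrak{g}(A) \oplus \mathfrak{g}(A^*)$. The linear dual of $\Delta_A$ equips $A^*$ with the pre-Lie product $\circ$, hence with the sub-adjacent Lie algebra $\mathfrak{g}(A^*)$; on $T$ I take the bracket whose restrictions to $\mathfrak{g}(A)$ and $\mathfrak{g}(A^*)$ are the two sub-adjacent brackets and whose cross terms $[a, a^*]$ are given by the coadjoint-type actions assembled from $L_\diamond, R_\diamond$ and $L_\circ, R_\circ$ (that is, $\mathfrak{g}(A)$ acting on $A^*$ and $\mathfrak{g}(A^*)$ acting on $A$ through the transposes of the pre-Lie multiplications). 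The first task is to verify that this bracket satisfies the Jacobi identity, i.e.\ that $(\mathfrak{g}(A), \mathfrak{g}(A^*))$ with these actions form a matched pair of Lie algebras, which accounts for $\mathfrak{g}(A) \bowtie \mathfrak{g}(A^*)$; this is where part of the data in~\eqref{eq:plbi} gets consumed.

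Next, the bilinear form $\omega_p$ of~\eqref{eq:ff} is manifestly nondegenerate, and by its very definition $\omega_p(\mathfrak{g}(A), \mathfrak{g}(A)) = \omega_p(\mathfrak{g}(A^*), \mathfrak{g}(A^*)) = 0$, so the two summands are automatically isotropic. Hence the only remaining requirement for $(\mathfrak{g}(A) \bowtie \mathfrak{g}(A^*), \mathfrak{g}(A), \mathfrak{g}(A^*), \omega_p)$ to be a para-K\"ahler Lie algebra is that $\omega_p$ be a symplectic form, i.e.\ that it satisfy the cyclic $2$-cocycle identity $\omega_p([X,Y],Z) + \omega_p([Y,Z],X) + \omega_p([Z,X],Y) = 0$. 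I would check this identity case by case according to how many of $X, Y, Z$ lie in $A$ versus $A^*$: the all-$A$ and all-$A^*$ cases are vacuous, since each bracket stays inside an isotropic subalgebra, so only the mixed cases (two entries in $A$ and one in $A^*$, and its mirror image) carry content.

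Expanding the two mixed cases and pairing against arbitrary elements, I would rewrite each resulting scalar identity as an identity of tensors in $A \otimes A \otimes A$ (resp.\ in its dual), using $\langle \Delta_A(a), b^* \otimes c^* \rangle = \langle a, b^* \circ c^* \rangle$ to convert the cross actions into $\Delta_A$ and into the operators $L_\diamond(a) \otimes \id + \id \otimes L_\diamond(a)$ and $\id \otimes R_\diamond(b)$ that appear in $\zeta$. The goal is to show that the two mixed cocycle conditions are together equivalent to the two equalities $\zeta(a,b) = \zeta(b,a)$ and $\zeta(a,b) = \sigma(\zeta(a,b))$ of~\eqref{eq:plbi}, with the matched-pair Jacobi conditions of the first step subsumed in the same computation. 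Both directions then follow at once: a pre-Lie bialgebra yields the para-K\"ahler structure, and conversely the cocycle and matched-pair identities force~\eqref{eq:plbi}.

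The main obstacle I anticipate is bookkeeping rather than any conceptual difficulty. One must fix the cross actions $[a, a^*]$ with exactly the right combination of $L_\diamond, R_\diamond, L_\circ, R_\circ$ and their transposes so that $\mathfrak{g}(A)$ and $\mathfrak{g}(A^*)$ are simultaneously Lie subalgebras and $\omega_p$ restricts to zero on each, and then confirm that the dualization carries the cocycle identity precisely onto the symmetry and flip-invariance conditions on $\zeta$ with no stray terms. Tracking the signs forced by the \emph{skew}-symmetry of $\omega_p$ here, as opposed to the \emph{symmetric} form underlying the Manin-triple picture for Lie bialgebras (cf.\ Proposition~\ref{pro:lb2lmt}), is the delicate point; once the cross actions are correctly normalized, each mixed case should collapse term by term onto one of the identities in~\eqref{eq:plbi}.
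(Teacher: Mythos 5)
A preliminary remark: the paper does not prove Theorem~\ref{thm:plmmb} at all --- it is quoted from \cite{bai2008left} --- so your proposal can only be compared with the standard proof there (and with the analogous chain Theorems~\ref{thm:mp2pb}--\ref{thm:pmmb} that the paper does prove for perm algebras). Your overall architecture (double space $A\oplus A^*$, isotropy of the summands automatic, dualization of structure constants onto $\zeta$) is in the right spirit, but the division of labor you propose is inverted, and the step you lean on would fail. For the bracket actually meant by $\mathfrak{g}(A)\bowtie\mathfrak{g}(A^*)$, the cross terms are $[a,b^*]=L_\circ^*(b^*)a-L_\diamond^*(a)b^*$ --- this is the commutator of the quadratic pre-Lie product $\star$ displayed right after the theorem, and it resolves the ambiguity you flagged: only the dual representations $-L_\diamond^*$ and $-L_\circ^*$ of the left multiplications enter the Lie bracket (the $R^*$'s occur only in $\star$ itself, not in its commutator). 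With these cross terms the cyclic $2$-cocycle identity for $\omega_p$ holds \emph{identically}, with no input from Eq.~\eqref{eq:plbi}: for $a,b\in A$, $c^*\in A^*$,
\begin{equation*}
\omega_p([a,b],c^*)+\omega_p([b,c^*],a)+\omega_p([c^*,a],b)
=-\langle c^*, a\diamond b-b\diamond a\rangle-\langle c^*, b\diamond a\rangle+\langle c^*, a\diamond b\rangle=0,
\end{equation*}
and the mixed case $(a,b^*,c^*)$ vanishes the same way. So your plan to show that ``the two mixed cocycle conditions are together equivalent to $\zeta(a,b)=\zeta(b,a)$ and $\zeta(a,b)=\sigma(\zeta(a,b))$,'' with the matched-pair Jacobi conditions ``subsumed in the same computation,'' cannot work: pairing the cocycle identity against test elements yields $0=0$, and in the converse direction nothing about $\Delta_A$ can be extracted from it. The entire content of the para-K\"ahler property sits in the Jacobi identity for the mixed triples, i.e.\ in the matched-pair conditions for $(\mathfrak{g}(A),\mathfrak{g}(A^*);-L_\diamond^*,-L_\circ^*)$; dualizing those via $\langle\Delta_A(a),b^*\otimes c^*\rangle=\langle a,b^*\circ c^*\rangle$ is what produces exactly the two identities in Eq.~\eqref{eq:plbi}. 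That is how \cite{bai2008left} proceeds: bialgebra $\Longleftrightarrow$ matched pair $\Longleftrightarrow$ para-K\"ahler, with closedness of $\omega_p$ and isotropy being automatic once the bracket is the matched-pair one (compare Proposition~\ref{pro:pl2sl}, which is the special case of trivial $\circ$).

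A secondary gap of the same kind: if you instead read the hypothesis as an \emph{arbitrary} Lie bracket on $A\oplus A^*$ extending the two sub-adjacent brackets and making $\omega_p$ closed, then the cocycle identity still does not carry the weight you assign it, because it only determines the skew part of the cross actions; e.g.\ the $(a,b,c^*)$ case gives $\langle[b,c^*]_{A^*},a\rangle-\langle[a,c^*]_{A^*},b\rangle=\langle c^*,[a,b]\rangle$, which leaves an arbitrary part symmetric in $(a,b)$ undetermined, so it does not force $[a,c^*]_{A^*}=-L_\diamond^*(a)c^*$ and the ``term-by-term collapse'' onto Eq.~\eqref{eq:plbi} again does not occur. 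To repair the proof, fix the bracket to the matched-pair one from the outset, verify once and for all (as above) that $\omega_p$ is then automatically symplectic and the summands isotropic, and carry out the dualization of the mixed Jacobi identities --- that computation, not the cocycle check, is where $\zeta(a,b)=\zeta(b,a)$ and $\zeta(a,b)=\sigma(\zeta(a,b))$ come from.
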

Note that in this case, $(A \oplus A^*, \star, \omega_p)$ is a quadratic pre-Lie algebra where $\star$ is defined by
\begin{equation*}
    (a+a^*) \star (b+b^*) = (a \diamond b - (L_\circ^* - R_\circ^*)(a^*)b + R_\circ^*(b^*)a) + (a^* \circ b^* - (L_\diamond^* - R_\diamond^*)(a)b^* + R_\diamond^*(b)a^*).
\end{equation*}

The following results are similar to
Proposition~\ref{pro:pmt2lmp}, Lemma~\ref{lem:mtpeq} and
Proposition~\ref{pro:pblbmm} respectively.
\begin{pro}\label{pro:plmt2lmp}
    Let $(P, \cdot, \kappa)$ be a quadratic perm algebra.
    Let $(A, \diamond)$ and $(A^*, \circ)$ be pre-Lie algebras,
    and $(A \otimes P, [-, -]_{A \otimes P})$ and $(A^* \otimes P, [-, -]_{A^* \otimes P})$ be the induced Lie algebras.
    If $(\mathfrak{g}(A) \bowtie \mathfrak{g}(A^*), \mathfrak{g}(A), \mathfrak{g}(A^*), \omega_p)$ is a para-K\"ahler Lie algebra, then $((A \oplus A^*)\otimes P, A \otimes P, A^* \otimes P)$ is a Manin triple of Lie algebras associated with the bilinear form
    \begin{equation}\label{eq:plmt2lmp}
        \mathfrak{B}(a_1 \otimes p_1 + a_1^* \otimes p_2 , a_2 \otimes p_3 + a_2^* \otimes p_4) := \langle a_1^*, a_2 \rangle \kappa( p_2, p_3 ) - \langle a_2^*, a_1 \rangle  \kappa(p_1, p_4),
    \end{equation}
    for all $a_1, a_2 \in A, a_1^*, a_2^* \in A^*$ and $p_1, p_2, p_3, p_4 \in A$.
\end{pro}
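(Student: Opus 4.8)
The plan is to mirror the proof of Proposition~\ref{pro:pmt2lmp}, reducing the statement to (the symmetric analogue of) Proposition~\ref{pro:qp2snib}. The point is that the para-K\"ahler hypothesis is precisely what supplies a quadratic pre-Lie algebra to pair with the quadratic perm algebra $(P,\cdot,\kappa)$, after which the required invariant bilinear form on the tensor product is already furnished by that earlier result.

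First I would invoke Theorem~\ref{thm:plmmb} together with the remark following it: since $(\mathfrak{g}(A) \bowtie \mathfrak{g}(A^*), \mathfrak{g}(A), \mathfrak{g}(A^*), \omega_p)$ is a para-K\"ahler Lie algebra, the triple $(A \oplus A^*, \star, \omega_p)$ is a quadratic pre-Lie algebra, where $\omega_p$ is as in Eq.~\eqref{eq:ff} and $\star$ restricts to $\diamond$ on $A$ and to $\circ$ on $A^*$. Next I would form the induced Lie algebra $(A \oplus A^*) \otimes P$ via Remark~\ref{rmk:plp2l} and Eq.~\eqref{eq:plp2l}, and apply Proposition~\ref{pro:qp2snib} with the roles of the two tensor factors interchanged, that is, with the quadratic pre-Lie algebra $(A \oplus A^*, \star, \omega_p)$ and the quadratic perm algebra $(P,\cdot,\kappa)$. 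This produces a symmetric, nondegenerate, invariant bilinear form $\mathfrak{B}$ on $(A \oplus A^*) \otimes P$, given by $\mathfrak{B}(u \otimes p, v \otimes q) = \omega_p(u, v)\kappa(p, q)$.

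It then remains to verify the three defining conditions of a Manin triple of Lie algebras. Since $A$ and $A^*$ are pre-Lie subalgebras of $(A \oplus A^*, \star)$, the subspaces $A \otimes P$ and $A^* \otimes P$ are Lie subalgebras whose brackets coincide with $[-,-]_{A \otimes P}$ and $[-,-]_{A^* \otimes P}$, and they give the vector space direct sum $(A \oplus A^*) \otimes P = (A \otimes P) \oplus (A^* \otimes P)$. Because $\omega_p$ vanishes on $\mathfrak{g}(A) \times \mathfrak{g}(A)$ and on $\mathfrak{g}(A^*) \times \mathfrak{g}(A^*)$, the form $\mathfrak{B}$ vanishes on each of $A \otimes P$ and $A^* \otimes P$, so both are isotropic. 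Finally, substituting the decomposition of $\omega_p$ from Eq.~\eqref{eq:ff} into $\mathfrak{B}(a_1 \otimes p_1 + a_1^* \otimes p_2, a_2 \otimes p_3 + a_2^* \otimes p_4)$ and noting that the $A$-$A$ and $A^*$-$A^*$ blocks drop out recovers exactly Eq.~\eqref{eq:plmt2lmp}.

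The main obstacle is really only the bookkeeping for the interchanged version of Proposition~\ref{pro:qp2snib}: one must confirm that its invariance computation survives swapping the tensor order, which it does because it rests solely on the invariance of $\kappa$ (Eq.~\eqref{eq:qpi}) for the perm factor and the invariance of $\omega_p$ for the pre-Lie factor, applied independently to the two slots. Given the symmetric roles of perm and pre-Lie algebras already exploited in Proposition~\ref{pro:ppl2l} and Remark~\ref{rmk:plp2l}, this swap is routine, and the proposition follows just as Proposition~\ref{pro:pmt2lmp} follows from Proposition~\ref{pro:qp2snib}.
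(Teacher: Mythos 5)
Your proof is correct and takes essentially the same route as the paper: the paper omits the proof of Proposition~\ref{pro:plmt2lmp} precisely because, like Proposition~\ref{pro:pmt2lmp}, it follows from Proposition~\ref{pro:qp2snib} applied to the quadratic pre-Lie algebra $(A \oplus A^*, \star, \omega_p)$ supplied by the para-K\"ahler hypothesis (via Theorem~\ref{thm:plmmb} and the remark after it), with the tensor factors in the order of Remark~\ref{rmk:plp2l}. Your expansion of $\mathfrak{B}(u \otimes p, v \otimes q) = \omega_p(u,v)\kappa(p,q)$ into Eq.~\eqref{eq:plmt2lmp} and the isotropy/subalgebra checks are exactly the intended bookkeeping.
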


Let $(P, \cdot, \kappa)$ be a quadratic perm algebra. By the
linear isomorphism $\varphi: P \to P^*$ defined by $\langle
\varphi(p), q\rangle = \kappa(p, q)$ for all $p, q \in P$, a perm
algebra $(P^*, \cdot^\prime)$ is obtained by
\begin{equation*}
    p^* \cdot^\prime q^* := \varphi(\varphi^{-1}(p^*) \cdot \varphi^{-1}(q^*)), \;\; \forall p^*, q^* \in P^*.
\end{equation*}

\begin{lem}\label{lem:mtpleq}
    With the same assumptions in Proposition~\ref{pro:plmt2lmp} and let $(L^* = A^* \otimes P^*, [-, -]_{A^* \otimes P^*})$ be the induced Lie algebra from $(A^*, \circ)$ and $(P^*, \cdot^\prime)$.
    Then $(L \oplus L^*, L = A \otimes P, L^* = A^* \otimes P^*)$ is a Manin triple of Lie algebras associated with the bilinear form defined by Eq.~\eqref{eq:smp}.
    Moreover, this Manin triple is isomorphic to the Manin triple $((A \oplus A^*) \otimes P, A \otimes P, A^* \otimes P)$ associated with the bilinear form defined by Eq.~\eqref{eq:plmt2lmp}.
\end{lem}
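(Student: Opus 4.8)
The plan is to mirror the construction that precedes Lemma~\ref{lem:mtpeq} in the perm case, with the roles of the perm and pre-Lie algebras interchanged so that the quadratic datum now lives on $(P, \cdot, \kappa)$. Since $\kappa$ is nondegenerate, $\varphi: P \to P^*$ given by $\langle \varphi(p), q\rangle = \kappa(p, q)$ is a linear isomorphism, and by the definition of $\cdot^\prime$ it is an isomorphism of perm algebras from $(P, \cdot)$ onto $(P^*, \cdot^\prime)$. First I would introduce the linear isomorphism
\[
    f: (A \oplus A^*) \otimes P \longrightarrow (A \otimes P) \oplus (A^* \otimes P^*), \qquad a_1 \otimes p_1 + a_2^* \otimes p_2 \longmapsto a_1 \otimes p_1 + a_2^* \otimes \varphi(p_2),
\]
and transport the Lie bracket of Proposition~\ref{pro:plmt2lmp} along $f$ to obtain a Lie bracket $[-, -]^\prime$ on $L \oplus L^* = (A \otimes P) \oplus (A^* \otimes P^*)$; this $[-, -]^\prime$ is the bracket I would use for the Manin triple in the first assertion.

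Next I would check that $[-, -]^\prime$ restricts to the two prescribed induced brackets. On $L = A \otimes P$ the map $f$ is the identity and $\star$ restricts to $\diamond$ on the subalgebra $\mathfrak{g}(A)$, so by Eq.~\eqref{eq:plp2l} the restriction is exactly $[-, -]_{A \otimes P}$. On $L^* = A^* \otimes P^*$, using that $\star$ restricts to $\circ$ on $\mathfrak{g}(A^*)$ together with $\varphi(\varphi^{-1}(p^*) \cdot \varphi^{-1}(q^*)) = p^* \cdot^\prime q^*$, one computes that the transported bracket of $a_1^* \otimes p_1^*$ and $a_2^* \otimes p_2^*$ equals $a_1^* \circ a_2^* \otimes p_1^* \cdot^\prime p_2^* - a_2^* \circ a_1^* \otimes p_2^* \cdot^\prime p_1^*$, i.e. precisely $[-, -]_{A^* \otimes P^*}$. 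Hence $L$ and $L^*$ are Lie subalgebras of $(L \oplus L^*, [-, -]^\prime)$ carrying exactly the stated structures.

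Then I would match the bilinear forms. Writing $u = a_1 \otimes p_1 + a_1^* \otimes p_2$ and $v = a_2 \otimes p_3 + a_2^* \otimes p_4$, the pairing of Eq.~\eqref{eq:smp} gives
\[
    \mathfrak{B}_d(f(u), f(v)) = \langle a_1^*, a_2\rangle \kappa(p_2, p_3) + \langle a_2^*, a_1\rangle \kappa(p_4, p_1),
\]
and skew-symmetry of $\kappa$ turns the second term into $-\langle a_2^*, a_1\rangle \kappa(p_1, p_4)$, so that $\mathfrak{B}_d(f(u), f(v)) = \mathfrak{B}(u, v)$ with $\mathfrak{B}$ as in Eq.~\eqref{eq:plmt2lmp}. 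Thus $f$ is a Lie algebra isomorphism that sends $A \otimes P$ to $L$, sends $A^* \otimes P$ to $L^*$, and identifies $\mathfrak{B}$ with $\mathfrak{B}_d$. Since $((A \oplus A^*) \otimes P, A \otimes P, A^* \otimes P)$ is a Manin triple of Lie algebras associated with $\mathfrak{B}$ by Proposition~\ref{pro:plmt2lmp}, transporting along $f$ yields at once that $(L \oplus L^*, L, L^*)$ is a Manin triple associated with $\mathfrak{B}_d$ and that the two Manin triples are isomorphic, establishing both assertions.

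The only point requiring real care is the restriction computation on $L^* = A^* \otimes P^*$: it is essential that conjugation through $\varphi$ converts the perm product $\cdot$ into $\cdot^\prime$ exactly, which is guaranteed precisely by $\varphi$ being a perm-algebra isomorphism, and that $\star|_{A^*} = \circ$ as recorded after Theorem~\ref{thm:plmmb}. Everything else is routine bookkeeping of the flip and of the sign produced by the skew-symmetry of $\kappa$; the substantive input is Proposition~\ref{pro:plmt2lmp} together with the quadratic property of $(P, \cdot, \kappa)$.
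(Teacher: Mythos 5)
Your proof is correct and takes essentially the same approach as the paper: the paper establishes this lemma by mirroring, with the roles of perm and pre-Lie algebras interchanged, the explicit construction preceding Lemma~\ref{lem:mtpeq} --- building the isomorphism $\varphi$ from the quadratic form, transporting the Lie bracket along $f$, checking that the restrictions recover the induced brackets, and matching $\mathfrak{B}_d$ of Eq.~\eqref{eq:smp} with $\mathfrak{B}$ of Eq.~\eqref{eq:plmt2lmp} via the skew-symmetry of $\kappa$. Your verification of the restriction on $A^*\otimes P^*$ through $\varphi$ being a perm-algebra isomorphism onto $(P^*,\cdot^\prime)$ and $\star|_{A^*}=\circ$ is exactly the point the paper relies on.
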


\begin{pro}\label{pro:plblbmp}
    Let $(A, \diamond, \Delta_A)$ be a pre-Lie bialgebra and $(P, \cdot, \kappa)$ be a quadratic perm algebra.
    Then the Lie bialgebra $(L, [-, -], \delta)$ obtained in Theorem~\ref{thm:plb2lb} coincides with the one obtained from the Manin triple $(L \oplus L^*, L = A \otimes P, L^* = A^* \otimes P^*)$ of Lie algebras given in Lemma~\ref{lem:mtpleq}.
    That is, we have the following commutative diagram.
    \begin{equation*}
        \xymatrix@C=3cm@R=0.75cm{
            \txt{$(A, \diamond, \Delta_A)$ \\ a pre-Lie bialgerbra} \ar@{<->}[r]^-{Thm.~\ref{thm:plmmb}} \ar[d]^-{Thm.~\ref{thm:plb2lb}} & \txt{$(\mathfrak{g}(A) \bowtie \mathfrak{g}(A^*), \mathfrak{g}(A), \mathfrak{g}(A^*), \omega_p)$ \\ a para-K\"ahler Lie algebra} \ar[d]^-{Prop.~\ref{pro:plmt2lmp},~Lem.~\ref{lem:mtpleq}}\\
            \txt{$(L = A \otimes P, [-, -], \delta)$ \\ a Lie bialgebra} \ar@{<->}[r]^-{Pro.~\ref{pro:lb2lmt}} & \txt{$(L \oplus L^*, L = A \otimes P, L^* = A^* \otimes P^*)$ \\ a Manin triple of Lie algebras}
        }
    \end{equation*}
\end{pro}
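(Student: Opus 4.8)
The plan is to show the two Lie bialgebra structures on $L = A \otimes P$ coincide by reducing everything to the cobracket. Both constructions share the same underlying Lie algebra: the right-then-down route produces, via Lemma~\ref{lem:mtpleq}, the Lie algebra $L = A \otimes P$ which is exactly the induced Lie algebra from $(A, \diamond)$ and $(P, \cdot)$ used in Theorem~\ref{thm:plb2lb}. Hence it suffices to prove that the cobracket $\delta$ of Eq.~\eqref{eq:plc2lc} equals the cobracket $\delta'$ attached to the Manin triple $(L \oplus L^*, L, L^*)$ through Proposition~\ref{pro:lb2lmt}; by that correspondence $\delta'$ is the linear dual of the Lie bracket on $L^* = A^* \otimes P^*$, which is the induced Lie bracket from $(A^*, \circ)$ and $(P^*, \cdot^\prime)$.

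First I would pin down the pairing between $L$ and $L^*$. Pulling the form $\mathfrak{B}_d$ of Eq.~\eqref{eq:smp} back along the isomorphism $f$ of Lemma~\ref{lem:mtpleq} and using $\kappa(\varphi^{-1}(p^*), q) = \langle p^*, q \rangle$ together with the skew-symmetry of $\kappa$ recovers precisely the form $\mathfrak{B}$ of Eq.~\eqref{eq:plmt2lmp}; this identifies the relevant pairing as the standard one $\langle a^* \otimes p^*, a \otimes p \rangle = \langle a^*, a \rangle \langle p^*, p \rangle$, so that $\delta'$ is dual to $[-,-]_{L^*}$ with respect to this pairing.

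The heart of the matter is a pair of duality identities. On the pre-Lie side, the hypothesis that the linear dual of $\Delta_A$ gives the pre-Lie product $\circ$ on $A^*$ is exactly $\langle a_1^* \otimes a_2^*, \Delta_A(a) \rangle = \langle a_1^* \circ a_2^*, a \rangle$. On the perm side I would establish that $\Delta_P$ of Eq.~\eqref{eq:qp2pc} is dual to $\cdot^\prime$, namely
\begin{equation*}
    \langle p_1^* \otimes p_2^*, \Delta_P(p) \rangle = \langle p_1^* \cdot^\prime p_2^*, p \rangle, \quad \forall p \in P,\ p_1^*, p_2^* \in P^*.
\end{equation*}
Writing $p_i^* = \varphi(u_i)$, the left-hand side equals $\widetilde{\kappa}(\Delta_P(p), u_1 \otimes u_2) = -\kappa(p, u_1 \cdot u_2)$ by Eq.~\eqref{eq:qp2pc}, while the right-hand side equals $\kappa(u_1 \cdot u_2, p)$ by the definition of $\cdot^\prime$, and these agree by the skew-symmetry of $\kappa$. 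Expanding $\delta(a \otimes p) = (\id - \hat{\sigma})(\Delta_A(a) \bullet \Delta_P(p))$ in Sweedler notation and pairing against $(a_1^* \otimes p_1^*) \otimes (a_2^* \otimes p_2^*)$, the two duality identities convert this into the difference $\langle a_1^* \circ a_2^*, a \rangle \langle p_1^* \cdot^\prime p_2^*, p \rangle - \langle a_2^* \circ a_1^*, a \rangle \langle p_2^* \cdot^\prime p_1^*, p \rangle$, which is exactly $\langle a \otimes p, [a_1^* \otimes p_1^*, a_2^* \otimes p_2^*]_{L^*} \rangle$; hence $\delta = \delta'$.

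The main obstacle is bookkeeping rather than conceptual difficulty: one must correctly align the flip $\hat{\sigma}$ appearing in Eq.~\eqref{eq:plc2lc} with the sign produced by the skew-symmetry of $\kappa$ when passing between $\Delta_P$ and $\cdot^\prime$, and likewise keep the factors $\circ$ and $\cdot^\prime$ in the correct tensor slots. Since this argument runs entirely parallel to the perm-bialgebra case, in the final write-up I would present it by indicating the modifications relative to the proof of Proposition~\ref{pro:pblbmm}.
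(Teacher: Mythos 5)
Your proposal is correct and follows the same route the paper intends: the paper states this proposition without proof, as a summary of Lemma~\ref{lem:mtpleq} together with Proposition~\ref{pro:lb2lmt} (mirroring Proposition~\ref{pro:pblbmm}), and your argument supplies exactly the omitted verification, namely that the pullback of $\mathfrak{B}_d$ along $f$ is the form of Eq.~\eqref{eq:plmt2lmp} and that $\delta$ of Eq.~\eqref{eq:plc2lc} is dual to the induced bracket on $L^* = A^* \otimes P^*$ via the identities $\langle a_1^* \otimes a_2^*, \Delta_A(a)\rangle = \langle a_1^* \circ a_2^*, a\rangle$ and $\langle p_1^* \otimes p_2^*, \Delta_P(p)\rangle = \langle p_1^* \cdot^\prime p_2^*, p\rangle$. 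Your sign bookkeeping checks out: the two skew-symmetry flips in $\kappa$ cancel when rewriting $\langle \varphi(u_1) \otimes \varphi(u_2), \Delta_P(p)\rangle$ as $\widetilde{\kappa}(\Delta_P(p), u_1 \otimes u_2)$, so the computation closes as claimed.
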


\subsection{Infinite-dimensional Lie bialgebras from the \texorpdfstring{$S$}{S}-equation}
Let $(A, \diamond)$ be a pre-Lie algebra. Let $r = \sum_{\alpha
\in I} a_\alpha \otimes b_\alpha \in A \otimes A$, where $I$ is a
finite index set. Set
\begin{eqnarray*}
    &&r_{12} \diamond r_{23} := \sum_{\alpha, \beta} a_\alpha \otimes b_\alpha \diamond a_\beta \otimes b_\beta, \;\; r_{13} \diamond r_{23} := \sum_{\alpha, \beta} a_\alpha \otimes a_\beta \otimes b_\alpha \diamond b_\beta, \\
    &&r_{12} \diamond r_{13} := \sum_{\alpha, \beta} a_\alpha \diamond a_\beta \otimes b_\alpha \otimes b_\beta, \;\; r_{23} \diamond r_{13} := \sum_{\alpha, \beta} a_\beta \otimes a_\alpha \otimes b_\alpha \diamond b_\beta.
\end{eqnarray*}
\begin{defi}
    Let $(A, \diamond)$ be a pre-Lie algebra and $r \in A \otimes A$.
    We call the following equation the \textbf{$S$-equation} in $(A, \diamond)$:
    \begin{equation*}
        {\mathcal S}(r) := -r_{12} \diamond r_{13} + r_{12} \diamond r_{23} + r_{13} \diamond r_{23} - r_{23} \diamond r_{13} = 0.
    \end{equation*}
\end{defi}

\begin{pro}\label{pro:s2plb}
    {\rm (\cite{bai2008left})}
    Let $(A, \diamond)$ be a pre-Lie algebra and $r \in A \otimes A$.
    Define a linear map $\Delta_A: A \to A \otimes A$ by
    \begin{equation}\label{eq:plcob}
        \Delta_A(a) = (L_\diamond(a) \otimes \id + \id \otimes (L_\diamond - R_\diamond)(a))(r), \;\; \forall a \in A.
    \end{equation}
    If $r$ is a symmetric solution of the $S$-equation in $(A, \diamond)$, then $(A, \diamond, \Delta_A)$ is a pre-Lie bialgebra.
\end{pro}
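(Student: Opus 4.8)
The plan is to verify directly the two defining axioms of a pre-Lie bialgebra for the triple $(A, \diamond, \Delta_A)$: first that $(A, \Delta_A)$ is a pre-Lie coalgebra, i.e.\ that the finite-tensor version of Eq.~\eqref{eq:cplc} holds, and second that the compatibility condition Eq.~\eqref{eq:plbi} holds. In both cases the strategy is to substitute the coboundary formula $\Delta_A(a) = (L_\diamond(a) \otimes \id + \id \otimes (L_\diamond - R_\diamond)(a))(r)$ and to reduce every expression to contractions of the tensors $r_{12} \diamond r_{13}$, $r_{12} \diamond r_{23}$, $r_{13} \diamond r_{23}$ and $r_{23} \diamond r_{13}$ against the operators $L_\diamond$ and $R_\diamond$, so that $\mathcal{S}(r)$ appears as a recognizable block. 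This mirrors the coboundary analysis carried out for perm algebras in Lemma~\ref{lem:pbco}, and the symmetry $r = \sigma(r)$ plays here the role that the symmetry hypothesis played in Corollary~\ref{cor:pbc}.

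For the compatibility, I would write $r = \sum_\alpha a_\alpha \otimes b_\alpha$ and expand $\zeta(a, b)$ term by term. Each of $\Delta_A(a \diamond b)$, $(L_\diamond(a) \otimes \id + \id \otimes L_\diamond(a))\Delta_A(b)$ and $(\id \otimes R_\diamond(b))\Delta_A(a)$ produces compositions such as $a \diamond (b \diamond a_\alpha)$ and $(a \diamond b) \diamond a_\alpha$; the pre-Lie identity Eq.~\eqref{eq:prelie}, in the operator form $L_\diamond(a \diamond b) - L_\diamond(a)L_\diamond(b) = L_\diamond(b \diamond a) - L_\diamond(b)L_\diamond(a)$ together with the companion relations for $R_\diamond$, collapses these compositions and lets the cross terms cancel in pairs. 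What survives organizes into pieces proportional to $r - \sigma(r)$ and into the $\mathcal{S}(r)$-block; the symmetry of $r$ kills the former and $\mathcal{S}(r) = 0$ kills the latter. I would first establish $\zeta(a,b) = \sigma(\zeta(a,b))$, the tensor-symmetry half, where $r = \sigma(r)$ is used most directly, and then $\zeta(a,b) = \zeta(b,a)$, the swap half, each reducing to the same two inputs.

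The pre-Lie coalgebra axiom is the more delicate half, and I expect it to be the main obstacle. Here I would compute the full co-associator
\begin{equation*}
(\Delta_A \otimes \id)\Delta_A - (\sigma \otimes \id)(\Delta_A \otimes \id)\Delta_A - (\id \otimes \Delta_A)\Delta_A + (\sigma \otimes \id)(\id \otimes \Delta_A)\Delta_A
\end{equation*}
applied to $a$, which expands into eight triple-tensor sums, each breaking further once the second application of $\Delta_A$ is inserted. The bookkeeping is substantial: one must track in which of the three tensor legs each $L_\diamond$ or $R_\diamond$ acts and repeatedly apply Eq.~\eqref{eq:prelie} to merge nested products into the standard blocks $r_{12} \diamond r_{13}$, and so on. The goal is to show that the whole expression equals $(L_\diamond(a) \otimes \id \otimes \id + \cdots)$ applied to $\mathcal{S}(r)$ plus terms in $r - \sigma(r)$, so that the hypotheses $\mathcal{S}(r) = 0$ and $r = \sigma(r)$ force it to vanish.

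A conceptually cleaner alternative, available because the excerpt already provides Theorem~\ref{thm:plmmb}, is to recast the claim through the para-K\"ahler characterization: for symmetric $r$ one identifies $r$ with a map $r^\sharp \colon A^* \to A$ and shows, by an argument parallel to Theorem~\ref{thm:pybeo} and its corollary for perm algebras, that $\mathcal{S}(r) = 0$ is equivalent to $r^\sharp$ being an $\mathcal{O}$-operator of $(A, \diamond)$ for a suitable representation on $A^*$. Such an $\mathcal{O}$-operator yields a symplectic form making $\mathfrak{g}(A) \bowtie \mathfrak{g}(A^*)$ a para-K\"ahler Lie algebra, whence $(A, \diamond, \Delta_A)$ is a pre-Lie bialgebra by Theorem~\ref{thm:plmmb}. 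This route trades the triple-tensor bookkeeping for the verification of the $\mathcal{O}$-operator identity, which is the same computation in a more structured guise.
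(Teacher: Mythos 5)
The paper itself contains no proof of Proposition~\ref{pro:s2plb}: it is quoted from \cite{bai2008left}, where it is established by exactly the coboundary computation you outline as your primary route --- substituting Eq.~\eqref{eq:plcob}, reducing the co-associator of the pre-Lie coalgebra axiom Eq.~\eqref{eq:cplc} to an $\mathcal{S}(r)$-block plus terms in $r - \sigma(r)$, and verifying the compatibility Eq.~\eqref{eq:plbi} via the pre-Lie identity; this is the pre-Lie analogue of what Lemma~\ref{lem:pbco} and Corollary~\ref{cor:pbc} carry out for perm algebras in this paper. Your first route is therefore the standard proof and is correct in outline, though you stop short of the triple-tensor expansion that constitutes the actual argument.

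One caution about your proposed alternative. Theorem~\ref{thm:plmmb} sits in a subsection where all pre-Lie and Lie algebras are assumed finite-dimensional, whereas Proposition~\ref{pro:s2plb} carries no such hypothesis (only $r \in A \otimes A$ is a finite sum), so that route proves a weaker statement. More seriously, invoking Theorem~\ref{thm:plmmb} presupposes that the linear dual of $\Delta_A$ defines a pre-Lie product on $A^*$ --- which is exactly the pre-Lie coalgebra axiom you identified as the main obstacle, so as sketched the route is circular. It can be repaired: one constructs the product on $A^*$ independently from the $\mathcal{O}$-operator $r^\sharp$ (for instance through the induced L-dendriform structure of \cite{bai2010some}, in analogy with Proposition~\ref{pro:pp2p} for perm algebras) and then identifies it with $\Delta_A^*$ using the symmetry of $r$, but as you yourself anticipate this amounts to the same computation in different packaging rather than a genuine shortcut.
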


\begin{pro}\label{pro:s2cybe}
    Let $(A, \diamond)$ be a pre-Lie algebra, $(P = \oplus_{i \in \mathbb{Z}} P_i, \cdot, \kappa)$ be a quadratic $\mathbb{Z}$-graded perm algebra and $(L := A \otimes P, [-, -])$ be the induced Lie algebra from $(A, \diamond)$ and $(P, \cdot)$.
    Let $\{e_\lambda\}_{\lambda \in \Lambda}$ be a basis of $P$ consisting of homogeneous elements and $\{f_\lambda\}_{\lambda \in \Lambda}$ be the dual basis with respect to $\kappa$ consisting of homogeneous elements.
    If $r = \sum_{\alpha} a_{\alpha} \otimes b_{\alpha}$ is a symmetric solution of the $S$-equation in $(A, \diamond)$,
    then the tensor element
    \begin{equation}\label{eq:s2cybe}
        \tilde{r} := \sum_{\lambda \in \Lambda} \sum_{\alpha} (a_\alpha \otimes e_\lambda ) \otimes (b_\alpha \otimes f_\lambda) \in L \hat{\otimes} L
    \end{equation}
    is a skew-symmetric completed solution of the CYBE in $(L, [-, -])$.
    Further, if $(P, \cdot, \kappa)$ is the quadratic $\mathbb{Z}$-graded perm algebra given in Example~\ref{ex:qgp},
    then
    \begin{equation*}
        \tilde{r} := \sum_{m, n \in \mathbb{Z}} \sum_{\alpha} (a_\alpha x_1^{m}x_2^{n} \partial_1 \otimes b_\alpha x_1^{-m} x_2^{-n} \partial_2 - a_\alpha x_1^{m}x_2^{n} \partial_2 \otimes b_\alpha x_1^{-m} x_2^{-n} \partial_1)  \in L \hat{\otimes} L
    \end{equation*}
    is a skew-symmetric completed solution of the CYBE in $(L, [-, -])$ if and only if $r$ is a symmetric solution of the $S$-equation in $(A, \diamond)$.
\end{pro}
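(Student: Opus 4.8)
The plan is to transcribe the proof of Proposition~\ref{pro:pybe2cybe} with the roles of the quadratic datum and the ambient solution interchanged: there the solution lay in the perm factor and the quadratic structure was the graded pre-Lie algebra, whereas here the solution $r$ lies in the pre-Lie factor $(A,\diamond)$ and the quadratic structure is the graded perm algebra $(P,\cdot,\kappa)$. First I would verify that $\tilde{r}$ is skew-symmetric. Since $\kappa$ is skew-symmetric and $\kappa(f_\lambda,e_\mu)=\delta_{\lambda\mu}$, for all $\mu,\nu\in\Lambda$ one computes $\widetilde{\kappa}(\sum_\lambda e_\lambda\otimes f_\lambda, e_\mu\otimes e_\nu)=\kappa(e_\nu,e_\mu)=\widetilde{\kappa}(-\sum_\lambda f_\lambda\otimes e_\lambda, e_\mu\otimes e_\nu)$, so the left nondegeneracy of $\widetilde{\kappa}$ gives $\sum_\lambda e_\lambda\otimes f_\lambda=-\sum_\lambda f_\lambda\otimes e_\lambda$; combined with the symmetry of $r$ this yields $\hat{\sigma}(\tilde{r})=-\tilde{r}$. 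The grading of $\kappa$ and the homogeneity of $\{f_\lambda\}$ ensure $\tilde{r}\in L\hat{\otimes}L$ and that the three triple brackets below lie in $L\hat{\otimes}L\hat{\otimes}L$, so the CYBE expression is well defined.

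Next I would expand $[\tilde{r}_{12},\tilde{r}_{13}]+[\tilde{r}_{12},\tilde{r}_{23}]+[\tilde{r}_{13},\tilde{r}_{23}]$ using the Lie bracket Eq.~\eqref{eq:plp2l}, namely $[a_1\otimes p_1,a_2\otimes p_2]=a_1\diamond a_2\otimes p_1\cdot p_2-a_2\diamond a_1\otimes p_2\cdot p_1$. Every resulting term factors, through the $\bullet$-product, into a pre-Lie tensor built from $r=\sum_\alpha a_\alpha\otimes b_\alpha$ (the left, $A$-valued factor) and a perm tensor built from the dual bases $\{e_\lambda\},\{f_\lambda\}$ (the right, $P$-valued factor). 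The role played by the pre-Lie dual-basis identities in Proposition~\ref{pro:pybe2cybe} is here played by their perm counterparts: pairing $\sum_{\lambda,\eta}(e_\lambda\cdot e_\eta)\otimes f_\lambda\otimes f_\eta$ and its relatives against $e_s\otimes e_u\otimes e_v$ and invoking the invariance Eq.~\eqref{eq:qpi}, $\kappa(e_u\cdot e_v,e_s)=\kappa(e_u,e_v\cdot e_s-e_s\cdot e_v)$, together with the left nondegeneracy of $\widetilde{\kappa}$, produces the relations that normalize all the perm tensors to a common form. After this normalization the whole CYBE sum collapses into a $\bullet$-product whose left factor is exactly the $S$-equation combination $-r_{12}\diamond r_{13}+r_{12}\diamond r_{23}+r_{13}\diamond r_{23}-r_{23}\diamond r_{13}=\mathcal{S}(r)$ (plus a companion obtained by transposition).

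Consequently, if $r$ solves the $S$-equation then $\mathcal{S}(r)=0$ forces the expression to vanish, which proves the first assertion for an arbitrary quadratic $\mathbb{Z}$-graded perm algebra. For the converse with the specific algebra of Example~\ref{ex:qgp}, I would substitute its explicit dual basis $\{x_1^{-i}x_2^{-j}\partial_2,\,-x_1^{-i}x_2^{-j}\partial_1\}$ into Eq.~\eqref{eq:s2cybe}, recover the displayed formula for $\tilde{r}$, expand the CYBE, and compare the coefficient of a single carefully chosen monomial triple (the analogue of the coefficient of $t^0\otimes s^0\otimes s^{-1}$ used in Proposition~\ref{pro:pybe2cybe}, e.g. a term carrying $\partial_1\otimes\partial_1\otimes\partial_2$ at trivial exponents) to isolate one component of $\mathcal{S}(r)$ in $A\otimes A\otimes A$; a short rearrangement, the pre-Lie analogue of Eq.~\eqref{eq:eee}, then promotes this single component to the full identity $\mathcal{S}(r)=0$.

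The main obstacle is the bookkeeping of the triple completed tensor product: unlike the symmetric perm products in the perm-YBE setting, here the perm products $e_\lambda\cdot e_\eta$ are genuinely order-sensitive and enter the three tensor slots in different arrangements, so assembling the correct perm identities from the invariance of $\kappa$ and checking that every unwanted term cancels demands care. I expect the derivation of these perm identities and the selection of the monomial to read off in the converse direction to be the only delicate points; the remainder is a faithful transcription of the argument already carried out for Proposition~\ref{pro:pybe2cybe}.
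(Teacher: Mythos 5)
Your proposal follows essentially the same route as the paper: the paper's own proof of this proposition is precisely the transcription of Proposition~\ref{pro:pybe2cybe} that you describe, and your collapsed form matches the paper's identity
$[\tilde{r}_{12}, \tilde{r}_{13}] + [\tilde{r}_{12}, \tilde{r}_{23}] + [\tilde{r}_{13}, \tilde{r}_{23}] = -\mathcal{S}(r)\bullet(\sum_{\lambda,\eta} e_\lambda \cdot e_\eta \otimes f_\lambda \otimes f_\eta) + ((\id \otimes \sigma)\mathcal{S}(r))\bullet(\sum_{\lambda,\eta} e_\eta \cdot e_\lambda \otimes f_\lambda \otimes f_\eta)$, with the dual-basis normalizations coming from the invariance of $\kappa$ exactly as you say.

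Two small corrections to the converse direction. First, you only extract $\mathcal{S}(r)=0$, but the conclusion is that $r$ is a \emph{symmetric} solution, and symmetry of $\tilde{r}$'s preimage is part of what must be recovered: the paper obtains it separately from the hypothesis $\hat{\sigma}(\tilde{r}) + \tilde{r} = 0$ by comparing the coefficients of $x_1^m x_2^n \partial_2 \otimes x_1^{-m} x_2^{-n} \partial_1$, before touching the CYBE expansion; your proposal omits this step (it is a one-line computation, but it does not follow from the $S$-equation extraction). Second, your appeal to ``the pre-Lie analogue of Eq.~\eqref{eq:eee}'' is unnecessary and slightly misleading: unlike the perm case, where Eq.~\eqref{eq:eee} was needed to convert the extracted combination into the perm-YBE, here the collapsed expansion is already written directly in terms of $\mathcal{S}(r)$ and $(\id \otimes \sigma)\mathcal{S}(r)$, so comparing the coefficient of a single monomial triple --- the paper uses $x_1^{m+s+1} x_2^{n+t} \partial_2 \otimes x_1^{-m} x_2^{-n} \partial_2 \otimes x_1^{-s} x_2^{-t} \partial_1$, i.e.\ a $\partial_2 \otimes \partial_2 \otimes \partial_1$ pattern rather than your suggested $\partial_1 \otimes \partial_1 \otimes \partial_2$ --- yields $\mathcal{S}(r) = 0$ outright, with no rearrangement identity. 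With these two adjustments your outline reproduces the paper's argument faithfully.
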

\begin{proof}
    An argument similar to the one used in the proof of Proposition~\ref{pro:pybe2cybe} shows that
    \begin{eqnarray*}
        &&[\tilde{r}_{12}, \tilde{r}_{13}] + [\tilde{r}_{12}, \tilde{r}_{23}] + [\tilde{r}_{13}, \tilde{r}_{23}] \\
        &&= -({\mathcal S}(r))\bullet (\sum_{\lambda, \eta \in \Lambda} e_\lambda \cdot e_\eta \otimes f_\lambda \otimes f_\eta) + ((\id \otimes \sigma)({\mathcal S}(r)))\bullet \sum_{\lambda, \eta \in \Lambda}(e_\eta \cdot e_\lambda \otimes f_\lambda \otimes f_\eta) = 0.
    \end{eqnarray*}
    Consider the case when $(P, \cdot, \omega)$ is the quadratic $\mathbb{Z}$-graded perm algebra given in Example~\ref{ex:qgp}.
    Note that $\{x_1^{-m}x_2^{-n} \partial_2$, $-x_1^{-m}x_2^{-n} \partial_1: m, n \in \mathbb{Z}\}$ is the dual basis of $\{x_1^{m}x_2^{n} \partial_1, x_1^{m}x_2^{n} \partial_2: m, n \in \mathbb{Z}\}$,
    the ``if'' part is obvious.
    Conversely, suppose that $\tilde{r}$ is a skew-symmetric  completed solution of the CYBE in $(L, [-, -])$.
    Note that
    \begin{eqnarray*}
        0 &=& \hat{\sigma}(\tilde{r}) + \tilde{r} \;\;\;=\;\;\; \sum_{m, n \in \mathbb{Z}}\sum_{\alpha} \biggl(
        b_\alpha x_1^{-m} x_2^{-n} \partial_2 \otimes a_\alpha x_1^{m}x_2^{n} \partial_1 - b_\alpha x_1^{-m} x_2^{-n} \partial_1 \otimes a_\alpha x_1^{m}x_2^{n} \partial_2 \\
        && + a_\alpha x_1^{m}x_2^{n} \partial_1 \otimes b_\alpha x_1^{-m} x_2^{-n} \partial_2 - a_\alpha x_1^{m}x_2^{n} \partial_2 \otimes b_\alpha x_1^{-m} x_2^{-n} \partial_1 \biggr).
    \end{eqnarray*}
    Comparing the coefficients of $x_1^m x_2^n \partial_2 \otimes x_1^{-m}x_2^{-n} \partial_1$, we show that $r$ is symmetric.
    Furthermore, comparing the coefficients of $x_1^{m+s+1} x_2^{n+t} \partial_2 \otimes x_1^{-m} x_2^{-n} \partial_2 \otimes x_1^{-s} x_2^{-t} \partial_1$ in the expansion of
    \begin{equation*}
        0 = [\tilde{r}_{12}, \tilde{r}_{13}] + [\tilde{r}_{12}, \tilde{r}_{23}] + [\tilde{r}_{13}, \tilde{r}_{23}],
    \end{equation*}
    we show that ${\mathcal S}(r) = 0$.
    The proof is completed.
\end{proof}

\begin{cor}\label{cor:se2lb}
    With the same assumptions in Proposition~\ref{pro:s2cybe},
    let $\Delta_A: A \to A \otimes A$ be a linear map defined by Eq.~\eqref{eq:plcob} through $r \in A \otimes A$,
    $\Delta_P: P \to P \hat{\otimes} P$ be the linear map defined by Eq.~\eqref{eq:qp2pc}
    and $\delta: L \to L \hat{\otimes} L$ be a linear map defined by Eq.~\eqref{eq:plc2lc}.
    Then $(A, \diamond, \Delta)$ is a pre-Lie bialgebra by Proposition~\ref{pro:s2plb} and hence $(L, [\cdot, \cdot], \delta)$ is a completed Lie bialgebra by Theorem~\ref{thm:plb2lb}.
    It coincides with the completed Lie bialgebra with $\delta$ defined by Eq.~\eqref{eq:lbc} through $\tilde{r}$ by Proposition~\ref{pro:lbc}, where $\tilde{r}$ is defined by Eq.~\eqref{eq:s2cybe}.
    Hence, we have the following commutative diagram.
    \begin{equation*}
        \xymatrix@C=3cm{
            \txt{symmetric solutions of the $S$-equation} \ar[r]^-{Prop.~\ref{pro:s2plb}} \ar[d]^-{Prop.~\ref{pro:s2cybe}} & \txt{pre-Lie bialgebras} \ar[d]^-{Thm.~\ref{thm:plb2lb}}\\
            \txt{skew-symmetric solutions of the CYBE} \ar[r]^-{Prop.~\ref{pro:lbc}} & \txt{Lie bialgebras}
        }
    \end{equation*}
\end{cor}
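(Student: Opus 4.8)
The existence assertions require no new work: Proposition~\ref{pro:s2plb} gives that $(A, \diamond, \Delta_A)$ is a pre-Lie bialgebra, so Theorem~\ref{thm:plb2lb} produces a completed Lie bialgebra $(L, [-,-], \delta)$ with $\delta$ given by Eq.~\eqref{eq:plc2lc}; on the other hand, Proposition~\ref{pro:s2cybe} shows that the element $\tilde{r}$ of Eq.~\eqref{eq:s2cybe} is a skew-symmetric completed solution of the CYBE, so Proposition~\ref{pro:lbc} endows $L$ with a completed Lie bialgebra structure whose cobracket is given by Eq.~\eqref{eq:lbc} applied to $\tilde{r}$. The whole content of the corollary is therefore the coincidence of these two cobrackets on the same Lie algebra $L = A \otimes P$, i.e. the identity
\begin{equation*}
    (\ad(a \otimes p) \hat{\otimes} \id + \id \hat{\otimes} \ad(a \otimes p))(\tilde{r}) = (\id_{L \hat{\otimes} L} - \hat{\sigma})(\Delta_A(a) \bullet \Delta_P(p)), \quad \forall a \in A, p \in P.
\end{equation*}
This is the exact mirror of Corollary~\ref{cor:pybe2lb}, with the roles of the perm factor and the pre-Lie factor exchanged, and the plan is to follow that proof line for line.

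The first step is to record the dual-basis identities for the quadratic $\mathbb{Z}$-graded perm algebra $(P, \cdot, \kappa)$ that convert the defining relation Eq.~\eqref{eq:qp2pc} of $\Delta_P$ into explicit tensors. Pairing $\sum_\lambda (p \cdot e_\lambda) \otimes f_\lambda$ and $\sum_\lambda (e_\lambda \cdot p) \otimes f_\lambda$ against an arbitrary $e_\mu \otimes e_\nu$, and using $\kappa(f_\lambda, e_\nu) = \delta_{\lambda \nu}$, the invariance Eq.~\eqref{eq:qpi}, the skew-symmetry of $\kappa$, and the consequent identity $\kappa(p_1, p_2 \cdot p_3) = \kappa(p_2, p_1 \cdot p_3) - \kappa(p_2, p_3 \cdot p_1)$, I expect to obtain
\begin{equation*}
    \sum_{\lambda} (p \cdot e_\lambda) \otimes f_\lambda = \Delta_P(p) - \hat{\sigma}\Delta_P(p), \qquad \sum_{\lambda} (e_\lambda \cdot p) \otimes f_\lambda = -\hat{\sigma}\Delta_P(p),
\end{equation*}
after invoking the left nondegeneracy of $\widetilde{\kappa}$; the second-leg versions $\sum_\lambda e_\lambda \otimes (p \cdot f_\lambda)$ and $\sum_\lambda e_\lambda \otimes (f_\lambda \cdot p)$ follow by applying $\hat{\sigma}$ and relabeling (the dual basis of $\{f_\lambda\}$ being $\{-e_\lambda\}$ by skew-symmetry). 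These are the $\kappa$-side analogues of the identities $\sum_\lambda a \diamond e_\lambda \otimes f_\lambda = -\Delta_A(a)$ and $\sum_\lambda e_\lambda \diamond a \otimes f_\lambda = -\Delta_A(a) + \hat{\sigma}\Delta_A(a)$ used in Corollary~\ref{cor:pybe2lb}.

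With these in hand, the second step is the direct computation. Expanding $\ad(a \otimes p)$ on each leg of $\tilde{r}$ through the induced bracket Eq.~\eqref{eq:plp2l} produces, on the perm side, the four products $p \cdot e_\lambda$, $e_\lambda \cdot p$, $p \cdot f_\lambda$, $f_\lambda \cdot p$ and, on the pre-Lie side, the matching products $a \diamond a_\alpha$, $a_\alpha \diamond a$, $a \diamond b_\alpha$, $b_\alpha \diamond a$. Substituting the dual-basis identities rewrites the perm legs in terms of $\Delta_P(p)$ and $\hat{\sigma}\Delta_P(p)$, while the definition Eq.~\eqref{eq:plcob} of $\Delta_A$ reassembles the pre-Lie legs into $\Delta_A(a)$ and $\hat{\sigma}\Delta_A(a)$. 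I expect the resulting terms to regroup into one part equal to $\Delta_A(a) \bullet \Delta_P(p)$ and one part equal to $-\hat{\sigma}(\Delta_A(a) \bullet \Delta_P(p))$, whose sum is $(\id_{L \hat{\otimes} L} - \hat{\sigma})(\Delta_A(a) \bullet \Delta_P(p)) = \delta(a \otimes p)$ by Eq.~\eqref{eq:plc2lc}, as required.

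The main obstacle is organizational rather than conceptual: one must keep the completed tensor products, the $\bullet$-pairing between the pre-Lie and perm factors, and the numerous flip operators $\sigma$ and $\hat{\sigma}$ in their correct slots throughout, exactly as in the perm case. No phenomenon arises beyond Corollary~\ref{cor:pybe2lb}, and the graded nondegeneracy and skew-symmetry of $\kappa$ guarantee both that the dual bases behave as stated and that all index sums converge as elements of $L \hat{\otimes} L$. The commuting square in the statement then follows from Propositions~\ref{pro:s2plb}, \ref{pro:s2cybe}, \ref{pro:lbc} and Theorem~\ref{thm:plb2lb} together with this coincidence of cobrackets.
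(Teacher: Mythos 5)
Your proposal is correct and follows essentially the same route as the paper, whose entire proof of this corollary is the single sentence that it is ``analogous to that of Corollary~\ref{cor:pybe2lb}''; you have simply carried out that analogy explicitly. Your perm-side dual-basis identities $\sum_{\lambda} (p \cdot e_\lambda) \otimes f_\lambda = \Delta_P(p) - \hat{\sigma}\Delta_P(p)$ and $\sum_{\lambda} (e_\lambda \cdot p) \otimes f_\lambda = -\hat{\sigma}\Delta_P(p)$ do hold (they follow from Eq.~\eqref{eq:qpi}, skew-symmetry of $\kappa$ and left nondegeneracy of $\widetilde{\kappa}$, and can be checked directly in Example~\ref{ex:qgp} against Example~\ref{ex:cpc}), so the regrouping into $(\id_{L \hat{\otimes} L} - \hat{\sigma})(\Delta_A(a) \bullet \Delta_P(p))$ goes through exactly as in the perm-bialgebra case.
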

\begin{proof}
    The proof is analogous to that of Corollary~\ref{cor:pybe2lb}.
\end{proof}

We would like to point out that there are also the notions of
$\mathcal O$-operators of pre-Lie algebras (\cite{bai2008left})
and L-dendriform algebras (\cite{bai2010some}) playing the similar
roles as pre-perm algebras, and hence similarly as
Proposition~\ref{pro:pp2clb}, there is a construction of a
(completed) Lie bialgebra from an L-dendriform algebra along the
approach given in this section.


\appendix
\newcommand{\loneappendix}{
    \numberwithin{equation}{section}
    \stepcounter{section}
    \renewcommand{\thesection}{A}
}

\section*{Appendix. Constructions of quadratic \texorpdfstring{$\mathbb{Z}$}{Z}-graded pre-Lie algebras}\label{app:qpl}
\loneappendix 
We illustrate the construction of
Example~\ref{ex:qgpl}, which plays important roles in the
affinization of a perm bialgebra given in this paper.

\begin{thm}\label{thm:nqpl}
    $(W_n, \diamond)$ cannot be a quadratic pre-Lie algebra.
\end{thm}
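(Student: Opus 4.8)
The plan is to show that any candidate skew-symmetric nondegenerate invariant form on $(W_n,\diamond)$ must have a nonzero radical, which is absurd. The vector I will place in the radical is $\partial_1 := 1\cdot\partial_1 \in W_n$. The argument rests on two elementary observations about $(W_n,\diamond)$ from Example~\ref{ex:cd2pl}. First, $\partial_1$ is a \emph{right annihilator} for $\diamond$: for any $a = u\partial_i$ we have $a \diamond \partial_1 = u\,\partial_i(1)\,\partial_1 = 0$, since $\partial_i$ kills the constant $1$. Second, the sub-adjacent Lie algebra of $(W_n,\diamond)$, with bracket $[a,b] = a\diamond b - b\diamond a$, is \emph{perfect}, i.e. $[W_n, W_n] = W_n$.

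Granting these, the contradiction is immediate. Suppose $\omega$ is skew-symmetric, nondegenerate, and invariant, so that $\omega(a_1\diamond a_2, a_3) = -\omega(a_2,\, a_1\diamond a_3 - a_3\diamond a_1)$ for all $a_1,a_2,a_3$. Putting $a_2 = \partial_1$ and using the right-annihilator property, the left-hand side becomes $\omega(0,a_3)=0$, whence $\omega(\partial_1, [a_1,a_3]) = 0$ for all $a_1,a_3 \in W_n$. By perfectness every element of $W_n$ is a sum of brackets, so $\omega(\partial_1, v) = 0$ for all $v \in W_n$. Thus the nonzero vector $\partial_1$ lies in the radical of $\omega$, contradicting nondegeneracy. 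Since we establish that $\omega(\partial_1, -)$ vanishes identically, the argument applies verbatim whether or not $\omega$ is additionally required to be graded.

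The one step needing real computation, and the place where I expect the only difficulty, is perfectness. Writing $x^\gamma = x_1^{\gamma_1}\cdots x_n^{\gamma_n}$ for $\gamma \in \mathbb{Z}^n$, the monomials $x^\gamma \partial_k$ form a basis of $W_n$, and I would exhibit each of them as a bracket. When $\gamma_k \neq -1$, the identity $[\partial_k,\, x_k x^\gamma \partial_k] = (\gamma_k+1)\,x^\gamma \partial_k$ does the job. The delicate case, and the expected main obstacle, is $\gamma_k = -1$, where this bracket has vanishing coefficient; here I would instead use $[x^\mu \partial_k,\, x^\nu \partial_k] = (\nu_k - \mu_k)\,x^{\mu+\nu-\epsilon_k}\partial_k$, choosing $\mu,\nu$ with $\mu+\nu = \gamma+\epsilon_k$ (possible because $(\gamma+\epsilon_k)_k = 0$) and $\mu_k = 1$, $\nu_k = -1$, which yields the nonzero coefficient $-2$; here $\epsilon_k$ denotes the $k$-th standard basis vector of $\mathbb{Z}^n$. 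This settles perfectness for every $n \geq 1$ and completes the plan.
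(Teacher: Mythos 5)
Your proof is correct, and it takes a genuinely different route from the paper's. The paper proceeds by direct computation: it applies the invariance identity to specially chosen pairs such as $(x_1\partial_1,\,x_1^{j_1}\cdots x_n^{j_n}\partial_k)$ and $(x_k\partial_k,\,x_1^{j_1}\cdots x_n^{j_n}\partial_1)$, extracts scalar constraints like $(i_1+j_1-1)\,\omega(\cdot,\cdot)=0$, treats the cases $k=1$ and $k\neq 1$ separately, handles the leftover pairing $\omega(x_1\partial_1,\partial_1)$ by writing $x_1\partial_1=\tfrac12\,\partial_1\diamond x_1^2\partial_1$, and concludes that $x_1\partial_1$ lies in the radical of $\omega$. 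You instead place $\partial_1$ in the radical by a structural two-step argument: (i) $\partial_1$ is a right annihilator of $\diamond$, since $u\partial_i\diamond\partial_1=u(\partial_i 1)\partial_1=0$, so invariance with $a_2=\partial_1$ gives $\omega(\partial_1,[a_1,a_3])=0$ for all $a_1,a_3$; and (ii) the sub-adjacent Lie algebra (the generalized Witt algebra) is perfect. Your verification of perfectness is sound: the identity $[\partial_k,\,x_kx^\gamma\partial_k]=(\gamma_k+1)x^\gamma\partial_k$ covers $\gamma_k\neq-1$, and for $\gamma_k=-1$ your bracket $[x^\mu\partial_k,\,x^\nu\partial_k]=(\nu_k-\mu_k)x^{\mu+\nu-\epsilon_k}\partial_k$ with $\mu_k=1$, $\nu_k=-1$, $\mu+\nu=\gamma+\epsilon_k$ produces the nonzero coefficient $-2$; both identities check against the product $u\partial_i\diamond v\partial_j=u(\partial_i v)\partial_j$. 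What your approach buys: it isolates the actual mechanism (a nonzero right annihilator of a pre-Lie algebra pairs trivially with the derived ideal of the sub-adjacent Lie algebra, so perfectness is incompatible with nondegeneracy), it avoids the paper's case analysis, it works uniformly for all $n\geq 1$, and it generalizes verbatim to any pre-Lie algebra with a right annihilator and perfect sub-adjacent Lie algebra. Note, though, that the paper's computational proof likewise never uses gradedness of $\omega$, so on that point the two arguments are on equal footing; the paper's only advantage is that it stays at the level of raw identities without invoking perfectness, but since your perfectness check is itself a two-line computation, your argument is arguably the cleaner and more conceptual of the two.
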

\begin{proof}
    Suppose that $\omega$ is a skew-symmetric invariant bilinear form on $(W_n, \diamond)$.
    For all $i_1, \cdots, i_n, j_1, \cdots, j_n \in \mathbb{Z}$ and $k \in \{1, \cdots, n\}$, we have
    \begin{eqnarray*}
        i_1 \omega(x_1^{i_1} \cdots x_{n}^{i_n}\partial_1, x_1^{j_1} \cdots x_{n}^{j_n} \partial_k)
        &=& \omega(x_1 \partial_1 \diamond x_1^{i_1} \cdots x_{n}^{i_n}\partial_1, x_1^{j_1} \cdots x_{n}^{j_n} \partial_k) \\
        &=& -\omega(x_1^{i_1} \cdots x_{n}^{i_n}\partial_1, x_1 \partial_1 \diamond x_1^{j_1} \cdots x_{n}^{j_n} \partial_k - x_1^{j_1} \cdots x_{n}^{j_n} \partial_k \diamond x_1 \partial_1) \\
        &=& -j_1\omega(x_1^{i_1} \cdots x_{n}^{i_n}\partial_1, x_1^{j_1} \cdots x_{n}^{j_n} \partial_k) + \delta_{k, 1}\omega(x_1^{i_1} \cdots x_{n}^{i_n}\partial_1, x_1^{j_1} \cdots x_{n}^{j_n}\partial_1), \\
        j_1 \omega(x_1^{i_1} \cdots x_{n}^{i_n}\partial_1, x_1^{j_1} \cdots x_{n}^{j_n} \partial_k)
        &=& \omega(x_1^{i_1} \cdots x_{n}^{i_n}\partial_1, x_1 \partial_1 \diamond x_1^{j_1} \cdots x_{n}^{j_n} \partial_k) \\
        &=& \omega(x_1^{i_1} \cdots x_{n}^{i_n}\partial_1 \diamond x_1 \partial_1 - x_1 \partial_1 \diamond x_1^{i_1} \cdots x_{n}^{i_n}\partial_1, x_1^{j_1} \cdots x_{n}^{j_n} \partial_k) \\
        &=& (1-i_1)\omega(x_1^{i_1} \cdots x_{n}^{i_n}\partial_1, x_1 \partial_1 \diamond x_1^{j_1} \cdots x_{n}^{j_n} \partial_k).
    \end{eqnarray*}
    Thus, we have the following conclusions.
    \begin{enumerate}
        \item If $k = 1$, then
        \begin{equation}\label{eq:pf13}
            (i_1+j_1-1)\omega(x_1^{i_1} \cdots x_{n}^{i_n}\partial_1, x_1^{j_1} \cdots x_{n}^{j_n} \partial_1) = 0.
        \end{equation}
        \item If $k \neq 1$, then
        \begin{equation*}
            (i_1+j_1)\omega(x_1^{i_1} \cdots x_{n}^{i_n}\partial_1, x_1^{j_1} \cdots x_{n}^{j_n} \partial_k) = (i_1+j_1-1)\omega(x_1^{i_1} \cdots x_{n}^{i_n}\partial_1, x_1^{j_1} \cdots x_{n}^{j_n} \partial_k) = 0.
        \end{equation*}
        Note that $i_1+j_1$ and $i_1+j_1-1$ could not be equal to zero simultaneously, we have
        \begin{equation}
            \omega(x_1^{i_1} \cdots x_{n}^{i_n}\partial_1, x_1^{j_1} \cdots x_{n}^{j_n} \partial_k) = 0, \;\; \forall i_1, \cdots, i_n, j_1, \cdots, j_n \in \mathbb{Z}, k \neq 1. \label{eq:pf5}
        \end{equation}
    \end{enumerate}
    On the other hand, we have
    \begin{eqnarray*}
        i_k \omega(x_1^{i_1} \cdots x_{n}^{i_n}\partial_1, x_1^{j_1} \cdots x_{n}^{j_n} \partial_1)
        &=& \omega(x_k \partial_k \diamond x_1^{i_1} \cdots x_{n}^{i_n}\partial_1, x_1^{j_1} \cdots x_{n}^{j_n} \partial_1)\\
        &=& -\omega(x_1^{i_1} \cdots x_{n}^{i_n}\partial_1, x_k \partial_k \diamond  x_1^{j_1} \cdots x_{n}^{j_n} \partial_1 - x_1^{j_1} \cdots x_{n}^{j_n} \partial_1 \diamond x_k \partial_k) \\
        &=& -j_k\omega(x_1^{i_1} \cdots x_{n}^{i_n}\partial_1, x_1^{j_1} \cdots x_{n}^{j_n} \partial_1)  + \delta_{k,1} \omega(x_1^{i_1} \cdots x_{n}^{i_n}\partial_1, x_1^{j_1} \cdots x_{n}^{j_n} \partial_1).
    \end{eqnarray*}
    Hence, if $j_k \neq 0$ for some $k \neq 1$, then $$\omega(x_1\partial_1, x_1^{j_1} \cdots x_n^{j_n} \partial_1) = 0.$$
    Eq.~\eqref{eq:pf13} shows that if $j_1 \neq 0$, then $\omega(x_1\partial_1, x_1^{j_1} \cdots x_n^{j_n} \partial_1) = 0$.
    Therefore, we have
    \begin{equation*}
        \omega(x_1\partial_1, x_1^{j_1} \cdots x_n^{j_n} \partial_1) = 0, \text{whenever $j_k \neq 0$ for some $k \in \{1, \cdots, n\}$}.
    \end{equation*}
    Finally, consider the case $\omega(x_1 \partial_1, \partial_1)$:
    \begin{eqnarray*}
        \omega(x_1 \partial_1, \partial_1) = \frac{1}{2}\omega( \partial_1 \diamond x_1^2 \partial_1, \partial_1) = \frac{1}{2}\omega(x_1^2\partial_1, \partial_1 \diamond \partial_1 - \partial_1 \diamond \partial_1) = 0.
    \end{eqnarray*}
    We conclude that
    \begin{equation}
        \omega(x_1\partial_1,  x_1^{j_1} \cdots x_n^{j_n} \partial_1) = 0, \;\; \forall j_1, \cdots, j_n \in \mathbb{Z} \label{eq:pf6}
    \end{equation}
    Now combining Eqs.~\eqref{eq:pf5}-\eqref{eq:pf6}, we have $\omega(x_1 \partial_1, W_n) = 0$. Therefore $\omega$ is degenerate.
\end{proof}
Though $(W_n, \diamond)$ cannot be quadratic, we can construct a
quadratic $\mathbb{Z}$-graded pre-Lie algebra related to
$(W_1,\diamond)$. We first recall some facts.

\begin{pro}\label{pro:sl2pl}
    {\rm (\cite{chu1974symplectic})}
    Let $(A, [-, -], \omega)$ be a finite-dimensional symplectic Lie algebra.
    Then there exists a compatible pre-Lie algebra structure $\diamond$ on $A$ given by
    \begin{equation*}
        \omega(a \diamond b, c) = -\omega(b, [a, c]), \;\; \forall a, b, c \in A.
    \end{equation*}
\end{pro}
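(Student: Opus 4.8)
The plan is to read the displayed formula as the \emph{definition} of $\diamond$ and then verify, in turn, that $\diamond$ is well-defined, that its commutator recovers $[-,-]$, and that it satisfies the pre-Lie identity~\eqref{eq:prelie}. First I would note that since $\omega$ is nondegenerate, for each fixed pair $a, b \in A$ the linear functional $c \mapsto -\omega(b, [a, c])$ is represented by a unique element of $A$, which we declare to be $a \diamond b$; bilinearity of $\diamond$ follows at once from the bilinearity of $\omega$ and of $[-, -]$. This settles well-definedness.

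Next I would establish compatibility, namely $a \diamond b - b \diamond a = [a, b]$. Pairing the left-hand side against an arbitrary $c$ and applying the definition to each term gives $\omega(a \diamond b - b \diamond a, c) = -\omega(b, [a,c]) + \omega(a, [b,c])$. The symplectic $2$-cocycle condition $\omega([a,b],c) + \omega([b,c],a) + \omega([c,a],b) = 0$, combined with the skew-symmetry of $\omega$, rewrites the right-hand side precisely as $\omega([a,b], c)$, so nondegeneracy yields $a \diamond b - b \diamond a = [a,b]$.

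The heart of the argument is the pre-Lie identity, that is, the symmetry of the associator $(a,b,c) \mapsto (a \diamond b) \diamond c - a \diamond (b \diamond c)$ in its first two slots. Here I would pair the difference of the two associators against an arbitrary $d \in A$ and reduce everything to iterated brackets by applying the defining relation to each occurrence of $\diamond$; this produces terms of the shape $-\omega(c, [a \diamond b, d])$ and $\omega(c, [b, [a,d]])$. Combining the two $\diamond$-inside-bracket contributions via the compatibility relation just proved converts $[a \diamond b - b \diamond a, d]$ into $[[a,b], d]$, after which the Jacobi identity in the form $[[a,b],d] = [a,[b,d]] - [b,[a,d]]$ cancels all remaining terms, so the total pairing against $d$ is zero. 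Nondegeneracy of $\omega$ then forces the associator to be symmetric in $a$ and $b$, which is exactly~\eqref{eq:prelie}.

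I expect the only real obstacle to be careful sign bookkeeping in the final step: one must apply the defining relation to the nested products in the correct order and track the signs arising from the skew-symmetry of $\omega$. No idea beyond the cocycle condition, the compatibility identity, and the Jacobi identity is required, so once the signs are pinned down the computation closes.
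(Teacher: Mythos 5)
Your proposal is correct: the paper states this result with only a citation to Chu's 1974 paper and gives no proof of its own, and your argument—defining $a \diamond b$ via nondegeneracy of $\omega$, recovering $[a,b] = a \diamond b - b \diamond a$ from the $2$-cocycle identity, and reducing the symmetry of the associator to the Jacobi identity through the pairing $\omega\bigl((a\diamond b)\diamond c - a\diamond(b\diamond c),\, d\bigr) = -\omega(c,[a\diamond b,d]) - \omega(c,[b,[a,d]])$—is exactly the standard proof from that reference, with all signs working out as you anticipated.
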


\begin{pro}\label{pro:pl2sl}
    {\rm (\cite{bai2006further, bai2021introduction})}
    Let $(A, \diamond)$ be a  finite-dimensional pre-Lie algebra.
    Let $(\mathfrak{g}(A) \ltimes_{-L_\diamond^*} A^*, [-, -])$ be the semi-product Lie algebra of $(\mathfrak{g}(A), [-, -])$ and its representation $(A^*, -L_\diamond^*)$.
    Then $(\mathfrak{g}(A) \ltimes_{-L_\diamond^*} A^*, [-, -], \omega_p)$ is a symplectic Lie
    algebra, where $\omega_p$ is defined by Eq.~{\rm (\ref{eq:ff})}.
\end{pro}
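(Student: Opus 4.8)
The plan is to check the three defining properties of a symplectic Lie algebra for the canonical form $\omega_p$ on $\mathfrak{g}(A) \ltimes_{-L_\diamond^*} A^*$: skew-symmetry, nondegeneracy, and the cocycle identity. Skew-symmetry is immediate from the defining formula \eqref{eq:ff}. Nondegeneracy follows from the perfect pairing between $A$ and $A^*$: if $\omega_p(a+a^*, \cdot)$ vanishes, then testing against all $b \in A$ forces $a^* = 0$ and testing against all $b^* \in A^*$ forces $a = 0$. Before this, I would record that the semidirect product is genuinely a Lie algebra because the pre-Lie identity \eqref{eq:prelie} is precisely $L_\diamond([a,b]) = [L_\diamond(a), L_\diamond(b)]$, so $(A, L_\diamond)$ is a representation of $\mathfrak{g}(A)$ and $(A^*, -L_\diamond^*)$ its dual representation, as the hypothesis asserts.

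The real content is the cocycle identity $\omega_p([x,y],z) + \omega_p([y,z],x) + \omega_p([z,x],y) = 0$. I would write $x = a+a^*$, $y = b+b^*$, $z = c+c^*$ with $a,b,c \in A$ and $a^*,b^*,c^* \in A^*$, and use the semidirect product bracket
\begin{equation*}
    [a+a^*, b+b^*] = [a,b] - L_\diamond^*(a)b^* + L_\diamond^*(b)a^*,
\end{equation*}
together with the adjunction $\langle L_\diamond^*(a)b^*, c\rangle = \langle b^*, a \diamond c\rangle$ coming from the definition of the dual map. Since $\omega_p$ vanishes on $A \times A$ and on $A^* \times A^*$, the term $\omega_p([x,y],z)$ reduces to $\langle -L_\diamond^*(a)b^* + L_\diamond^*(b)a^*, c\rangle - \langle c^*, [a,b]\rangle$, which rewrites as $-\langle b^*, a\diamond c\rangle + \langle a^*, b\diamond c\rangle - \langle c^*, [a,b]\rangle$. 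Expanding the whole cyclic sum and grouping the resulting scalars according to which of $a^*, b^*, c^*$ they pair against yields three separate groups.

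The only genuine (but purely mechanical) step, which I expect to be the main obstacle, is to see that each group vanishes. For instance, the $a^*$-group consists of $\langle a^*, b \diamond c\rangle$ from the first cyclic term, $-\langle a^*, c\diamond b\rangle$ from the third, and $-\langle a^*, [b,c]\rangle$ from the second; here the decisive input is that the bracket is the commutator $[b,c] = b\diamond c - c\diamond b$, so $-\langle a^*, [b,c]\rangle = -\langle a^*, b\diamond c\rangle + \langle a^*, c\diamond b\rangle$ and the group collapses to $0$. The $b^*$- and $c^*$-groups cancel by the same symmetric mechanism. Hence $\omega_p$ is closed, and combined with the skew-symmetry and nondegeneracy already noted, $(\mathfrak{g}(A) \ltimes_{-L_\diamond^*} A^*, [-, -], \omega_p)$ is a symplectic Lie algebra. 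I would emphasize that the pre-Lie identity enters the argument only indirectly, through guaranteeing that $\mathfrak{g}(A)$ is a Lie algebra and that $-L_\diamond^*$ is a representation, whereas the closedness of $\omega_p$ itself reduces to the commutator relation defining the sub-adjacent bracket.
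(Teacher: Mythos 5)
Your proof is correct; note that the paper itself gives no proof of this proposition, merely citing \cite{bai2006further, bai2021introduction}, and your argument --- skew-symmetry and nondegeneracy of $\omega_p$ from the canonical pairing, plus the cyclic cocycle sum collapsing group-by-group via $[b,c]=b\diamond c-c\diamond b$ --- is exactly the standard direct verification found in those references. Your closing observation is also the right one: the pre-Lie identity enters only to make $L_\diamond$ a representation of $\mathfrak{g}(A)$ (hence $-L_\diamond^*$ its dual, so the semidirect product exists), while the closedness of $\omega_p$ itself uses nothing beyond the commutator definition of the sub-adjacent bracket.
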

Therefore, given a finite-dimensional pre-Lie algebra $(A,
\diamond)$, Proposition~\ref{pro:sl2pl} and \ref{pro:pl2sl} show
that there is a compatible pre-Lie algebra structure $(A \oplus
A^*, \diamond^\prime)$ on the Lie algebra $(\mathfrak{g}(A)
\ltimes_{-L_\diamond^*} A^*, [-, -])$ such that $(A \oplus A^*,
\diamond^\prime, \omega_p)$ is quadratic pre-Lie algebra, where
$\omega_p$ is defined by Eq.~\eqref{eq:ff}. When $(A, \diamond)$ is
a $\mathbb{Z}$-graded pre-Lie algebra, we might consider the
restricted dual of $A$ (\cite{lepowsky2004introduction})  and have
the following conclusion.

\begin{pro}\label{pro:plfd2qpl}
    Let $(A = \oplus_{i \in \mathbb{Z}} A_i, \diamond)$ be a $\mathbb{Z}$-graded pre-Lie algebra with all $A_i$ finite-dimensional.
    Let $A^\circ := \oplus_{i \in \mathbb{Z}} A_{-i}^*$ be the restricted dual of $A$.
    Define linear maps $L_\diamond^*, R_\diamond^*: A \to \End_\mathbf{k}(A^\circ)$ by
    \begin{equation*}
         \langle L_\diamond^*(a)(a^*), b \rangle := \langle a^*, a \diamond b \rangle, \; \langle R_\diamond^*(a)(a^*), b\rangle := \langle a^*, b \diamond a \rangle, \;\; \forall a, b \in A, a^* \in A^\circ.
    \end{equation*}
    Then $(A \oplus A^\circ, \diamond^\prime)$ is a $\mathbb{Z}$-graded pre-Lie algebra with the linear decomposition $A \oplus A^\circ = \oplus_{i\in \mathbb{Z}} (A_i \oplus A_{-i}^*)$,
    where $\diamond^\prime$ is defined by
    \begin{equation*}
        a \diamond^\prime b = a \diamond b, \;
        a \diamond^\prime a^* = -L_\diamond^*(a)a^* + R_\diamond^*(a)a^*, \;
        a^* \diamond^\prime a = R_\diamond^*(a)a^*, \;
        a^* \diamond^\prime b^* = 0, \;\;
        \forall a, b \in A, a^*, b^* \in A^\circ.
    \end{equation*}
    Moreover, $(A \oplus A^\circ = \oplus_{i\in \mathbb{Z}} (A_i \oplus A_{-i}^*), \diamond^\prime, \omega)$ is a quadratic $\mathbb{Z}$-graded pre-Lie algebra,
    where $\omega$ is defined by
    \begin{align*}
        \omega(a + a^*, b + b^*) := \langle a^* , b \rangle - \langle b^*, a\rangle, \forall a, b \in A, a^*, b^* \in A^\circ.
    \end{align*}
    In particular, let $(W_1 = \mathbf{k}[t,t^{-1}], \diamond)$ be the pre-Lie algebra given in Example~\ref{ex:cd2pl} for $n=1$
    and $\mathbf{k}[s,s^{-1}]$ be the restricted dual of $W_1$ with $s^{i}$ being the dual of $t^{-i}$ such that $\langle s^i, t^j\rangle = \delta_{i+j, 0}$ for all $i, j \in \mathbb{Z}$, we obtain Examples~\ref{ex:zgpl} and \ref{ex:qgpl}.
\end{pro}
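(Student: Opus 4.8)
The plan is to realize $(A \oplus A^\circ, \diamond', \omega)$ as the $\mathbb{Z}$-graded, restricted-dual analogue of the composite of Propositions~\ref{pro:pl2sl} and~\ref{pro:sl2pl}. First I would pass to the sub-adjacent Lie algebra $(\mathfrak{g}(A), [-,-])$, $[a,b] := a \diamond b - b \diamond a$, and form the semidirect product Lie algebra $\mathfrak{g}(A) \ltimes_{-L_\diamond^*} A^\circ$ on $A \oplus A^\circ$, using the restricted dual $A^\circ = \oplus_{i \in \mathbb{Z}} A_{-i}^*$ in place of the full linear dual. On this Lie algebra the form $\omega$ of the statement coincides with the form $\omega_p$ of Eq.~\eqref{eq:ff}, and the graded version of Proposition~\ref{pro:sl2pl} then produces a compatible pre-Lie product $\diamond'$ determined by $\omega(x \diamond' y, z) = -\omega(y, [x,z])$. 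Because $\diamond'$ is compatible, its sub-adjacent bracket is $x \diamond' z - z \diamond' x = [x,z]$, so the invariance condition $\omega(x \diamond' y, z) = -\omega(y, x \diamond' z - z \diamond' x)$ is literally this defining relation; invariance of $\omega$ therefore comes for free, and the pre-Lie identity holds by compatibility. The remaining task is to verify that this induced product is exactly the $\diamond'$ written in the statement.

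The hard part will be that Propositions~\ref{pro:pl2sl} and~\ref{pro:sl2pl} are stated for finite-dimensional algebras, so the core of the argument is checking that the three ingredients survive in the infinite-dimensional graded setting, and this is precisely what the restricted dual secures. I would verify: (i) for $a \in A$ the operators $L_\diamond^*(a), R_\diamond^*(a)$ send $A^\circ$ into $A^\circ$ and $-L_\diamond^*$ is a representation of $\mathfrak{g}(A)$ on $A^\circ$; (ii) $\omega$ is graded, with $\omega(V_i, V_j) = 0$ for $i+j \neq 0$ under $A \oplus A^\circ = \oplus_{i}(A_i \oplus A_{-i}^*)$, and is graded-nondegenerate, because on complementary degrees it restricts to the perfect duality pairing between the finite-dimensional $A_i$ and $A_i^*$; and (iii) for homogeneous $x, y$ the functional $z \mapsto -\omega(y, [x,z])$ is supported in a single degree, so graded-nondegeneracy yields a unique homogeneous $x \diamond' y \in A \oplus A^\circ$. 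Each statement reduces, once homogeneous components are fixed, to a claim about finitely many finite-dimensional pieces, where the finite-dimensional reasoning applies verbatim; this is exactly where the hypotheses that each $A_i$ is finite-dimensional and that $A_i \diamond A_j \subseteq A_{i+j}$ are used.

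With well-definedness secured, evaluating $\omega(x \diamond' y, z) = -\omega(y,[x,z])$ with $z$ ranging over homogeneous elements of $A$ and of $A^\circ$, and using the defining relations of $L_\diamond^*, R_\diamond^*$ together with the fact that $\langle\,,\rangle$ is the evaluation pairing, recovers $a \diamond' b = a \diamond b$, $a \diamond' a^* = -L_\diamond^*(a)a^* + R_\diamond^*(a)a^*$, $a^* \diamond' a = R_\diamond^*(a)a^*$ and $a^* \diamond' b^* = 0$. The same degree bookkeeping shows each product has degree equal to the sum of the degrees of its factors, so $(A \oplus A^\circ, \diamond')$ is a $\mathbb{Z}$-graded pre-Lie algebra; combined with the skew-symmetry, graded-nondegeneracy and invariance of $\omega$ already established, $(A \oplus A^\circ, \diamond', \omega)$ is a quadratic $\mathbb{Z}$-graded pre-Lie algebra. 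A more self-contained alternative would skip the symplectic detour and verify the pre-Lie identity for the explicit $\diamond'$ directly on homogeneous triples, splitting into the $2^3$ cases according to whether each argument lies in $A$ or in $A^\circ$: the all-$A$ case is the pre-Lie identity of $(A,\diamond)$, the cases with two or three arguments in $A^\circ$ collapse since $A^\circ \diamond' A^\circ = 0$, and the mixed cases follow by dualizing the pre-Lie identity of $A$.

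For the concluding assertion I would specialize to $(W_1, \diamond) = (\mathbf{k}[t,t^{-1}], \diamond)$ with $t^i \diamond t^j = j\, t^{i+j-1}$ and $A^\circ = \mathbf{k}[s,s^{-1}]$ paired by $\langle s^i, t^j\rangle = \delta_{i+j,0}$. Direct evaluation gives $L_\diamond^*(t^i)s^j = (1-i-j)\,s^{i+j-1}$ and $R_\diamond^*(t^i)s^j = i\, s^{i+j-1}$, so the four formulas above become $t^i \diamond' t^j = j\, t^{i+j-1}$, $t^i \diamond' s^j = (2i+j-1)\,s^{i+j-1}$, $s^j \diamond' t^i = i\, s^{i+j-1}$ and $s^i \diamond' s^j = 0$, reproducing the products of Example~\ref{ex:zgpl}; and $\omega(s^i,t^j) = \langle s^i,t^j\rangle = \delta_{i+j,0}$ together with the vanishing of $\omega$ on $t$-$t$ and $s$-$s$ pairs recovers the form of Example~\ref{ex:qgpl}, completing the identification.
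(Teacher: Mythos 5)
Your proposal is correct, and your specialization computations check out: $L_\diamond^*(t^i)s^j=(1-i-j)s^{i+j-1}$ and $R_\diamond^*(t^i)s^j=i\,s^{i+j-1}$ do give $t^i\diamond' s^j=(2i+j-1)s^{i+j-1}$ and $s^j\diamond' t^i=i\,s^{i+j-1}$, recovering Examples~\ref{ex:zgpl} and~\ref{ex:qgpl} exactly. Your main route, however, differs from the paper's. The paper proves the proposition by direct verification: it observes that the grading with finite-dimensional pieces makes $L_\diamond^*$ and $R_\diamond^*$ well defined on $A^\circ$ with $L_\diamond^*(a)a^*, R_\diamond^*(a)a^*\in A^*_{-i-j}$ for $a\in A_i$, $a^*\in A^*_{-j}$, declares the pre-Lie identity for the explicit $\diamond'$ a routine case check, and then verifies invariance of $\omega$ by a short explicit computation of $\omega((a+a^*)\diamond'(b+b^*),c+c^*)$ against $-\omega(b+b^*,(a+a^*)\diamond'(c+c^*)-(c+c^*)\diamond'(a+a^*))$. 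You instead lift the symplectic machinery of Propositions~\ref{pro:pl2sl} and~\ref{pro:sl2pl} --- which the paper invokes only as finite-dimensional motivation before the proposition --- to the graded restricted-dual setting: you form $\mathfrak{g}(A)\ltimes_{-L_\diamond^*}A^\circ$ with $\omega_p$, use graded nondegeneracy (each $A_i\oplus A^*_{-i}$ finite-dimensional, functionals supported in a single degree) to define $\diamond'$ by $\omega(x\diamond' y,z)=-\omega(y,[x,z])$, and then get the pre-Lie identity and invariance structurally, with the explicit formulas recovered by evaluation. This buys conceptual transparency (invariance is the defining relation, not a computation) at the cost of having to re-establish the two finite-dimensional propositions degreewise, which you correctly identify as the point where the hypotheses on the grading enter; your fallback ``self-contained alternative'' of checking the pre-Lie identity over the $2^3$ homogeneous cases is essentially the paper's actual proof, and your observation that all cases with two or three arguments in $A^\circ$ collapse since $A^\circ\diamond' A^\circ=0$ is accurate --- the only substantive case is one dual argument, obtained by dualizing the pre-Lie identity of $A$.
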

\begin{proof}
    Note that $(A = \oplus_{i \in \mathbb{Z}} A_i, \diamond)$ is a $\mathbb{Z}$-graded pre-Lie algebra with all $A_i$ finite-dimensional,
    hence $L_\diamond^*$ and $R_\diamond^*$ are well defined.
    Moreover, if $a \in A_i, a^* \in A_{-j}^*$ for $i, j \in \mathbb{Z}$,
    then $L_\diamond^*(a)a^*, R_\diamond^*(a)a^* \in A_{-i-j}^*$.
    Therefore, by a routine verification, we show that $(A \oplus A^\circ, \diamond^\prime)$ is a $\mathbb{Z}$-graded pre-Lie algebra.
    For all $a, b, c \in A$ and $a^*, b^*, c^* \in A^\circ$, we have
    \begin{eqnarray*}
        &&\omega((a + a^*) \diamond^\prime (b+b^*), c+c^*) = \omega(a \diamond b - L_\diamond^*(a)b^* + R_\diamond^*(a)b^* + R_\diamond^*(b)a^*, c + c^*) \\
        &&= -\langle c^*, a \diamond b \rangle - \langle b^*, a \diamond c - c \diamond a \rangle + \langle a^*, c \diamond b \rangle, \\
        &&\omega(b+b^*, (a + a^*) \diamond^\prime (c+c^*) - (c + c^*) \diamond^\prime (a+a^*)) = \omega(b+b^*, a \diamond c - c \diamond a -L_\diamond^*(a)c^* + L_\diamond^*(c)a^*) \\
        &&=\langle b^*, a \diamond c - c \diamond a \rangle + \langle c^*, a \diamond b \rangle  - \langle a^*, c \diamond b \rangle.
    \end{eqnarray*}
    That is, $\omega((a + a^*) \diamond^\prime (b+b^*), c+c^*) = -\omega(b+b^*, (a + a^*) \diamond^\prime (c+c^*) - (c + c^*) \diamond^\prime (a+a^*))$.
    Hence, $(A \oplus A^\circ, \diamond^\prime, \omega)$ is a quadratic $\mathbb{Z}$-graded pre-Lie algebra.
    The particular case follows from a direct verification.
\end{proof}

\noindent{\bf Acknowledgments.} This work is supported by NSFC
(11931009, 12271265, 12261131498, 12326319), Fundamental Research
Funds for the Central Universities and Nankai Zhide Foundation.
The authors thank the referee for valuable suggestions.

\smallskip

\noindent {\bf Declaration of interests. } The authors have no
conflicts of interest to disclose.

\smallskip

\noindent {\bf Data availability. } Data sharing is not applicable
to this article as no new data were created or analyzed in this
study.

\smallskip


\end{document}